\newtheorem{thm}{Theorem}[section]
\newtheorem{lem}[thm]{Lemma}
\newtheorem{prp}[thm]{Proposition}
\newtheorem{cor}[thm]{Corollary}
\newtheorem{cnj}[thm]{Conjecture}
\newtheoremstyle{roman} 
    {8.0pt plus 2.0pt minus 4.0pt}                    
    {8.0pt plus 2.0pt minus 4.0pt}                    
    {\normalfont}                
    {}                           
    {\bfseries}                  
    {.}                          
    {5pt plus 1pt minus 1pt}     
    {}  
\theoremstyle{roman}
\newtheorem{example}[thm]{Example}
\newtheorem{remark}[thm]{Remark}
\theoremstyle{plain}
\newcommand{\Step}[1]{\noindent {\it Step #1.}} 
\newcommand{\rem}[1]{}
\newcommand{\N}{\mathbb{N}}
\newcommand{\Z}{\mathbb{Z}}
\newcommand{\HH}{{\mathrm{H}}}
\newcommand{\frakP}{{\mathfrak{P}}}
\newcommand{\frakQ}{{\mathfrak{Q}}}
\newcommand{\fraka}{{\mathfrak{a}}}
\newcommand{\frakb}{{\mathfrak{b}}}
\newcommand{\frakc}{{\mathfrak{c}}}
\newcommand{\frakm}{{\mathfrak{m}}}
\newcommand{\frakp}{{\mathfrak{p}}}
\newcommand{\frakq}{{\mathfrak{q}}}
\newcommand{\frakt}{{\mathfrak{t}}}
\newcommand{\calD}{{\mathcal{D}}}
\newcommand{\calM}{{\mathcal{M}}}
\newcommand{\calP}{{\mathcal{P}}}
\newcommand{\calT}{{\mathcal{T}}}
\newcommand{\catC}{{\mathscr{C}}}
\newcommand{\what}[1]{\widehat{#1}}
\newcommand{\veps}{\varepsilon}
\newcommand{\vphi}{\varphi}
\newcommand{\onto}{\rightarrow\mathrel{\mkern-14mu}\rightarrow} 
\newcommand{\embeds}{\hookrightarrow}
\newcommand{\xonto}[2][]{%
  \xrightarrow[#1]{#2}\mathrel{\mkern-14mu}\rightarrow
}
\newcommand{\xembeds}[2][]{\xhookrightarrow[#1]{#2}}
\newcommand{\suchthat}{\,:\,}
\newcommand{\where}{\,|\,}
\newcommand{\quo}[1]{\overline{#1}}
\DeclareMathOperator{\ann}{ann}
\DeclareMathOperator{\Ann}{Ann} %
\DeclareMathOperator{\Ass}{Ass} %
\DeclareMathOperator{\Br}{Br} %
\DeclareMathOperator{\Cent}{Cent} %
\DeclareMathOperator{\coker}{coker} %
\DeclareMathOperator{\depth}{depth} %
\DeclareMathOperator{\End}{End} %
\DeclareMathOperator{\Ext}{Ext} %
\DeclareMathOperator{\hgt}{hgt} %
\DeclareMathOperator{\Hom}{Hom} %
\DeclareMathOperator{\id}{id} %
\DeclareMathOperator{\im}{im} %
\DeclareMathOperator{\ind}{ind} %
\DeclareMathOperator{\Int}{Int} %
\DeclareMathOperator{\Max}{Max}
\newcommand{\op}{\mathrm{op}} %
\DeclareMathOperator{\rank}{rank}
\DeclareMathOperator{\Spec}{Spec} %
\DeclareMathOperator{\supp}{supp} %
\DeclareMathOperator{\Sym}{Sym} %
\DeclareMathOperator{\Tr}{Tr} %
\newcommand{\nMat}[2]{\mathrm{M}_{#2}(#1)}
\newcommand{\uU}{{\mathbf{U}}}
\newcommand{\et}{\mathrm{\acute{e}t}}
\newcommand{\units}[1]{{#1^\times}}
\numberwithin{equation}{section}                            
\newcommand{\rMod}[1]{\calM({#1})}  
\newcommand{\rmod}[1]{\calM_{\mathrm{f}}(#1)}  
\newcommand{\rproj}[1]{{\calP({#1})}}              
\newcommand{\Herm}[2][]{{\mathcal{H}}^{#1}(#2)} 
\newcommand{\tHerm}[2][]{{\tilde{\mathcal{H}}}^{#1}(#2)} 
\newcommand{\dd}{{\mathrm{d}}} 
\newcommand{\aGW}[1]{{\mathcal{GW}}^{#1}_+} 
\newcommand{\GW}[1]{{\mathcal{GW}}^{#1}} 
\renewcommand{\Sym}{{\mathcal{S}}}    
\renewcommand{\Cent}{{\mathrm{Z}}}    
\title{On the Grothendieck--Serre Conjecture for Classical Groups}
\date{\today}
\author{Eva Bayer-Fluckiger$^1$}
\email{eva.bayer@epfl.ch}
\author{Uriya A.\ First$^2$}
\email{uriya.first@gmail.com}
\author{Raman Parimala$^3$}
\email{parimala.raman@emory.edu}
\address{$^1$Department of Mathematics, \'Ecole Polytechnique F\'ed\'erale de Lausanne}
\address{$^2$Department of Mathematics, University of Haifa}
\address{$^3$Department of Mathematics, Emory University}
\keywords{
Classical algebraic group, 
affine group scheme, 
principal homogeneous space,
torsor,
\'etale cohomology,
hermitian form, 
Azumaya algebra with involution,
Witt group.
}
\subjclass[2020]{
Primary: 11E57. 
Secondary: 11E39, 
14L15, 
16H05. 
}
\begin{document}

\begin{abstract}
We prove some new cases of the Grothendieck--Serre conjecture for classical groups. This
is based on a new construction of the Gersten--Witt complex for Witt groups of Azumaya
algebras with involution on regular semilocal rings, with explicit second residue maps; the
complex is shown to be exact when the ring is of dimension $\le 2$ (or $\le 4$, with
additional hypotheses on the algebra with involution). Note that we do not assume 
that the ring contains a field.
\end{abstract}

\maketitle

\section*{Introduction}

Let $R$ be a regular local ring of Krull dimension $d$, and let $G$ be a reductive group scheme over $R$; let $F$ be the field of fractions of $R$.
The Grothendieck--Serre conjecture states the following:

\begin{cnj}[Grothendieck--Serre] 
The restriction map 
$$\HH^1_{\et}(R,G) \to \HH^1_{\et}(F,G)$$ has trivial kernel.
\end{cnj}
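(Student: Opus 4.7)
The plan is to address the conjecture in the case when $G$ is a classical group over $R$, by translating the triviality-of-kernel statement into an injectivity question for Witt groups of hermitian forms and then invoking the exactness result on the Gersten--Witt complex that is the main technical contribution of this paper.

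First, for a classical group $G$ over $R$, there is an Azumaya $R$-algebra with involution $(A,\sigma)$ and a nondegenerate $\varepsilon$-hermitian form $h$ on a finitely generated projective $A$-module such that $G$ is, up to central isogeny, the unitary group $\uU(A,\sigma,h)$. The twisting bijection identifies $\HH^1_{\et}(R,G)$ with isometry classes of nondegenerate $\varepsilon$-hermitian $(A,\sigma)$-forms of the same rank as $h$, modulo a possible residual contribution from the kernel of the isogeny, which is controlled by $\HH^1_{\et}$ with coefficients in a subgroup scheme of the center of $G$ (e.g.\ $\umu_2$ for spinor-norm type issues). Witt cancellation, valid in the semilocal setting, makes isometry over $R$ of two such forms equivalent to equality of their classes in the Witt group $W^\varepsilon(A,\sigma)$, and analogously over $F$. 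The main content of the conjecture thus reduces to injectivity of the restriction
\[ W^\varepsilon(A,\sigma) \longto W^\varepsilon(A_F,\sigma_F), \]
supplemented with a standard handling of the residual central isogeny obstruction.

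Second, I would invoke the Gersten--Witt complex constructed in this paper, namely
\[
W^\varepsilon(A,\sigma) \longto W^\varepsilon(A_F,\sigma_F) \longto \bigoplus_{\mathfrak{p}} W^\varepsilon\bigl(A(\mathfrak{p}), \sigma(\mathfrak{p})\bigr),
\]
where $\mathfrak{p}$ runs over the height-one primes of $R$ and the second arrow is a sum of the explicit second residue maps built in the paper. Injectivity at the first spot is exactly exactness of this complex there, which the paper establishes when $\dim R \le 2$, and when $\dim R \le 4$ under the stated additional hypotheses on $(A,\sigma)$. Combined with the previous step, and with the well-known Gersten-type injectivity of $\HH^1_{\et}$ with coefficients in $\umu_n$ or $\uGm$ over regular rings, this yields the corresponding new cases of the Grothendieck--Serre conjecture for classical groups.

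The principal obstacle is not the translation to hermitian forms but the exactness of the Gersten--Witt complex itself, in the generality of a regular semilocal $R$ that is not assumed to contain a field. Over rings containing a field, a triangulated Witt-group formalism (Balmer, Walter, Gille) produces such a complex and proves exactness in low dimension by a duality-preserving dévissage built on well-behaved local dualizing complexes. That input is not directly available in mixed characteristic, so the second residues must be defined by hand from any chosen regular parameter, shown to be independent of that choice up to an appropriate sign or unit, and checked to satisfy $\partial^2 = 0$; only then can exactness be attempted. The dimension-$\le 2$ case should succumb to a direct two-parameter dévissage, whereas the dimension-$\le 4$ case with the additional hypotheses on $(A,\sigma)$ will demand a substantially more involved inductive argument exploiting the algebraic structure of the involution, and this is where the hardest work of the proposal sits.
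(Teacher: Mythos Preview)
Your broad outline is right: reduce to injectivity of $W_\veps(A,\sigma)\to W_\veps(A_F,\sigma_F)$ via the classification of torsors by hermitian forms, and then obtain that injectivity from exactness of the Gersten--Witt complex at degree $-1$. This matches the paper's strategy at the top level (see Theorem~\ref{TH:GS-and-purity} and \cite[Proposition~8.5]{First_2018_octagon_preprint}).

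The genuine gap is in how you plan to prove exactness. You propose a ``direct two-parameter d\'evissage'' for $\dim R\le 2$ and an unspecified ``inductive argument exploiting the algebraic structure of the involution'' for $\dim R\le 4$. The paper's actual mechanism is neither of these: the key tool in \emph{both} cases is the $8$-periodic exact sequence of Witt groups (the octagon of Section~\ref{sec:octagon}), which allows induction on $\deg A$ by expressing $W_{\pm\veps}(A,\sigma)$ in terms of Witt groups of a subalgebra $B$ of half the degree. One must show this octagon is compatible with the Gersten--Witt complex (Theorem~\ref{TH:octagon-is-compatible}), arrange via odd-rank \'etale base change and Morita-type reductions that the needed $\lambda,\mu\in A^\times$ exist (Theorem~\ref{TH:reduction-for-octagon}, using a Springer-type result, Corollary~\ref{CR:Springer-odd-etale}), and run a diagram chase (Lemma~\ref{LM:grid-lemma}). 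The base of the induction is the Balmer--Preeti--Walter exactness for $(R,\id,1)$ in dimension $\le 4$. For odd $\ind A$ this reduction collapses $A$ to degree $1$ (Theorem~\ref{TH:exactness-odd-index}); for general $(A,\sigma)$ with $\dim R\le 2$ one needs in addition the vanishing of the even cohomologies $\HH^0$ and $\HH^d$ of $\aGW{B,\tau,\pm1}$ for all auxiliary $(B,\tau)$ (Theorems~\ref{TH:surj-at-last-term} and~\ref{TH:purity-dim-two}), which feed into Theorem~\ref{TH:cond-exactness-even-cohomologies}. Your ``direct d\'evissage'' would not produce these inputs, and you have the relative difficulty of the two cases inverted: the $\dim R\le 2$ case for arbitrary index is the one requiring the full octagon machinery plus the auxiliary vanishing results, while the odd-index assumption is precisely what makes $\dim R\le 4$ accessible.
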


\noindent In other words, it is conjectured that a $G$-torsor over $R$ is trivial if it is trivial over $F$. 
 
The first forms of this
conjecture --- in special cases --- appear in the works of Serre 
\cite[Rem.\ p.~31]{Serre_1958_espaces_fibres_algebriques} 
and Grothendieck 
\cite[Rem.~3, pp.~26-27]{Grothendieck_1958_homological_torsion}; it was first stated in the above
form by Grothendieck 
\cite[Rem.~1.11.a]{Grothendieck_68_groupe_de_Brauer_II}. 
It is now proved when $R$ {\it contains a field} by Fedorov and Panin 
\cite{Fedorov_2015_Grothendieck_Serre_conj},
and Panin \cite{Panin_2017__Grothendieck_Serre_R_contains_finite_fld_preprint};
see for instance \cite[\S5]{Panin_2018_Grothendieck_Serre_conj_survey} 
for a detailed description of the history of this conjecture.

\medskip

The aim of the present paper is to prove some new cases of this conjecture, when $R$ {\it does not necessarily
contain a field}. We assume that $2 \in R^{\times}$, and that $G$ is of classical type. More precisely:

\begin{thm}[{Theorems~\ref{TH:GS-and-purity}}]
	\label{TH:main-intro}
	Let $R$ be a regular semilocal domain with fraction field $F$,
	let $(A,\sigma)$ be an Azumaya $R$-algebra with involution, 
	and let $\uU(A,\sigma)$ be the corresponding unitary group scheme. 
	Suppose
	that   one of the following hold:
	\begin{enumerate}[label=(\arabic*)]
		\item $\dim R=2$;
		\item $\dim R\leq 4$ and $\ind A$ is odd. 
	\end{enumerate}
	Then the restriction map $\HH^1_{\et}(R,\uU(A,\sigma))\to\HH^1_{\et}(F,\uU(A,\sigma))$ 
	has trivial kernel.
\end{thm}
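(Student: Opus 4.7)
The plan is to reformulate the Grothendieck--Serre statement for $\uU(A,\sigma)$ in terms of hermitian forms and then feed it into the paper's exactness result for the Gersten--Witt complex. Let $h_0$ be the hyperbolic-style hermitian form $h_0(a,b)=\sigma(a)b$ on $A$ (regarded as a right $A$-module), so that $\uU(A,\sigma)$ is the automorphism group scheme of $(A,h_0)$. The pointed set $\HH^1_{\et}(R,\uU(A,\sigma))$ then classifies hermitian forms $(P,h)$ over $(A,\sigma)$ on finitely generated projective right $A$-modules that are étale-locally isometric to $(A,h_0)$. The kernel of the restriction map consists of those classes for which $(P,h)\otimes_R F \cong (A,h_0)\otimes_R F$ as hermitian spaces over $A_F$, and the goal is to upgrade this generic isometry to an isometry over $R$.

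The heart of the argument is to compare $(P,h)$ and $(A,h_0)$ in the Witt group $W(A,\sigma)$. An isometry over $F$ certainly yields $[(P,h)]=[(A,h_0)]$ in $W(A_F,\sigma_F)$. The key input is the first-term exactness of the Gersten--Witt complex $0\to W(A,\sigma)\to W(A_F,\sigma_F)\to\cdots$ constructed in the body of the paper; this is precisely the purity statement for the Witt group of $(A,\sigma)$. Since the paper establishes exactness of the full complex both when $\dim R\le 2$ and when $\dim R\le 4$ with $\ind A$ odd, purity holds in cases (1) and (2), and we may conclude that $[(P,h)]=[(A,h_0)]$ in $W(A,\sigma)$.

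The concluding step is a Witt cancellation argument: since $(P,h)$ and $(A,h_0)$ have the same rank and are Witt-equivalent over the semilocal ring $R$ (on which $2$ is invertible by assumption), the classical cancellation theorem for hermitian forms over semilocal rings forces $(P,h)\cong(A,h_0)$ as hermitian $(A,\sigma)$-modules. This produces the required trivialization of the torsor and shows the kernel of the restriction map is trivial.

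The main obstacle is precisely the input invoked in the second paragraph: constructing the Gersten--Witt complex for Azumaya algebras with involution on regular semilocal rings that need not contain a field, equipping it with explicit second residue maps, and proving its exactness up to dimension $2$ unconditionally and up to dimension $4$ under the odd-index hypothesis. By contrast, the reduction to purity and the concluding cancellation step are comparatively routine once the cohomological input is in place; they rely only on the standard torsor--hermitian form dictionary and classical Witt cancellation facts for hermitian forms over semilocal rings in which $2$ is a unit.
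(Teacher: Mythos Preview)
Your proposal is correct and follows essentially the same approach as the paper: the paper's proof of this statement (Theorem~\ref{TH:GS-and-purity}(i)) invokes $\HH^{-1}(\aGW{A,\sigma,1})=0$ together with \cite[Proposition~8.5]{First_2018_octagon_preprint}, the latter being precisely the torsor--hermitian-form dictionary plus Witt cancellation that you unpack. One terminological quibble: the input you actually use is injectivity of $W(A,\sigma)\to W(A_F,\sigma_F)$, i.e.\ $\HH^{-1}=0$, whereas ``purity'' in this paper refers to $\HH^{0}=0$ (the statement that unramified classes descend); your argument does not need the latter, and you should not call the former ``purity''.
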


The same holds for the neutral connected component of $\uU (A,\sigma)\to \Spec R$.
We also prove a purity result for the Witt group of $(A,\sigma)$  under the same assumptions.

\medskip

The proof uses the {\it Gersten--Witt complex} of Witt groups of hermitian and skew-hermitian forms over $(A,\sigma)$.
In fact, we establish its exactness under the assumptions (1), (2) above (Theorems~\ref{TH:exactness-odd-index} 
and \ref{TH:GW-exact-dim-2}).

\medskip

Recall that in the special case where $A = R$, a conjecture of Pardon \cite[Conjecture~A]{Pardon_1982_Gersten_conjecture} predicts the existence of an exact cochain complex
\[
0\to W (R)\xrightarrow{\dd_{-1}} W (F)\xrightarrow{\dd_0} \bigoplus_{\frakp\in R^{(1)}}W(k(\frakp))
\to \dots \to  \bigoplus_{\frakp\in R^{(d)}}W(k(\frakp))\to 0 
\]
called the {\it Gersten--Witt complex} of $R$; here, $R^{(e)}$
is the set of height-$e$ prime ideals, 
$k(\frakp)$ is the fraction field of $R/\frakp$
and $W(R)$ is the Witt group of $R$.
The map $\dd_{-1}$ is  base-change from $R$ to $F$, and the $\frakp$-component
of $\dd_{0}$ is equivalent to 
the \emph{second residue} map
$\partial:W(F)\to W(k(\frakp))$ (see \cite[p.~209]{Scharlau_1985_quadratic_and_hermitian_forms},
for instance).

Constructions of  a Gersten--Witt complex, applying to any regular domain $R$,  
were given by Pardon  \cite{Pardon_1982_relation_between_Witt_groups_zero_cycles} and Balmer--Walter \cite{Balmer_2002_Gersten_Witt_complex}. 
When $R$ is local,
the exactness has been  established  
when $\dim R\leq 4$ by Balmer--Walter \cite{Balmer_2002_Gersten_Witt_complex} 
and when 
$R$ contains a field by Balmer--Gille--Panin--Walter \cite{Balmer_2002_Gersten_conj_equicharacteristic}
(see also \cite{Balmer_2001_Witt_cohomology}). 
The more
general case of an Azumaya algebra with involution was considered by Gille
\cite{Gille_2007_hermitian_GW_complex_I}, \cite{Gille_2009_hermitian_GW_complex_II}, who also proved its exactness
when $R$ contains a field 
\cite[Theorem~7.7]{Gille_2013_coherent_herm_Witt_grps}. 
See  Balmer--Preeti
\cite[p.~3]{Balmer_2005_shifted_Witt_groups_semilocal}  and 
Jacobson \cite[Theorem~3.8]{Jacobson_2018_cohomological_invariants}
for further positive results on the exactness.

\medskip

We introduce a new  construction of the Gersten-Witt complex 
of   $\veps$-hermitian forms over an Azumaya algebra with involution $(A,\sigma)$ 
(Section~\ref{sec:second-res}). 
In more detail,
the complex that we construct is denoted
$\aGW{A,\sigma,\veps}$
and takes the form 
\[
0\to W_\veps (A)\to  {W}_\veps (A_F)\to\bigoplus_{\frakp\in R^{(1)}}
\tilde{W}_\veps(A (\frakp))
\to \dots \to  \bigoplus_{\frakp\in R^{(d)}}\tilde{W}_\veps(A (\frakp))\to 0,
\]
where 
$\tilde{W}_\veps(A (\frakp))$ denotes the Witt group of $\veps$-hermitian
forms over $ A(\frakp):=A\otimes k(\frakp) $ which are valued in $A\otimes \tilde{k}(\frakp)$,
and 
\[\tilde{k}(\frakp)
=\Ext_{R_\frakp}^{\dim R_\frakp}(k(\frakp),R_\frakp)=
\Hom_{R_\frakp}({\textstyle\bigwedge_{R_\frakp}^{\dim R_\frakp}}(\frakp_\frakp/\frakp_\frakp^2),k(\frakp)) .
\] 
(Since $\tilde{k}(\frakp)$ is a $1$-dimensional $k(\frakp)$-vector space,
the group $\tilde{W}_\veps(A (\frakp)) $ is isomorphic to the ordinary
Witt group of $\veps$-hermitian forms over $A (\frakp)$, but the isomorphism is not canonical.)
The $(-1)$-differential of $\aGW{A,\sigma,\veps}$ is base-change from $R$ to $F$, and
the $(\frakp,\frakq)$-component of each of the other differentials is given as 
a generalized second residue map, see \ref{subsec:second-res}.
This description   is
sufficiently explicit to allow
hands-on computation  of the differentials (Section~\ref{sec:complete-intersection}),
which facilitates many of our arguments.

\medskip

The broad  idea of the proof of Theorem~\ref{TH:main-intro} 
and the exactness of the Gersten--Witt complex
is to use the $8$-periodic exact sequence of Witt groups of 
\cite{First_2022_octagon} (recalled in Section~\ref{sec:octagon}) in order
to induct on the degree of $A$. The basis of the induction is the case $(A,\sigma) = (R,\id)$ established by
Balmer, Preeti and Walter \cite{Balmer_2002_Gersten_Witt_complex},
\cite{Balmer_2005_shifted_Witt_groups_semilocal}.

Several supplementary results
are required to make this approach work.
First, we show that the Gersten--Witt complex
is functorial in $(A,\sigma)$ (after
\emph{d\'evissage}) relative to  \emph{base change} 
and \emph{involution traces} (Theorems~\ref{TH:base-change-is-compatible-with-GW} and
\ref{TH:pi-transfer-is-compatible}),
and hence compatible with the $8$-periodic 
exact sequence  of Witt groups of
\cite{First_2022_octagon} (Theorem~\ref{TH:octagon-is-compatible}).
Second,
we 
prove that the Gersten--Witt complex is exact at the last, i.e.\ $d$-th,
term for all $(A,\sigma,\veps)$ and semilocal $R$, and also at the $0$-term if $\dim R\leq 2$
(Section~\ref{sec:surjectivity}).
Third, we establish a version of the \emph{weak Springer  Theorem on odd-degree
base change}
for Azumaya algebras over semilocal rings (Corollary~\ref{CR:Springer-odd-etale}).

\medskip

The paper is organized as follows:
Section~\ref{sec:preliminaries}
is preliminary and recalls Azumaya algebras with involution,
hermitian categories and some homological algebra facts.
In Section~\ref{sec:second-res},
we give our   construction of the Gersten--Witt complex, and in
Section~\ref{sec:complete-intersection} we explain how the differentials
can be computed ``by hand''.
The construction is used in
Sections~\ref{sec:functoriality} and~\ref{sec:surjectivity}
to establish various 
functoriality
properties
for the Gersten--Witt complex and the surjectivity
of its last differential when $R$ is semilocal.
Section~\ref{sec:octagon} recalls
the $8$-periodic exact sequence of Witt groups of \cite{First_2022_octagon}
and shows that it is compatible with the Gersten--Witt complex.
Applying the $8$-periodic exact sequence to our situation
is the subject
matter of   Sections~\ref{sec:Springer} and~\ref{sec:odd-extension}.
Finally, in Section~\ref{sec:exactness},
we put the machinery of the previous sections together to prove
new cases of the Grothendieck--Serre  conjecture
and the exactness of the Gersten--Witt complex.

\subsection*{Acknowledgements}
We are grateful to Stefan Gille and Paul
Balmer for several useful 
correspondences. 
We also thank the anonymous referees for their comments and suggestions.

\subsection*{Notation}

	A ring means a commutative (unital) ring.
	Algebras are unital and associative, but not necessarily commutative.
	\emph{We assume   that $2$ is invertible in all rings and algebras.}
	
	Unless otherwise indicated, $R$ denotes a   ring.
	By default, unadorned tensors, $\Hom$-groups  and $\Ext$-groups are   taken over $R$.
	We will also suppress the base ring when tensoring or taking $\Hom$-groups
	over a localization of $R$. 
	An $R$-ring means a commutative $R$-algebra. An invertible $R$-module
	means a rank-$1$ projective $R$-module.
	
	Given $\frakp\in \Spec R$, we let $k(\frakp)$ denote
	the fraction field of $R/\frakp$.		
	The set of primes  in $\Spec R$  of codimension (i.e.\ height) $e$ is denoted
	$R^{(e)}$. 
	The  support of an $R$-module $M$ is denoted $\supp_R M$. If all   primes
	in $\supp_R M$ are of codimension at least $e$,
	then $M$ is said to be supported, or $R$-supported, in codimension $ e$.
	An element $r\in R$ is said to act faithfully on $M$ if the map $x\mapsto xr:M\to M$
	is injective; the set of all such elements is a   multiplicative subset of $R$.
	
	Let $S$ be an $R$-ring.
	Given  $R$-modules $M$, $N$ and $f\in\Hom(M,N)$,
	we write $M_S:=M\otimes S$ and $f_S:=f\otimes \id_S\in\Hom_S(M_S,N_S)$.
	When $S=k(\frakp)$ for $\frakp\in \Spec R$,
	we write $M(\frakp)=M_{k(\frakp)}$ and $f(\frakp)=f_{k(\frakp)}$.
	In addition, we
	let $m(\frakp)$
	denote the image  of $m\in M$ in $M(\frakp)$.
	
	For $M, N, f$ as before, we write $M^\vee =\Hom(M,R)$ and $f^\vee=\Hom(f,R)$.

	Given an $R$-algebra $A$, its units and its center 
	are denoted $\units{A}$ and $\Cent(A)$, 
	respectively. Given $a\in \units{A}$, we write $\Int(a)$
	for the inner automorphism $x\mapsto axa^{-1}:A\to A$.
	The category of finite (i.e.\ finitely
	generated) projective right $A$-modules is denoted $\rproj{A}$.
	The category of all, resp.\ finite, right $A$-modules is denoted $\rMod{A}$,
	resp.\ $\rmod{A}$. Unless otherwise indicated, $\Hom_A$ and $\Ext^*_A$
	indicate $\Hom$-groups and $\Ext$-groups taken relative to the category
	of right $A$-modules.
	If $R$ is noetherian, $C$ is a multiplicative subset of $R$ and $M,N\in\rmod{A}$, 
	then we shall freely identify $\Ext^*_A(M,N)C^{-1}$ with
	$\Ext^*_{AC^{-1}}(MC^{-1},NC^{-1})$, see \cite[Theorem~2.39]{Reiner_2003_maximal_orders_reprint}.
	
	In situations when an abelian group $M$ can be regarded as a module over multiple $R$-algebras,
	we shall sometimes write $M_A$ to denote ``$M$, viewed as a right $A$-module''.

	An $R$-algebra with involution means an $R$-algebra with an $R$-linear involution.
	Given an $R$-involution $\sigma:A\to A$  and $\veps\in \mu_2(R):=\{r\in R\suchthat
	r^2=1\}$, we let
	$\Sym_{\veps}(A,\sigma)=\{a\in A\suchthat a=\veps a^{\sigma}\}$.
	
\section{Preliminaries}	
\label{sec:preliminaries}

\subsection{Azumaya Algebras with Involution}
\label{subsec:Azumaya}

Given  an Azumaya $R$-algebra $A$,
the degree and the index  of $A$
are denoted $\deg A$ and $\ind A$, respectively.
When $R$ is connected, they can be regarded as integers.
The Brauer class of $A$ is denoted $[A]$, and the Brauer group of $R$
is denoted $\Br R$.

A finite \'etale $R$-algebra $S$ of rank $2$ is called
\emph{quadratic \'etale}. In this case, $S$ admits a unique $R$-involution
$\theta$ such that $R=S^{\{\theta\}}:=\{s\in S\suchthat s^\theta=s\}$; it is given
by $x^\theta=\Tr_{S/R}(x)-x$ and called the \emph{standard
$R$-involution} of $S$.

In the sequel, we shall often consider $R$-algebras $A$ such that $A$ is Azumaya over $\Cent(A)$
and $\Cent(A)$ is finite \'etale over $R$.
These turn out to be precisely
the $R$-algebras which are separable and projective over $R$
\cite[Proposition~1.1]{First_2022_octagon}. We therefore
call such algebras \emph{separable projective}.
A module over a separable projective $R$-algebra
is projective over the algebra if and only if it is projective over
$R$ \cite[Proposition~2.14]{Saltman_1999_lectures_on_div_alg}.

\medskip

Let $(A,\sigma)$ be an $R$-algebra with ($R$-linear) involution.
Following \cite[\S1.4]{First_2022_octagon} and \cite[\S4]{Ojaguren_2001_rationally_trivial_hermitian_spaces},
we say that $(A,\sigma)$ is Azumaya over $R$
if $A$ is a separable projective $R$-algebra
and $r\mapsto 1_A\cdot r$
defines an isomorphism
$R\to \Cent(A)^{\{\sigma\}}=\{s\in\Cent(A)\suchthat s^\sigma=s\}$.
In this case, provided $R$ is connected, $\Cent(A)$ is either $R$ or a quadratic \'etale 
$R$-algebra.

If $(A,\sigma)$ is a separable projective $R$-algebra with involution,
then $(A,\sigma)$ is Azumaya over $\Cent(A)^{\{\sigma\}}$
and $\Cent(A)^{\{\sigma\}}$ is finite \'etale over $R$  
\cite[Example~1.20]{First_2022_octagon}.
Thus, if $S$ is a connected finite \'etale
$R$-algebra and $\sigma:S\to S$ is a nontrivial $R$-involution,
then $S$ is quadratic \'etale over $S^{\{\sigma\}}$,
the standard $S^{\{\sigma\}}$-involution of $S$ is $\sigma$
and  $S^{\{\sigma\}}$
is finite \'etale over $R$.

\medskip

Suppose that $R$ is connected and let $(A,\sigma)$ be an Azumaya
$R$-algebra with involution. If $\Cent(A)=R$,
then the type of $\sigma$ can be either \emph{orthogonal} or \emph{symplectic},
and if $\Cent(A)\neq R$,  then $\sigma$ is said to be of \emph{unitary} type;
see \cite[\S1.4]{First_2022_octagon}.  
Symplectic
involutions can exist only when $\deg A$ is even.
Given $\veps\in \mu_2(R)=\{\pm 1\}$, we define the type $(\sigma,\veps)$ to be
the type of $\sigma$ if $\veps=1$ and the opposite   type of $\sigma$
if $\veps=-1$; here, orthogonal and symplectic types are opposite to each
other and the unitary type is self-opposite.

These definitions extend  to the non-connected case by considering
the types as functions on $\Spec R$ valued in $\{\text{orthogonal},\text{symplectic},\text{unitary}\}$;
see \cite[\S1.4]{First_2022_octagon}.

\subsection{Hermitian Forms}
\label{subsec:herm-forms}

We proceed with recalling some facts about hermitian spaces and Witt groups.
See \cite{Scharlau_1985_quadratic_and_hermitian_forms}, \cite{Knus_1991_quadratic_hermitian_forms}
or \cite{Balmer_2005_Witt_groups}
for an extensive discussion.

Let $(A,\sigma)$ be an $R$-algebra with involution and let $Z$ be an $(A,A)$-bimodule
admitting a map $\theta:Z\to Z$ satisfying $x^{\theta \theta}=x$, $(axb)^\theta=b^\sigma x^\theta a^\sigma$
and $rx=xr$ for all $x\in Z$, $a,b\in A$ and $r\in R$,
e.g.\ $(Z,\theta)=(A,\sigma)$. Let $\veps\in \mu_2(R)$.
An \emph{$(Z,\theta)$-valued $\veps$-hermitian space} over $(A,\sigma)$ is a
pair $(V,f)$ consisting of a right $A$-module $V$ and a biadditive map $f:V\times V\to Z$
satisfying $f(xa,yb)=a^\sigma f(x,y)b$ and $f(x,y)=\veps f(y,x)^\theta$
for all $x,y\in V$, $a,b\in A$. 
For example, when $(Z,\theta)=(A,\sigma)$ and $V\in \rproj{A}$, this is just
an $\veps$-hermitian space over $(A,\sigma)$ in the usual sense.
Isometries of and orthogonal sums of $(Z,\theta)$-valued  $\veps$-hermitian spaces
over $(A,\sigma)$ are defined in the usual way.

Let $\Hom^\sigma_A(V,Z)$   denote $\Hom_A(V,Z)$ endowed with the \emph{right}
$A$-module structure given by $(\phi a)x=a^\sigma (\phi x)$ ($\phi\in \Hom_A(V,Z)$, $x\in V$, $a\in A$). 
Then $\Hom^\sigma_A(-,Z):\rMod{A}\to \rMod{A}$ is a contravariant functor,
and there
is a natural transformation $\omega_V : V\to \Hom^\sigma_A(\Hom^\sigma_A(V,Z),Z)$
given by $(\omega x)\phi=(\phi x)^\theta$.
Every $(Z,\theta)$-valued $\veps$-hermitian space $(V,f)$ gives
rise to  a right $A$-module map $x\mapsto f(x,-):V\to \Hom^\sigma_A(V,Z)$.
We say   that $(V,f)$ is \emph{unimodular} if this map is bijective.

\medskip

Let $\catC$ be a   full additive subcategory of $\rmod{A}$, e.g.\ $\rproj{A}$.
Then $\catC$ inherits a exact category structure from $\rMod{A}$. Suppose that  
\begin{enumerate}[label=(H\arabic*)]
\item \label{item:herm-exact-cond} $\Hom^\sigma_A(-,Z)$ restricts to an exact functor $\catC \to \catC$;
\item \label{item:herm-reflexive-cond} $\omega_V$ is an isomorphism for all $V\in \catC$.
\end{enumerate}
Then $(\catC,\Hom^\sigma_A(-,Z),\omega)$ is an exact hermitian category
in the sense of \cite[\S1.1]{Balmer_2005_Witt_groups}, and so we may
consider its Witt group of $\veps$-hermitian forms, denoted
$W_\veps(\catC)$. 
In more detail, let $\Herm[\veps]{\catC}$ 
denote the category whose objects are unimodular   $(Z,\theta)$-valued $\veps$-hermitian spaces with underlying
module in $\catC$ and whose morphisms are isometries.
Then   $W_\veps(\catC)$ is the quotient of the   Grothendieck group of 
$\Herm[\veps]{\catC}$ (relative to   orthogonal sum) by    the subgroup
generated by   metablic $\veps$-hermitian spaces. Here, $(V,f)\in\Herm[\veps]{\catC}$ is \emph{metabolic} 
(relative to $\catC$)
if there is submodule $M\subseteq V$ such that both $M$ and $V/M$
are in $\catC$ and $M=M^\perp:=\{x\in V\suchthat f(M,x)=0\}$. 
The   class represented by $(V,f)\in\Herm[\veps]{\catC}$ in $W_\veps(\catC)$ is 
the 
\emph{Witt class} of $(V,f)$; it is denoted  by $[V,f]$ or $[f]$.

For the most part, we shall be concerned with the following two  examples where $\catC=\rproj{A}$.
A few examples with $\catC\neq \rproj{A}$ will be encountered in later sections.

\begin{example}\label{EX:herm-ring-with-inv}
	When $(Z,\theta)=(A,\sigma)$ and $\catC=\rproj{A}$, the category
	$\Herm[\veps]{\catC}$ is just the category of unimodular
	$\veps$-hermitian spaces over $(A,\sigma)$ in the usual sense, e.g.\ \cite[\S2]{First_2022_octagon}.
	In this case, we write $\Herm[\veps]{A,\sigma}$ for $\Herm[\veps]{\catC}$ and 
	$W_\veps(A,\sigma)$ for $W_\veps(\catC)$. 
\end{example}

\begin{example}\label{EX:herm-line-bundle}
	Let $M$ be an invertible $R$-module and take
	$Z= A\otimes M$, $\theta= \sigma\otimes \id_M$ and 
	$\catC=\rproj{A}$. 	
	Conditions \ref{item:herm-exact-cond} and \ref{item:herm-reflexive-cond} 
	hold in this situation  because they hold
	after localizing at every $\frakp\in \Spec R$.
	The corresponding category and Witt group of $\veps$-hermitian forms
	are denoted $\Herm[\veps]{A,\sigma;M}$ and $W_\veps(A,\sigma;M)$, respectively.
\end{example}

Continuing Example~\ref{EX:herm-line-bundle}, let $S$ be an $R$-ring.
For every $(V,f)\in \Herm[\veps]{A,\sigma;M}$, let $f_S:V_S\times V_S\to Z_S $
be the biadditive map determined by $f_S(x\otimes s,y\otimes t)=f(x,y)\otimes st$
($x,y\in V$, $s,t\in S$). It is easy to check that $(V_S,f_S)\in\Herm[\veps]{A_S,\sigma_S;M_S}$.
The assignment $(V,f)\mapsto (V_S,f_S)$ extends to a  functor
$\Herm[\veps]{A,\sigma;M}\to \Herm[\veps]{A_S,\sigma_S;M_S}$ by mapping every
isometry $\phi$ to $\phi_S$. This in turn induces a group homomorphism
$W_\veps(A,\sigma;M)\to W_\veps(A_S,\sigma_S;M_S)$.
When $S=R_\frakp$ for $\frakp\in\Spec R$, we shall write $f_S$ as $f_\frakp$.

\subsection{Some Homological Algebra}
\label{subsec:Koszul-complex}

We record several facts about Cohen--Macaulay modules and the Koszul complex that will be needed in the sequel.
See \cite{Bruns_1993_cohen_macaulay_rings} for 
an extensive discussion.
All rings in this subsection are noetherian.

\begin{prp}\label{PR:CM-properties-I}
	Let $R$ be a Cohen-Macaulay local ring   with dualizing module
	$E$,
	let $M\in\rmod{R}$,
	and let $C$ be the set of elements in $R$ acting faithfully on $M$.
	\begin{enumerate}[label=(\roman*)]
	\item
	If $M$ is supported in codimension $e$, 
	then $\Ext^i_R(M,E)=0$ for all $i<e$, and $\Ext^e_R(M,E)$ is $C$-torsion-free.
	\item If $M$ is Cohen-Macaulay of dimension $e$,
	then $\Ext^i_R(M,E)=0$ for all $i\neq e$ and $\Ext^e_R(M,E)$
	is Cohen-Macualay of dimension $e$.
	\end{enumerate}
\end{prp}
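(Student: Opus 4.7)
The plan is to prove both parts via the standard dualizing-module calculus for Cohen--Macaulay modules, starting from the grade formula
\[
\operatorname{grade}(N) \;=\; \min\{i \suchthat \Ext^i_R(N,E)\neq 0\}
\]
valid for any finite $R$-module $N$ over a Cohen--Macaulay local ring with dualizing module $E$, together with the identity $\operatorname{grade}(N)=\min_{\frakp\in\supp N}\hgt\frakp$ on such rings (see \cite{Bruns_1993_cohen_macaulay_rings}, \S\S 3.3, 3.5).

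For part (i), the vanishing $\Ext^i_R(M,E)=0$ for $i<e$ is immediate from these formulas. For torsion-freeness, fix $c\in C$. Since $c$ acts injectively on $M$, it avoids every associated prime of $M$, and in particular every minimal prime of $\supp M$; Krull's Hauptidealsatz then forces every prime in $\supp(M/cM)$ to have height $\geq e+1$. The short exact sequence $0 \to M \xrightarrow{c} M \to M/cM \to 0$ induces
\[
\Ext^{e-1}_R(M/cM,E) \longrightarrow \Ext^e_R(M,E) \xrightarrow{c} \Ext^e_R(M,E),
\]
whose leftmost term vanishes by the vanishing statement applied to $M/cM$. Hence $c$ acts injectively on $\Ext^e_R(M,E)$, as claimed.

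For part (ii), the strategy is to reduce to maximally Cohen--Macaulay (MCM) duality via change of rings. Since $M$ is Cohen--Macaulay with $\operatorname{grade}(\Ann M)=e$, pick an $R$-regular sequence $x_1,\ldots,x_e \in \Ann M$ and set $\bar R = R/(x_1,\ldots,x_e)$; then $\bar R$ is Cohen--Macaulay with dualizing module $\bar E := \Ext^e_R(\bar R, E)$ (all other $\Ext^i_R(\bar R, E)$ vanishing by part (i)), and $\bar R$ has projective dimension $e$ over $R$ through its Koszul resolution. The standard change-of-rings computation then yields $\Ext^i_R(M,E) \cong \Ext^{i-e}_{\bar R}(M,\bar E)$, and $M$ is MCM over $\bar R$, so it suffices to prove MCM duality: for $N$ MCM over a Cohen--Macaulay local ring with dualizing module $\bar E$, $\Ext^j_{\bar R}(N,\bar E)=0$ for $j>0$ and $\Hom_{\bar R}(N,\bar E)$ is MCM. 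This follows by induction on $\dim \bar R$: the base $\dim \bar R=0$ uses that $\bar E$ is then injective (the Matlis dual of the residue field), and the inductive step picks $x\in\bar\frakm$ regular on both $\bar R$ and $N$, applies the long $\Ext$-sequence of $0\to N\xrightarrow{x}N\to N/xN\to 0$, the inductive hypothesis to $N/xN$ over $\bar R/(x)$, and Nakayama's lemma. No serious obstacle arises; the main points requiring care are the change-of-rings isomorphism and the simultaneous bookkeeping of the MCM property for $\Hom_{\bar R}(N,\bar E)$ throughout the induction.
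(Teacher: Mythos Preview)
Your argument is correct in spirit and essentially matches the paper's: for (ii) the paper simply cites \cite[Theorem~3.3.10(c)]{Bruns_1993_cohen_macaulay_rings}, whose proof is the change-of-rings reduction to MCM duality you sketch, and for (i) the paper uses the same short-exact-sequence trick with $M\xrightarrow{c}M\to M/cM$.

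One small slip to fix: in the long exact sequence for $\Ext^*_R(-,E)$ applied to $0\to M\xrightarrow{c}M\to M/cM\to 0$, the term preceding $\Ext^e_R(M,E)\xrightarrow{c}\Ext^e_R(M,E)$ is $\Ext^{e}_R(M/cM,E)$, not $\Ext^{e-1}_R(M/cM,E)$. This does not affect the conclusion, since your support computation gives $\supp(M/cM)$ in codimension $\geq e+1$, so the vanishing in (i) still kills $\Ext^e_R(M/cM,E)$. (Also, the invocation of Hauptidealsatz is unnecessary: once $c$ avoids the minimal primes of $\supp M$, any $\frakq\in\supp(M/cM)$ strictly contains such a minimal prime $\frakp$, and $\hgt\frakq\geq\hgt\frakp+1\geq e+1$ follows just from the definition of height.)
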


\begin{proof}
	Part (ii) is \cite[Theorem~3.3.10(c)]{Bruns_1993_cohen_macaulay_rings},
	and
	the first assertion of (i) follows from
	\cite[Proposition~1.2.10, Theorem~2.1.2(b)]{Bruns_1993_cohen_macaulay_rings} 
	(applied with $M=E$ and $I=\Ann_R M$).
	Finally, given   $c\in C$, the short exact sequence $M\xembeds{c} M\onto M/cM$ induces
	a short exact sequence $\Ext^e_R(M/cM,E)\to \Ext^e_R(M,E)\xrightarrow{c} \Ext^e_R(M,E)$.
	Since $M/cM$ is supported in codimension $e+1$, we have
	$\Ext^e_R(M/cM,E)=0$, so $c$ acts faithfully on $\Ext^e_R(M,E)$. 
%
%
%
\end{proof}

\begin{lem}\label{LM:zero-divisors}
	Let $R$ be a Cohen--Macaulay ring. Then the set of zero divisors
	in $R$ is $\bigcup_{\frakp\in R^{(0)}} \frakp$. Moreover,
	every ideal of $R$ not contained in a minimal prime contains a regular element.
\end{lem}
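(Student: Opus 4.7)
The plan is to reduce both assertions to two standard facts: (a) in any noetherian ring, the set of zero divisors equals $\bigcup_{\frakp\in\Ass R}\frakp$, and (b) prime avoidance, applied to the finite set $\Ass R$.

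First I would establish the key identification $\Ass R = R^{(0)}$ for a Cohen--Macaulay ring $R$. The containment $R^{(0)}\subseteq \Ass R$ is automatic since every minimal prime is associated. For the reverse, let $\frakp\in\Ass R$. Associated primes can be checked after localization, so $\frakp R_\frakp\in \Ass R_\frakp$, which forces $\depth R_\frakp = 0$. Because $R_\frakp$ is Cohen--Macaulay local, $\depth R_\frakp = \dim R_\frakp$, so $\dim R_\frakp = 0$ and $\frakp$ is minimal. Combined with fact (a), this gives
\[
\{\text{zero divisors of }R\}=\bigcup_{\frakp\in\Ass R}\frakp=\bigcup_{\frakp\in R^{(0)}}\frakp,
\]
which is the first assertion.

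For the second assertion, let $I\idealof R$ be an ideal not contained in any minimal prime. Since $R$ is noetherian, $R^{(0)}$ is finite, so by prime avoidance $I\not\subseteq\bigcup_{\frakp\in R^{(0)}}\frakp$. Any element of $I$ outside this union is, by the first part, not a zero divisor, i.e.\ regular. The only step requiring any care is verifying $\Ass R=R^{(0)}$; this is standard but is the only place where the Cohen--Macaulay hypothesis is used, and it is the crux of the argument.
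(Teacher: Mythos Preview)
Your proof is correct and follows essentially the same route as the paper: the paper simply cites Matsumura for the identification of the zero divisors with $\bigcup_{\frakp\in\Ass R}\frakp$ and for the fact that Cohen--Macaulay rings have no embedded primes, and then invokes prime avoidance for the second assertion. You have unwound those citations into a direct argument, with the depth-equals-dimension characterization of Cohen--Macaulay local rings doing the work of showing $\Ass R=R^{(0)}$.
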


\begin{proof}
	See
	\cite[Theorems~6.1(ii), 17.3(i)]{Matsumura_1989_commutative_ring_theory} for the first statement.	
	The second statement follows from the first and the Prime Avoidance Lemma
	\cite[Exercise~16.8]{Matsumura_1989_commutative_ring_theory}.
\end{proof}

\begin{prp}\label{PR:CM-properties-one-dim}
	Let $R$ be a  $1$-dimensional Cohen--Macaulay  local ring with maximal ideal
	$\frakm$,   let $C$ denote the set of non-zero divisors in $R$, and let
	$ M $ be a nonzero finite $R$-module. Then the following are equivalent:
	\begin{enumerate}[label={\rm(\alph*)}]
		\item  $M$ is $C$-torsion-free;
		\item $M$ is Cohen-Macaulay of dimension $1$;
		\item $\frakm\notin \Ass_R M$.
	\end{enumerate}	 
\end{prp}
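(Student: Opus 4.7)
The plan is to prove (a)$\Leftrightarrow$(c) and (b)$\Leftrightarrow$(c), exploiting the fact that in a $1$-dimensional local ring one has $\Spec R = R^{(0)}\sqcup\{\frakm\}$.

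For (a)$\Leftrightarrow$(c), I would start from two standard identifications: the set of zero divisors on $M$ is $\bigcup_{\frakp\in\Ass_R M}\frakp$ (a standard fact for finite modules over a noetherian ring), while Lemma~\ref{LM:zero-divisors} tells us that $R\setminus C=\bigcup_{\frakp\in R^{(0)}}\frakp$. Thus $M$ is $C$-torsion-free precisely when $\bigcup_{\frakp\in\Ass_R M}\frakp \subseteq \bigcup_{\frakq\in R^{(0)}}\frakq$, which by prime avoidance is equivalent to the inclusion $\Ass_R M\subseteq R^{(0)}$. Since every prime of $R$ is either minimal or equals $\frakm$, the latter is equivalent to $\frakm\notin\Ass_R M$, which is (c).

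For (b)$\Leftrightarrow$(c), I would invoke the standard characterization $\depth M=0 \Leftrightarrow \frakm\in\Ass_R M$ for a finite module over a local ring. So (c) is equivalent to $\depth M\geq 1$. Now $\dim M\leq\dim R = 1$, and (b) is the statement $\depth M = \dim M = 1$. If (c) holds, then $1\leq \depth M\leq \dim M\leq 1$ forces both to equal $1$, yielding (b); conversely, (b) gives $\depth M=1>0$, and so $\frakm\notin\Ass_R M$.

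There is no real obstacle here; the proposition is a packaging of standard dimension theory for $1$-dimensional Cohen--Macaulay local rings. The only mildly nontrivial input is the statement about zero divisors recorded in Lemma~\ref{LM:zero-divisors} (which already uses prime avoidance), and the characterization of depth-zero via associated primes. The one small care-point is ensuring that $\dim M = 0$ is properly excluded under (c): if $\dim M = 0$, then $M$ has finite length and so its socle is nonzero, forcing $\frakm\in\Ass_R M$, a contradiction.
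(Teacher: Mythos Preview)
Your proof is correct and uses essentially the same ingredients as the paper's: Lemma~\ref{LM:zero-divisors} for the description of $C$, the identification of zero-divisors on $M$ with $\bigcup_{\frakp\in\Ass_R M}\frakp$, and the characterization $\depth M>0\Leftrightarrow \frakm\notin\Ass_R M$. The only cosmetic difference is that the paper proves the cycle (a)$\Rightarrow$(b)$\Rightarrow$(c)$\Rightarrow$(a) (producing an explicit $r\in\frakm\cap C$ for (a)$\Rightarrow$(b)), whereas you prove the two biconditionals (a)$\Leftrightarrow$(c) and (b)$\Leftrightarrow$(c); the underlying arguments are the same.
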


\begin{proof}
	(a)$\implies$(b): By Lemma~\ref{LM:zero-divisors}, there
	exists $r\in  
	\frakm \cap C$. 
	This   $r$ acts faithfully on $M$, so $\depth_R M\geq 1$. Since $\dim R=1$,
	this means that $M$ is Cohen-Macaulay of dimension $1$.

	(b)$\implies$(c) is 
	\cite[\href{https://stacks.math.columbia.edu/tag/0BUS}{Tag 0BUS}]{DeJong_2018_stacks_project}
	or \cite[Theorem~2.12(a)]{Bruns_1993_cohen_macaulay_rings}.

	(c)$\implies$(a): Since $\dim R=1$ and $R$ is local,
	(c) implies $\Ass_R M\subseteq R^{(0)}$,
	so by \cite[\href{https://stacks.math.columbia.edu/tag/00LD}{Tag 00LD}]{DeJong_2018_stacks_project},
	elements of $R-(\cup_{\frakp\in R^{(0)}}\frakp)$ act faithfully on $M$.
	Since $C\subseteq R-(\cup_{\frakp\in R^{(0)}} \frakp)$, (a) follows.	
\end{proof}

\begin{prp}\label{PR:CM-properties-divisor}
	Let $R$ be a     Cohen--Macaulay local ring 
	with dualizing module $E$, let $I\subseteq J$ be    ideals of   $R$
	such that $R/I$ is   
	of dimension $e$,
	and 
	let $C$ denote the set of elements of $R$ acting faithfully
	on $R/I$. Assume $C\cap J\neq \emptyset$, so that
	$JC^{-1}=RC^{-1}$ and $\Ext^e_R(J/I,E)C^{-1}
	=\Ext^e_R(R/I,E)C^{-1}$.
	Then the natural maps from
	$\Ext^e_R(R/I,E)$ and $\Ext^e_R(J/I,E)$
	to $\Ext^e_{R }(R /I ,E)C^{-1}$
	are injective and, upon regarding the former as submodules of the latter,
	we have
	\[
	\Ext^e_R(J/I,E)=
	\{x\in \Ext^e_{R }(R /I ,E)C^{-1}\suchthat xJ\subseteq \Ext^e_R(R/I,E)\}.
	\]
\end{prp}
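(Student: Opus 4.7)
My plan is to prove the proposition in three steps: injectivity, then each containment of the second claim. Write $N_0:=\Ext^e_R(R/I,E)$, $N_1:=\Ext^e_R(J/I,E)$, and $\tilde N:=N_0 C^{-1}$; since $JC^{-1}=RC^{-1}$, we have $(R/J)C^{-1}=0$ and hence $(J/I)C^{-1}=(R/I)C^{-1}$, which via the identification of Ext with localization (from the notation section) gives $N_1 C^{-1}=\tilde N$.

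\textit{Step 1 (injectivity).} Both $R/I$ and $J/I$ are $R$-supported in codimension $e$, and $C$ acts faithfully on each (for $J/I$ because it is a submodule of $R/I$). By Proposition~\ref{PR:CM-properties-I}(i), $N_0$ and $N_1$ are $C$-torsion-free, so they inject into $\tilde N$.

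\textit{Step 2 (forward containment).} Pick $c\in J\cap C$. Then $R/J$ is a quotient of $R/I$ by an ideal containing the non-zero-divisor $c+I$, so $R/J$ is $R$-supported in codimension $e+1$; hence $\Ext^e_R(R/J,E)=0$ by Proposition~\ref{PR:CM-properties-I}(i). The long exact Ext-sequence of $0\to J/I\to R/I\to R/J\to 0$ becomes
\[0\to N_0\to N_1\xrightarrow{\delta}\Ext^{e+1}_R(R/J,E).\]
Since $J$ annihilates $R/J$ and hence $\Ext^{e+1}_R(R/J,E)$, multiplication by any element of $J$ sends $N_1$ into $\ker\delta=N_0$.

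\textit{Step 3 (reverse containment).} I would use the Grothendieck change-of-rings spectral sequence
\[E^{p,q}_2=\Ext^p_{R/I}(M,\Ext^q_R(R/I,E))\Rightarrow\Ext^{p+q}_R(M,E)\]
for $R/I$-modules $M$. Proposition~\ref{PR:CM-properties-I}(i) kills $E^{p,q}_2$ for $q<e$, and one checks that no non-trivial differentials touch the position $(0,e)$, so $\Ext^e_R(M,E)\cong E^{0,e}_2=\Hom_{R/I}(M,N_0)$. Applied to $M=J/I$, this yields a natural isomorphism $N_1\cong\Hom_{R/I}(J/I,N_0)$. By naturality of the spectral sequence under the ring map $R\to RC^{-1}$, the natural injection $N_1\hookrightarrow\tilde N$ becomes the map $\phi\mapsto\phi_C(1)$, where $\phi_C$ is the $RC^{-1}$-linear extension of $\phi$ to $(J/I)C^{-1}=(R/I)C^{-1}$. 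Now, given $x\in\tilde N$ with $xJ\subseteq N_0$, the map $\phi_x\colon J/I\to N_0$, $\phi_x(j)=jx$, is well-defined and $R/I$-linear, and has image $(\phi_x)_C(1)=(c+I)x/(c+I)=x$ in $\tilde N$; hence $x\in N_1$.

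The chief technical difficulty lies in Step 3: verifying the naturality compatibility between the spectral sequence isomorphism $N_1\cong\Hom_{R/I}(J/I,N_0)$ and the localization map $N_1\hookrightarrow\tilde N$. This is a standard but slightly tedious functoriality argument for the Grothendieck spectral sequence under $R\to RC^{-1}$, using the identification of the localized spectral sequence with the one computing $\Ext^{\bullet}_{RC^{-1}}((J/I)C^{-1},EC^{-1})$.
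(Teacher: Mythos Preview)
Your proof is correct. Step~1 matches the paper. Your Step~2, via the long exact sequence of $0\to J/I\to R/I\to R/J\to 0$, is a clean route to the forward containment that the paper does not take.

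Where you diverge substantively is Step~3. The paper avoids the Grothendieck spectral sequence altogether: it chooses generators $r_1,\dots,r_n$ of $J$, forms the presentation $K\hookrightarrow(R/I)^n\twoheadrightarrow J/I$, applies $\Ext^e_R(-,E)$ to it and to its localization at $C$, and obtains a commutative ladder with exact rows and injective vertical arrows (Proposition~\ref{PR:CM-properties-I}(i)); since $\Ext^{e-1}_R(K,E)=0$, the localized map $\tilde N\to\tilde N^n$, $x\mapsto(xr_1,\dots,xr_n)$, is injective, and a short diagram chase then gives both containments at once: $x\in N_1$ if and only if each $xr_i\in N_0$. Your spectral-sequence route yields the more conceptual identification $N_1\cong\Hom_{R/I}(J/I,N_0)$, from which the reverse containment is immediate, but at the price of the naturality check you flag. (Incidentally, that check simplifies if you use naturality of the edge isomorphism in the variable $M$, applied to $J/I\hookrightarrow R/I$, and then localize; the compatibility with $R\to RC^{-1}$ becomes just ``localization commutes with $\Ext$ and $\Hom$''.) The paper's presentation argument is more elementary and sidesteps the issue entirely; it is in effect computing the same $\Hom$ by hand via the free presentation, without naming it.
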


\begin{proof}	
	Let $\{r_1,\dots,r_n\}$ be a generating set for $J$. 
	Define $f:(R/I)^n\to J /I$   by $f(x_1,\dots,x_n)=r_1x_1+\dots+r_nx_n$
	and let $K=\ker f$. The short exact sequence $K\embeds (R/I)^n\onto J /I$
	and its localization at $C$ give rise to a commutative diagram with exact rows:
	\[
	\xymatrix{
	\Ext_{R }^{e}(R /I ,E)C^{-1} \ar[r]^{f' } & 
	(\Ext_{R }^{e}(R /I ,E)C^{-1})^n \ar[r] & 
	\Ext_{R}^{e}(K,E)C^{-1} \\
	\Ext_R^{e}(J/I,E)\ar[u]\ar[r] &
	\Ext_{R }^{e}(R/I,E )^n \ar[u]\ar[r]  &
	\Ext_{R }^{e}(K,E )\ar[u]
	}
	\]
	The vertical arrows are given by localization at  $C$,
	and are injective by Proposition~\ref{PR:CM-properties-I}(i).
	The same proposition also tells us that $\Ext_{R}^{e-1}(K,E)=0$ if $e>0$, so
	$f'$ is also injective. 
	Since $f'$
	is given by $f'(x)=(xr_1,\dots,xr_n)$, the proposition is a consequence
	of a simple diagram chase.
\end{proof}

\begin{prp}\label{PR:CM-Azumaya-properties}
	Let $R$ be a $d$-dimensional Cohen--Macaulay local ring with dualizing module
	$E$, let
	$A$ be a separable projective $R$-algebra, 
	and let $M\in\rmod{A}$.
	Suppose that $M$ is a Cohen-Macaulay $R$-module of dimension $e$.
	Then:
	\begin{enumerate}[label=(\roman*)]
	\item $\Ext^i_A(M,  A\otimes E)=0$ for all $i\neq d-e$ and $\Ext^{d-e}_A(M,  A\otimes E)$ is a 
	Cohen-Macaulay $R$-module of dimension $e$.
	\item If $e=d$, then the natural map $M\to \Hom'_A(\Hom_A(M,  A\otimes E),  A\otimes E)$
	is an isomorphism. Here, $\Hom'_A$ denotes the set of \emph{left} $A$-module homomorphisms.
	\end{enumerate}	 
\end{prp}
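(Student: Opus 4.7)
The plan is to reduce both parts to the analogous statements for $R$-modules (Proposition~\ref{PR:CM-properties-I}(ii) and the standard reflexivity property of the dualizing module) via a natural isomorphism
\[
\Ext^i_A(M, A \otimes E) \cong \Ext^i_R(M, E)
\]
of left $A$-modules, valid for every $M \in \rmod{A}$.

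To establish this isomorphism, I would first use that $A$ is separable projective over $R$: the trace form $(a,b)\mapsto \Tr_{A/R}(ab)$ is nondegenerate and therefore induces an $A$-bimodule isomorphism $A \cong A^\vee$. Since $A$ is finitely generated projective over $R$, tensoring with $E$ and composing with the standard isomorphism $A^\vee \otimes E \cong \Hom_R(A, E)$ yields an $A$-bimodule isomorphism $A \otimes E \cong \Hom_R(A, E)$. Because $A$ is projective over $R$, the functor $\Hom_R(A, -)$ is exact and preserves injective $R$-modules, so the tensor-hom adjunction $\Hom_A(-, \Hom_R(A, E)) \cong \Hom_R(-, E)$ (natural in the right $A$-module argument) derives to the claimed $\Ext$ isomorphism.

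Part (i) then follows at once from Proposition~\ref{PR:CM-properties-I}(ii) applied to $M$ as an $R$-module, since both the vanishing and the Cohen--Macaulay conclusion transfer across the isomorphism. For part (ii), the case $e = d$ gives $\Hom_A(M, A \otimes E) \cong \Hom_R(M, E)$ as left $A$-modules; applying the symmetric construction (swapping the roles of left and right $A$-modules, using that $A^{\op}$ is also separable projective) to the left $A$-module $\Hom_R(M, E)$ yields $\Hom'_A(\Hom_R(M, E), A \otimes E) \cong \Hom_R(\Hom_R(M, E), E)$ as right $A$-modules. By naturality, the map $M \to \Hom'_A(\Hom_A(M, A \otimes E), A \otimes E)$ of the statement corresponds under these identifications to the ordinary $R$-biduality map $M \to \Hom_R(\Hom_R(M, E), E)$, which is an isomorphism for the maximal Cohen--Macaulay module $M$ by the reflexivity property of the dualizing module (\cite[Theorem~3.3.10(d)]{Bruns_1993_cohen_macaulay_rings}).

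The main technical point I anticipate is the $A$-bimodule isomorphism $A \cong A^\vee$ via the trace form: this is precisely where the separability of $A$ enters, and it is classical but deserves either a clean citation or a brief direct verification. The remainder of the argument is formal manipulation of adjunctions and naturality, combined with the Cohen--Macaulay input already collected in this section.
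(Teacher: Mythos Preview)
Your approach is sound and differs genuinely from the paper's. The paper passes to the strict henselization of $R$ (where $A$ becomes a product of matrix algebras), reduces to $A=R$ via Morita equivalence, and then invokes \cite[Theorem~3.3.10]{Bruns_1993_cohen_macaulay_rings}. You instead produce a global isomorphism $\Ext^i_A(M, A\otimes E)\cong \Ext^i_R(M,E)$ directly from the separability of $A$ and then cite the same classical input. Your route is more conceptual and avoids the base-change bookkeeping (flatness, that $E_{R'}$ is again dualizing, etc.); the paper's route is more concrete but leans on several facts about strict henselization. Both ultimately rest on the same $R$-module statement.

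One point needs repair, though. The ordinary regular-representation trace satisfies $\Tr_{A/R}=n\cdot(\Tr_{S/R}\circ\Trd_{A/S})$ with $S=\Cent(A)$ and $n=\deg_S A$, so the form $(a,b)\mapsto \Tr_{A/R}(ab)$ is $n$ times a nondegenerate form and is therefore \emph{degenerate} whenever $n\notin\units{R}$ (for instance $A=\nMat{R}{p}$ over a local ring of residue characteristic $p>2$). The fix is immediate: use instead $(a,b)\mapsto \Tr_{S/R}(\Trd_{A/S}(ab))$, which is nondegenerate because the reduced trace pairing is unimodular for Azumaya algebras and $\Tr_{S/R}$ is unimodular since $S$ is \'etale over $R$. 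This still gives the $A$-bimodule isomorphism $A\cong A^\vee$ you need, and the rest of your argument---the adjunction, the deriving via an injective resolution of $E$, and the naturality check for the double dual---goes through unchanged.
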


\begin{proof}
	We prove (i) and (ii) together.
	Let $R'$ denote a strict henselization of $R$.
	Then $R'$ is a Cohen--Macaulay local ring
	which is faithfully flat over $R$ and has the same dimension as $R$
	\cite[Tags %
	\href{https://stacks.math.columbia.edu/tag/06LK}{06LK}, 
	\href{https://stacks.math.columbia.edu/tag/07QM}{07QM}, 
	\href{https://stacks.math.columbia.edu/tag/06LM}{06LM}
	]{DeJong_2018_stacks_project}.
	By virtue
	of \cite[Theorem~3.3.14(a)]{Bruns_1993_cohen_macaulay_rings} and
	\cite[Tag \href{https://stacks.math.columbia.edu/tag/04GP}{04GP}]{DeJong_2018_stacks_project},
	$E_{R'}$ is a canonical module of $R'$,
	and by \cite[Tag \href{https://stacks.math.columbia.edu/tag/0338}{0338}]{DeJong_2018_stacks_project},
	$M_{R'}$ is a Cohen--Macualay $R'$-module of dimension $e$.
	Moreover, by  \cite[Theorem~2.39]{Reiner_2003_maximal_orders_reprint},
	there is a canonical isomorphism $\Ext^*_A(M,E)\otimes R'\cong 
	\Ext^*_{A\otimes R'}(M_{R'},E_{R'})$.
	We may therefore tensor $M$, $A$ and $E$ with $R'$
	and   assume $R=R'$. 
	In this case,
	$A=\prod_{i=1}^t \nMat{R}{n_i}$ for some $t\geq 0$ and $n_1,\dots,n_t \in\N$.
	Working at each factor separately and using Morita equivalence,
	we may assume $A=R$. This case is contained in
	\cite[Theorem~3.3.10]{Bruns_1993_cohen_macaulay_rings}. 
\end{proof}

\begin{cor}\label{CR:CM-Azumaya-properties}
	Let $R$ be a Cohen-Macaulay ring dimension $e$
	with dualizing module $E $ and let $(A,\sigma)$ be a separable projective $R$-algebra
	with involution.
	Set $Z=  A \otimes E $, $\theta= \sigma \otimes \id_{E }$ and let $\catC$
	denote the full subcategory of $\rmod{A}$ consisting of   
	$A$-modules which are Cohen-Macaulay $R$-modules of dimension $e$.
	Then $\catC$ and $(Z,\theta)$ satisfy conditions \ref{item:herm-exact-cond} and \ref{item:herm-reflexive-cond} 
	of \ref{subsec:herm-forms}.
\end{cor}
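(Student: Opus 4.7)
The corollary is essentially a packaging of Proposition~\ref{PR:CM-Azumaya-properties} into the categorical framework of \ref{subsec:herm-forms}, so the plan is to reduce both \ref{item:herm-exact-cond} and \ref{item:herm-reflexive-cond} to local statements and apply that proposition with $d=e$. Specifically, the closure of $\catC$ under $\Hom^\sigma_A(-,Z)$, the exactness of this functor, and the bijectivity of $\omega_V$ can all be verified after localizing at an arbitrary maximal ideal $\frakm$ of $R$ --- this is because being Cohen--Macaulay of a given dimension, vanishing of $\Ext^1$, and being an isomorphism are local properties, and $E_\frakm$ is a dualizing module for $R_\frakm$.

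Fix $V\in\catC$ and $\frakm\in\Max R$. If $\frakm\notin\supp V$ then $V_\frakm=0$ and there is nothing to check; otherwise, since $V$ is a Cohen--Macaulay $R$-module of dimension $e=\dim R$, the module $V_\frakm$ is a maximal Cohen--Macaulay $R_\frakm$-module, i.e.\ $\dim R_\frakm=\dim V_\frakm=e$. Now I would apply Proposition~\ref{PR:CM-Azumaya-properties} to $R_\frakm$, $A_\frakm$, $E_\frakm$, and $V_\frakm$ with $d=e$. Part (i) gives
\[
\Ext^i_{A_\frakm}(V_\frakm,\,A_\frakm\otimes E_\frakm)=0\quad\text{for all } i\neq 0,
\]
and shows that $\Hom_{A_\frakm}(V_\frakm,A_\frakm\otimes E_\frakm)$ is Cohen--Macaulay of dimension $e$ over $R_\frakm$. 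Since $\Hom^\sigma_A(V,Z)$ shares its underlying $R$-module with $\Hom_A(V,Z)$, this simultaneously shows that $\Hom^\sigma_A(V,Z)\in\catC$ and that the vanishing of $\Ext^1_A(V,Z)_\frakm$ yields the right-exactness of $\Hom^\sigma_A(-,Z)$ on short exact sequences inside $\catC$ (left-exactness being automatic), establishing \ref{item:herm-exact-cond}.

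For \ref{item:herm-reflexive-cond}, Proposition~\ref{PR:CM-Azumaya-properties}(ii) tells me that the natural map
\[
V_\frakm\longrightarrow \Hom'_{A_\frakm}\bigl(\Hom_{A_\frakm}(V_\frakm,A_\frakm\otimes E_\frakm),\,A_\frakm\otimes E_\frakm\bigr)
\]
is an isomorphism. Using the involution $\theta=\sigma\otimes\id_E$ to convert left $A$-module homomorphisms into $\sigma$-twisted right $A$-module homomorphisms, this target identifies canonically with $\Hom^\sigma_{A_\frakm}(\Hom^\sigma_{A_\frakm}(V_\frakm,Z_\frakm),Z_\frakm)$, and the identification carries the proposition's natural map to the map $\omega_{V_\frakm}$ given by $(\omega x)\phi=(\phi x)^\theta$. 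Hence $\omega_V$ is a local isomorphism, so a global one.

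The only step that requires real attention is the verification that $V_\frakm$ is maximal Cohen--Macaulay whenever $V_\frakm\neq 0$, together with checking that the identification between $\Hom'_A(-,Z)$ and $\Hom^\sigma_A(-,Z)\circ\Hom^\sigma_A(-,Z)$ via $\theta$ really sends the canonical double-dual map to $\omega$; both are straightforward once the conventions on bimodule structures are tracked carefully. Everything else is a direct invocation of Proposition~\ref{PR:CM-Azumaya-properties}.
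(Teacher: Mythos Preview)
Your proposal is correct and follows the same approach as the paper: reduce to the local case, identify $\omega_V$ with the canonical double-dual map via post-composition with $\theta$, and invoke parts (i) and (ii) of Proposition~\ref{PR:CM-Azumaya-properties}. The only quibble is that $\dim R_\frakm$ need not equal $e$ when $R$ is not equidimensional, but this is harmless---simply apply the proposition with $d=\dim R_\frakm$ and $e_{\mathrm{prop}}=\dim V_\frakm$---and the paper's own proof elides the same point.
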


\begin{proof}
	It is enough to check this   
	after localizing at every
	$\frakp$, so we may assume that $R$ is local.
	Observe that for every $V\in\rMod{A}$, there is a natural
	isomorphism $\psi \mapsto \theta\circ \psi:\Hom_A^\sigma(\Hom_A^\sigma(V,Z),Z)\to
	\Hom_A'(\Hom_A(V,Z),Z)$ and under this isomorphism the map $\omega_V$
	is just the natural map $V\to \Hom_A'(\Hom_A(V,Z),Z)$.
	With this at hand, \ref{item:herm-exact-cond} and \ref{item:herm-reflexive-cond}  follow from parts (i) and (ii) of 
	Proposition~\ref{PR:CM-Azumaya-properties}, respectively.
\end{proof}

We finish  with recalling the construction of  the Koszul complex and several of its properties.

Let $E\in \rmod{R}$ and let $s\in E^\vee = \Hom_R(E, R)$.
The \emph{Koszul complex} of $(E,s)$ is the chain complex $K=K(E,s)=(K_i, d_i)_{i\in \Z}$,
where
$K_i=\bigwedge^i_R E$ for $i\geq 0$, $K_i=0$ for $i <0$, and $d_i$ is given by
\[
d_i(e_1\wedge\dots\wedge e_i)=\sum_{1\leq j\leq i} (-1)^{j+1} s(e_j)\cdot e_1\wedge\dots\wedge\hat{e}_j\wedge \dots \wedge e_i
\]
for all $e_1,\dots,e_i\in E$
($\hat{e}_j$ means omitting $e_j$).

\begin{prp}\label{PR:Koszul-properties} 
	Let $r_1,\dots,r_n$ be a regular sequence in $R$,
	let $\fraka=\sum_i r_i R$, and let
	$K=K(E,s)$ where $E$ is a free $R$-module with basis $x_1,\dots,x_n$
	and $s$ is determined by $s(x_i)=r_i$ for all $i$.
	Then:
	\begin{enumerate}[label=(\roman*)]
		\item $\HH^0(K)=R/\fraka$ and $\HH^i(K)=0$ for all $i\neq 0$.
		\item $\fraka/\fraka^2$ is a free $R/\fraka$-module
		with basis $\{r_1+\fraka^2,\dots,r_n+\fraka^2\}$. 
		\item 
		There is a canonical   isomorphism
		$\Ext^n_R(R/\fraka,R)\cong \Hom({\textstyle\bigwedge^n_R}(\fraka/\fraka^2),R/\fraka)$.
	\end{enumerate}
\end{prp}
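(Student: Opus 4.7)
The plan is to prove (i), (ii), (iii) in order; (ii) and (iii) both rest on the free resolution provided by (i). For (i), I would argue by induction on $n$. The case $n=1$ is immediate since $K$ reduces to $R \xrightarrow{r_1} R$ and $r_1$ is a non-zero divisor. For the inductive step, let $E'$ be spanned by $x_1, \dots, x_{n-1}$, set $s' = s|_{E'}$, and identify $K = K(E, s)$ with the mapping cone of $r_n$ acting on $K' = K(E', s')$. The associated long exact sequence in homology reads
\[
\cdots \to \HH_i(K') \xrightarrow{\pm r_n} \HH_i(K') \to \HH_i(K) \to \HH_{i-1}(K') \xrightarrow{\pm r_n} \HH_{i-1}(K') \to \cdots
\]
By the inductive hypothesis $\HH_i(K') = 0$ for $i > 0$ and $\HH_0(K') = R/(r_1, \dots, r_{n-1})$, and by definition of a regular sequence $r_n$ acts as a non-zero divisor on this quotient. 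The desired conclusion follows.

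For (ii), I would compute $\mathrm{Tor}_1^R(R/\fraka, R/\fraka)$ in two ways. Tensoring the exact sequence $0 \to \fraka \to R \to R/\fraka \to 0$ with $R/\fraka$ identifies $\mathrm{Tor}_1^R(R/\fraka, R/\fraka)$ with $\fraka/\fraka^2$ (the vanishing of $\mathrm{Tor}_1^R(R, R/\fraka)$ and of the map $\fraka\otimes R/\fraka\to R/\fraka$ does the job). Using instead the resolution $K$ from (i), every differential has image in $\fraka K_\bullet$, so tensoring with $R/\fraka$ kills all differentials, and $\mathrm{Tor}_1^R(R/\fraka, R/\fraka) = K_1 \otimes R/\fraka = E \otimes R/\fraka$, which is free with basis $\{x_i \otimes 1\}$. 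Comparing the two computations shows that the composite isomorphism $E \otimes R/\fraka \cong \fraka/\fraka^2$ sends $x_i \otimes 1 \mapsto r_i + \fraka^2$, which is (ii).

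For (iii), I would compute $\Ext^n_R(R/\fraka, R)$ via $K$ as the cokernel of $d_n^\vee \colon K_{n-1}^\vee \to K_n^\vee$. A direct calculation using the formula for $d_n$ shows that $d_n^\vee$ sends each dual basis element $\hat{x}_j^\vee$ to $\pm r_j \cdot (x_1\wedge\cdots\wedge x_n)^\vee$, so the image of $d_n^\vee$ equals $\fraka \cdot K_n^\vee$ and
\[
\Ext^n_R(R/\fraka, R) \cong K_n^\vee \otimes R/\fraka \cong \Hom_{R/\fraka}\bigl({\textstyle\bigwedge^n}(E \otimes R/\fraka), R/\fraka\bigr).
\]
Composing with the isomorphism $E \otimes R/\fraka \cong \fraka/\fraka^2$ from (ii) yields the claimed statement. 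The main subtlety is verifying that the final isomorphism is genuinely canonical --- independent of the basis of $E$ used to present $K$ --- but this reduces to the observation that the isomorphism $E \otimes R/\fraka \cong \fraka/\fraka^2$ from (ii) is just $d_1$ reduced modulo $\fraka$, which depends only on $(E, s)$.
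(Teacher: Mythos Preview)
Your proof is correct. For (i) and (ii) the paper simply cites Bruns--Herzog (Corollary~1.6.14 and Theorem~1.1.8), whereas you supply self-contained arguments; your mapping-cone induction for (i) is the standard proof found there. Your route to (ii) via two computations of $\mathrm{Tor}_1^R(R/\fraka,R/\fraka)$ is a pleasant alternative to the direct argument in Bruns--Herzog (which shows that any relation $\sum a_ir_i=0$ forces $a_i\in\fraka$), and it has the bonus of making the identification $E/\fraka E\cong\fraka/\fraka^2$ emerge as $\bar d_1=\bar s$, exactly what is needed downstream. For (iii) your argument coincides with the paper's: compute $\Ext^n$ as $\coker(d_n^\vee)$, identify the image as $\fraka\cdot K_n^\vee$, and invoke (ii). Your closing remark on canonicity is a shade more explicit than the paper's ``via $s$'', though note that neither you nor the paper actually verifies independence of the choice of regular sequence generating $\fraka$; both stop at independence of the basis of $E$ for a fixed $(E,s)$.
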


\begin{proof}
(i) is 
\cite[Corollary~1.6.14]{Bruns_1993_cohen_macaulay_rings}
and (ii) is 
\cite[Theorem~1.1.8]{Bruns_1993_cohen_macaulay_rings}.

(iii) This is well-known, but we recall the proof for later reference. 
By (i),   $\Ext^n_R(R/\fraka,R)$ is 
the cokernel
of $d_n^\vee: (\bigwedge^{n-1} E)^\vee\to (\bigwedge^n E)^\vee$.
Let $\theta\in  (\bigwedge^n E )^\vee$ denote
the element mapping $x_1\wedge\dots\wedge x_n$ to $1$.
One readily checks that the image of $d_n^\vee$ is $\fraka \theta$.
Thus, $\coker(d_n^\vee)\cong  (\bigwedge^n E)^\vee/\fraka (\bigwedge^n E)^\vee=
\Hom(\bigwedge^n E,R)\otimes (R/\fraka)\cong  \Hom(\bigwedge^n (E/\fraka E),R/\fraka)$.
By (ii), we may identify $E/\fraka E$ with $\fraka/\fraka^2$ via $s:E\to R$,
so $\Ext^n_R(R/\fraka,R)\cong \Hom(\bigwedge^n (\fraka/\fraka^2),R/\fraka)$.
\end{proof}

\section[The Gersten--Witt Complex]{The Gersten--Witt Complex via Second Residue Maps}
\label{sec:second-res}

Henceforth, $R$ denotes a $d$-dimensional regular ring, $(A,\sigma)$
is a separable projective $R$-algebra with involution, e.g.\ an Azumaya $R$-algebra with involution,
and $\veps\in \mu_2(R)$.

In this section, we give a new construction of the Gersten--Witt complex
of $\veps$-hermitian forms over $(A,\sigma)$, denoted  $\GW{A/R,\sigma,\veps}$ or $\GW{A,\sigma,\veps}$, in which
the differentials are defined using   generalized
second residue maps. 

\medskip

Given $e\geq 0$ and $\frakp\in R^{(e)}$, write
\[\tilde{k}(\frakp):= \Ext_{R_\frakp}^{\dim R_\frakp}(k(\frakp),R_{\frakp})
=\Hom(\textstyle\bigwedge^{\dim R_\frakp}_{R_\frakp} (\frakp_\frakp/\frakp_\frakp^2),k(\frakp))
\]
(see~Proposition~\ref{PR:Koszul-properties}(iii) for second equality), and, using
the notation of Example~\ref{EX:herm-line-bundle}, set
\[
\tilde{W}_\veps(A(\frakp))=W_\veps(A(\frakp),\sigma(\frakp);\tilde{k}(\frakp)).
\]
Since $\tilde{k}(\frakp)\cong k(\frakp)$  as
$R$-modules, 
we actually have $\tilde{W}_\veps(A(\frakp))\cong W_\veps(A(\frakp),\sigma(\frakp))$,
but this isomorphism is not canonical unless $e=0$. (In contrast, for $e=0$,  we have
$\tilde{k}(\frakp)=
\Hom_{R_\frakp}(R_\frakp,k(\frakp))=\End_{k(\frakp)}(k(\frakp))=k(\frakp)$,
and so $\tilde{W}_\veps(A(\frakp))= W_\veps(A(\frakp),\sigma(\frakp))$.)

The cochain complex $\GW{A,\sigma,\veps}$ will  take the form
\[
0 
\to 
\bigoplus_{\frakp\in R^{(0)}}\tilde{W}_\veps(A(\frakp) )
\xrightarrow{\dd_0} \bigoplus_{\frakp\in R^{(1)}}\tilde{W}_\veps(A(\frakp) ) 
\xrightarrow{\dd_1} \dots
\xrightarrow{\dd_{d-1}} \bigoplus_{\frakp\in R^{(d)}}\tilde{W}_\veps(A(\frakp) )
\to 0.
\]
We will sometimes augment $\GW{A,\sigma,\veps}$ with the additional map
\[\dd_{-1}:W_\veps(A,\sigma)\to \bigoplus_{\frakp\in R^{(0)}}\tilde{W}_\veps(A(\frakp) ),\]
given by localizing at $\frakp$ at the $\frakp$-component.
The resulting complex is the   \emph{augmented Gersten--Witt complex}
of $(A,\sigma,\veps)$, denoted  $\aGW{A/R,\sigma,\veps}$  or $\aGW{A,\sigma,\veps}$.

The true challenge in defining a Gersten--Witt complex is defining its differentials,
and this shall be our concern in the remainder of this section.
We shall first introduce a generalization of the classical second residue map
for Witt groups
(see \cite[p.~209]{Scharlau_1985_quadratic_and_hermitian_forms}),
and then use it to define the differentials of $\aGW{A ,\sigma,\veps}$.
Section~\ref{sec:complete-intersection} will   provide a    method for explicit computation of the differentials.

\begin{remark}
	In an unpublished work, Schmid \cite{Schmid_1997_PhD}
	defined a Gersten--Witt complex when $A=R$ and $R$ is
	a localization of a finite-type ring over a field.
	The differentials in Schmid's Gersten--Witt complex involve classical second residue maps, but also
	normalizations and traces, which are not present in our construction.
\end{remark}

\subsection{The Residue Map}
\label{subsec:second-res}

Suppose that $R$ is regular local of dimension $e+1$, let $\frakq$ denote the maximal ideal of $R$,
and let $I$ be an ideal of $R$ such that $S:=R/I$ is a $1$-dimensional Cohen-Macaulay ring, e.g., 
any prime ideal $\frakp\in R^{(e)}$.
Let $C$ denote the set of elements of $R$ acting faithfully on $R/I$ and write
\begin{align*}
k(I)&= SC^{-1} & \tilde{k}(I)&= \Ext^e_{RC^{-1}}(SC^{-1},RC^{-1})\\
A(I)&=  A \otimes k(I) & \tilde{A}(I)&=A\otimes \tilde{k}(I)\\
\sigma(I) &= \sigma\otimes  \id_{k(I)}  & \tilde{\sigma}(I) &=  \sigma \otimes \id_{\tilde{k}(I)}.
\end{align*}
This agrees with our previous notation when $I=\frakp$ for some $\frakp\in R^{(e)}$.
Note also that $k(I)$ is a $0$-dimensional Cohen-Macaulay ring with dualizing modules
$\tilde{k}(I)$  \cite[Theorems~3.3.7(b), 3.3.5]{Bruns_1993_cohen_macaulay_rings}.

We shall be concerned with $(\tilde{A}(I),\tilde{\sigma}(I))$-valued
$\veps$-hermitian forms over $(A(I),\sigma(I))$  in the sense of \ref{subsec:herm-forms}.
By Corollary~\ref{CR:CM-Azumaya-properties},
conditions \ref{item:herm-exact-cond} and \ref{item:herm-reflexive-cond} 
of \ref{subsec:herm-forms} hold for the category $\rmod{A(I)}$, so we may
define the associated category of unimodular $\veps$-hermitian forms, denoted $\tHerm{A(I)}$, and the associated
Witt group, denoted $\tilde{W}_\veps(A(I))$.
Note that when $I=\frakp\in R^{(e)}$, these are just  $ \Herm[\veps]{A(\frakp),\sigma(\frakp);\tilde{k}(\frakp)}$
and $\tilde{W}_\veps(A(\frakp))$, respectively.
We shall define a group homomorphism,
\[
\partial_{I,\frakq}=\partial_{I,\frakq}^A:\tilde{W}_\veps(A(I))\to \tilde{W}_\veps(A(\frakq)) ,
\]
called the \emph{$(I,\frakq)$-residue map}.

\medskip

Let $\frakm=\frakq/I$  and write
\begin{align*}
\tilde{S}&= \Ext^e_R(S,R) & \tilde{\frakm}^{-1} &=\Ext^e_R(\frakm,R) \\
\tilde{A}_S&= A \otimes \tilde{S} & A \tilde{\frakm}^{-1} &=  A \otimes \tilde{\frakm}^{-1}.
\end{align*}
Then $\tilde{S}$ is a dualizing module for $S$ \cite[Theorem~3.3.7(b)]{Bruns_1993_cohen_macaulay_rings}.
By Proposition~\ref{PR:CM-properties-divisor} and the fact $A$ that is a finite projective $R$-module, we may
regard   $\tilde{A}_S$ and $\tilde{\frakm}^{-1}A$ 
as submodules  of  $\tilde{A}(I)$, and then
\[
 A \tilde{\frakm}^{-1} =\{x\in \tilde{A}(I)\suchthat x \frakm\subseteq \tilde{A}_S\} .
\]
By Proposition~\ref{PR:CM-properties-I}, the short exact sequence $  \frakm\embeds S \onto k(\frakq) $
induces a short exact sequence 
$\tilde{S}\embeds \tilde{\frakm}^{-1}\onto \tilde{k}(\frakq)$, and
by tensoring with $A$, we get 
\begin{align}\label{EQ:dual-m-S-k-A}
  \tilde{A}_S\embeds \tilde{A} \frakm^{-1} \xonto{\calT} \tilde{A}(\frakq) .
\end{align}
We denote the right map by $\calT_{I,\frakq,A}$,
dropping some or all of the subscripts when they are clear from the context.

\medskip

Let $V\in\rmod{A(I)}$.
Following \cite[p.~129]{Reiner_2003_maximal_orders_reprint},
an \emph{$A $-lattice} in $V$
is an  $A $-submodule
$U\subseteq V$ which is finitely generated
over $S$ and satisfies ${U\cdot k(I)}=V$.

Let $(V,f)\in\tHerm[\veps]{A(I)}$.
Given  an $A $-lattice $U$ in $V$,
we write
\[
U^f:=\{x\in V\suchthat  {f}(U,x)\subseteq \tilde{A}_S\}.
\]
The following lemma was shown by the first and second
authors when $S$ is a discrete valaution ring;
see
\cite[Theorem~4.1]{Bayer_2017_rational_isomorphism_forms} and its proof.

\begin{lem}\label{LM:U-prime-properties}
	In the previous notation:
	\begin{enumerate}[label=(\roman*)]
	\item For every $A$-lattice $U$ in $V$, the set $U^f$ is an $A $-lattice in $V$
	and $U=U^{ff}$.	
	\item There exists an $A $-lattice $U$ in $V$ such that 
	$ U^f \frakm \subseteq U\subseteq U^f$.
	\end{enumerate}
\end{lem}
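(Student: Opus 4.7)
The plan splits into two parts.

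For (i), I identify $U^f$ with $\Hom^{\sigma}_{A_S}(U,\tilde A_S)$ via the map $x\mapsto f(-,x)|_U$. The inverse sends $\psi:U\to\tilde A_S$ to the unique $x\in V$ with $f(-,x)|_U=\psi$; such an $x$ exists by unimodularity of $f$ combined with the density $V=U\cdot k(I)$, and it automatically lies in $U^f$. Since $\tilde A_S=A\otimes_R\tilde S$ is finite over $S$ and $U$ is finite over $A_S$, this identification shows that $U^f$ is finite over $S$; localizing at $C$ recovers $V$, giving $U^f\cdot k(I)=V$, so $U^f$ is an $A$-lattice. For $U=U^{ff}$, the embedding $U\hookrightarrow V=U\cdot k(I)$ makes $U$ a $C$-torsion free finite $S$-module, hence Cohen--Macaulay of dimension $1$ over $S$ by Proposition~\ref{PR:CM-properties-one-dim}. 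Applying Proposition~\ref{PR:CM-Azumaya-properties}(ii) over $S$ (which is $1$-dimensional Cohen--Macaulay local with dualizing module $\tilde S$, and with separable projective algebra $A_S$) yields the natural biduality isomorphism, which is then translated via the involution $\sigma$ into the sought $U\xrightarrow{\sim} U^{ff}$ exactly as in the proof of Corollary~\ref{CR:CM-Azumaya-properties}.

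For (ii), I begin with any $A$-lattice $U_0$. Lemma~\ref{LM:zero-divisors} (applied to $S$, which is $1$-dimensional) supplies a regular element $c\in\frakm$. As $U_0$ and $U_0^f$ are commensurable lattices in $V$ and $(c^kU_0)^f=c^{-k}U_0^f$ (because $c^\sigma=c$ and $c$ is a unit in $k(I)$), rescaling lets me assume $U_0\subseteq U_0^f$. Then $U_0^f/U_0$ is a finitely generated $C$-torsion $S$-module, hence of finite length, so $\frakm^n U_0^f\subseteq U_0$ for some minimal $n\geq 1$. I then induct on $n$. The case $n=1$ is the conclusion. For $n\geq 2$, put
\[
U':=U_0+\frakm^{n-1}U_0^f,
\]
which is again an $A$-lattice. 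The inclusion $\frakm^n U_0^f\subseteq U_0$ combined with hermitian symmetry $f(U_0^f,U_0)\subseteq \tilde A_S$ yields $\frakm^n f(U_0^f,U_0^f)\subseteq \tilde A_S$, and since $n\geq 2$ this gives $\frakm^{2(n-1)}f(U_0^f,U_0^f)\subseteq \frakm^{n-2}\tilde A_S\subseteq \tilde A_S$. Together with the trivial bound $\frakm^{n-1}f(U_0,U_0^f)\subseteq \tilde A_S$, this suffices to check $f(U',U')\subseteq \tilde A_S$, i.e.\ $U'\subseteq U'^f$. Moreover $U_0\subseteq U'$ forces $U'^f\subseteq U_0^f$, so
\[
\frakm^{n-1}U'^f\subseteq \frakm^{n-1}U_0^f\subseteq U'.
\]
Hence $U'$ satisfies the hypotheses with $n-1$ in place of $n$, and the induction terminates at $n=1$.

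The main obstacle is the bookkeeping of $\frakm$-powers in the inductive step of (ii); the crucial arithmetic is $2(n-1)=n+(n-2)$, which makes the estimate close, but only when $n\geq 2$. Part (i) is formal by comparison, though it is needed to know that $U^f$ is itself a lattice in order for the iteration in (ii) to be well-posed.
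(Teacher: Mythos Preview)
Your proof is correct and follows essentially the same approach as the paper's: both identify $U^f$ with $\Hom^\sigma_A(U,\tilde A_S)$ and invoke the biduality for Cohen--Macaulay modules over $S$ for part (i), and both use the lattice $U_0+\frakm^{n-1}U_0^f$ in the inductive step of part (ii) (the paper writes this as $L+L^f\frakm^t$ with $t=n-1$). The only cosmetic differences are that the paper replaces $U_0$ by $U_0\cap U_0^f$ rather than rescaling by $c^k$ to achieve $U_0\subseteq U_0^f$, and it phrases termination via noetherianity of $U_0^f$ rather than via the decreasing exponent.
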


\begin{proof}
	(i) Recall from \ref{subsec:herm-forms} that $\Hom^\sigma_A(U,\tilde{A}_S)$
	denotes $\Hom_A(U,\tilde{A}_S)$ endowed with the \emph{right} $A$-module
	structure given by $(a\phi) x=a^\sigma \cdot \phi x$
	($a\in A$, $\phi\in\Hom_A(U,\tilde{A}_S)$, $x\in U$).
	
	Since $f$ is unimodular, the map
	$x\mapsto f(x,-):U^f\to \Hom_A^\sigma (U,\tilde{A}_S)$
	is an isomorphism of right $A_S$-modules.
	As $U$ is finitely generated over $A$ and $\tilde{A}_S$
	is noetherian, this means that $ U^f $ is finitely generated over $A_S$.
	Let $v\in V$. Then $f(U,v)$ is a left $A_S$-submodule of $\tilde{A}(I)$.
	Since $\tilde{A}_S$ is an $A$-lattice in $\tilde{A}(I)$, 
	there exists $c\in C$ such that $\hat{f}(U,v)c\subseteq \tilde{A}_S$,
	so $vc\in U^f$. As this holds for all $v\in V$,
	we have shown that  $U^f\cdot k(I)=V$.

	The unimodularity of $f$
	also implies that $x\mapsto f(x,-):U^{ff}\to \Hom^\sigma_A( {U^f},\tilde{A}_S)$
	is an isomorphism. Since $ U^f \cong \Hom^\sigma_A(U,\tilde{A}_S)$,
	we get an isomorphism $U^{ff}\cong \Hom^\sigma_A(\Hom^\sigma_A(U,\tilde{A}_S),\tilde{A}_S)$.
	It is routine to check
	that the composition $U\to U^{ff}\to \Hom^\sigma (\Hom_A^\sigma (U,\tilde{A}_S),\tilde{A}_S)$
	is   the map $\veps\cdot \omega_U$ of 	  \ref{subsec:herm-forms}, defined
	using $(Z,\theta)=(\tilde{A}_S,\sigma\otimes \id_{\tilde{S}} )$.
	By Corollary~\ref{CR:CM-Azumaya-properties} 
	and Proposition~\ref{PR:CM-properties-one-dim}, $\omega_U$ is an isomorphism, so $U=U^{ff}$.

	(ii) Choose an
	$A $-lattice $L$ in $V$.
	If $L\nsubseteq L^f$, replace $L$
	with $L\cap L^f$.
	Since $LC^{-1}=V=L^fC^{-1}$, there exists $c\in C$
	such that $L^fc\subseteq L$. This means that 
	$\supp_S(L^f/L)=\{\frakm\}$, so
	there exists $t\in \N\cup\{0\}$
	with $(L^f/L)\frakm^t\neq 0$ and $(L^f/L)\frakm^{t+1}=0$
	\cite[\href{https://stacks.math.columbia.edu/tag/00L6}{Tag 00L6}]{DeJong_2018_stacks_project}.
	If $t=0$, take $U=L$.
	If not, then let $M=L+L^f\frakm^t$.
	We have $f(M,M)\subseteq
	\tilde{A}_S+f(L^f\frakm^t,L^f\frakm^t) =
	\tilde{A}_S+f(L^f\frakm^{2t},L^f)\subseteq
	\tilde{A}_S+f(L,L^f)=\tilde{A}_S$,
	so
	$M\subseteq M^f$
	and $L\subsetneq M\subseteq M^f\subseteq L^f$.
	
	Replacing $L$ with $M$,
	we can repeat this process until we have $L^f\frakm\subseteq L$.
	The number of   iterations must be finite
	because $L^f$
	is noetherian.
\end{proof}

We can now   define 
$\partial_{I,\frakq}: \tilde{W}_\veps(A(I))\to
\tilde{W}_\veps(A(\frakq))$.
Given $(V,f)\in \tHerm[\veps]{A(I)}$,
apply Lemma~\ref{LM:U-prime-properties}(ii) to choose an $A $-lattice $U$ in $V$
such that $ U^f \frakm\subseteq U \subseteq U^f$.
Then $U^f/U\in \rproj{A(\frakq)}$,
and $ {f}(U^f,U^f)\subseteq  {A}\tilde{\frakm}^{-1}$ because 
$f(U^f,U^f)\frakm= f(U^f\frakm, U^f)\subseteq \tilde{A}_S$.
We may therefore define a biadditive map ${ \partial_Uf} :U^f/U\times U^f/U\to \tilde{A}(\frakq)$
by
\[
 {\partial_U f}(x+U,y+U)= \calT_{I,\frakq,A}(f(x,y)).
\]
We shall write $\partial f$ for $\partial_U f$ when $U$ is clear from the context.
Finally, set
\[
\partial_{I,\frakq}[V,f]=[U^f/U,\partial   f].
\]
This is well-defined by the following lemma.

\begin{lem}\label{LM:residue-well-defined} 
	In the previous notation, $ (U^f/U, {\partial   f})\in\tHerm[\veps]{A(\frakq)}$.
	Moreover, the Witt class $[U^f/U,\partial  f]\in\tilde{W}_\veps(A(\frakq))$ 
	depends only on the Witt class $[V,f]$ (and not on the choices of $V$, $f$, $U$).
\end{lem}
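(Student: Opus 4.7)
The plan is to verify the lemma in five steps: (i) finite projectivity of $U^f/U$ over $A(\frakq)$; (ii) well-definedness of $\partial f$; (iii) the $\veps$-hermitian identity for $\partial f$; (iv) unimodularity of $\partial f$; and (v) independence of the Witt class from the choice of $U$. Steps (i)--(iii) are routine: (i) follows from the hypothesis $\frakm\cdot U^f\subseteq U$, which makes $U^f/U$ a finite module over the separable (and hence semisimple) $k(\frakq)$-algebra $A(\frakq)=A\otimes k(\frakq)$; (ii) holds because if $u\in U$ and $y\in U^f$ then by definition of $U^f$ and the exactness of~\eqref{EQ:dual-m-S-k-A} we have $f(u,y)\in \tilde{A}_S=\ker\calT_{I,\frakq,A}$, so $\calT(f(x+u,y))=\calT(f(x,y))$, and symmetry in the second argument follows via the $\veps$-hermitian identity of $f$; (iii) follows from the naturality of~\eqref{EQ:dual-m-S-k-A} with respect to the involutions $\tilde\sigma$, which gives $\calT\circ\tilde\sigma(I)=\tilde\sigma(\frakq)\circ\calT$.

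The main technical step is unimodularity, step (iv). The plan is to apply the contravariant functor $\Hom_{A_S}^\sigma(-,\tilde{A}_S)$ to the short exact sequence $0\to U\to U^f\to U^f/U\to 0$. By Proposition~\ref{PR:CM-properties-one-dim}, the $S$-torsion-free modules $U$ and $U^f$ are Cohen--Macaulay of $S$-dimension $1$, while $U^f/U$ is annihilated by $\frakm$ and hence Cohen--Macaulay of $S$-dimension $0$. Applying Proposition~\ref{PR:CM-Azumaya-properties}(i) with base ring $S$ and dualizing module $\tilde{S}$ therefore gives $\Ext^{i}_{A_S}(U,\tilde{A}_S)=\Ext^{i}_{A_S}(U^f,\tilde{A}_S)=0$ for $i\geq 1$ and $\Hom_{A_S}(U^f/U,\tilde{A}_S)=0$, so the long exact sequence collapses to
\begin{equation*}
0\to \Hom_{A_S}^\sigma(U^f,\tilde{A}_S)\to \Hom_{A_S}^\sigma(U,\tilde{A}_S)\to \Ext^1_{A_S}(U^f/U,\tilde{A}_S)\to 0.
\end{equation*}
Via the $f$-induced isomorphisms $\Hom_{A_S}^\sigma(U,\tilde{A}_S)\cong U^f$ and $\Hom_{A_S}^\sigma(U^f,\tilde{A}_S)\cong U^{ff}=U$ from Lemma~\ref{LM:U-prime-properties}(i), this sequence identifies $U^f/U$ canonically with $\Ext^1_{A_S}(U^f/U,\tilde{A}_S)$. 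A parallel computation, applying $\Hom_{A_S}^\sigma(U^f/U,-)$ to~\eqref{EQ:dual-m-S-k-A} and using that $\tilde{\frakm}^{-1}$ is Cohen--Macaulay of $S$-dimension $1$, identifies $\Ext^1_{A_S}(U^f/U,\tilde{A}_S)$ with $\Hom_{A(\frakq)}^{\sigma(\frakq)}(U^f/U,\tilde{A}(\frakq))$. Unwinding the definitions shows that the resulting composite $U^f/U\to \Hom_{A(\frakq)}^{\sigma(\frakq)}(U^f/U,\tilde{A}(\frakq))$ is exactly $x+U\mapsto \partial f(x+U,-)$, so it is an isomorphism.

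For independence, step (v), the plan is to reduce to the nested case via sublagrangian reduction. If $U\subseteq U'$ are both admissible (in the sense that $V^f\frakm\subseteq V\subseteq V^f$), then $U'\subseteq U'^f\subseteq U^f$, so $U'/U$ embeds as a sub-$A(\frakq)$-module of $(U^f/U,\partial_U f)$; a direct computation shows that $(U'/U)^\perp=U'^f/U$, that $U'/U\subseteq (U'/U)^\perp$, and that the form induced by $\partial_U f$ on $(U'^f/U)/(U'/U)=U'^f/U'$ coincides with $\partial_{U'}f$, which is unimodular by step (iv). The standard sublagrangian reduction principle for Witt groups of exact hermitian categories then yields $[U^f/U,\partial_U f]=[U'^f/U',\partial_{U'}f]$. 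For arbitrary admissible $U_1,U_2$, one connects them via a chain of nested admissible lattices: the enlargement procedure from the proof of Lemma~\ref{LM:U-prime-properties}(ii) applied to the isotropic lattice $U_1\cap U_2$ produces an admissible $U_3\supseteq U_1\cap U_2$, and combining with further intermediate enlargements yields a zig-zag of inclusions between admissible lattices linking $U_1$ to $U_2$; iterating the nested case along this zig-zag completes the proof. The main obstacle is step (iv): the $\Ext^1$-duality must be identified concretely with the residue pairing, which requires both Cohen--Macaulay hypotheses and careful tracking of the $\sigma$-twists.
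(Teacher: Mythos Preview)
Your steps (i)--(iv) are correct. Your route to unimodularity in~(iv) via the long exact sequence for $\Ext_{A_S}^{\ast}(-,\tilde{A}_S)$ and the Cohen--Macaulay vanishing of Proposition~\ref{PR:CM-Azumaya-properties} is a legitimate alternative to the paper's argument, which instead proves injectivity of $x\mapsto\partial_U f(x,-)$ directly from $U^{ff}=U$ and then gets surjectivity from exactness of the duality on the auxiliary category $\catC$ of finite-length $A_S$-modules. Your approach has the advantage of staying entirely inside $\rproj{A(\frakq)}$, but note that the claim that the composite isomorphism is exactly $\partial f$ hides a nontrivial diagram chase (compare Step~6 in the proof of Theorem~\ref{TH:GW-is-a-complex}).

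The gap is in step~(v). Your ``zig-zag'' is not justified: starting from $U_0=U_1\cap U_2$ and enlarging via the procedure of Lemma~\ref{LM:U-prime-properties}(ii) produces an admissible $U_3\supseteq U_0$, but there is no reason $U_3$ is comparable to $U_1$ or $U_2$; your nested-case argument only compares $[\partial_Uf]$ and $[\partial_{U'}f]$ when one of $U,U'$ contains the other. (Incidentally, in that nested case the hypothesis ``$U'$ admissible'' is automatic: if $U\subseteq U'\subseteq U'^{f}$ and $U$ is admissible then $U'^{f}\frakm\subseteq U^{f}\frakm\subseteq U\subseteq U'$. But this observation does not produce a chain linking two arbitrary admissible lattices.) Concretely, one would want an admissible $U_3$ contained in both $U_1$ and $U_2$, or containing both, and neither is available in general: $U_1+U_2$ need not be isotropic, and there is no reason an admissible lattice sits inside $U_1\cap U_2$.

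The paper circumvents this by enlarging the target category. It introduces $Z=\tilde{A}(I)/\tilde{A}_S$, verifies \ref{item:herm-exact-cond} and \ref{item:herm-reflexive-cond} for the abelian category $\catC$ of finite-length $A_S$-modules with $Z$-valued forms, and invokes the Quebbemann--Scharlau--Schulte d\'evissage theorem to identify $\tilde{W}_\veps(A(\frakq))$ with $W_\veps(\catC)$. In $\catC$ one may define $\partial_U f$ for \emph{every} isotropic lattice $U\subseteq U^f$, not just admissible ones; the comparison of $U_1$ and $U_2$ then goes through the common isotropic sublattice $U_0=U_1\cap U_2$ via a single sublagrangian reduction on each side, as in \cite[p.~204]{Scharlau_1985_quadratic_and_hermitian_forms}. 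To repair your argument you either need this enlargement, or an independent construction of a chain of nested admissible lattices between $U_1$ and $U_2$, which you have not supplied.
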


\begin{proof}
	Let $Z=\tilde{A}(I)/\tilde{A}_S= A\otimes (\tilde{k}(I)/S)$, $\theta=\sigma \otimes \id_{\tilde{k}(I)/S}$ 
	and let $\catC$ denote
	the category of right $A_S$-modules of finite length.
	We shall verify at the end that conditions \ref{item:herm-exact-cond} and \ref{item:herm-reflexive-cond} 
	of \ref{subsec:herm-forms}
	hold for $(Z,\theta)$ and $\catC$, so that we may consider
	the category $\Herm[\veps]{\catC}$ and the associated Witt group
	$W_\veps(\catC)$.
	
	We identify  $\tilde{A}(\frakq)$ with ${A}\tilde{\frakm}^{-1}/\tilde{A}_S\subseteq Z$
	via $\calT $. 
	Proposition~\ref{PR:CM-properties-divisor} implies that $A\tilde{\frakm}^{-1} /A_S$
	is the $S$-socle of $Z$, and thus
	$\Hom^\sigma_A(P,\tilde{A}(\frakq))=\Hom^\sigma_A(P,Z)$ for every
	$P\in \rproj{A(\frakq)}$. This allows us to regard
	every unimodular $(\tilde{A}(\frakq),\tilde{\sigma}(\frakq))$-valued
	$\veps$-hermitian space with underlying module in $\rproj{A(\frakq)}$
	as a unimodular $(Z,\theta)$-valued
	$\veps$-hermitian space.
	(More formally,   $(\rproj{A(\frakq)},\Hom^\sigma_A(-,\tilde{A}(\frakq)),\omega)$
	is as an exact hermitian subcategory of $(\catC,\Hom^\sigma_A(-,Z),\omega)$.)
	Since $\catC$ is abelian and its semisimple objects are the  objects of $\rproj{A(\frakq)}$,
	the 
	Quebbemann--Scharlau--Schulte Theorem
	\cite[Corollary~6.9, Theorem~6.10]{Quebbemann_1979_hermitian_categories}
	tells us that 
	the natural map $\tilde{W}_\veps(A(\frakq))\to W_\veps(\catC)$ is an isomorphism.

	For every $A $-lattice $U\subseteq V$ with $U\subseteq U^f$,
	define $ {\partial_U  f}:U^f/U\times U^f/U\to Z$ by   
	$ \partial_U  f (x+U,y+U)=f(x,y)+\tilde{A}_S$; this  agrees with our previous definition of $\partial_U f$
	when $U^f\frakm\subseteq U$. It is enough to show that $(U^f/U,\partial_U f)$
	lives in $\Herm[\veps]{\catC}$ and its Witt class in $W_\veps(\catC)$ 
	depends only on $[V,f]$.	

	That $\partial_U  f$ is $\veps$-hermitian is straightforward.
	The exactness of $\Hom^\sigma_A(-,Z)$ means that 
	$\operatorname{length}_A M=\operatorname{length}_A \Hom_A^\sigma(M,Z)$.
	Thus, in order
	to show that $\partial_U  f$ is unimodular,
	it is enough show  that $x\mapsto \partial f(x,-):U^f/U\to \Hom_A^\sigma(U^f/U,Z)$  is injective. 
	This is the same as saying that radical of $\partial f$ is $0$,
	which   follows readily from $U^{ff}=U$ (Lemma~\ref{LM:U-prime-properties}(i)).
	
	Next, assuming $V$ and $f$ are fixed,
	we check that $[\partial_U f]$ is independent of the $A$-lattice
	$U$. This an adaptation of a classical argument, \cite[p.~204]{Scharlau_1985_quadratic_and_hermitian_forms}
	(see also Lemma~6.1.4 and Theorem~5.3.4 in this source), to our more general situation.
	Let $U'$ be another $A$-lattice in $V$ with $U'\subseteq U'^f$.
	We need to show that $ [\partial_U f]= [\partial_{U'} f]$.
	Since $U'':=U\cap U'$ is an $A$-lattice  satisfying $U''\subseteq U''^f$ and $U''\subseteq U,U'$,
	it is enough to consider the case where $U'\subseteq U$.
	Observe that $U'\subseteq U\subseteq U^f\subseteq U'^f$,
	and let $g=\partial_U f\oplus (-\partial_{U'} f)$.
	We claim that $L := \{(x+U,y+U')\in U^f/U\times U^f/U'\suchthat x-y\in U\}$
	satisfies $L=L^\perp$ relative to $g$. Indeed, for all $x_1,x_2,y_1,y_2\in U^f$
	with $x_1-y_1,x_2-y_2\in U$, we have
	\begin{align*} g((x_1&+U,y_1+U'),(x_2+U,y_2+U'))
	=f(x_1,x_2)-f(y_1,y_2)+\tilde{A}_S  \\
	&= f(x_1,x_2)+f(x_1,y_2-x_2)+f(y_1-x_1,y_2)-f(y_1,y_2)+\tilde{A}_S=
	0+\tilde{A}_S ,
	\end{align*} so $g(L,L)=0$.
	On the other hand, if $x\in U$, $y\in U'$ satisfy
	$g(L,(x+U,y+U'))=0$, then $f(z,x)-f(w,y)\in \tilde{A}_S$ for all $z,w\in U^f$ with $z-w\in U$.
	Taking $z=w$ shows that $f(z,x-y)\in \tilde{A}_S$ for all $z\in U^f$, so $x-y\in U^{ff}=U$,
	whereas taking $z=0$ and $w\in U$  shows that $f(U,y)\in \tilde{A}_S$, so $y\in U^f$.
	Likewise, $x\in U^f$ and we conclude that $(x+U,y+U')\in L$, i.e., $L^{\perp}=L$.
	This means that $[\partial_U f\oplus (-\partial_{U'} f)]=0$ in $W_\veps(\catC)$,
	or rather, $[\partial_Uf ]=[\partial_{U'} f]$.
	
	We now show that $[\partial_U f]$ depends only on $[V,f]$ and  not   on $(V,f)$.
	Observe that if $(V',f')\in \tHerm[\veps]{A(\frakp)}$,
	and $U'$ is an $A$-lattice in $V'$ with $U'\subseteq U'^{f'}$,
	then $\partial_{U\oplus U'}(f\oplus f')\cong \partial_U f\oplus \partial_{U'}f'$. 
	Thus, in order to prove our claim, it is enough to show that $[\partial_U f]=0$
	whenever $(V,f)$ is metabolic. Since we assume that $2\in\units{R}$,
	\cite[Proposition~I.3.7.1]{Knus_1991_quadratic_hermitian_forms}
	tells us that there is $V_1\in\rproj{A(\frakp)}$ such that
	$V\cong V_1\times \Hom^\sigma_A(V_1,\tilde{A}(\frakp))$
	and under this isomorphism, $f$ is given by
	$f((x,\phi),(x',\phi'))=\phi x'+\veps (\phi' x)^{\tilde{\sigma}(\frakp)}$. Identify
	$V$ with $V_1\times \Hom^\sigma_A(V_1,\tilde{A}(\frakp))$. Let $U_1$
	be an $A$-lattice in $V_1$, and regard $U'_1:=\Hom(U_1,\tilde{A}_S)$
	as an $A$-lattice in $V'_1 := \Hom^\sigma_A(V_1,\tilde{A}(\frakp))$. 
	Then $f$ restricts to a $\tilde{A}_S$-valued  $A_S$-bilinear pairing on $U:=U_1\times U'_1$.
	Written in matrix form, the map $x\mapsto f(x,-):U\to \Hom^\sigma_A(U,\tilde{A}_S)$,
	denoted $\hat{f}$,
	is $[\begin{smallmatrix} 0 & \id_{U'_1} \\ \veps\omega_{U_1} & 0\end{smallmatrix}]$,
	where $\omega_{U_1}$ is the natural map $U_1\to \Hom_A^\sigma(\Hom^\sigma_A(U_1,\tilde{A}_S),\tilde{A}_S)$
	considered in \ref{subsec:herm-forms}.
	By Corollary~\ref{CR:CM-Azumaya-properties}
	and~\ref{PR:CM-properties-one-dim} (applied with $S$ in place of $R$ and $e=1$),
	$\omega_{U_1}$ is an isomorphism, and thus so is $\hat{f}$.
	This implies readily that $U^f=U$, so $[U^f/U,\partial  f]=0$ 
	in $W_\veps(\catC)$.

	It remains to 	establish \ref{item:herm-exact-cond} and \ref{item:herm-reflexive-cond} 
	for $\catC$ and $(Z,\theta)$.
	Condition \ref{item:herm-exact-cond} will follow if we show that $\Ext^1_{A_S}(M,Z)=0$
	for all $M\in\catC$. It is enough
	to check this when $M$ is semisimple, and hence
	for $M= {A}(\frakq)$.
	Since  $\Ext^1_{A_S}(A(\frakq),Z)\cong\Ext^1_S(k(\frakq),\tilde{k}(I)/\tilde{S})\otimes A $
	\cite[Theorem~2.39]{Reiner_2003_maximal_orders_reprint},
	we are reduced into showing that $\Ext^1_S(k(\frakq),\tilde{k}(I)/\tilde{S})=0$.
	To that end, it is enough to show that $\Ext^1_S(k(\frakq),\tilde{k}(I))=0$
	and $\Ext^2_S(k(\frakq),\tilde{S})=0$.
	The first claim holds because  $\Ext^1_S(k(\frakq),\tilde{k}(I))$ is a
	$C$-divisible  module annihilated by $\frakq$, and the second follows
	from Proposition~\ref{PR:CM-properties-I}(ii).
	Now that $\Hom_A^\sigma(-,Z)$ is exact on $\catC$, we can induct on the length of $M\in\catC$ in order
	to show that $\omega_M $ is an isomorphism.
	When $M$ is simple, this holds because $M\in\rproj{A(\frakq)}$
	and $\Hom_A(M,Z)=\Hom_A(M,\tilde{A}(\frakq))$, so we are done.
\end{proof}

	We shall see in Example~\ref{EX:second-residue-II} that when
	$S=R/I$ is a discrete valuation ring, $A=R$ and $\veps=1$, the map $\partial_{I,\frakq}$
	is   equivalent to the classical second residue map.

\subsection{Differentials}
\label{subsec:dd-description}

We   retain our original setting where $R$ is a  regular ring of dimension $d$.
For every $e\geq 0$, $\frakp\in R^{(e)}$ and $\frakq\in R^{(e+1)}$, define
$\partial_{\frakp,\frakq}=\partial_{\frakp,\frakq}^A:\tilde{W}_\veps(A(\frakp))\to\tilde{W}(A(\frakq))$
to be the $(\frakp_\frakq,\frakq_\frakq)$-residue map 
$\partial_{\frakp_{\frakq},\frakq_\frakq}$ of \ref{subsec:second-res} if $\frakp\subseteq\frakq$,
and $0$ otherwise. We define the $e$-th differential of $\GW{A,\sigma,\veps}$ to be
\[\dd_e:=\sum_{\frakp\in R^{(e)}}\sum_{\frakq\in R^{(e+1)}}\partial^A_{\frakp,\frakq}:\GW{A,\sigma,\veps}_e\to \GW{A,\sigma,\veps}_{e+1}.\]
By the following lemma,
the inner sum is finite   when evaluated on a Witt class.

\begin{lem}\label{LM:partial-well-defined}
	Let $\frakp\in R^{(e)}$
	and $(V,f)\in\tHerm{A(\frakp) }$.
	Then there exist only finitely many
	ideals $\frakq\in R^{(e+1)}$
	such that $\partial_{\frakp,\frakq}[V,f]\neq 0$.
\end{lem}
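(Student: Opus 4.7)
The plan is to globalize the recipe of Subsection~\ref{subsec:second-res} for computing $\partial_{\frakp,\frakq}$, and then deduce the finiteness from a standard statement about torsion modules over a Noetherian domain. Set $S := R/\frakp$, which is a Noetherian domain with fraction field $k(\frakp)$; since $R$ is regular and hence catenary, the primes $\frakq \in R^{(e+1)}$ containing $\frakp$ are in bijection with the height-one primes $\bar{\frakq} \in S^{(1)}$. The globalization uses the finitely generated $S$-module $\tilde{S} := \Ext^e_R(S, R)$, for which flat base change gives $\tilde{S}_{\frakq} = \Ext^e_{R_\frakq}(S_\frakq, R_\frakq)$ for every such $\frakq$; thus $\tilde{A}_S := A \otimes \tilde{S} \subseteq \tilde{A}(\frakp)$ is the global analogue of the local dualizing lattice used in \ref{subsec:second-res}.

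With this in place, I would pick any $A \otimes S$-lattice $L \subseteq V$, that is, a finitely generated $A \otimes S$-submodule with $L \cdot k(\frakp) = V$; such $L$ exists because $V$ is finite over $A(\frakp)$. Then define
\[
L^\sharp := \{x \in V : f(L, x) \subseteq \tilde{A}_S\}.
\]
Unimodularity of $f$ identifies $L^\sharp$ with $\Hom^\sigma_{A \otimes S}(L, \tilde{A}_S)$, which is finitely generated over $A \otimes S$ by Noetherianity, while non-degeneracy of $f$ gives $L^\sharp \cdot k(\frakp) = V$. By flat base change, $(L^\sharp)_\frakq$ coincides with the local dual $(L_\frakq)^{f}$ of \ref{subsec:second-res} for each $\frakq \supseteq \frakp$ of height $e+1$. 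Replacing $L$ with the lattice $L \cap L^\sharp$ (still finitely generated and still spanning $V$ over $k(\frakp)$, with $L^\sharp$ updated accordingly) ensures $L \subseteq L^\sharp$ globally.

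The quotient $L^\sharp/L$ is then a finitely generated $A \otimes S$-module whose generic stalk is zero, hence it is $S$-torsion; its $S$-support is cut out by the nonzero ideal $\Ann_S(L^\sharp/L)$ and therefore contains only finitely many height-one primes of the Noetherian domain $S$. For every $\frakq \supseteq \frakp$ in $R^{(e+1)}$ whose image $\bar{\frakq}$ lies outside this finite bad set, we have $L_\frakq = (L_\frakq)^{f}$, and taking $U := L_\frakq$ in the definition of $\partial_{\frakp,\frakq}$ yields $U^f/U = 0$, whence $\partial_{\frakp,\frakq}[V,f] = 0$; primes $\frakq$ not containing $\frakp$ contribute $0$ by definition. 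The main technical point I anticipate is the verification that forming $L^\sharp$ commutes with localization (i.e., that $L^\sharp$ is finitely generated and $(L^\sharp)_\frakq = (L_\frakq)^f$) and that $L \cap L^\sharp$ remains a full lattice after intersecting; once flat base change handles both, the lemma reduces to the finite-support statement recalled above.
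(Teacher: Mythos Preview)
Your proposal is correct and follows essentially the same route as the paper: globalize the dual-lattice construction over $S=R/\frakp$ using $\tilde S=\Ext^e_R(S,R)$, verify that forming the dual commutes with localization via the identification with $\Hom^\sigma_A(L,\tilde A_S)$, intersect to get $L\subseteq L^\sharp$, and conclude by observing that the torsion $S$-module $L^\sharp/L$ has only finitely many height-one primes in its support. The paper makes the last step slightly more concrete by choosing a single $s\in S\setminus\{0\}$ with $U^f s\subseteq U$ and arguing via $\dim S/sS$, but this is the same finiteness statement you invoke.
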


\begin{proof}
	We may assume $e<\dim R$.
	Write $S=R/\frakp$ (beware that $S$ is not local)
	and $\tilde{S}=\Ext^e_R(S,R)$.
	We may still consider $A $-lattices $U$ in $V$
	and define the $A $-lattice $U^f$ as in \ref{subsec:second-res}
	($U^{ff}=U$ is no longer guaranteed).

	Let $\frakq\in R^{(e+1)}$ be an ideal containing $\frakp$,
	and  identify  $\tilde{S}_\frakq$ with $\Ext^e_{R_\frakq}(S_\frakq,R_\frakq)$.
	We claim that   $(U^f)_\frakq=(U_\frakq)^{f }$, where the right hand
	side is defined as in \ref{subsec:second-res} relative to the ideals $I=\frakp_\frakq$
	and $\frakq_\frakq$.
	Indeed, recall from the proof of Lemma~\ref{LM:U-prime-properties} that $x\mapsto f(x,-)$
	defines an $S$-isomorphism $ {U^f}\to\Hom^\sigma_A(U, A \otimes \tilde{S})$,
	and likewise, $(U_\frakq)^{f }\cong \Hom^\sigma_A(U_\frakq, A \otimes \tilde{S}_\frakq)$.
	It is routine to check that under these isomorphisms,
	the natural isomorphism $\Hom_A(U,\tilde{S}\otimes A)_\frakq\to
	\Hom_A(U_\frakq,\tilde{S}_\frakq\otimes A)$ 
	corresponds
	to the natural map
	$(U^f)_\frakq\to (U_\frakq)^{f }$, hence our claim.

	Choose an $A $-lattice $U$ in $V$,
	and replace it with $U\cap U^f$  to assume $U\subseteq U^f$.
	By the previous paragraph, we have $(U_\frakq)^f=(U^f)_\frakq = U_\frakq$
	for every $\frakq\in \Spec R$ containing $\frakp$ with $\frakq\notin \supp_R U^f/U$.
	Thus, $\partial_{\frakp,\frakq} [V,f]=0$ for all $\frakq\notin \supp_R (U^f/U)\cap R^{(e+1)} $.
	Choose $s\in S-\{0\}$ such that $U^f s\subseteq U$. Then
	$U^f/U$ is a finite $S/sS$-module. Since $\dim S/sS\leq \dim S-1$,
	the set   $\supp_R (U^f/U)\cap R^{(e+1)}$ is finite, hence the lemma.
\end{proof}

	It remains to check that $\aGW{A,\sigma,\veps}$
	is a cochain complex, i.e., that
	$\dd_{e+1}\circ\dd_e=0$ for all $e\geq -1$.
	This is the content of the next theorem.
	
	\begin{thm}\label{TH:GW-is-a-complex}
		In the previous notation, $\dd_{e+1}\circ \dd_e=0$ for all $e\geq -1$.
	\end{thm}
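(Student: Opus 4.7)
I split the argument according to whether $e=-1$ or $e\geq 0$.

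\emph{The augmented case $e=-1$.} Given $[V,f]\in W_\veps(A,\sigma)$ and $\frakq\in R^{(1)}$, the ring $R_\frakq$ is a DVR (regular local of dimension one, hence integral); writing $\frakp$ for the unique minimal prime of $R$ contained in $\frakq$, the $\frakq$-component of $\dd_0(\dd_{-1}[V,f])$ equals $\partial_{\frakp,\frakq}[V(\frakp),f(\frakp)]$. I would compute this residue using the $A_{R_\frakq}$-lattice $U:=V\otimes R_\frakq$ inside $V(\frakp)$: unimodularity of $f$ over $R$ passes to $R_\frakq$, so $U^f=U$, hence $U^f/U=0$ and the residue vanishes.

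\emph{The case $e\geq 0$; reductions.} For fixed $\frakp\in R^{(e)}$ and $\frakr\in R^{(e+2)}$ I must show that
\[
\Sigma_{\frakp,\frakr}\;:=\;\sum_{\substack{\frakq\in R^{(e+1)}\\ \frakp\subset\frakq\subset\frakr}}\partial^A_{\frakq,\frakr}\circ\partial^A_{\frakp,\frakq}
\]
vanishes. Since each residue depends only on localization data at its target prime, I first localize $R$ at $\frakr$ to assume $R$ is regular local of dimension $e+2$ with maximal ideal $\frakr$. Next I pass to the two-dimensional Cohen--Macaulay local domain $T:=R/\frakp$ with dualizing module $\tilde T:=\Ext^e_R(T,R)$: the construction of Section~\ref{sec:second-res} depends only on the dualizing-module structure, so the residues may be rewritten purely in terms of $T$-modules, reducing the problem to the case where the base is a two-dimensional CM local domain $T$, the role of $\frakp$ is played by $(0)$, $\frakr$ is the maximal ideal, and $(A,\sigma)$ is replaced by $(B,\tau):=(A\otimes T,\sigma\otimes\id_T)$.

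\emph{The two-dimensional core.} Let $K$ be the fraction field of $T$ and let $(V,f)$ be a unimodular $\veps$-hermitian form over $(B_K,\tau_K)$. I choose a reflexive $B$-lattice $U\subseteq V$ with $U\subseteq U^f$ and, by iterating the enlargement step in the proof of Lemma~\ref{LM:U-prime-properties}(ii) at each height-one prime in $\supp_T(U^f/U)$, arrange $U^f_\frakq\cdot\frakq T_\frakq\subseteq U_\frakq$ simultaneously for all such $\frakq$; then $\partial_{0,\frakq}[V,f]=[U^f_\frakq/U_\frakq,\partial_U f]$. The goal is to prove $\sum_{\frakq\in T^{(1)}}\partial_{\frakq,\frakr}[U^f_\frakq/U_\frakq,\partial_U f]=0$. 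I would produce a single unimodular $\veps$-hermitian form $g$ on the torsion $B$-module $N:=\Ext^1_T(U^f/U,\tilde T)$, valued in $\Ext^2_T(\cdot,\tilde T)$, by applying the long exact Ext sequence to $0\to U\to U^f\to U^f/U\to 0$ and organizing duality via Proposition~\ref{PR:CM-Azumaya-properties}. The localization of $g$ at $\frakr$ recovers the iterated residue, and the image of the natural map $\Hom_T(U,\tilde T)\to N$ (which is self-orthogonal by the reflexivity $U^{ff}=U$ coming from Lemma~\ref{LM:U-prime-properties}(i)) is the sought Lagrangian, so $g$ is metabolic and $\Sigma_{\frakp,\frakr}=0$.

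\emph{Main obstacle.} The hardest part is identifying the iterated residue $\partial_{\frakq,\frakr}\circ\partial_{\frakp,\frakq}$ intrinsically with the Ext-theoretic form $g$, and then verifying that the candidate submodule really equals its own orthogonal complement (rather than merely being isotropic of the correct rank). This requires careful bookkeeping of the dualizing-module isomorphisms supplied by Proposition~\ref{PR:Koszul-properties}(iii) and consistent sign conventions in passing from codimension $e$ through $e+1$ to $e+2$; the hands-on description of the differentials promised in Section~\ref{sec:complete-intersection} is precisely the tool designed to make such verifications tractable.
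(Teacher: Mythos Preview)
Your handling of $e=-1$ is fine and matches the paper. The problems begin with your reduction for $e\geq 0$.

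\textbf{The gap in the reduction.} You write ``pass to the two-dimensional Cohen--Macaulay local domain $T:=R/\frakp$''. But for a height-$e$ prime $\frakp$ in a regular local ring of dimension $e+2$, the quotient $R/\frakp$ is a $2$-dimensional local domain that is \emph{not} Cohen--Macaulay in general (there are classical examples of non-arithmetically-CM curves in $\mathbb{A}^4$). Your entire two-dimensional core rests on this assumption, so the argument does not go through. The paper confronts exactly this difficulty: rather than descending to $R/\frakp$, it chooses a regular sequence $r_1,\dots,r_e\in\frakp$, sets $J=\sum r_iR$, and uses Lemma~\ref{LM:change-of-ideals}(iii) to replace $\partial_{\frakt,\frakp}$ by $\partial_{J,\frakp}$. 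Then $R/J$ is a \emph{complete intersection} of dimension $2$, hence automatically CM with dualizing module $(R/J)^{[e]}$. All subsequent computations take place inside $R$, relative to the ideals $J$ and a further complete-intersection ideal $I=J+rR$.

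\textbf{Further issues in the core.} Even granting a CM base, several of your steps are unsupported. You invoke Lemma~\ref{LM:U-prime-properties}(i) to get $U^{ff}=U$, but that lemma is proved only in the $1$-dimensional setting; the paper obtains $L^{ff}=L$ in dimension $2$ by choosing $L$ \emph{maximal} with $L\subseteq L^f$ (Step~2), which also yields the crucial fact that no strictly larger self-isotropic lattice exists---this maximality is what drives Step~3 ($L^fI_0\subseteq L$) and Step~4 ($L^f/L$ is CM of dimension $1$). Your simultaneous arrangement of $U^f_\frakq\cdot\frakq\subseteq U_\frakq$ at all height-one primes by local iteration is also delicate: enlarging at one prime can alter $U^f$ globally. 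Finally, your form $g$ on $N=\Ext^1_T(U^f/U,\tilde T)$ with values in ``$\Ext^2_T(\cdot,\tilde T)$'' is not specified precisely enough to check, and the claim that the image of $\Hom_T(U,\tilde T)\to N$ is a Lagrangian (rather than merely isotropic) needs a length or duality argument you have not given.

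\textbf{How the paper actually closes.} The paper does \emph{not} prove metabolicity of an auxiliary form. Instead it builds a single form $g$ on $U:=L^fC^{-1}/LC^{-1}$ (valued in $\tilde A(I)$), shows $[g]=\sum_\frakp\partial_{\frakp,\frakq}\partial_{J,\frakp}[f]$ via equation~\eqref{EQ:g-is-direct-sum}, and then proves that the \emph{residue} $\partial_{I,\frakq}[g]$ vanishes by exhibiting $M:=L^f/L$ as a lattice in $U$ with $M^g=M$ (Steps~5--6). The verification $M^g=M$ is a nontrivial $\Ext$-computation over $A/AJ$, using that $L^f$ is CM of dimension $2$ (Step~4). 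This is the ``hardest part'' you allude to, but the mechanism---self-dual lattice forcing zero residue, not Lagrangian forcing zero Witt class---is different from what you sketched.
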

	
	The proof is somewhat technical. 
	Given $M\in \rMod{R}$, we shall abbreviate $A\otimes M$ to $AM$ for brevity.
	We first prove the following   lemma.

\begin{lem}\label{LM:change-of-ideals}
	Assume $R$ is a regular local ring of dimension $e+1$ ($e\geq 0$) with
	maximal ideal $\frakq$.
	Let $I_2\subseteq I_1$ be two ideals of $R$ such that both $S_1:=R/I_1$ and $S_2:=R/I_2$
	are Cohen--Macaulay of dimension $1$. For $i=1,2$,
	define $C_i$,  $\frakm_i$, $\tilde{S}_i$, $\tilde{\frakm}^{-1}_i$   
	similarly to $C$, $\frakm$, $\tilde{S}$,  $\tilde{\frakm}^{-1}$ in \ref{subsec:second-res}.
	Then:
	\begin{enumerate}[label=(\roman*)]
		\item $C_2\subseteq C_1$, and
		for every finite $C_1$-torsion-free $S_1$-module $M$, the map $MC_2^{-1}\to MC_1^{-1}$ is an isomorphism.
	\end{enumerate}
	Let $\iota=\iota_{I_1,I_2}$ denote the composition
	\[
	\iota_{I_1,I_2}: \tilde{A}(I_1)=A\tilde{S}_1 C_1^{-1} =
	A\tilde{S}_1 C_2^{-1} \to A \tilde{S}_2 C_2^{-1} =\tilde{A}(I_2)
	\]
	induced by (i) and the quotient map $R/I_2\to R/I_1$.
	\begin{enumerate}[label=(\roman*), resume]	
		\item 
		The map $\iota: \tilde{A}(I_1)\to \tilde{A}(I_2)$ is injective,
		$\im(\iota)= \Ann_{\tilde{A}(I_2)} I_1$, 
		and we
		have   $A\tilde{S}_2 \cap \iota(\tilde{A}(I_1))=\iota(A\tilde{S}_1)$
		and $A\tilde{\frakm}_2^{-1}\cap \iota(\tilde{A}(I_1))=\iota(A\tilde{\frakm}_1^{-1})$
		as subsets of $\tilde{A}(I_2)$.
		\item
		For every $(V,f)\in \tHerm[\veps]{A(I_1) }$,
		we have $(V,\iota \circ f)\in \tHerm[\veps]{A(I_2) }$
		and 
		\[ \partial_{I_2,\frakq} [V,\iota \circ f]=\partial_{I_1,\frakq}[V, f] .\]
	\end{enumerate}
\end{lem}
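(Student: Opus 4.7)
The plan is to handle (i), (ii), (iii) in order, with (ii) being the heart of the argument. For (i), since each $S_i=R/I_i$ is Cohen--Macaulay of dimension $1$ in the $(e+1)$-dimensional local ring $R$, Lemma~\ref{LM:zero-divisors} identifies $C_i$ as the complement in $R$ of the union of the minimal primes over $I_i$. Every minimal prime over $I_1$ has height $e$ and contains $I_2$, so for dimension reasons it is also minimal over $I_2$; hence $C_2\subseteq C_1$. For the localization statement, the primes of $S_1 C_2^{-1}$ correspond bijectively to the minimal primes over $I_1$ (any prime of $R$ containing $I_1$ and contained in $\bigcup_{\frakp\text{ min over }I_2}\frakp$ must be minimal over $I_1$ by prime avoidance), so elements of $C_1$ already act as units in $S_1 C_2^{-1}$, and $MC_2^{-1}\xrightarrow{\sim} MC_1^{-1}$ follows for every finite $C_1$-torsion-free $S_1$-module $M$.

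For (ii), I will construct $\tilde{S}_1\to\tilde{S}_2$ by applying $\Ext^e_R(-,R)$ to $0\to I_1/I_2\to S_2\to S_1\to 0$. Since $I_1/I_2\hookrightarrow S_2$ is supported in codimension $\geq e$, Proposition~\ref{PR:CM-properties-I}(i) gives $\Ext^i_R(I_1/I_2,R)=0$ for $i<e$, while Proposition~\ref{PR:CM-properties-I}(ii) gives $\Ext^{e+1}_R(S_1,R)=0$; together these produce a short exact sequence $0\to\tilde{S}_1\to\tilde{S}_2\to\Ext^e_R(I_1/I_2,R)\to 0$. Tensoring with the flat $R$-module $A$ and localizing at $C_2$ yields injectivity of $\iota$. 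The image description $\im\iota=\Ann_{\tilde{A}(I_2)}I_1$ follows from the change-of-rings identification $\tilde{S}_1=\Ext^e_R(S_1,R)\cong\Hom_{S_2}(S_1,\tilde{S}_2)=\Ann_{\tilde{S}_2}(I_1/I_2)$, itself a consequence of the Cartan--Eilenberg spectral sequence $\Ext^p_{S_2}(S_1,\Ext^q_R(S_2,R))\Rightarrow\Ext^{p+q}_R(S_1,R)$, which degenerates in low degrees because $\Ext^q_R(S_2,R)=0$ for $q<e$. The first intersection identity then follows from flatness of $A$ and the annihilator description of $\im\iota$. For the second, combine Proposition~\ref{PR:CM-properties-divisor}'s formula $A\tilde{\frakm}_i^{-1}=\{x\in\tilde{A}(I_i):x\frakq\subseteq A\tilde{S}_i\}$ with the observation that for $x\in\iota(\tilde{A}(I_1))$ the relation $xI_1=0$ forces $x\frakq\cdot I_1=0$, so $x\frakq\subseteq A\tilde{S}_2\cap\iota(\tilde{A}(I_1))=\iota(A\tilde{S}_1)$ whenever $x\frakq\subseteq A\tilde{S}_2$.

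For (iii), I will show that an $A$-lattice $U$ with $U^f\frakm_1\subseteq U\subseteq U^f$ (relative to $(A(I_1),S_1)$) serves equally well for $(V,\iota\circ f)$ once $V$ is viewed as an $A(I_2)$-module via the surjection $A(I_2)\to A(I_1)$ afforded by (i). The key equality $U^{\iota\circ f}=U^f$ follows at once from the second intersection identity in (ii): $(\iota\circ f)(U,x)\subseteq A\tilde{S}_2$ iff $\iota(f(U,x))\subseteq A\tilde{S}_2\cap\iota(\tilde{A}(I_1))=\iota(A\tilde{S}_1)$ iff $f(U,x)\subseteq A\tilde{S}_1$. Unimodularity of $\iota\circ f$ then follows because $V$ is annihilated by $I_1$, so every $A(I_2)$-linear map $V\to\tilde{A}(I_2)$ factors through $\Ann_{\tilde{A}(I_2)}I_1=\iota(\tilde{A}(I_1))$; thus the adjoint of $\iota\circ f$ is the composition of the adjoint of $f$ with this isomorphism. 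Finally, to see that the two residue forms on $U^f/U$ coincide, apply $\Ext^e_R(-,R)$ to the commutative ladder of short exact sequences $0\to\frakm_i\to S_i\to k(\frakq)\to 0$ ($i=1,2$) connected by the surjections $\frakm_2\onto\frakm_1$ and $S_2\onto S_1$. Tensoring with $A$ and using $\Ext^e_R(k(\frakq),R)=0$ gives a compatibility square whose right-hand column is $\id_{\tilde{A}(\frakq)}$ and which yields $\calT_{I_2,\frakq,A}\circ\iota=\calT_{I_1,\frakq,A}$ on $A\tilde{\frakm}_1^{-1}$. This translates directly into $\partial_U(\iota\circ f)=\partial_U f$ as biadditive forms on $U^f/U$, hence the equality of Witt classes.

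The main obstacle is the image description $\im\iota=\Ann_{\tilde{A}(I_2)}I_1$ in (ii): without it, both intersection identities and the lattice equality $U^{\iota\circ f}=U^f$ are out of reach, and the entire residue-compatibility statement of (iii) collapses. The change-of-rings identification $\tilde{S}_1\cong\Hom_{S_2}(S_1,\tilde{S}_2)$ is the one substantive piece of homological algebra required; a concrete Koszul-resolution argument would also work when a regular system of parameters adapted to $I_2$ is available, but the spectral-sequence approach avoids any such case analysis.
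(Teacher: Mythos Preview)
Your proof is correct and follows the paper's overall architecture, but your treatment of (ii) takes a genuinely different route. The paper chooses generators $r_1,\dots,r_t$ of $I_1$, forms the exact sequence $0\to K\to S_2^t\to S_2\to S_1\to 0$ of $1$-dimensional Cohen--Macaulay modules, and applies the duality $\Ext^e_R(-,R)$ to obtain $0\to A\tilde{S}_1\to A\tilde{S}_2\xrightarrow{x\mapsto(r_ix)_i}(A\tilde{S}_2)^t$; both the injectivity of $\iota$ and the annihilator description of its image can then be read off directly, and the first intersection identity follows by diagram chasing against the localization. Your approach replaces this hands-on computation with the change-of-rings spectral sequence $\Ext^p_{S_2}(S_1,\Ext^q_R(S_2,R))\Rightarrow\Ext^{p+q}_R(S_1,R)$, which degenerates to give $\tilde{S}_1\cong\Hom_{S_2}(S_1,\tilde{S}_2)=\Ann_{\tilde{S}_2}(I_1/I_2)$ in one stroke. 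Your argument is cleaner and more conceptual; the paper's is more elementary and self-contained (no spectral sequences), which fits the explicit flavor of the rest of Section~\ref{sec:second-res}. Both approaches yield exactly the same intermediate statements, and your treatments of (i) and (iii) match the paper's essentially verbatim.

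One small slip: in (iii) you write ``second intersection identity'' but the chain of equivalences you give for $U^{\iota\circ f}=U^f$ actually invokes the \emph{first} identity $A\tilde{S}_2\cap\iota(\tilde{A}(I_1))=\iota(A\tilde{S}_1)$. The argument is correct as written; only the reference is off.
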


\begin{proof}
	(i) That $C_2\subseteq C_1$ is a consequence of Lemma~\ref{LM:zero-divisors}.
	The same proposition also implies that $S_2C_2^{-1}$ is $0$-dimensional, so it is   artinian. 
	This means that $\operatorname{length}_{RC_2^{-1}} MC_2^{-1}<\infty$. 
	Since every element of $C_1$ acts faithfully on $M$, 
	it follows that $MC_2^{-1}$ is $C_1$-divisible,
	and  $MC_2^{-1}\to MC_2^{-1}C_1^{-1}=MC_1^{-1}$ is an isomorphism.
	
	(ii)
	Let $r_1,\dots,r_t\in R$ be generators
	of $I_1$. Define
	$f: S_2^t\to S_2$  
	by $f(x_1,\dots,x_t)=\sum_{i=1}^t r_i x_i $ and let $K=\ker f$.	
	Then we have an exact sequence
	\begin{align*}
	0\to K\to S_2^t\xrightarrow{f} S_2\to S_1\to 0
	\end{align*}
	which, thanks to Proposition~\ref{PR:CM-properties-one-dim}, consists of $1$-dimensional
	Cohen-Macaulay $R$-modules. Since $\Ext^e_R(-,R)$ induces a duality on the latter
	\cite[Theorem~3.3.10(c)]{Bruns_1993_cohen_macaulay_rings}, and since $A$ is flat over $R$,
	we have an exact sequence of $R$-modules
	\begin{align}\label{EQ:exact-S1-S2-sequence}
	0\to A\tilde{S}_1 \xrightarrow{\iota|_{A\tilde{S}_1}} A\tilde{S}_2 \xrightarrow{f'} (A\tilde{S}_2)^t
	\end{align}
	in which $f'$ is given by $f'(x)=(r_1 x,\dots,r_t x )$. 
	
	By localizing \eqref{EQ:exact-S1-S2-sequence}  at $C_2$, 
	we see that $\iota :\tilde{A}(I_1)\to \tilde{A}(I_2)$
	is injective and $\im(\iota)=\ann_{\tilde{A}(I_2)} I_1$.
	
	That  $A \tilde{S}_2 \cap \iota(\tilde{A}(I_1))=\iota(A \tilde{S}_1 )$
	follows by considering the map from \eqref{EQ:exact-S1-S2-sequence}  to its localization at $C_2$
	and simple diagram chasing. Note that all $R$-modules in \eqref{EQ:exact-S1-S2-sequence}
	are $C_2$-torsion-free (Proposition~\ref{PR:CM-properties-I}).
	
	Finally, 
	if $x\in \iota(A\tilde{\frakm_1}^{-1})$, then $x I_1=0$ and $x\frakm_2\subseteq \iota(A\tilde{S}_1)\subseteq A\tilde{S}_2$,
	so $x\in \tilde{A}\frakm_2^{-1}\cap	\Ann_{\tilde{A}(I_2)}I_1= \tilde{A}\frakm_2^{-1}\cap \iota(\tilde{A}(I_1))$.
	Conversely, if $x\in \tilde{A}\frakm_2^{-1}\cap \iota(\tilde{A}(I_1))$,
	then there is $y\in \tilde{A}(I_1)$ with $\iota(y)=x$. We have
	$x\frakm_2\subseteq  A\tilde{S}_2 \cap \iota(\tilde{A}(I_1))=\iota(\tilde{A}S_1)$, so   
	$y\frakm_1\in A\tilde{S}_1$.
	This means that $y\in A\tilde{\frakm}^{-1}_1$ and $x\in \iota(A\tilde{\frakm_1}^{-1})$.

	(iii) 	
	That $\iota\circ f$ is unimodular follows easily from   $\iota(\tilde{A}(I_1))=\Ann_{\tilde{A}(I_2)} I_1$
	and $VI_1=0$.

	By Lemma~\ref{LM:U-prime-properties},
	there exists   an $A$-lattice $U$ in $V$ with $U^f \frakq\subseteq U\subseteq U^f$.
	Thanks to (ii) and the fact that $U\cdot I_1=0$, we have
	$U^{\iota\circ f}=U^f$, so $\partial_{I_2,\frakq}[V,\iota \circ f]$
	is represented by $g:U^f/U\times U^f/U\to \tilde{k}(\frakq)$
	given by $g(x+U,y+U)=\calT_{I_2,\frakq}(\iota(f(x,y)))$.
	It is therefore enough to show that $\calT_{I_2,\frakq,A}\circ \iota|_{A\tilde{\frakm}_1^{-1}} =\calT_{I_1,\frakq,A}$.
	This follows by applying $\Ext_R^*(-,R)\otimes A $ to the to following morphism of short
	exact sequences:
	\[
	\begin{gathered}[b]
	\xymatrix{
	\frakm_2 \ar@{^{(}->}[r] \ar[d] &  S_2 \ar[r]\ar@{->>}[d] & k(\frakq)\ar@{=}[d] \\
	\frakm_1 \ar@{^{(}->}[r] & S_1 \ar@{->>}[r] & k(\frakq)
	} \\[-\dp\strutbox]
	\end{gathered}
	\qedhere
	\]
\end{proof}

\begin{proof}[Proof of Theorem~\ref{TH:GW-is-a-complex}]
	That $\dd_{0}\circ\dd_{-1}=0$ is   straightforward and is left to the reader. We will
	prove that $\dd_{e+1}\circ \dd_e=0$ for all $e\geq 0$. This amounts
	to showing that for all $\frakt\in R^{(e)}$, $\frakq\in R^{(e+2)}$ with $\frakt\subseteq \frakq$
	and $(V,f)\in\tHerm[\veps]{A(\frakt)}$, we have
	$\sum_{\frakp} \partial_{\frakp,\frakq} \partial_{\frakt,\frakp} [V,f]=0$,
	where the sum is taken over the set of primes lying strictly
	between $\frakt$ and $\frakq$.
	To that end, we may base-change from $R$ to $R_\frakq$ and assume that $R$ is regular local
	of dimension $e+2$ and $\frakq$ is its maximal ideal.
	
	The set of height-$i$ primes containing a given ideal $J$ will be denoted
	$R^{(i)}_{\supseteq J}$. Given $M\in \rMod{R}$, we shall abbreviate 
	\[M^{[i]}:=\Ext^i_R(M,R) ,\]
	e.g.\ $\tilde{k}(\frakt)=(R/\frakt)^{[e]}_\frakt$.

\medskip

	\Step{1}	
	Since $R$ is regular and $\hgt \frakt=e$, 
	there is a regular sequence $r_1,\dots,r_e\in \frakt$ 
	\cite[Theorem~17.4(i)]{Matsumura_1989_commutative_ring_theory}.
	Write $J=r_1R+\dots+r_e R$, and let $D$ denote the set of elements of $R$
	acting faithfully on $R/J$. 
	Then $R/J$ is a  complete intersection local ring of dimension $2=(e+2)-e$, hence Cohen-Macaulay 
	with dualizing module $(R/J)^{[e]}$.
	
	Proposition~\ref{PR:CM-properties-I}(i) and the flatness of $A$ over $R$
	imply that for every 
	ideal $J'\subseteq R$ with  $J\subseteq J'$ and $J'\cap D\neq \emptyset$, 
	the natural map 
	$A (J'/J)^{[e]}  \to A(J'/J)^{[e]}D^{-1}  =
	A(R/J)^{[e]}D^{-1} $ is injective.
	We shall therefore freely regard $A (J'/J)^{[e]} $ as a subset
	of $A(R/J)^{[e]}D^{-1} $. If $J''$ is another ideal satisfying  $J\subseteq J''$ 
	and $J''\cap D\neq\emptyset$,
	then  $A(J'/J)^{[e]}D^{-1}\subseteq A(J''/J)^{[e]}D^{-1}$ whenever $J''\subseteq J'$.
	Note that  $J'\cap D\neq \emptyset $ holds whenever $J'/J$ is not 
	contained in a minimal prime of $R/J$ (Lemma~\ref{LM:zero-divisors}).

	By Lemma~\ref{LM:change-of-ideals}(iii),
	for every $\frakp\in R^{(e+1)}$ containing $\frakt$, 
	we have
	$\partial_{J,\frakp}[\iota\circ f]=\partial_{\frakt,\frakp}$,
	where $\iota=\iota_{\frakt_\frakp,J_\frakp}: A(R/\frakt)^{[e]}_\frakt  
	\to A(R/J)^{[e]}D^{-1} $ (this is   independent of $\frakp$, in fact).
	Replacing $f$ with $\iota\circ f$, we are reduced into proving
	\[
	\sum_{\frakp} \partial_{\frakp,\frakq}\partial_{J,\frakp}[f]=0,
	\]
	where the sum is taken over $R^{(e+1)}_{\supseteq J}$
	(note that if $\frakt\nsubseteq \frakp$, then $V_\frakp=0$ and $\partial_{J,\frakp}[f]=0$).

\medskip

\Step{2}
	Let us call an $A$-submodule $L\subseteq V$
	an $A$-lattice (in $V$) if $L$ is finitely generated
	and $L D^{-1}=V$. In this case,   define
	\[L^{f}=\{x\in V\suchthat f(L,x) \subseteq A (R/J)^{[e]} \} .\]
	As in the proof of Lemma~\ref{LM:U-prime-properties}, $L^f$ is also an $A$-lattice in $V$.

	Choose an $A$-lattice $L$ in $V$.
	Replacing $L$  with $L\cap L^f$,
	we may assume that $L\subseteq L^{ f}$.
	Since $L^{ f}$ is a noetherian $R$-module, there is $L'\subseteq L^{f}$
	maximal with respect to the property that $L'\subseteq L'^{f}$.
	Replace $L$ with $L'$. Then every $A$-lattice $M$ containing $L$ and
	satisfying $M\subseteq M^{ f}$ must be equal to $L$.
	In particular, $L^{ff}=L$.
	
	Since $L D^{-1} = V=L^fD^{-1}$,  there is some $r\in D\cap \frakq$
	such that $L^{  f} r\subseteq L$. Fix such $r$, let 
	\[I=J+Rr=Rr_1+\dots+Rr_e+Rr\]
	and let $C$ denote the set of elements of $R$ acting faithfully on $R/I$.
	Since $r_1,\dots,r_e,r$ is a regular sequence,  $R/I$ is a $1$-dimensional
	complete intersection local ring with dualizing module $(R/I)^{[e+1]}$.

	As in the proof of Lemma~\ref{LM:partial-well-defined},
	the set 
	\[(LC^{-1})^f:=\{x\in V\suchthat f(LC^{-1},x)\subseteq A(R/J)^{[e]}C^{-1} \}\]
	coincides with $L^f C^{-1}$.
	Since $L^fC^{-1}r  \subseteq L C^{-1}$,
	we have $f(L^fC^{-1},L^fC^{-1}) r\subseteq A(R/J)^{[e]}C^{-1} $,
	or rather, $f(L^fC^{-1},L^fC^{-1}) \subseteq A(R/J)^{[e]}C^{-1}  \cdot r^{-1}$.
	By Proposition~\ref{PR:CM-properties-divisor}, this means that
	$f(L^fC^{-1},L^fC^{-1}) \subseteq
	A(I/J)^{[e]}C^{-1}  $. Writing  $U=L^fC^{-1}/LC^{-1}$, we may now define
	$g:U\times U\to \tilde{A}(I)=A (R/I)^{[e]}C^{-1} $ by
	\[
	g(x+LC^{-1},y+LC^{-1})=\calT_{J,I}(f(x,y)) ,
	\]
	where
	\[
	\calT_{J,I}:A(I/J)^{[e]}C^{-1} \to A(R/I)^{[e]}C^{-1} 
	\]
	is induced by   $I/J\embeds R/J \onto R/I$
	
\medskip
	
	\Step{3}	
	Let $I_0$ denote the radical of $I$.
	Since $L^{ f} I=L^fr\subseteq L$ and $R$ is noetherian,
	there is some $t\in \N\cup\{0\}$ such that $L^{f} I_0^{t}\subseteq L$.
	If $t>1$, then one readily checks that 
	$f(L+L^{ f} I_0^{t-1}, L+L^{f} I_0^{t-1})\subseteq A(R/J)^{[e]} $,
	or rather, $ L+L^fI_0^{t-1} \subseteq (L+L^fI_0^{t-1})^f$.
	By the maximality of $L$ (Step~2), this means that 
	$L^f I_0^{t-1}\subseteq L$.
	Continuing in this manner, we find that $L^fI_0\subseteq L$.

	As in the proof of Lemma~\ref{LM:partial-well-defined},
	for every $\frakp\in R^{(e+1)}_{\supseteq J}$,
	we have $(L_\frakp)^f=(L^f)_\frakp$.
	If $\frakp\supseteq I$, then 
	$(L^f)_\frakp \frakp_\frakp=(L^f)_\frakp (I_0)_\frakp \subseteq L_\frakp$, and thus  $\partial_{J,\frakp} [f]$ is represented by $((L^f)_\frakp/L_\frakp,\partial_{L_\frakp} f)$.
	Other other hand, if  $\frakp\nsupseteq I$, then $L^f_\frakp= L_\frakp$ and
	$\partial_{L_\frakp}f=0$.

	Since $U=L^fC^{-1}/LC^{-1}$ is a module over the artinian ring  $(R/I)C^{-1}$,
	the natural map 
	$U\to \prod_{\frakp\in R^{(e+1)}_{\supseteq I}} U_\frakp= \prod_{\frakp\in R^{(e+1)}_{\supseteq I}} (L^f)_\frakp/L_\frakp$
	is an isomorphism. We claim that under this isomorphism,
	\begin{align}
	\label{EQ:g-is-direct-sum}
	g=\bigoplus_{\frakp\in R^{(e+1)}_{\supseteq I}} (\iota_{\frakp,I}\circ \partial_{L_\frakp}f).
	\end{align}
	(In particular, $g:U\times U\to A (R/I)^{[e+1]}C^{-1} $ is unimodular.)
	Provided this holds,   Lemma~\ref{LM:change-of-ideals}(iii) tells us that
	\[
	\partial_{I,\frakq}[g]=
	\sum_{\frakp\in R^{(e+1)}_{\supseteq I}} \partial_{\frakp,\frakq}  \partial_{J,\frakp}[f]
	\]
	so we are reduced into proving that $\partial_{I,\frakq}[U,g]=0$.
	
	We now prove \eqref{EQ:g-is-direct-sum}.
	Let $\frakp\in R^{(e+1)}_{\supseteq I}$.
	Since $UI_0=0$, 
	the image of the (unique) $R$-module section of $U\to U_\frakp$ is the set
	of $\frakp$-torsion elements in $U$. Let $\quo{x},\quo{y}$ be elements
	in that set, and let $x,y\in L^fC^{-1}$ be lifts
	of $x,y$. Then $y\frakp\subseteq LC^{-1}$, and so $f(x,y)\frakp=f(x,y\frakp)\subseteq   A (R/J)^{[e]}C^{-1}$.
	By   Proposition~\ref{PR:CM-properties-divisor}, this means that
	$f(x,y)\in A (\frakp/J)^{[e]}C^{-1}  $,
	and thus $g(\quo{x},\quo{y})=\calT_{J,I}(f(x,y))$.
	On the other hand, $(\iota_{\frakp,I}\circ \partial_{L_\frakp }f)(\quo{x},\quo{y})=\iota_{\frakp, I} \calT_{J,\frakp}f(x,y)$, 
	so it is enough  enough to check that 
	$ \calT_{J,I}$ and $\iota_{\frakp, I} \circ \calT_{J,\frakp}$
	agree on $A(\frakp/J)^{[e]}C^{-1} $. This follows 
	from the commutative diagram on the right, which is induced by the commutative
	diagram on the left.	
	\[
	\xymatrix{
	I/J \ar@{^{(}->}[r] \ar[d] &  R/J \ar[r]\ar@{=}[d] & R/I\ar[d] \\
	\frakp/J \ar@{^{(}->}[r] & R/J \ar@{->>}[r] & R/\frakp
	}
	\qquad
	\xymatrix{
	A(I/J)^{[e]}C^{-1} \ar[r]^{\calT_{J,I}}  &  A(R/I)^{[e]}C^{-1}   \\
	A(\frakp/J)^{[e]}C^{-1}   \ar[r]^{\calT_{J,\frakp}} \ar@{^{(}->}[u] &
	A(R/\frakp)^{[e+1]}C^{-1}  \ar[u]^{\iota_{\frakp,I}}  
	}
	\]
	
\medskip

	\Step{4} 
	Write $M=L^f/L$.  We claim that 
	$M$ is either $0$ or a $1$-dimensional Cohen--Macaulay $R$-module.	
	By Proposition~\ref{PR:CM-properties-one-dim}, it is enough to show
	that $\frakq\notin \Ass_R  M $.
	Suppose that $\quo{x}\in M$ is annihilated by $\frakq$,
	and let $x\in L^f$ be a lift of $x$. Then $ x \frakq\in L$, so
	$f(x,x)\frakq=f(x,x\frakq)\subseteq A(R/J)^{[e]} $.
	By Proposition~\ref{PR:CM-properties-divisor}, this means
	that $f(x,x)\in A(\frakq/J)^{[e]} $.
	Since both $(R/\frakq)^{[e]}$ and $(R/\frakq)^{[e+1]}$ are $0$ (Proposition~\ref{PR:CM-properties-I}),
	the map $(R/J)^{[e]}\to (\frakq/J)^{[e]}$ is an isomophism, and
	$f(x,x)\in A(R/J)^{[e]}$. This means
	that    $f(L+xA,L+xA)\subseteq (R/J)^{[e]}$, so
	$L+xA=L$ by the maximality of $L$ (Step~2). We conclude that $\quo{x}=0$.

	Next, we claim that $L^f$ is $2$-dimensional Cohen--Macaulay $R$-module.
	Since the element $r$ of Step~2 acts faithfully on $L^f$,
	it is enough to show that $L^f/ L^fr$ is a $1$-dimensional Cohen-Macaulay $R$-module,
	which again amounts to showing $\frakq\notin \Ass_R (L^f/ L^fr)$.
	Let $\quo{x}\in L^f/ L^fr$ be an element annihilated by $\frakq$
	and let $x\in L^f$ be a lift of $\quo{x}$.
	Then $ x \frakq\in  L^f r$, hence $f(L,xr^{-1})\frakq=f(L,xr^{-1}\frakq)\subseteq A (R/J)^{[e]} $.
	As in the previous paragraph, this means that $f(L,xr^{-1})\subseteq A(R/J)^{[e]}$ and $xr^{-1}\in L^f$.
	Thus,  $x\in L^fr$ and $\quo{x}=0$.

\medskip	
	
	\Step{5} By Step~4 and Proposition~\ref{PR:CM-properties-one-dim}, 
	$M$ is $C$-torsion-free, so we may regard $M$ as an $A$-lattice in $U$.
	We  will prove that $M^g=M$,
	and thus   $\partial_{I,\frakq}[g]=0$. 
	
	We first prove that $M\subseteq M^g$, or equivalently, $g(M,M)\subseteq A(R/I)^{[e+1]}$.
	Consider the diagram on the right, which is induced by the commutative diagram on the left:
	\[
	\xymatrix{
	I/J \ar@{^{(}->}[r]\ar@{=}[d] & 
	\frakq/J \ar@{^{(}->}[d]  \ar@{->>}[r] &  
	\frakq/I \ar@{^{(}->}[d] \\
	I/J \ar@{^{(}->}[r]  & 
	R/J \ar@{->>}[r] \ar@{->>}[d]  &
	R/I \ar@{->>}[d] \\
	& 
	k(\frakq) \ar@{=}[r] &
	k(\frakq)
	}
	\qquad
	\xymatrix{
	A (I/J)^{[e]}   \ar[r]^{\calT_{I/J}} &
	A (\frakq/I)^{[e+1]}    \ar[r] \ar[d]^{\calT_{I,\frakq}} &
	A (\frakq/J)^{[e+1]}   \ar[d] \\
	&
	\tilde{A}(\frakq)   \ar@{=}[r] &
	\tilde{A}(\frakq)   
	}
	\]
	The top row of the right diagram is exact in the middle, so $\calT_{I,\frakq}\circ \calT_{I/J}|_{A(I/J)^{[e]}}=0$,
	or rather, $   \calT_{I,J} (A(I/J)^{[e]})\subseteq \ker \calT_{I,\frakq} = A(R/I)^{[e+1]} $ 
	(see~\eqref{EQ:dual-m-S-k-A}).
	Since $f(L^f,L^f)I=f(L^f I,L^f)\subseteq f(L,L^f)\subseteq A(R/J)^{[e]}$,
	we have $f(L^f,L^f)\subseteq A(I/J)^{[e]}$
	(Proposition~\ref{PR:CM-properties-divisor}),
	and  it follows that $g(M,M)\subseteq A(R/I)^{[e+1]} $.
	
\medskip

	\Step{6}	
	It remains to show that $y\mapsto g(y,-):M\to \Hom_A(M,A(R/I)^{[e]} )$,
	denoted $\vphi$,
	is an isomorphism. Consider the following  diagram of $R$-modules:
	\begin{align*}
	\xymatrix{
	L \ar[rr]^-{x\mapsto f(x,-)} \ar@{^{(}->}[d]  & & \Hom_{A/AJ}(L^f,A(R/J)^{[e]} )\ar[d] \\
	L^f \ar[rr]^-{x\mapsto f(x,-)} \ar@{->>}[d] & & \Hom_{A/AJ}(L,A(R/J)^{[e]} ) \ar[d]^u \\
	M \ar[r]^-{\vphi} & 
	\Hom_A(M,A(R/I)^{[e+1]} ) \ar[r]^v &
	\Ext^1_{A/AJ}(M,A(R/J)^{[e]} )
	}
	\end{align*}
	The right column is induced by $L\embeds L^f\onto M$, so it is exact in the middle, and the map
	$v$ is induced by $(R/J)^{[e]}\embeds (I/J)^{[e]}\onto (R/I)^{[e+1]}$ (we have
	$(R/I)^{[e]}=0$ and $(R/J)^{[e+1]}=0$ by Proposition~\ref{PR:CM-properties-I}(ii)).
	The top and middle horizontal maps are bijective because $f$ is unimodular and $L^{ff}=L$, 
	and the top rectangle clearly commutes.
	Thus, in order to show that $\vphi$ is an isomorphism, it is enough to show the following:
	\begin{enumerate}[label=(\roman*)]
		\item $u$ is onto,
		\item $v$ is bijective,  
		\item the bottom rectangle 
		commutes.
	\end{enumerate}

	By Proposition~\ref{PR:CM-Azumaya-properties}(i) and Step~4,
	we have  $\Ext^1_{A/AJ }(L^f,A(R/J)^{[e]} )=0$, so
	$u$ is onto. We have $I/J = r(R/J)$, so for similar reasons, $\Hom_{A/AJ}(M,A(I/J)^{[e]} )\cong
	\Hom_{A/AJ}(M,A(R/J)^{[e]})=0$ and 
	$v$ is injective.
	To show that $v$ is surjective, it is enough to check that 
	$\Ext^1_{A/AJ}(M,A(R/J)^{[e]} )\to \Ext^1_{A/AJ}(M,A(I/J)^{[e]} )$
	is the zero map. Observe that $I/J\embeds R/J\onto R/I$
	is isomorphic to $R/J\xembeds{r} R/J \onto R/I$. This means
	that the morphism $\Ext^1_{A/AJ}(M,A(R/J)^{[e]} )\to \Ext^1_{A/AJ}(M,A(I/J)^{[e]} )$
	is isomorphic to $r:\Ext^1_{A/AJ}(M,A(R/J)^{[e]} )\to \Ext^1_{A/AJ}(M,A(R/J)^{[e]} )$,
	which is the zero map because $Mr=0$. We conclude that (i) and (ii) hold.

	In order to prove (iii),
	we interpret elements of $\Ext^1_{A/AJ}(M,A(R/J)^{[e]} )$ as $A$-module extensions  
	$M$ by $A(R/J)^{[e]} $.
	Let $x\in L^f$, and let
	\begin{align*}
	X&=\{(u,m)\in (I/J)^{[e]}\times M\suchthat \calT_{I,J}(u)=g(x+L,m)\},\\
	Y&=(A(R/J)^{[e]} \times L^f)/\{(f(x,\ell),-\ell)\where \ell\in L\}.
	\end{align*}	
	One readily checks that the  extensions
	of corresponding to the two possible images of $x$ in $\Ext^1_A(M,A(R/J)^{[e]})$ are
	\begin{align*}
	A(R/J)^{[e]}  \xembeds{\alpha} X \xonto{\beta} M 
	\qquad
	\text{and}
	\qquad
	A(R/J)^{[e]}  \xembeds{\gamma} Y \xonto{\delta} M 
	\end{align*}
	where $\alpha(u)=(u,0)$, $\beta(u,m)=m$, 
	$\gamma(u)=\quo{(u,0)}$ and $\delta\quo{(u,\ell)}=\ell+L$,
	and $\quo{(u,\ell)}$ denotes the image of $(u,\ell)\in  A(R/J)^{[e]}  \times L^f$
	in $Y$.
	Define $\psi:Y\to X$ by
	\begin{align*}
	\psi\quo{(u,\ell)}=(u+f(x,\ell),\ell+L).
	\end{align*}
	It is easy to check that $\psi$ is a well-defined and defines
	a morphism between the extensions. Moreover, $\psi$ is an isomorphism by the Five Lemma.
	This completes the proof of (iii), so we
	have established the theorem.
\end{proof}	

\begin{prp}\label{PR:comparison-with-BW}
	The complex $\aGW{R,\id,1}$ is isomorphic to the Gersten--Witt complex of $R$
	defined by Balmer and Walter in \cite{Balmer_2002_Gersten_Witt_complex}.
\end{prp}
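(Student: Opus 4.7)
The plan is to compare the two complexes term-by-term and then differential-by-differential.
Both the Balmer--Walter complex and $\aGW{R,\id,1}$ have, in degree $e\geq 0$, the group
$\bigoplus_{\frakp\in R^{(e)}}W(k(\frakp);\omega_{\frakp})$, where $\omega_\frakp$ stands for a
$1$-dimensional $k(\frakp)$-vector space built from the dualizing data at $\frakp$. In the
Balmer--Walter framework this coefficient is
$\omega_{k(\frakp)/R_\frakp}=\Ext^{\dim R_\frakp}_{R_\frakp}(k(\frakp),R_\frakp)$ (the top non-vanishing
$\Ext$ along the codimension filtration), which is canonically the author's $\tilde{k}(\frakp)$.
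Hence there is a canonical identification of graded abelian groups, and the $(-1)$-differential is
base change to $F$ in both cases. Thus only the differentials $\dd_e$ for $e\geq 0$ require comparison.

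Since both differentials are sums $\sum_{\frakp,\frakq}\partial_{\frakp,\frakq}$ (indexed by pairs
with $\frakp\subseteq\frakq$ and $\hgt\frakq=\hgt\frakp+1$), the problem is local at $\frakq$.
Localizing, I assume that $R$ is regular local of dimension $e+1$ with maximal ideal $\frakq$,
and that $\frakp\in R^{(e)}$ is fixed. Next I choose a regular sequence $r_1,\dots,r_e\in\frakp$
generating an ideal $J\subseteq\frakp$; then $R/J$ is a $1$-dimensional complete intersection
with $R/\frakp$ as a quotient (cf.\ Step~1 in the proof of Theorem~\ref{TH:GW-is-a-complex}).
On the author's side, Lemma~\ref{LM:change-of-ideals}(iii) shows that $\partial_{\frakp,\frakq}$
factors through the map $\tilde{W}_1(k(\frakp))\to\tilde{W}_1(R/J)$ induced by
$\iota_{\frakp,J}$, followed by a residue map from $R/J$ to $k(\frakq)$. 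On the Balmer--Walter
side the analogous factorization follows from the naturality of their codimension-of-support
filtration under the inclusion of the derived subcategory of modules annihilated by $J$ into
that of modules annihilated by $\frakp$; this is built into the spectral sequence construction
in \cite{Balmer_2002_Gersten_Witt_complex}.

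Both residues are thereby reduced to the following $1$-dimensional computation: let $S=R/J$
(Cohen--Macaulay of dimension $1$), let $\eta\subset R/J$ be a minimal prime and
$\frakq_S=\frakq/J$ the maximal ideal; then one must identify the residue map
$\tilde W_1(S_\eta)\to\tilde W_1(k(\frakq))$ produced by each construction. After normalizing
$S$ at $\eta$ (which is harmless for Witt groups of the generic point and compatible with both
constructions by functoriality under flat ring maps), we reduce to the case of a DVR. There
both residues coincide with the classical Scharlau second residue map: for the author this is
noted in Example~\ref{EX:second-residue-II}, and for Balmer--Walter this is proven directly in
\cite{Balmer_2002_Gersten_Witt_complex} (their differential in the $1$-dimensional case is,
by construction, the classical second residue).

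The main obstacle is the Balmer--Walter side of the reduction in the middle step: their
differentials arise from connecting morphisms in a triangulated filtration, and one has to
verify that their $(\frakp,\frakq)$-component really does factor, via $J\subseteq\frakp$,
through the corresponding component of $\aGW{S,\id,1}$ for the $1$-dimensional ring $S=R/J$.
This amounts to an explicit check of functoriality of their spectral sequence under the closed
immersion $\Spec(R/\frakp)\hookrightarrow\Spec(R/J)$, combined with the compatibility of their
d\'evissage isomorphisms with ours; once that is in place, the remaining verification on DVRs
is classical.
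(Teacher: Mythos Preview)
Your proposal has the right opening moves---the identification of terms and the localization at $\frakq$ match the paper exactly---but the core of your argument diverges from the paper and runs into genuine difficulties.

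The normalization step is problematic. The ring $S=R/J$ is a complete intersection, hence Cohen--Macaulay, but it need not be reduced (e.g.\ $R=k[[x,y]]$, $J=(x^2)$), so normalization is not straightforwardly defined. Even when $S$ is reduced, the map $S\to\tilde{S}$ is finite but in general not flat, so your appeal to ``functoriality under flat ring maps'' does not apply. More fundamentally, the residue map depends on the $S$-lattice structure, not just the generic point, and passing to the normalization can change the residue field at the closed point; you would need an additional argument to control this. The gap you yourself flag on the Balmer--Walter side---checking that their differential factors through the $1$-dimensional situation via a closed-immersion d\'evissage---is also real and would require substantial work inside their triangulated framework.

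The paper avoids both issues by taking a different route. Rather than reducing to a DVR, it works directly with the complete intersection $S=R/I$ and compares to the \emph{explicit} description of the Balmer--Walter differential in \cite[Proposition~8.5(b)]{Balmer_2002_Gersten_Witt_complex}, namely their diagram~(35). Concretely: (a) for a lattice $U\subseteq U^f$ one writes the isomorphism $\vphi:U^f/U\to\Hom(U^f/U,\tilde{k}(I)/\tilde{S})$ coming from $\partial_U f$ and fits it into a diagram (the paper's \eqref{EQ:comparison-diagram}) involving $\Ext^1(U^f/U,\tilde{S})$; (b) one proves a canonical identification $\Ext^i(M,\tilde{S})\cong\Ext^{i+e}(M,R)$ for Cohen--Macaulay $S$-modules $M$ by a double-complex argument using the Koszul resolution of $S$ over $R$; (c) under this identification, the diagram \eqref{EQ:comparison-diagram} becomes, up to a sign depending only on $e$, Balmer--Walter's diagram~(35). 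No DVR reduction is needed: Balmer--Walter's own explicit formula does all the work on their side, and the comparison is a direct diagram match rather than a functoriality argument.
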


\begin{proof}	
	The proposition follows from the fact that
	$\partial_{\frakp,\frakq}:\tilde{W}(k(\frakp))\to \tilde{W}(k(\frakq))$
	($\frakp\in R^{(e)}, \frakq\in R^{(e+1)}$)
	is equivalent to the corresponding map   in \cite[Proposition~8.5(b)]{Balmer_2002_Gersten_Witt_complex},
	a fact that we now prove.
	By localizing at $\frakq$, we may   assume that $R$ is local and $\frakq$ is its maximal ideal.

	Let $(V,f)\in\tHerm[1]{k(\frakp)}$. 
	Choose a regular sequence $r_1,\dots,r_e\in \frakp$
	(use \cite[Theorem~17.4(i)]{Matsumura_1989_commutative_ring_theory}),
	let $I=r_1 R+\dots+r_e R$ and define $S$ and $\tilde{S}$
	as in \ref{subsec:second-res}. By Lemma~\ref{LM:change-of-ideals}, 
	$\partial_{\frakp,\frakq}[f]=\partial_{I,\frakq}[\iota_{I,\frakq}\circ f]$, 
	so we may replace $f$ with $\iota_{I,\frakq}\circ f$.
	Choose any $A$-lattice $U$ in $V$ with $U\subseteq U^f$,   set $Z=\tilde{k}(I)/\tilde{S}$,
	and define $\partial_U f: U^f/U\times U^f/U\to Z$ as in the proof of Lemma~\ref{LM:residue-well-defined}.
	Let $\vphi$ denote the isomorphism $x\mapsto \partial_U f(x,-):U^f/U\to \Hom(U^f/U,Z)$.
	One readily checks that upon identifying $U^f$ and $U$ with $\Hom(U,\tilde{S})$ and
	$\Hom(U^f,\tilde{S})$ via $y\mapsto f(y,-)$, the map $\vphi$ fits into the  commutative   diagram
	\begin{align}\label{EQ:comparison-diagram}
	\xymatrix{
	U  \ar[r] \ar[dd]^{\wr} &
	\Hom(U,\tilde{S}) \ar[r] \ar@{=}[dd] &
	U^f/U \ar[d]^{\vphi}_{\wr}  \\
	& &	
	\Hom(U^f/U,Z) 
	\ar[d]^v_{\wr} \\
	\Hom(\Hom(U,\tilde{S}),\tilde{S}) \ar[r] &
	\Hom(U,\tilde{S}) \ar[r]^-u &
	\Ext^1(U^f/U,\tilde{S})
	}
	\end{align}
	in which $v$ is   induced by $\tilde{S}\embeds \tilde{k}(I)\onto Z$
	and $u$ is induced by   the top row. The commutativity is shown
	as in Step~6 of the proof of Theorem~\ref{TH:GW-is-a-complex}. 
	The map $v$ is an isomorphism because $\Hom(U^f/U,\tilde{k}(I))$
	and $\Ext^1(U^f/U,\tilde{k}(I))$ are both annihilated by some power of $\frakq$
	and $C$-divisible, hence $0$ ($\frakq\cap C\neq \emptyset$ by Lemma~\ref{LM:zero-divisors}).
	
	For every $i$-dimensional Cohen--Macuaulay $S$-module $M$, there
	is an canonical isomorphism  $\Ext^{i}(M,\tilde{S})\cong \Ext^{i+e}(M,R)$.
	To see this, let  $P_e\to \dots\to P_0 \to S$
	and $Q_{e+i}\to \dots\to Q_0\to M$ be  
	$R$-projective resolutions with $P_0=R$
	(their lengths  
	are justified by \cite[Theorem~1.3.3]{Bruns_1993_cohen_macaulay_rings}).
	By Proposition~\ref{PR:CM-properties-I}(ii),
	$P_0^\vee\to \dots\to P_e^\vee\to \tilde{S}$ is an $R$-projective resolution of
	$\tilde{S}$. Consider the induced double complex:
\newcommand{\xyvdots}{\raisebox{.006\baselineskip}{\ensuremath{\vdots}}} 
	\[
	\xymatrix@C=1.3em@R=1.2em{
	 & 0 \ar[d] &   & 0 \ar[d] & 0 \ar[d] & \\
	\cdots \ar[r] & 
	\Hom(Q_i,P_0^\vee) \ar[r] \ar[d]  & 
	\cdots \ar[r] & 
	\Hom(Q_{e+i},P_0^\vee) \ar[r]  \ar[d] &
	\Ext^{e+i}(M,P_0^\vee) \ar[d]^-{w} \ar[r] &
	0\\
	& \xyvdots \ar[d] & & \xyvdots \ar[d] & \xyvdots \ar[d] & \\
	\cdots \ar[r] & 
	\Hom(Q_i,P_e^\vee) \ar[r]\ar[d] &  
	\cdots \ar[r] &
	\Hom(Q_{e+i},P_e^{\vee}) \ar[r]\ar[d] &
	\Ext^{e+i}(M,P_e^{\vee}) \ar[d] \ar[r] &
	0\\
	\cdots \ar[r]^-{h}  &
	\Hom(Q_i,\tilde{S}) \ar[r]^-{h'} \ar[d] &
	\cdots \ar[r]  &
	\Hom(Q_{e+i},\tilde{S}) \ar[r] \ar[d] &
	\Ext^{e+i}(M,\tilde{S}) \ar[r] \ar[d] &
	0 \\
	& 0 & & 0 & 0 &
	}
	\]
	All rows except   the bottom one and all columns except the right one are
	exact.
	Standard diagram chasing therefore implies that 
	$\Ext^{i}(M,\tilde{S})=(\ker h')/(\im h)$
	is isomorphic to $\ker w$. By taking $P_\bullet$ to be the Koszul
	complex associated to the regular sequence $r_1,\dots,r_e$ 
	(Proposition~\ref{PR:Koszul-properties}),
	we see that $w$ is in fact $0$, because $MI=0$.
	Thus, $\ker w=\Ext^{e+i}(M,P_0^\vee)\cong \Ext^{e+i}(M,R)$
	and $\Ext^{i}_S(M,\tilde{S})\cong \Ext^{e+i}(M,R)$.

	Now, after replacing $f$ with $(-1)^{\frac{(e-1)(e-2)}{2}}f$, the diagram
	\eqref{EQ:comparison-diagram} (without the middle term on the right column)
	is naturally isomorphic to   diagram (35) in \cite{Balmer_2002_Gersten_Witt_complex} when 
	$U$ is a free $R/\frakp$-module.
	This means that, up to a sign depending on $e$, the map $\partial_{\frakp,\frakq}$ is equivalent to the
	map in \cite[Proposition~8.5(b)]{Balmer_2002_Gersten_Witt_complex}.
\end{proof}

\begin{remark}\label{RM:split-primes-do-not-contribute}
	Suppose that $(A,\sigma)$ is an Azumaya $R$-algebra
	with \emph{unitary} involution.
	Then $S:=\Cent(A)$ is a quadratic \'etale $R$-algebra
	and $\sigma|_S$ is the standard $R$-involution on $S$.
	If $\frakp\in R^{(e)}$  splits in $S$,
	then $S(\frakp)\cong k(\frakp)\times k(\frakp)$
	and   $(A(\frakp),\sigma(\frakp))\cong (B\times B^\op,(x,y^\op)\mapsto (y,x^\op))$
	for some central simple $k(\frakp)$-algebra $B$. 
	As a result, $\tilde{W}_\veps(A(\frakp) )=0$ 
	\cite[Example~2.4]{First_2022_octagon} and   $\frakp$
	does not contribute to $\GW{A,\sigma,\veps}_e$. 
	In particular, if $S=R\times R$, then $\aGW{A,\sigma,\veps}$ is the zero complex.
\end{remark}

We finish this section by showing that the isomorphism class of $\aGW{A/R,\sigma,\veps}$
is independent of $R$. This will usually be used to replace
$R$ with $\Cent(A)^{\{\sigma\}}$ in order to assume that 
$(A,\sigma)$ is Azumaya over $R$.

\begin{thm}\label{TH:base-does-not-matter}
	Let $R'$ be a regular ring and suppose that
	$A$ is equipped with an $R'$-algebra structure
	such  
	that $(A,\sigma)$ is a separable projective
	$R'$-algebra with involution.
	Then $\aGW{A/R,\sigma,\veps}\cong \aGW{A/R',\sigma,\veps}$.
\end{thm}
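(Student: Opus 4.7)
The plan is to reduce to a comparison against a canonical choice of base ring, then match terms and differentials. Set $T := \Cent(A)^{\{\sigma\}}$; by construction this ring depends only on $(A,\sigma)$ and not on $R$ or $R'$. By the results of \S\ref{subsec:Azumaya}, the natural maps $R\to T$ and $R'\to T$ are both finite étale (because $(A,\sigma)$ is separable projective over each base), $T$ is regular (finite étale over a regular ring), and $(A,\sigma)$ is Azumaya over $T$. Hence it is enough to prove: \emph{for any regular ring $R$ over which $(A,\sigma)$ is separable projective, $\aGW{A/R,\sigma,\veps}\cong \aGW{A/T,\sigma,\veps}$.} Applying this twice then yields the theorem.

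For the term-by-term comparison, fix $e\geq 0$ and $\frakp\in R^{(e)}$. Since $T/R$ is finite étale, the primes $\frakq_1,\ldots,\frakq_k$ of $T$ above $\frakp$ are exactly of height $e$, and $T\otimes_R k(\frakp)=\prod_i k(\frakq_i)$. Because $\sigma$ acts as the identity on $T$ (by definition of $T$), the induced decomposition $A(\frakp)_{A/R}=\prod_i A(\frakq_i)_{A/T}$ is preserved factorwise by $\sigma(\frakp)$. Flat base change for $\Ext$ gives canonical isomorphisms $\tilde{k}(\frakq_i)\cong \tilde{k}(\frakp)\otimes_{k(\frakp)}k(\frakq_i)$ and a compatible decomposition of the bimodule-with-involution $\tilde{A}(\frakp)$. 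Since a unimodular hermitian space over a product of algebras-with-involution splits orthogonally, one obtains
\[\tilde{W}_\veps(A/R(\frakp))\;\cong\;\bigoplus_{\frakq\mid\frakp}\tilde{W}_\veps(A/T(\frakq)).\]
Summing over $\frakp\in R^{(e)}$ and re-indexing identifies the $e$-th terms of $\aGW{A/R,\sigma,\veps}$ and $\aGW{A/T,\sigma,\veps}$ for every $e$. The $(-1)$-differential, given on both sides by localization, is manifestly compatible.

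To match the remaining differentials, fix $\frakp\in R^{(e)}$ and $\frakq'\in R^{(e+1)}$ with $\frakp\subsetneq\frakq'$; after localizing at $\frakq'$, the ring $T$ becomes a finite product $\prod_j T_j$ of regular local rings, each étale over $R_{\frakq'}$ of the same dimension. Tracing through the construction of $\partial_{I,\frakq}$ in \S\ref{subsec:second-res}, one shows that under the above identifications, $\partial_{\frakp,\frakq'}^{A/R}$ corresponds to $\sum_{(\tilde{\frakp},\tilde{\frakq}')}\partial_{\tilde{\frakp},\tilde{\frakq}'}^{A/T}$, the sum ranging over pairs $(\tilde{\frakp}\mid\frakp,\tilde{\frakq}'\mid\frakq')$ with $\tilde{\frakp}\subseteq \tilde{\frakq}'$. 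The verification reduces to three assertions: (i) any $A$-lattice $U\subseteq V$ splits as $U=\prod_j U_j$ compatibly with $V=\prod_j V_j$; (ii) one has $U^f=\prod_j U_j^{f_j}$ under this splitting; and (iii) the transfer map $\calT_{I,\frakq,A}$ of~\eqref{EQ:dual-m-S-k-A} factors through the corresponding factorwise transfers on the $T_j$.

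The main obstacle is step (iii): one must check that the short exact sequence $\frakm\embeds S\onto k(\frakq)$ and the functor $\Ext^e_R(-,R)\otimes A$ used to define $\calT$ behave compatibly under finite étale base change. This is ultimately bookkeeping with the flat base change isomorphism for $\Ext^*_R(-,R)$ and the Koszul description of $\tilde{k}(\frakp)$ in Proposition~\ref{PR:Koszul-properties}(iii); once this compatibility is in place, the isomorphism of complexes is automatic.
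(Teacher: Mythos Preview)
Your proposal is correct and follows essentially the same route as the paper: reduce to $R'=T:=\Cent(A)^{\{\sigma\}}$, identify the degree-$e$ terms via the product decomposition over primes of $T$ lying above a given $\frakp\in R^{(e)}$ (using flat base change for $\Ext$ to match the twists $\tilde{k}(-)$), and then verify compatibility with the differentials. The only organizational difference is in the differential check: rather than splitting over the factors $T_j$ from below, the paper fixes a prime $\frakQ\in R'^{(e+1)}$ above $\frakq'$, shows directly that $\partial_{I,\frakQ}[f_\frakQ]=(\partial_{\frakp,\frakq'}[f])_\frakQ$ for the generally \emph{non-prime} ideal $I=\frakp R'_\frakQ$, and then invokes Lemma~\ref{LM:change-of-ideals}(iii) to decompose $\partial_{I,\frakQ}$ as $\sum_{\frakP}\partial_{\frakP,\frakQ}$ over the primes $\frakP$ between $I$ and $\frakQ$. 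This packages your step~(iii) cleanly; note that $T_j/\frakp T_j$ is local but can have several minimal primes, so the passage from the factor indexed by $\tilde{\frakq}'_j$ to the sum over pairs $(\tilde{\frakp},\tilde{\frakq}'_j)$ is exactly the content of that lemma rather than pure bookkeeping.
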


\begin{proof}
	Since the structure morphism $R'\to A$ factors through
	$\Cent(A)^{\{\sigma\}}$, it is enough to prove
	the theorem when $R'=\Cent(A)^{\{\sigma\}}$. In particular,
	we may assume that $R'$ is a finite \'etale $R$-algebra.
	Under this additional assumption, we shall define an isomorphism 
	$\psi=\psi_{R,R'}:\aGW{A/R,\sigma,\veps}\xrightarrow{{\sim}} \aGW{A/R',\sigma,\veps}$ as follows.
	
	Let $\psi_{-1}$ be the identity map $W_\veps(A,\sigma)\to W_\veps(A,\sigma)$.
	
	To define $\psi_e$ for $e\geq 0$, let $\phi:\Spec R'\to \Spec R$ denote the morphism corresponding to the 
	structure map $R\to R'$.
	Since $R'$ is finite \'etale over $R$ the fibers of $\phi$
	are finite, $\phi(R'^{(e)})\subseteq R^{(e)}$ for all $e\geq 0$,
	and  $\frakp R'=\bigcap_{\frakP\in \phi^{-1}(\frakp)} \frakP$ for
	all $\frakp\in R^{(e)}$ 
	\cite[Tags \href{https://stacks.math.columbia.edu/tag/00HS}{00HS},
	\href{https://stacks.math.columbia.edu/tag/00GT}{00GT}]{DeJong_2018_stacks_project}.
	Given $\frakP\in R'^{(e)}$,    write
	$k'(\frakP)=\mathrm{Frac}(R'/\frakP)$, $\tilde{k}(\frakP)=\Ext^{e}_{R'_{\frakP}}(k'(\frakP),R'_\frakP)$,
	$A(\frakP)= A \otimes_{R'} k'(\frakP) $ and $\tilde{A}(\frakP)=  A \otimes_{R'} \tilde{k}'(\frakP) $. 
	For every $\frakP\in R'^{(e)}$ with $\frakp:=\phi(\frakP)$, define
	$
	\psi_{\frakp,\frakP}:\tilde{W}_\veps(A(\frakp))\to \tilde{W}_\veps(A(\frakP) ) 
	$ by $\psi_{\frakp,\frakP}[V,f]=[V_\frakP,f_\frakP]$. Here,
	we have identified $\tilde{A}(\frakp)_\frakP= (A \otimes_{R'}  R'_{\frakP}) \otimes \tilde{k}(\frakp)$ 
	with $\tilde{A}(\frakP)$ using the canonical isomorphism
	\[
	 R'_\frakP \otimes \tilde{k}(\frakp) =
	R'_\frakP \otimes  \Ext^e_{R_\frakp}(k(\frakp),R_\frakp) \cong
	\Ext^e_{R'_\frakP}(k'(\frakP),R'_\frakP)=\tilde{k}'(\frakP).
	\]
	Finally, let
	\[
	\psi_e=\sum_{\frakp\in R^{(e)}}\sum_{\frakP\in \iota^{-1}(\frakp)} \psi_{ \frakp,\frakP}:
	\bigoplus_{\frakp\in R^{(e)}}\tilde{W}_\veps(A(\frakp))
	\to
	\bigoplus_{\frakP\in R'^{(e)}}\tilde{W}_\veps(A(\frakP)).
	\]

	In order to show that $\psi_e$ is an isomorphism, it is enough
	to check that the sum $\sum_{\frakP\in \phi^{-1}(\frakp)}\psi_{\frakp,\frakP}:
	\tilde{W}_\veps(A(\frakp))\to 
	\bigoplus_{\frakP\in \iota^{-1}(\frakp)}\tilde{W}_\veps(A(\frakP))$ is an isomorphism.
	This follows readily from the fact
	that $R'(\frakp)=\prod_{\frakP\in \phi^{-1}(\frakp)} k'(\frakP)$,
	which means that the natural map $V\to \prod_{\frakP\in\iota^{-1}(\frakp)} V_\frakP$
	is an isomorphism of $R'(\frakp)$-modules for every $R'(\frakp)$-module $V$,
	and likewise for $A(\frakp)$-modules.

	It remains to check that $\psi=(\psi_e)_{e\geq -1}$
	is compatible with the differentials
	of $\aGW{A/R,\sigma,\veps}$ and $\aGW{A/R',\sigma,\veps}$.
	Let $\dd'_e$ denote the $e$-th differential of $\aGW{A/R',\sigma,\veps}$.
	Then we need to check that $\dd'_e\circ \psi_e=\psi_{e+1}\circ \dd_e$
	for all $e\geq -1$. We leave the straightforward case where $e=-1$  to the reader
	and assume  $e\geq 0$. In this case,
	it is enough to show that for all $\frakp\in R^{(e)}$ and $\frakQ\in R^{(e+1)}$,
	we have $\sum_{\frakP} \partial_{\frakP,\frakQ}\circ \psi_{\frakp,\frakP}=
	\psi_{\frakq,\frakQ}\circ \partial_{\frakp,\frakq}$, where
	$\frakq=\iota(\frakQ)$ and the sum is taken over all $\frakP\in \iota^{-1}(\frakp)$
	contained in $\frakQ$. We assume that $\frakp\subseteq \frakq$ otherwise both sides
	are $0$. We may further assume that $R$ is local and $\frakq$ is its maximal ideal.
	
	Fix $\frakp\in R^{(e)}$ and $(V,f)\in \tHerm[\veps]{A(\frakp)}$.
	Write $I=\frakp R'_\frakQ$. Then $R'_\frakQ/I$ is Cohen-Macaulay 
	\cite[\href{https://stacks.math.columbia.edu/tag/025Q}{Tag 025Q}]{DeJong_2018_stacks_project},
	so we may consider $\partial_{I ,\frakQ}$.
	We claim that 
	\begin{align}\label{EQ:partial-I-frakQ} 
	\partial_{I,\frakQ} [f_\frakQ]= (\partial_{\frakp, \frakq} [f])_\frakQ.
	\end{align}
	Indeed, choose an $A$-lattice $U$ in $V$ with $U^f\frakq\subseteq U \subseteq U^f$.
	As in the proof of Lemma~\ref{LM:partial-well-defined},
	we have $(U_\frakQ)^{f_\frakQ}=(U^f)_\frakQ$.
	Since $U^f_\frakQ$ is an $R'_\frakQ/I$-module and
	$U^f_\frakQ \frakQ=U^f_\frakQ R'_\frakQ \frakq\subseteq U^f_\frakQ$,
	it follows that  $ \partial_{I,\frakQ} [f_\frakQ]$
	is represented by $(U^f_\frakQ/U_\frakQ,\partial f_\frakQ)=(U^f/U,\partial f)_{\frakQ}$,
	hence our claim.
	
	Let $\frakQ\in R'^{(e+1)}$ and let
	$T= \{\frakP\in \phi^{-1}(\frakp)\suchthat \frakP\subseteq \frakQ\}$. 
	Then the natural maps $V_\frakQ\to \prod_{\frakP\in T} V_\frakP$ and
	$\tilde{A}(\frakp)_\frakQ\to \prod_{\frakP\in T}\tilde{A}(\frakp)_\frakP$  are isomorphisms.
	Given $\frakP\in T$, the (unique)   $A$-module section
	of $\tilde{A}(\frakp)_\frakQ\to   \tilde{A}(\frakp)_\frakP$	 is just
	the map $\iota_{\frakP,I}$ of Lemma~\ref{LM:change-of-ideals} (applied to $R'$),
	so $(V_\frakQ,f_\frakQ)\cong \oplus_{\frakP\in T}(V_\frakQ,\iota_{\frakP,I}\circ f_{\frakQ})$.
	By part (iii) of that lemma, 
	$\partial_{I,\frakQ}[f_\frakQ]=\sum_{\frakP\in T}\partial_{\frakP,\frakQ}[f_\frakP]$.
	Thanks to \eqref{EQ:partial-I-frakQ},
	this means that $\psi_{\frakq,\frakQ}[f]=\sum_{\frakP\in T}\partial_{\frakP,\frakQ}\psi_{\frakp,\frakP}[f]$,
	which is what we need to show.
\end{proof}

\section{Computing Residue Maps}
\label{sec:complete-intersection}

Suppose $R$, $(A,\sigma)$ and $\veps$ are as in Section~\ref{sec:second-res}.
Let $e\in \N\cup\{0\}$, $\frakp\in R^{(e)}$, $\frakq\in R^{(e+1)}$
and assume $\frakp\subseteq \frakq$.
We now show that  the definition of $\partial_{\frakp,\frakq}:\tilde{W}_\veps(A(\frakp))\to \tilde{W}_\veps(A(\frakq))$,
see   \ref{subsec:dd-description}, can be made even
more explicit, to the extent of allowing hands-on
computations.
This will give an elementary method to compute the differentials of $\GW{A,\sigma,\veps}$,
which will be used in the sequel.

To that end, we may assume that $R$ is local and $\frakq$ is its maximal ideal.
In fact, we shall further require that $R/\frakp$ is a   complete intersection ring,
because the general case can be deduced from this special case, see Remark~\ref{RM:res-in-general}.

\medskip

Similarly to  \ref{subsec:second-res}, we shall work in greater
generality, replacing $\frakp$ with any ideal
$I$ of $ R$ such that $S:=R/I$ is a $1$-dimensional \emph{complete intersection} ring.
Define $C$, $\frakm$, $k(I)$, $\tilde{S}$, $\tilde{\frakm}^{-1}$ and $\tilde{k}(I)$ as in \ref{subsec:second-res}.
By Proposition~\ref{PR:Koszul-properties}, $I/I^2$ is a free $S$-module of rank
$e$, and 
\[\tilde{S}\cong \Hom({\textstyle\bigwedge^e} (I/I^2),S)\]
canonically. In particular,  $\tilde{S}$ is a free  $S$-module of rank $1$.
For  the sake of brevity, given a regular sequence $r_1,\dots,r_e\in R$ generating $I$
and some $s\in R $,
we shall   write
$[r_1\wedge\dots\wedge r_e\mapsto s]$ to denote
the unique element of $\tilde{S}$ 
sending $(r_1+I^2)\wedge
\dots\wedge (r_e+I^2)$ to $s+I$. The same convention will be applied to other complete intersection
ideals, e.g., $IC^{-1}\subseteq RC^{-1}$ and $\frakq\subseteq R$.

Let us fix a regular sequence 
$\alpha_1,\dots,\alpha_e \in R$ generating 
$I$
and a regular sequence $\beta_1,\dots,\beta_{e+1}\in R$ generating $\frakq$.
Then $\tilde{S}$ is   generated by $[\alpha_1\wedge\dots\wedge \alpha_e\mapsto 1]$.
Since $I \subseteq \frakq$, we can find 
elements $ \gamma_{ji} \in R$ 
($1\leq i\leq e$, $1\leq j\leq e+1$)
such that $\sum_j \beta_j\gamma_{ji}=\alpha_i$
for all $i$.
In addition, since $\frakq C^{-1}=RC^{-1}$
(Lemma~\ref{LM:zero-divisors}),
there exist $\xi_1,\dots,\xi_{e+1}\in RC^{-1}$
such that $\sum_j \xi_j\beta_j=1$.

\begin{prp}\label{TH:description-of-tildeM}
	Let $d\in RC^{-1}$ denote the determinant of the matrix
	\[
	\left[
	\begin{matrix}
	\xi_1 & \gamma_{11} & \cdots & \gamma_{1e} \\
	\xi_2 & \gamma_{21} & \cdots & \gamma_{2e} \\
	\vdots & \vdots & & \vdots \\
	\xi_{e+1} & \gamma_{(e+1) 1} & \cdots & \gamma_{(e+1) e}
	\end{matrix}
	\right]
	\]
	Then 
	$\tilde{\frakm}^{-1}=\tilde{S}+[\alpha_1\wedge \dots\wedge \alpha_n\mapsto d]S$
	and the map $\calT_{I,\frakq,R}:\tilde{\frakm}^{-1}\to \tilde{k}(\frakq)$
	(see \eqref{EQ:dual-m-S-k-A}) sends 
	$\tilde{S}$ to $0$ and $[\alpha_1\wedge \dots\wedge \alpha_n\mapsto d]$
	to $[\beta_1\wedge\dots \wedge \beta_{e+1}\mapsto 1]\in\tilde{k}(\frakq)$.
\end{prp}

\begin{proof}
	We work in the bounded derived category of  $\rMod{R}$, denoted $\calD$.
	Recall that $M^\vee$ denotes $\Hom(M,R)$.
	If $P\in\calD$, we write the $i$-th differential
	of $P$ as $d_i^P$,
	and set $P^\vee=(P_{-i},(d_{1-i}^P)^\vee)_{i\in\Z}$
	and $\Delta_n P=T^n (P^\vee)$.
	We define $P^{\vee'}$ and $\Delta'_n P$ similarly, by
	dualizing relative to $RC^{-1}$.

	Let $E$ be a free $R$-module
	with basis $x_1,\dots,x_e$ and let $s\in E^\vee$
	be the $R$-module homomorphism sending $x_i$ to $\alpha_i$ for all $i$.
	Let $F$ be a free $R$-module
	with basis $y_1,\dots,y_{e+1}$ and let $t\in F^\vee$
	be the $R$-module homomorphism sending $y_j$ to $\beta_j$ for all $j$.
	Let $L$ and $K$ denote the Koszul complexes of $(E,s)$
	and $(F,t)$, respectively, see~\ref{subsec:Koszul-complex}.
	Then $\HH_0(E)=R/I=S$ and $\HH_0(F)=R/\frakq=k(\frakq)$.
	
	Let $\gamma:E\to F$
	denote the $R$-homomorphism
	determined by $\gamma x_i=\sum_j  y_j \gamma_{ji}$.
	Then $t=s\circ \gamma$, hence $\gamma$ determines a morphism $L\to K$,
	which is also denoted $\gamma$. Explicitly, $\gamma_i:L_i\to K_i$
	is just $\gamma^{\wedge i}:\wedge^i E\to \wedge^i F$ (and $\gamma^{\wedge i}=0$
	if $i<0$).
	The induced map $\HH_0(\gamma):S\to k(\frakq)$
	is the   quotient map.
	
	Let $G$ denote the cone of $\gamma:L\to K$. Then we have
	a  distinguished triangle
	\[
	T^{-1}G\xrightarrow{u}
	L\xrightarrow{\gamma} K\xrightarrow{v} G
	\]
	in $\calD$, where 
	$G_i=E_{i-1}\oplus F_i$ (viewed as column vectors) and
	\[d_i^G=\begin{bmatrix}-d_{i-1}^L & 0 \\ \gamma^{\wedge (i-1)} & d_i^K \end{bmatrix},
	\qquad
	u_i=\begin{bmatrix}\id_{E_i} & 0\end{bmatrix},
	\qquad
	v_i=\begin{bmatrix} 0 \\ \id_{F_{i}}\end{bmatrix}.
	\]
	The cochain complex $T^{-1}G$ is quasi-isomorphic to a projective resolution of $\frakm$ 
	(the image of $\HH_0(u):\HH_0(T^{-1}G)\to \HH_0(L)=S$ is precisely $\frakm$),
	hence
	$\HH_0(\Delta_e T^{-1}G)=\HH_0(\Delta_{e+1}G)$
	can be identified canonically
	with $\tilde{\frakm}^{-1}=\Ext^e_R(\frakm,R)$.
	Moreover, the   maps 
	$\tilde{S}\to\tilde{\frakm}^{-1}$ and	
	$\calT_{I,\frakq}:\tilde{\frakm}^{-1}\to \tilde{k}(\frakq)$
	are just $\HH_0(\Delta_e u)$ and $\HH_0(\Delta_{e+1}v)$, respectively.
	
	Let $L'$ and $G'$ denote the localizations of $L$ and $G$ at $C$, respectively.
	Then $L'$
	is a projective resolution of the $RC^{-1}$-module
	$k(I)$.
	Let $c:=\sum_i y_i\xi_i\in FC^{-1}$. 
	For $i\geq 0$, define $c_i:\wedge^i EC^{-1} \to \wedge^{i+1} FC^{-1}$
	by $c_i(z)=c\wedge \gamma^{\wedge i} z$.
	Let $f_i:L_iC^{-1}_i\to G_{i+1}C^{-1}$ denote $[\begin{smallmatrix} \id \\ c_i\end{smallmatrix}]$
	if $0\leq i\leq e$ and the zero map otherwise. It is routine
	to check that $f=(f_i)_{i\in \Z}$ is a morphism from $L'$ to $T^{-1}G'$ 
	(use $s_{RC^{-1}}(c)=1$ and $\gamma^{\wedge (i-1)}d_i^L=d_i^K\gamma^{\wedge i}$).
	
	Let $u'=u_{RC^{-1}}:T^{-1}G'\to L'$.
	Then $u'$ is a quasi-isomorphism,  
	and since $u' \circ f=\id_{L'}$,
	we see that $\HH_0( \Delta'_e f)$
	is the the inverse of $\HH_0(\Delta'_e u'):\tilde{ S}C^{-1}\to \tilde{\frakm}^{-1}C^{-1}$.
	Thus, writing the natural morphism $\Delta_{e+1} G\to \Delta'_{e+1} G'$ as $\iota$,
	the image of $\HH_0(\Delta'_e f\circ \iota)$ in $\HH_0(\Delta'_e L')=\tilde{k}(I)$
	is the copy of $\tilde{\frakm}^{-1} $ in $\tilde{k}(I)$.

	The morphisms $\Delta'_ef \circ \iota$ and $\Delta_{e+1}v$
	are illustrated, in degrees $0$ and $1$, in the following diagram 
	(the top row is degree $1$ and the bottom row is degree $0$),
	\begin{align}
	\label{EQ:Koszul-diag}
	\xymatrix{
	(\wedge^e F)^\vee \ar[d]|{(-1)^{e+1}(d^K_{e+1})^\vee}
	&
	{\begin{array}{c}
	(\wedge^{e-1} E)^\vee \\[-0.15cm] \oplus \\[-0.05cm] (\wedge^{e} F)^\vee 
	\end{array}}
	\ar[d]^{(*)} \ar[l]_{[0~\id]} \ar[r]^{[\id~c_{e-1}^\vee]}
	&	
	(\wedge^{e-1} EC^{-1})^{\vee'} \ar[d]|{(-1)^e(d^{L'}_{e})^{\vee' } }
	\\
	(\wedge^{e+1} F)^\vee  
	&
	{\begin{array}{c}
	(\wedge^{e} E)^\vee \\[-0.15cm] \oplus \\[-0.05cm] (\wedge^{e+1} F)^\vee 
	\end{array}}
	\ar[l]_{[0~\id]} \ar[r]^{[\id~c_{e}^\vee]}
	&
	(\wedge^{e} EC^{-1})^{\vee'}
	}
	\end{align}
	where $(*)$ is $
	(-1)^{e+1 }\left[\begin{smallmatrix} -(d^E_e)^\vee &  (\gamma^{\wedge e})^\vee \\
	0 & (d_{e+1}^K)^\vee \end{smallmatrix}\right]$.
	As noted
	in the proof of Proposition~\ref{PR:Koszul-properties}(iii),
	the isomorphism  $\HH_0(\Delta'L')=\coker( d_e^{L'})^{\vee' } \cong \tilde{k}(I)$
	sends  $[x_1\wedge\dots \wedge x_e\mapsto 1]$
	to $[\alpha_1\wedge\dots \wedge \alpha_e\mapsto 1]$,
	and the isomorphism  $\HH_0(\Delta_{e+1} K)=\coker( d_{e+1}^K )^\vee\cong \tilde{k}(\frakq)$
	sends $[y_1\wedge\dots \wedge y_{e+1}\mapsto 1]$
	to $[\beta_1\wedge\dots \wedge \beta_{e+1}\mapsto 1]$.
	It is routine to check
	that  $c_{e+1}^\vee$ 
	maps $[y_1\wedge\dots \wedge y_{e+1}\mapsto 1]$
	to $[x_1\wedge \dots\wedge x_e\mapsto d]$.
	The theorem follows readily from this observation
	and the bottom row of \eqref{EQ:Koszul-diag}.
\end{proof}

	In the remainder of this section,
	we show how Proposition~\ref{TH:description-of-tildeM}
	can be applied to compute $\partial_{\frakp,\frakq}$ in various situations.
	Given $a_1,\dots,a_n\in  \tilde{A}(\frakp)$ with $\veps a_i^{\tilde{\sigma}(\frakp)}=a_i$
	for all $i$,
	we write $\langle a_1,\dots,a_n\rangle_{A(\frakp)}$ to
	denote the $(\tilde{A}(\frakp),\tilde{\sigma}(\frakp))$-valued $\veps$-hermitian form on $A(\frakp)^n$
	given by 
	$((x_i),(y_i))\mapsto \sum_i x^\sigma_ia_i y_i$.
	The Witt equivalence relation is denoted $\sim$.
	The ring $R$ is assumed to be regular local of dimension $e+1$ if not otherwise indicated.

	\begin{example}\label{EX:dd-description-toy-example}
		Suppose that $R/\frakp$ is regular, and hence a discrete valuation ring.
		Then there exists $\alpha_{e+1}\in R$
		such that $\alpha_1,\dots,\alpha_e,\alpha_{e+1}$ generate $\frakq$.
		Using this generating set, we can take $\gamma_{ji}=\delta_{ji}$,
		$\xi_1=\dots=\xi_e=0$ and $\xi_{e+1}=\alpha_{e+1}^{-1}$.		
		By Proposition~\ref{TH:description-of-tildeM},
		$\tilde{\frakm}^{-1}$ is the $S$-submodule of $\tilde{k}(\frakp)$
		generated
		by $[\alpha_1\wedge\dots\wedge \alpha_e\mapsto \alpha_{e+1}^{-1}]$
		(this module already contains $\tilde{S}$),
		and $\calT:\tilde{\frakm}^{-1}\to \tilde{k}(\frakq)$
		maps $[\alpha_1\wedge\dots\wedge \alpha_e\mapsto \alpha_{e+1}^{-1}]$
		to $(-1)^e [\alpha_1\wedge\dots\wedge   \alpha_{e+1}\mapsto 1]$.
		
		We apply this to describe $\partial_{\frakp,\frakq}$
		in  the case $(A,\sigma,\veps)=(R,\id_R,1)$. It is enough
		to evaluate $\partial_{\frakp,\frakq}f_b$,
		where $f_b:=\langle [\alpha_1\wedge\dots\wedge   \alpha_{e }\mapsto b] \rangle_{k(\frakp)}$
		and $b\in \units{R_\frakp}$.

		Since $S$ is a discrete valuation ring
		with uniformizer $\alpha_{e+1}(\frakp)$, we can write $b\equiv \alpha_{e+1}^n r \bmod \frakp_\frakp$ 
		with $n\in\Z$
		and $r\in \units{R}$. We claim that, up to Witt equivalence, 
	\begin{align*}
	\partial_{\frakp,\frakq}\langle [\alpha_1\wedge\dots \wedge \alpha_e\mapsto \alpha_{e+1}^nr]\rangle_{k(\frakp)}
	=\left\{
	\begin{array}{ll}
	0 & n\in 2\Z \\
	\langle (-1)^{e } [\alpha_1\wedge\dots \wedge \alpha_{e+1}\mapsto r]\rangle_{k(\frakq)}
	& n\notin 2\Z.
	\end{array}
	\right.
	\end{align*}
	Indeed, write $f:=f_b$,
	let $m=\lfloor \frac{n}{2}\rfloor$
	and consider the submodule $U=\alpha_{e+1}^{-m}(\frakp)S $
	of $V =k(\frakp)$. One readily checks
	that $U^f=\alpha_{e+1}^{-m}(\frakp)S=U$ if $n$ is even
	and $U^f=\alpha_{e+1}^{-m-1}(\frakp)S=\alpha_{e+1}^{-1}(\frakp)U$
	if $n$ is odd. Thus, $\partial f$ is the zero form when $n$ even.
	On the other hand, when $n$ is odd, $U^f/U$ is a one-dimensional
	$k(\frakq)$-vector space generated by the image of $\alpha^{-m-1}_{e+1}(\frakp)$,
	and we have
	\begin{align*}
	 {\partial f}(\alpha_{e+1}^{-m-1}(&\frakp)+U,\alpha_{e+1}^{-m-1}(\frakp)+U)=
	 \calT(f(\alpha_{e+1}^{-m-1}(\frakp),\alpha_{e+1}^{-m-1}(\frakp)))=\\
	&
	 \calT[\alpha_1\wedge\dots \wedge \alpha_e\mapsto \alpha_{e+1}^{n-2m-2} r]=
	(-1)^{e } [\alpha_1\wedge\dots \wedge \alpha_{e+1}\mapsto r],
	\end{align*}
	so $\partial_{\frakp,\frakq} f_b
	\sim \langle (-1)^{e } [\alpha_1\wedge\dots \wedge \alpha_{e+1}\mapsto r]\rangle_{k(\frakq)}$.

	To illustrate this formula, let $F$ be a field,
	let  
	$R=F[x,y ] $ and consider the ideals  $\fraka=xR$, $\frakb= yR$ and $\frakc=xR+yR$.
	Then
	\begin{align*}
	\partial_{0,\fraka}\langle xy\rangle_{k(0)} & \sim \langle  [x\mapsto y]\rangle_{k(\fraka)}, \\ 
	\partial_{0,\frakb}\langle xy\rangle_{k(0)} & \sim \langle  [y\mapsto x]\rangle_{k(\frakb)}, \\
	\partial_{\fraka,\frakc}\langle  [x\mapsto y]\rangle_{k(\fraka)} 
	& \sim \langle - [x\wedge y\mapsto 1]\rangle_{k(\frakc)},\\
	\partial_{\frakb,\frakc}\langle  [y\mapsto x]\rangle_{k(\frakb)} 
	& \sim \langle - [y\wedge x\mapsto 1]\rangle_{k(\frakc)}
	=\langle   [x\wedge y\mapsto 1]\rangle_{k(\frakc)}.
	\end{align*}
	One can similarly check
	that all other components of $\dd_0$, resp.\ $\dd_1$,
	vanish on $\langle xy\rangle_{k(0)}$,
	resp.\ 
	$\langle   [x\mapsto y]\rangle_{k(\fraka)}$ and
	$\langle  [y\mapsto x]\rangle_{k(\frakb)}$.
	Thus, $\dd_1\dd_0\langle xy\rangle_{k(0)}
	\sim \langle   -[x\wedge y\mapsto 1],   [x\wedge y\mapsto 1]\rangle_{k(\frakc)}
	\sim 0$, as one would expect.
	\end{example}

\begin{example}\label{EX:second-residue-II}
	We continue to assume that $R/\frakp$
	is regular as in Example~\ref{EX:dd-description-toy-example},
	and choose $\alpha_{e+1}\in R$
	such that $\alpha_1,\dots,\alpha_{e+1}$ generate $\frakq$.
	Recall that $(A,\sigma)$ is a separable projective $R$-algebra
	with involution.
	We write $\frakm^{-1} =\{r\in k(\frakp)\suchthat
	\frakm r\subseteq S\}$ and $A \frakm^{-1}=  A \otimes \frakm^{-1} =\{a\in A(\frakp)\suchthat
	a\frakm  \subseteq A_S\} $.
	
	Let us identify
	$\tilde{k}(\frakp)$ with $k(\frakp)$ by sending $[\alpha_1\wedge \dots \wedge \alpha_e\mapsto 1]$ to $1$
	and $\tilde{k}(\frakq)$ with $\frakm^{-1}/S$
	by sending $[\alpha_1\wedge \dots \wedge \alpha_{e+1}\mapsto 1]$ to $\alpha_{e+1}^{-1}+S$.
	Then $\tilde{S}$ and
	$\tilde{\frakm}^{-1}$ correspond to $S$ and $\frakm^{-1}$, respectively,
	and we have induced identifications
	$\tilde{A}_S=A_S$ and $\tilde{A}(\frakq)=A \otimes (\frakm^{-1}/S)  =A\frakm^{-1} /A_S$.
	By Example~\ref{EX:dd-description-toy-example},
	the map $\calT_A: A\frakm^{-1} \to A \frakm^{-1}/A_S$ of \eqref{EQ:dual-m-S-k-A} is just $(-1)^e$ times 
	the   the quotient map. 
	Note that the  identifications just made are independent of $\alpha_{e+1}$,
	so they are canonical  when $e=0$.

	Write $K=k(\frakp)$, $k=k(\frakq)$ and $\tilde{k}=\frakm^{-1}/S$.
	Then  
	$\tilde{W}_\veps(A(\frakp))=W_\veps(A_K,\sigma_K)$, $\tilde{W}_\veps(A(\frakq))=\tilde{W}_\veps(A_k,\sigma_k;
	\tilde{k})$ (notation as in Example~\ref{EX:herm-line-bundle}),
	and  $\partial_{\frakp,\frakq}:W_\veps(A_K,\sigma_K)\to\tilde{W}_\veps(A_k,\sigma_k;\tilde{k})$
	can be described as follows:
	Given $(V,f)\in \Herm[\veps]{A_K,\sigma_K}$,
	choose an $A$-lattice $U\subseteq V$
	such that $ U^f \frakm\subseteq U\subseteq U^f$,
	and define
	$\partial_{\frakp,\frakq}[V,f]=[U^f/U,\partial f]$,
	where $\partial f$ is given by ${\partial f}(x+U,y+U)=(-1)^{e }
	{f}(x,y)+A_S$ (here we identify
	$A \frakm^{-1}/A_S$ with $ A \otimes\tilde{k} $).
	
	Taking $(A,\sigma,\veps)=(R,\id_R,1)$   and fixing an isomorphism
	$\frakm^{-1}/S\cong k$, the induced map 
	$W (K)\to \tilde{W} (k)\cong W (k)$ 
	is (up to sign)  the usual \emph{second residue
	map}, see \cite[Definition 6.2.5]{Scharlau_1985_quadratic_and_hermitian_forms}, for instance.
	
	The previous paragraphs also imply that
	$(-1)^e\cdot \partial_{\frakp,\frakq}^A$ can be identified non-canonically
	with $\partial_{0,\frakm}^{A_S}$ when $S=R/\frakp$ is regular.
\end{example}

\begin{example}\label{EX:singular-residue}
	Let $F$ be a field,  let $R$ denote
	the localization of $F[x,y]$ at the ideal generated
	by $x$ and $y$, and let $\frakp=(x^2-y^5)R$ and $\frakq=xR+yR$.
	Then $R/\frakp$ is a complete intersection ring which is not regular.
	Suppose $(A,\sigma,\veps)=(R,\id_R,1)$.
	We apply Proposition~\ref{TH:description-of-tildeM}  
	to compute $\partial_{\frakp,\frakq}\langle [x^2-y^5\mapsto xy]\rangle_{k(\frakp)}$.

	To that end,
	we identify
	$\tilde{k}(\frakp)$ with $k(\frakp)$ by mapping $[x^2-y^5\mapsto 1]$ to $1$
	and $\tilde{k}(\frakq)$ with $k(\frakq)$ by mapping $[x\wedge y\mapsto 1]$ to $1$.
	It is also convenient to identify
	$k(\frakp)$ with $F(z)$ via $x\mapsto z^5$, $y\mapsto z^2$;
	the ring $S=R/\frakp$ corresponds to the localization of $F[z^2,z^5]$
	at the ideal $M$ generated by $\{z^2,z^5\}$.
	
	Take $\alpha_1=x^2-y^5$, $\beta_1=x$, $\beta_2=y$,
	$\gamma_{11}=x$, $\gamma_{21}=-y^4$, $\xi_1=x^{-1}$ and $\xi_2=0$.
	Then, under the previous identifications,  Proposition~\ref{TH:description-of-tildeM}
	asserts that $\tilde{\frakm}^{-1}=S+S(-y^4x^{-1})=S+Sz^3 =F[z^2,z^3]_M$,
	and $\calT:\tilde{\frakm}^{-1}\to k(\frakq)$
	maps $S$ to $0$ and $-z^3=-y^4x^{-1}(\frakp)$ to $1$.
	
	Writing $f=\langle xy\rangle_{k(\frakp)} =\langle z^7\rangle_{k(\frakp)}$
	and $U=z^{-1}F[z]_M$, it is routine to
	check that 
	$U^f=\{u\in k(z)\suchthat z^6F[z]u\subseteq S\}=z^{-2}F[z]_M$.
	Thus, $U^f/U$ is a simple $S$-module generated
	by $z^{-2}+U$.
	Since $ {\partial f}(z^{-2}+U,z^{-2}+U)= \calT(z^3)=-1$,
	we conclude that
	$\partial_{\frakp,\frakq}\langle xy\rangle_{k(\frakp)}\sim \langle -1\rangle_{k(\frakq)}$.
\end{example}

\begin{remark}
	\label{RM:res-in-general}
	Let $R$ be a regular local ring of dimension $e+1$,
	let   $\frakq$ denote its maximal ideal,
	and let   $\frakp\in R^{(e)}$. {\it We do not assume that $R/\frakp$ is a complete intersection ring.}
	Given $(V,f)\in\tHerm[\veps]{A(\frakp)}$,
	we can use Proposition~\ref{TH:description-of-tildeM} to 
	describe $\partial_{\frakp,\frakq}[V,f]$ as follows. 	
	By \cite[Theorem~2.12(b)]{Bruns_1993_cohen_macaulay_rings}, $\frakp$ contains a regular
	sequence $\alpha_1,\dots,\alpha_e$. Write $I=\alpha_1R+\dots+\alpha_eR$.
	Then $R/I$ is a complete intersection ring of dimension $1$.
	Thanks to  Lemma~\ref{LM:change-of-ideals}(iii), we have
	\[\partial_{\frakp,\frakq}[V,f]=\partial_{I,\frakq}[V,\iota_{\frakp, I}\circ f] ,\]
	and the right hand side can be computed using Proposition~\ref{TH:description-of-tildeM}.
	
	This approach requires a description of $\iota_{\frakp,I}:\tilde{k}(\frakp)\to \tilde{k}(I)$, 
	which is given as follows.
	Defining $C$ as in \ref{subsec:second-res},
	let $\pi_1,\dots,\pi_e\in \frakp C^{-1}$ be a regular sequence generating $\frakp C^{-1}$ in $R C^{-1}$.
	Since $I\subseteq \frakp$, we can write $\alpha_i=\sum_j \pi_j u_{ji}$ for some 
	$\{u_{ji}\}_{j,i}\subseteq RC^{-1}$.
	Then $\iota_{\frakp, I}$ is determined by  
	\[\iota_{\frakp, I}[\pi_1\wedge\dots\wedge \pi_e\mapsto 1] =
	\det(u_{ij})\cdot [\alpha_1\wedge \dots\wedge \alpha_e\mapsto 1].\]
	Indeed, let $s:R^e\to R$, $t:R^e\to R$ and $u:R^e\to R^e$ be given by
	$s(r_1,\dots,r_e)=\sum_i \alpha_i r_i$, $t(r_1,\dots,r_e)=\sum_i \pi_i r_i$
	and $u(r_1,\dots,r_e)=(\sum_i u_{1i} r_i,\dots, \sum_i u_{ei} r_i)$.
	As in the proof of Proposition~\ref{TH:description-of-tildeM}, 
	$u$ determines a morphism $u =(u^{\wedge i})_{i\geq 0}: K(R^n,s)\to K(R^n,t)$
	such that $\HH^0(u)$ is the quotient map $RC^{-1}/IC^{-1}\to RC^{-1}/\frakp C^{-1}$.
	The assertion now follows  by examining $(u^{\wedge e})^\vee: (K(R^e,t)_e)^\vee\to
	(K(R^e,s)_e)^\vee$ and using
	Proposition~\ref{PR:Koszul-properties}(iii).
\end{remark}

\section{Functoriality}
\label{sec:functoriality}

In this section, we prove
that  the Gersten--Witt complex (see Section~\ref{sec:second-res}) is 
compatible with 
a number of natural operations, e.g.\   base change of $(A,\sigma)$.
We stress that the elementary proofs are possible thanks to the new 
construction of  the Gersten--Witt complex in Section~\ref{sec:second-res}.

Throughout, $(A,\sigma)$
and $(B,\tau)$
denote $R$-algebras with involution, $\veps \in \mu_2(R)$,
and $M$ is an invertible $R$-module. 
Ultimately, we will specialize to
the case  where $R$ is regular and $A$ and $B$ are separable projective over $R$.
Recall \cite[\S2B]{Lam_1999_lectures_on_modules_rings}
that every 
$V\in\rproj{A}$
admits a finite dual basis, i.e.,
a collection $\{(x_i,\phi_i)\}_{i=1}^n\subseteq V\times \Hom_A(V,A)$
such that
$x=\sum_i x_i\cdot \phi_ix$ for all $x\in V$.

\begin{remark}
Regard $\rproj{A}$ and $\rproj{B}$ as exact hermitian categories
using the involutions $\sigma$ and $\tau$
(see~\ref{subsec:herm-forms} and Example~\ref{EX:herm-ring-with-inv}).
The operations
that we consider in this section --- base change, involution trace and $e$-transfer ---
are induced by exact hermitian functors between 
$\rproj{A}$ and $\rproj{B}$ in the sense of Balmer \cite{Balmer_2005_Witt_groups}.
These functors also respect the $R$-codimension filtration on the bounded derived categories
of $\rproj{A}$ and $\rproj{B}$,
and therefore induce a morphism between the   Gresten--Witt complexes
of $A$ and $B$ \`a la Gille
\cite{Gille_2007_hermitian_GW_complex_I},
\cite{Gille_2009_hermitian_GW_complex_II}.
This morphism agrees with the respective operation in degrees $-1$ and $0$, but
it is  {\it a~priori} not  clear that the same 
holds in degrees $\geq 1$. The compatibility in all degrees, which
we establish in this section for the Gersten--Witt complex
of Section~\ref{sec:second-res}, will be important later on.
\end{remark}

\subsection{Base Change}
\label{subsec:base-change}

Let  
$\rho:(A,\sigma)\to (B,\tau)$
be a morphism of $R$-algebras with involution.
There is a \emph{base change  along $\rho$}
functor $\rho_* : \Herm[\veps]{A,\sigma;M}\to \Herm[\veps]{B,\tau;M}$
(notation as in Example~\ref{EX:herm-line-bundle})
given on objects by $\rho_*(V,f)=(V\otimes_A B, \rho f)$,
where $\rho f$ is determined by
\[
\rho f(x\otimes b,x'\otimes b')= b^\tau\cdot (\rho\otimes \id_M)(f(x,x'))\cdot b'
\qquad
(x,x'\in V,\, b,b'\in B),
\]
and on morphisms by $\rho_*\vphi=\vphi\otimes_A \id_B$. The hermitian form
$\rho f $ is unimodular by the following lemma,
which also tells us that that $\rho$ induces a group homomorphism
$ W_\veps(A,\sigma;M)\to W_\veps(B,\tau;M)$. The latter is also denoted $\rho_*$.

\begin{lem}\label{LM:base-change-well-def}
	In the previous notation, $\rho_*(V,f)\in \Herm[\veps]{B,\tau}$.
	If $(V,f)$ is metabolic, then so is $\rho_* (V,f)$.
\end{lem}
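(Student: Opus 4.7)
The plan is to verify three things in order: first, that $V\otimes_A B$ lies in $\rproj{B}$; second, that $\rho f$ is unimodular; and third, that metabolicity is preserved. Everything will flow from the interplay between base change and the dualizing functor $\Hom^\sigma_A(-, A\otimes M)$.

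First, since $V\in\rproj{A}$, $V$ is a direct summand of some $A^n$, so $V\otimes_A B$ is a direct summand of $B^n$ and hence lies in $\rproj{B}$. That $\rho f$ is biadditive and $\veps$-hermitian over $(B,\tau)$ follows formally from the corresponding properties of $f$ together with the identities $(b^\tau)^\tau=b$ and $(\rho\otimes\id_M)\circ\theta_A=\theta_B\circ (\rho\otimes\id_M)$, where $\theta_A=\sigma\otimes\id_M$ and $\theta_B=\tau\otimes\id_M$; these are straightforward verifications.

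The key step is unimodularity. For any $V\in\rproj{A}$ and any $A$-module $N$ the natural map $\Hom_A(V,N)\otimes_A B\to \Hom_B(V\otimes_A B, N\otimes_A B)$ is an isomorphism (reduce to $V=A^n$ using a dual basis). Applying this with $N=A\otimes M$ and keeping track of the right $A$-module structures twisted by $\sigma$, resp.\ $\tau$, we obtain a natural isomorphism
\[
\Phi:\Hom^\sigma_A(V, A\otimes M)\otimes_A B\xrightarrow{\sim} \Hom^\tau_B(V\otimes_A B, B\otimes M).
\]
Unravelling the definitions, the map $x\otimes b\mapsto \rho f(x\otimes b,-)$ factors as
\[
V\otimes_A B\xrightarrow{(x\mapsto f(x,-))\otimes \id_B} \Hom^\sigma_A(V, A\otimes M)\otimes_A B\xrightarrow{\Phi} \Hom^\tau_B(V\otimes_A B, B\otimes M).
\]
The first arrow is an isomorphism because $f$ is unimodular, and $\Phi$ is an isomorphism, so the composite is an isomorphism. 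This shows $\rho f$ is unimodular.

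For the metabolic claim, let $L\subseteq V$ be a Lagrangian, so $L, V/L\in\rproj{A}$ and $L=L^\perp$. Then the sequence $0\to L\to V\to V/L\to 0$ splits, and tensoring with $B$ gives a split short exact sequence showing that $N:=L\otimes_A B$ and $(V\otimes_A B)/N\cong (V/L)\otimes_A B$ both lie in $\rproj{B}$. The inclusion $f(L,L)=0$ immediately gives $\rho f(N,N)=0$, hence $N\subseteq N^\perp$. For the reverse inclusion, observe that the composite $V\xrightarrow{v\mapsto f(v,-)}\Hom^\sigma_A(V,A\otimes M)\twoheadrightarrow \Hom^\sigma_A(L,A\otimes M)$ (the second map being restriction along $L\hookrightarrow V$, which is surjective because $L$ is a direct summand) is surjective with kernel $L^\perp=L$. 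Tensoring this short exact sequence with $B$ and composing with $\Phi$ (for $V$ replaced by $L$) yields a short exact sequence
\[
0\to N\to V\otimes_A B\to \Hom^\tau_B(N, B\otimes M)\to 0
\]
whose right map is $w\mapsto \rho f(w,-)|_N$; its kernel is $N^\perp$, forcing $N=N^\perp$. This furnishes the Lagrangian for $\rho_*(V,f)$.

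I expect no serious obstacle; the main care needed is in checking that $\Phi$ respects the twisted right $A$- and $B$-module structures and that the restriction map in the metabolic argument is indeed surjective (which uses projectivity of $V/L$ to split the inclusion $L\hookrightarrow V$). Both are routine, so the entire argument is essentially an exercise in naturality.
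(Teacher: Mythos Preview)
Your argument is correct. The unimodularity part matches the standard argument (which the paper simply attributes to \cite[I.7]{Knus_1991_quadratic_hermitian_forms}), so there is no real difference there.

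For the metabolic claim your route genuinely diverges from the paper's. The paper invokes \cite[Proposition~I.3.7.1]{Knus_1991_quadratic_hermitian_forms} to produce a \emph{complementary} Lagrangian $L'$ with $V=L\oplus L'$ and $L'=L'^\perp$; then both $L\otimes_A B$ and $L'\otimes_A B$ are totally isotropic and fill out $V\otimes_A B$, and the unimodularity of $\rho f$ forces $(L\otimes_A B)^\perp=L\otimes_A B$. You instead tensor the Lagrangian short exact sequence $0\to L\to V\to \Hom^\sigma_A(L,A\otimes M)\to 0$ and identify the resulting map with $w\mapsto\rho f(w,-)|_N$ via your $\Phi$. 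Your approach has the mild advantage of not appealing to the existence of a complementary Lagrangian (which in \cite{Knus_1991_quadratic_hermitian_forms} uses $2\in\units{R}$); the paper's approach is shorter once that black box is granted. One small point worth making explicit in your write-up: the kernel of $w\mapsto\rho f(w,-)|_N$ is $\{w:\rho f(w,N)=0\}$, which equals $N^\perp=\{w:\rho f(N,w)=0\}$ only because $\rho f$ is $\veps$-hermitian.
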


\begin{proof}
	The first statement follows by a simple adaptation of the argument in
	\cite[I.7]{Knus_1991_quadratic_hermitian_forms}.
	If $L\subseteq V$ is an $A$-submodule satisfying $L=L^\perp$ and $V/L\in\rproj{A}$, then there exists
	another $A$-submodule $L'$
	such that  $V=L\oplus L'$ and $L'=L'^\perp$ \cite[Proposition I.3.7.1]{Knus_1991_quadratic_hermitian_forms}. 
	It is easy to see
	that $\rho f(L\otimes_A B,L\otimes_A B)=\rho f(L'\otimes_A B,L'\otimes_A B)=0$.
	Since $V\otimes_AB=(L\otimes_A B)\oplus (L'\otimes_A B)$, the unimodularity of 
	$\rho f$ forces $(L\otimes B)^{\perp(\rho f)}=L\otimes B$, so $\rho f$ is metabolic.
\end{proof}

\begin{thm}\label{TH:base-change-is-compatible-with-GW}
	Suppose that $R$
	is regular 
	and that $A$ and $B$
	are separable  projective  $R$-algebras.
	Then $\rho :(A,\sigma)\to (B,\tau)$ induces
	a morphism of cochain complexes
	$\rho=(\rho_i)_{i\in \Z}:\aGW{A,\sigma,\veps}\to \aGW{B,\tau,\veps}$
	with  $\rho_{-1}=\rho_*$ 
	and   $\rho_e=\bigoplus_{\frakp\in R^{(e)}} {\rho}(\frakp)_*$.
\end{thm}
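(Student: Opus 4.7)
The plan is to verify that each $\rho_e$ is a well-defined homomorphism and that the family $(\rho_i)_{i \geq -1}$ commutes with the differentials. The case $\rho_{-1} = \rho_*$ is Lemma~\ref{LM:base-change-well-def}. For $\frakp \in R^{(e)}$, the map $\rho(\frakp)_*\colon \tilde{W}_\veps(A(\frakp)) \to \tilde{W}_\veps(B(\frakp))$ is defined analogously, with forms now valued in the invertible $k(\frakp)$-module $\tilde{k}(\frakp)$; the proof of Lemma~\ref{LM:base-change-well-def} carries over verbatim, because in our zero-dimensional setting the Cohen--Macaulay hermitian category of Corollary~\ref{CR:CM-Azumaya-properties} is just $\rproj{A(\frakp)}$ (respectively $\rproj{B(\frakp)}$). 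Commutation with $\dd_{-1}$ reduces to the tautological identification $(V \otimes_A B)_\frakp \cong V_\frakp \otimes_{A_\frakp} B_\frakp$ for $\frakp \in R^{(0)}$, which is also compatible with the forms.

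For the main content, given $e \geq 0$, $\frakp \in R^{(e)}$, and $\frakq \in R^{(e+1)}$ with $\frakp \subseteq \frakq$, I need to prove
\[
\partial^B_{\frakp,\frakq} \circ \rho(\frakp)_* = \rho(\frakq)_* \circ \partial^A_{\frakp,\frakq}.
\]
Localizing at $\frakq$, I may assume $R$ is local of dimension $e+1$ with maximal ideal $\frakq$. Write $S = R/\frakp$, $\frakm = \frakq/\frakp$, and let $C$ denote the elements of $R$ acting faithfully on $S$. Fix $(V,f) \in \tHerm[\veps]{A(\frakp)}$ and use Lemma~\ref{LM:U-prime-properties}(ii) to pick an $A_S$-lattice $U \subseteq V$ with $U^f \frakm \subseteq U \subseteq U^f$. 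Set $V' := V \otimes_{A(\frakp)} B(\frakp)$ and let $U', (U^f)' \subseteq V'$ be the $B_S$-submodules generated by the images of $U$ and $U^f$ under the canonical map $V \to V'$. I plan to establish:
\begin{enumerate}[label=(\alph*)]
\item $(U')^{\rho f} = (U^f)'$; in particular $U'$ is a $B_S$-lattice with $(U')^{\rho f} \frakm \subseteq U' \subseteq (U')^{\rho f}$;
\item the canonical $B(\frakq)$-linear map $(U^f/U) \otimes_{A(\frakq)} B(\frakq) \to (U^f)'/U'$ is an isomorphism carrying $\rho(\frakq)_*(\partial_U f)$ to $\partial_{U'}(\rho f)$.
\end{enumerate}
Granted these, both sides of the target identity represent the Witt class of $((U^f)'/U', \partial_{U'}(\rho f))$, finishing the argument. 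Claim (b) reduces to the naturality of the short exact sequence $\tilde{A}_S \embeds A\tilde{\frakm}^{-1} \onto \tilde{A}(\frakq)$ under the base change $-\otimes_A B$; since each term is of the form $A \otimes_R (-)$ for an $R$-module, tensoring with $B$ over $A$ recovers the analogous sequence $\tilde{B}_S \embeds B\tilde{\frakm}^{-1} \onto \tilde{B}(\frakq)$ (exactness surviving because $B$ is $R$-flat), and this compatibility transfers the trace map $\calT$ of \eqref{EQ:dual-m-S-k-A} to its $B$-counterpart.

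The main obstacle is claim (a), which asserts that the dual-lattice construction commutes with base change along $\rho$. Via the identification $U^f \cong \Hom_{A_S}^\sigma(U, \tilde{A}_S)$ from the proof of Lemma~\ref{LM:U-prime-properties}(i), this amounts to showing that the canonical comparison
\[
\Hom_{A_S}^\sigma(U, \tilde{A}_S) \otimes_{A_S} B_S \longrightarrow \Hom_{B_S}^\tau(U \otimes_{A_S} B_S, \tilde{B}_S)
\]
is an isomorphism. Since $\rho$ need not make $B$ flat over $A$, this is not formal. I would follow the strategy of the proof of Proposition~\ref{PR:CM-Azumaya-properties}: pass to a strict henselization of $R_\frakq$, which is faithfully flat and under which the relevant Exts and Homs base change well, using that $U$ is Cohen--Macaulay of dimension $1$ (Proposition~\ref{PR:CM-properties-one-dim}), so that $\Ext^i_{A_S}(U, \tilde{A}_S) = 0$ for $i \geq 1$ by Proposition~\ref{PR:CM-Azumaya-properties}(i). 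Over the henselization, $A$ and $B$ become products of matrix algebras over local rings, and Morita equivalence combined with the usual base-change-of-$\Hom$ formula for finitely presented modules over commutative rings then completes the verification.
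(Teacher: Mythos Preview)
Your overall plan matches the paper's: reduce to checking $\partial^B_{\frakp,\frakq}\circ\rho(\frakp)_*=\rho(\frakq)_*\circ\partial^A_{\frakp,\frakq}$, localize at $\frakq$, pick a good lattice $U$, and show that the dual lattice of $U\otimes_A B$ is $U^f\otimes_A B$. The difference is in how you handle claim (a).

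You write that ``$\rho$ need not make $B$ flat over $A$'' and propose to circumvent this via strict henselization and Morita equivalence. But in fact $B$ \emph{is} finite projective (hence flat) as a right $A$-module: since $A$ is separable projective over $R$ and $B$ is projective over $R$, projectivity of $B$ over $A$ follows (see \ref{subsec:Azumaya}). The paper invokes exactly this, and then finishes (a) by a short dual-basis computation: choose a dual basis $\{(b_i,\phi_i)\}\subseteq B\times\Hom_A(B,A)$, tensor $\phi_i$ with $\id_{\tilde{k}(\frakp)}$, and for any $v\in (U\otimes_A B)^{\rho f}$ use unimodularity of $f$ to find $v_i\in V$ with $f(v_i,x)=\phi'_i(\rho f(v,x\otimes 1))$; then $v_i\in U^f$ and $v=\sum_i v_i\otimes b_i^\tau$. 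This is elementary and avoids any passage to henselizations.

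Your henselization route is not wrong in principle, but it is more work than needed, and your sketch omits a couple of points: after strict henselization $\frakp R'$ need not be prime (you would need to work with the ideal $I=\frakp R'$ and the generalized residue $\partial_{I,\frakq'}$), and you would still need to analyse an arbitrary involution-preserving $R'$-algebra map between products of matrix algebras. Also note that your claim (b) already presupposes that $U\otimes_A B\to V'$ is injective (so that $U'\cong U\otimes_A B$), which again uses flatness of $B$ over $A$; you use this without comment. Once you observe the projectivity of $B$ over $A$, both (a) and (b) become immediate, as in the paper.
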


\begin{proof}
	Let $\dd_e$ and $\dd'_e$
	denote the $e$-th differentials
	of $\aGW{A,\sigma,\veps}$ and $\aGW{B,\tau,\veps}$,
	respectively.
	We need to show that
	$\dd'_e  \circ \rho_e=\rho_{e+1} \circ \dd_e$
	for all $e\geq -1$. The case $e=-1$
	is clear because $\rho_*$ is compatible with localization.
	Proving the case   $e\geq 0$
	amounts to showing that $
	\partial_{\frakp,\frakq}^B \circ  {\rho}(\frakp)_*
	= {\rho}(\frakq)_* \circ  \, \partial_{ \frakp,\frakq}^A $
	for all $\frakp\in R^{(e)}$ and $\frakq\in R^{(e+1)}$
	with $\frakp\subseteq\frakq$.
	To that end,
	we may assume
	that $R$ is local and $\frakq$ is its maximal ideal.
	We then apply the notation of \ref{subsec:second-res}, taking $I=\frakp$.
	
	Let $(V,f)\in \tHerm[\veps]{A(\frakp) }$
	and let $U$ be an $A $-lattice in $V$
	such that $ U^f \frakm \subseteq U\subseteq U^f$.
	Since $A$ is separable over $R$, the algebra $B$ is finite projective as a left (or right) 
	$A$-module
	(see \ref{subsec:Azumaya}).
	Thus,
	we may and shall view $U\otimes_A B $ and $U^f\otimes_A B $
	as submodules of $V\otimes_A B=V\otimes_{A(\frakp)} B(\frakp)$.
	Write $g= {\rho}(\frakp) f$.
	Provided that $(U\otimes_A B)^{g}=U^f\otimes_A B$,
	it is easy to see that
	the natural isomorphism $(U^f\otimes_A B)/(U\otimes_A B)\to (U^f/U)\otimes_{A(\frakq)} B(\frakq)$
	is an isometry
	from $\partial g$ to $ {\rho}(\frakq)(\partial f)$, 
	which is exactly what we want.
	We finish by showing that $(U\otimes_A B)^{g}=U^f\otimes_A B$.
	
	That $(U\otimes_A B)^{  g}\supseteq U^f\otimes_A B$ is straightforward,
	so  we turn to show the converse.
	We noted earlier that $B$ is finite projective as a right $A$-module,
	so it   has a finite dual basis $\{(b_i,\phi_i)\}_{i=1}^n\subseteq B\times \Hom_A(B,A)$.
	For all $i\in \{1,\dots,n\}$, let $b'_i=b_i(\frakp)\in B(\frakp)$
	and
	$\phi'_i=  \phi_i \otimes \id_{\tilde{k}(\frakp)} \in \Hom_A(\tilde{B}(\frakp),\tilde{A}(\frakp))$.
	Then $\phi'_i(\tilde{B}_S)\subseteq\tilde{A}_S$ and $b=\sum_i b'_i\cdot \phi'_i b$
	for all $b\in \tilde{B}(\frakp)$.
	Let $v\in (U\otimes_A B)^{g}$.
	Since $f$ is unimodular, for all $i\in \{1,\dots,n\}$,
	there exists $v_i\in V$
	such that $  f(v_i,x)=\phi'_i(g(v,x\otimes 1))$ for all $x\in V$.
	Since $ {g}(v,U\otimes 1)\subseteq \tilde{B}_S$, we have $\phi'_i(  g(v,U\otimes 1))\subseteq \tilde{A}_S$,
	hence  $v_i\in U^{  f}$.
	Now, for all $x\in V$ and $b\in B(\frakp)$, 
	we have $ {g}(v ,x \otimes b)=\sum_i b'_i \cdot \phi'_i(   g(v,x\otimes 1)) b=
	\sum_i b'_i \cdot   f(v_i,x)b=   g(\sum_i v_i \otimes b'^\tau_i, x \otimes b)$,
	so $v=\sum_i v_i \otimes b'^\tau_i\in U^f\otimes_A B$.
\end{proof}

\subsection{Involution Traces}
\label{subsec:Scharlau}

Throughout this subsection,
we assume that $B$
is an $R$-subalgebra of $A$ ($\sigma|_B\neq \tau$ is possible), and $A$ is a finite projective right 
$B$-module; the latter is automatic if $A$ and $B$ are separable projective over $R$ 
(see~\ref{subsec:Azumaya}). We further let $\gamma\in \mu_2(R)$.

\medskip

A function $\pi:A\to B$ is called an \emph{involution $\gamma$-trace}
(relative to $\sigma$ and $\tau$) if:
\begin{enumerate}[label=(T\arabic*)]
	\item  $\pi$ is additive and $\pi(b_1^\sigma a b_2)=b_1^\tau \pi(a)b_2$
	for all $b_1,b_2\in B$, $a\in A$;
	\item  $\pi\circ \sigma=\gamma \tau\circ \pi$; 
	\item \label{item:inv-trans:regularity} the map $a\mapsto [x\mapsto \pi(ax)]:A\to \Hom_B(A ,B )$
	is an isomorphism.
\end{enumerate}
Note that condition \ref{item:inv-trans:regularity} is 
equivalent to the unimodularity
of the $\gamma$-hermitian form $(a,a')\mapsto \pi(a^\sigma a'):A\times A\to B$
over $(B,\tau)$.
Under the additional assumptions $\tau=\sigma|_A$ and $\gamma=1$, the map $\pi$
is an involution trace in the sense of  \cite[I.7.2.4]{Knus_1991_quadratic_hermitian_forms}.

Every involution $\gamma$-trace $\pi:A\to B$ induces a functor
$\pi_*:\Herm[\veps]{A,\sigma;M}\to \Herm[\gamma\veps]{B,\tau;M}$
given on objects by $\pi_*(V,f)=(V_B,\pi f)$,
where $\pi f= (\pi\otimes \id_M) \circ f$, and by
$\pi_*\vphi=\vphi$ on morphisms. This is well-defined by the following lemma,
which also tells us that we have an induced
group homomorphism $\pi_*:W_\veps(A,\sigma;M)\to W_{\gamma\veps}(B,\tau; M)$.

\begin{lem}
	In the previous notation, $\pi_*(V, f)\in \Herm[\gamma\veps]{B,\tau;M}$.
	If $(V,f)$ is metabolic, then so is $\pi_*(V, f)$.
\end{lem}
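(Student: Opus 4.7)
The plan is to verify the two assertions in turn, in the spirit of Lemma~\ref{LM:base-change-well-def}.  Write $\pi'=\pi\otimes\id_M$ throughout.

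I would first check that $\pi_*(V,f)\in \Herm[\gamma\veps]{B,\tau;M}$.  The underlying module $V_B$ lies in $\rproj{B}$ because $V\in\rproj{A}$ and $A$ is finite projective over $B$, a standing assumption.  The sesquilinearity $\pi f(xb_1,yb_2)=b_1^\tau\pi f(x,y)b_2$ is immediate from (T1), and the $\gamma\veps$-hermitian symmetry $\pi f(x,y)=\gamma\veps\,\pi f(y,x)^{\tau\otimes\id_M}$ follows from the $\veps$-hermitian property of $f$ combined with (T2).  The substantive point is unimodularity.

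For unimodularity, I would observe that the adjoint $\hat{\pi f}\colon V_B\to \Hom^\tau_B(V_B,B\otimes M)$, $x\mapsto \pi f(x,-)$, factors as $\Psi\circ\hat f$, where $\hat f\colon V\to \Hom^\sigma_A(V,A\otimes M)$ is the isomorphism supplied by unimodularity of $f$ and $\Psi$ is post-composition with $\pi'$.  A direct calculation using (T1) shows $\Psi$ is a right-$B$-module homomorphism with respect to the $\sigma$- and $\tau$-twisted structures, so it suffices to show $\Psi$ is an isomorphism.  Since $\Psi$ is natural in $V$ and additive in direct sums, choosing a finite dual basis to exhibit $V$ as a direct summand of $A^n$ reduces to the case $V=A$.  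In this case the identification $\Hom^\sigma_A(A,A\otimes M)\cong A\otimes M$ (evaluation at $1$) together with the natural isomorphism $\Hom_B(A,B)\otimes M\cong \Hom_B(A,B\otimes M)$ (which holds since $A$ is finite projective over $B$, checked locally on $\Spec R$ where $M$ becomes free) identifies $\Psi$ with $\alpha\otimes\id_M$, where $\alpha\colon A\to \Hom_B(A,B)$, $a\mapsto[x\mapsto\pi(ax)]$, is the map of (T3).  Since $\alpha$ is an isomorphism by (T3), so is $\Psi$.

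For the metabolic part, let $L\subseteq V$ be an $A$-submodule with $L=L^\perp$ and $L,V/L\in\rproj{A}$.  I claim that $L$, regarded as a $B$-submodule $L_B$ of $V_B$, is a Lagrangian for $\pi_*(V,f)$.  Both $L_B$ and $V_B/L_B=(V/L)_B$ lie in $\rproj{B}$ by the first paragraph, and the inclusion $L_B\subseteq L_B^{\perp_{\pi f}}$ is immediate from $f(L,L)=0$.  For the reverse inclusion, I would invoke \cite[Proposition~I.3.7.1]{Knus_1991_quadratic_hermitian_forms} to obtain an $A$-submodule $L'\subseteq V$ with $V=L\oplus L'$ and $(L')^\perp=L'$.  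Then $L_B$ and $L'_B$ are both isotropic for $\pi f$; since $V_B=L_B\oplus L'_B$ and $\pi f$ is unimodular, the induced pairing $L_B\times L'_B\to B\otimes M$ is perfect.  If $y=\ell+\ell'\in L_B^{\perp_{\pi f}}$ with $\ell\in L_B$ and $\ell'\in L'_B$, then $\pi f(L_B,\ell')=\pi f(L_B,y)=0$ and perfectness of the pairing forces $\ell'=0$, so $y\in L_B$.  The main obstacle is the unimodularity of $\pi f$: condition (T3) together with the invertibility of $M$ is exactly what is needed to transport unimodularity across $\Psi$, while the remaining steps are routine bookkeeping.
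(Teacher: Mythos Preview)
Your proof is correct and follows essentially the same approach as the paper: factor the adjoint of $\pi f$ as $\hat f$ followed by post-composition with $\pi\otimes\id_M$, reduce by naturality to a free generator, and invoke (T3); for the metabolic statement, split $V=L\oplus L'$ into complementary Lagrangians via \cite[Proposition~I.3.7.1]{Knus_1991_quadratic_hermitian_forms} and use unimodularity of $\pi f$ to conclude $L_B^{\perp}=L_B$. The only cosmetic difference is that you reduce to $V=A$ and handle $M$ via the isomorphism $\Hom_B(A,B)\otimes M\cong \Hom_B(A,B\otimes M)$, whereas the paper reduces to $V=A\otimes M$ and strips off $M$ using the isomorphism $\Hom_A(U,W)\cong\Hom_A(U\otimes M,W\otimes M)$; both routes are equivalent.
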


\begin{proof}
	We have $V_B\in \rproj{B}$ because $A_B\in\rproj{B}$ and
	$V$ is an $A$-module summand of $A^n $ for some $n$. That $\pi f$ is a $(B\otimes M,\tau \otimes \id_M)$-valued 
	$\gamma\veps$-hermitian form over $(B,\tau)$ follows readily from 
	(T1) and (T2).
	
	Write $\hat{f}$ for $x\mapsto f(x,-):V\to \Hom_A^\sigma(V,A)$,
	and define $\what{\pi f}:V_B\to \Hom^\tau_B(V,B)$ similarly.
	Then $ \what{\pi f}  = j_V \circ \hat{f}$,
	where $j_V:\Hom_A(V,A\otimes M)\to \Hom_B(V_B,B\otimes M)$ is given by $j_V(\phi)=(\pi\otimes \id_M)\circ \phi$.
	Thus, in order to show that $\pi f$ is unimodular, it is enough to show that $j_V$ is an isomorphism.
	It is routine to check that $j_V$ is natural in $V$.
	Since  $V$ is a summand of $(A\otimes M)^n$ for some $n\in \N$, it is enough
	to consider the case $V=A\otimes M$.
	Since the map $\phi\mapsto \phi\otimes \id_M: \Hom_A(U,W)\to \Hom_A(U\otimes M,W\otimes M)$ is an isomorphism for 
	all $U,W\in\rproj{A}$ 	(\cite[Theorem~1.3.26]{Ford_2017_separable_algebras}, $\End(M)=R$),	 and likewise
	for $B$-modules, we are reduced  to
	proving that $\phi\mapsto \pi\circ \phi:\Hom_A(A,A)\to \Hom_B(A_B,B)$
	is an isomorphism. This holds because the composition of this map with the isomorphism $A\to \End_A(A)$ given
	by sending $a\in A$ to left multiplication by $a$ is the map considered in (T3).
	
	The second assertion is shown exactly as in the proof of Lemma~\ref{LM:base-change-well-def}.
\end{proof}

If $S$ is an $R$-ring, then $\pi_S:A_S\to B_S$
is also 
a $\gamma$-involution trace
relative to $\sigma_S$ and $\tau_S$ (use \cite[Corollary~1.3.27]{Ford_2017_separable_algebras}
to check (T3)).

\begin{thm}\label{TH:pi-transfer-is-compatible}
	Keeping the previous notation, suppose that $R$ is regular  
	and   $A$ and $B$ are separable projective over $R$.
	Then $\pi$ induces a morphism of cochain complexes
	$\pi=(\pi_i)_{i\in \Z}:\aGW{A,\sigma,\veps}\to \aGW{B,\tau,\gamma\veps}$
	with $\pi_{-1}=\pi_*$
	and $\pi_e=\bigoplus_{\frakp\in R^{(e)}} {\pi}(\frakp)_*$
	for $e\geq 0$.
\end{thm}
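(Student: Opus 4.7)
The plan is to follow the template of the proof of Theorem~\ref{TH:base-change-is-compatible-with-GW}, with the caveat that for an involution trace the underlying module of $\pi(\frakp)_*(V,f)$ is $V$ itself (only the $A$-action is restricted to $B$), so no tensor product intervenes. Compatibility with $\dd_{-1}$ is automatic since $\pi$ commutes with localization and $\pi(\frakp)$ is again an involution $\gamma$-trace (see \ref{subsec:Scharlau}). For $e\geq 0$, proving $\dd''_e\circ\pi_e=\pi_{e+1}\circ \dd_e$ reduces, just as in that earlier proof, to the identity
$$\partial^B_{\frakp,\frakq}\circ \pi(\frakp)_* \;=\; \pi(\frakq)_*\circ \partial^A_{\frakp,\frakq}$$
for every pair $\frakp\in R^{(e)}$, $\frakq\in R^{(e+1)}$ with $\frakp\subseteq\frakq$. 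After localizing at $\frakq$ I adopt the notation of \ref{subsec:second-res} with $I=\frakp$.

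Given $(V,f)\in\tHerm[\veps]{A(\frakp)}$, Lemma~\ref{LM:U-prime-properties}(ii) supplies an $A$-lattice $U\subseteq V$ with $U^f\frakm\subseteq U\subseteq U^f$. Set $g:=\pi(\frakp)f=(\pi\otimes \id_{\tilde{k}(\frakp)})\circ f$. The crux of the proof is the lattice identity $U^g=U^f$ (where $U^g$ is computed in $V_B$ with respect to $\tilde{B}_S$). Once this is in hand, the maps $\calT_{\frakp,\frakq,A}$ and $\calT_{\frakp,\frakq,B}$ both arise from the same $R$-linear map $\calT_R:\tilde{\frakm}^{-1}\to \tilde{k}(\frakq)$ by tensoring with $A$ and $B$ respectively, yielding the compatibility
$$\calT_{\frakp,\frakq,B}\circ(\pi\otimes \id_{\tilde{\frakm}^{-1}})\;=\;(\pi\otimes \id_{\tilde{k}(\frakq)})\circ \calT_{\frakp,\frakq,A},$$
so the identity map on $U^f/U=U^g/U$ is an isometry from $\pi(\frakq)(\partial f)$ to $\partial g$, finishing the proof.

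The main obstacle is therefore the lattice identity $U^g=U^f$. The inclusion $U^f\subseteq U^g$ is immediate, because $\pi\otimes\id_{\tilde S}$ sends $\tilde{A}_S$ into $\tilde{B}_S$. For the converse, I would leverage condition \ref{item:inv-trans:regularity} of the trace together with the finite projectivity of $A$ as a right $B$-module (guaranteed by \ref{subsec:Azumaya}): pick a dual basis $\{(a_i,\phi_i)\}_{i=1}^n\subseteq A\times \Hom_B(A,B)$, and use \ref{item:inv-trans:regularity} to write each $\phi_i(\cdot)=\pi(a'_i\cdot)$ for some $a'_i\in A$. Tensoring the identity $a=\sum_i a_i\cdot \pi(a'_i a)$ with $\tilde{k}(\frakp)$ and applying it to $f(u,x)\in \tilde{A}(\frakp)$, using $a'_i f(u,x)=f(u(a'_i)^\sigma,x)$, yields
$$f(u,x)\;=\;\sum_i a_i\cdot g(u(a'_i)^\sigma,x).$$
For $x\in U^g$ and $u\in U$, each $u(a'_i)^\sigma$ lies in $U$ by $A$-stability, hence $g(u(a'_i)^\sigma,x)\in \tilde{B}_S$, and therefore $f(u,x)\in \sum_i a_i\tilde{B}_S\subseteq \tilde{A}_S$, giving $x\in U^f$. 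This step is where the non-degeneracy axiom of the trace is essential; everything else in the proof is bookkeeping.
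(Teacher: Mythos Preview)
Your proposal is correct and follows essentially the same approach as the paper's proof: reduce to the local identity $\partial^B_{\frakp,\frakq}\circ \pi(\frakp)_* = \pi(\frakq)_*\circ \partial^A_{\frakp,\frakq}$, then establish the key lattice equality $U^g=U^f$ via a dual basis for $A_B$ combined with condition~\ref{item:inv-trans:regularity} to rewrite $f(u,x)$ as an $A$-linear combination of values of $g$. Your explicit unpacking of why $\partial g=\pi(\frakq)(\partial f)$ via the compatibility of the maps $\calT_{\frakp,\frakq,A}$ and $\calT_{\frakp,\frakq,B}$ is a helpful elaboration of what the paper leaves as ``easy to see''.
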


\begin{proof}
	As in the proof of Theorem~\ref{TH:base-change-is-compatible-with-GW},
	we may assume that $R$ is local of dimension $e+1$ with maximal ideal
	$\frakq$, and the proof
	reduces to showing that $ {\pi}(\frakq)_*\circ \partial^A_{\frakp,\frakq}=
	\partial^B_{\frakp,\frakq}\circ  {\pi}(\frakp)_*$
	for all $\frakp\in R^{(e)}$. We use the notation of \ref{subsec:second-res} with $I=\frakp$.

	Let $(V,f)\in \tHerm[\veps]{A(\frakp) }$
	and let $g= {\pi}(\frakq) f$.
	Choose an $A $-lattice $U$ in $V$ with $ U^f \frakm \subseteq U\subseteq U^f$.
	Provided that $U^g=U^f$, it is easy to see
	that $ \partial g = {\pi}(\frakq)(\partial f)$, which would finish the proof.
	
	It is clear that $U^f\subseteq U^g$.
	To see the converse, let $\{(a_i,\phi_i)\}_{i=1}^n$
	be a dual basis for $A_B$. By~\ref{item:inv-trans:regularity},
	there exist $\{c_i\}_{i=1}^n\subseteq A$ such that $\pi(c_i x)=\phi_i x$
	for all $x\in A$, hence $\sum_i a_i\pi(c_i x)= x$.
	Let $\tilde{\pi}=  \pi\otimes \id_{\tilde{k}(\frakp)}$.
	Then $\sum_i a_i \tilde{\pi}(c_i x)=x$ for all $x\in \tilde{A}(\frakp)$.
	Now, if $v\in U^g$, then for all $x\in U$,
	we have $  f(x,v)=\sum_i a_i \tilde{\pi}(c_i   f(x,v))=
	\sum_i a_i \tilde{\pi} (  f(x c_i^\sigma,v))=
	\sum_i a_i   g(x c_i^\sigma,v) \in \sum_i a_i \tilde{B}_S\subseteq \tilde{A}_S$,
	so $v\in U^f$.
\end{proof}

\begin{example}\label{EX:conjugation-is-compatible-with-GW}
	Let $u\in \units{A}\cap \Sym_{\gamma}(A,\sigma)$
	and let $\Int(u)$ denote the inner automorphism
	$a\mapsto uau^{-1}:A\to A$.
	Then $\tau:=\Int(u)\circ \sigma$ is an involution
	and the map $\pi_u:A\to A$ given by $\pi_u(a)=ua$
	is an involution $\gamma$-trace relative to $\sigma$ and $\tau$.
	We write $(\pi_u)_*$   as $u_*$, and $\pi_u f$ as $u f$ when $(V,f)\in \Herm[\veps]{A,\sigma;M}$.
	The functor $u_*:\Herm[\veps]{A,\sigma;M}\to
	\Herm[\gamma\veps]{A,\tau;M}$ is called \emph{$u$-conjugation}.
	It is clearly an equivalence, the inverse being $(u^{-1})_*$,
	so the induced map on the corresponding Witt groups is an isomorphism.

	By Theorem~\ref{TH:pi-transfer-is-compatible}, if $R$ is regular
	and $A$ is separable projective over $R$, then $u$-conjugation
	induces an isomorphism $u_*:\aGW{A,\sigma,\veps}\to \aGW{A,\Int(u)\circ \sigma,\gamma\veps}$.
\end{example}

\subsection{Hermitian Morita Equivalence}
\label{subsec:herm-Morita}

We show that the Gersten--Witt complex is compatible with
a special kind of hermitian Morita equivalence.

\medskip

Let $e\in A$ be an idempotent satisfying $e^\sigma=e$ and $AeA=A$.
Put $A_e=eAe$.
Then $\sigma$ restricts to an involution on $\sigma_e:A_e\to A_e$.
Since $A_e$ is an $R$-summand of $A$, we may and shall
regard $A_e\otimes M$ as a subset of $A\otimes M$.

Following  \cite[\S2.7]{First_2022_octagon} and \cite[Proposition~2.5]{First_2015_Witts_extension_theorem},
define the \emph{$e$-transfer} functor $e_*:\Herm[\veps]{A,\sigma;M}\to \Herm[\veps]{A_e,\sigma_e;M}$
by setting $e_*(V,f)=(Ve,f_e:=f|_{Ve\times Ve})$ for objects and $e_*\vphi = \vphi|_{Ve}$
for every morphism $\vphi:(V,f)\to (V',f')$. 

\begin{lem}
	$e_*:\Herm[\veps]{A,\sigma;M}\to \Herm[\veps]{A_e,\sigma_e;M}$ is an equivalence of categories
	taking metabolic spaces to metablic spaces.
\end{lem}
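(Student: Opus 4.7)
The plan is to verify full faithfulness, essential surjectivity, and preservation of metabolic forms, leveraging the classical Morita equivalence between $\rproj{A}$ and $\rproj{A_e}$ induced by the full idempotent $e$ (via the hypothesis $AeA=A$), with quasi-inverse $W\mapsto W\otimes_{A_e}eA$. Faithfulness and fullness on morphisms reduce to this additive Morita equivalence together with the observation that any $\veps$-hermitian form on $V\in\rproj{A}$ is determined by its restriction to $Ve\times Ve$ via $A$-sesquilinearity, because $V=VeA$ follows from $AeA=A$. Hence an $A$-isomorphism $\vphi:V\to V'$ intertwines $(f,f')$ if and only if $\vphi|_{Ve}$ intertwines $(f_e,f'_e)$.

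For essential surjectivity, given $(W,g)\in\Herm[\veps]{A_e,\sigma_e;M}$, I would set $V=W\otimes_{A_e}eA\in\rproj{A}$ and define
\[
f(w\otimes a,w'\otimes a'):=a^\sigma g(w,w')a'\in A\otimes M.
\]
Well-definedness over the tensor product (using $\sigma|_{A_e}=\sigma_e$), $A$-sesquilinearity, and the $\veps$-hermitian identity are routine, and $Ve\cong W\otimes_{A_e}eAe=W$ identifies $f_e$ with $g$. The step I expect to be the main obstacle is the unimodularity of $f$: I plan to use tensor--hom adjunction to rewrite $\Hom_A^\sigma(W\otimes_{A_e}eA,A\otimes M)\cong\Hom_{A_e}(W,\Hom_A^\sigma(eA,A\otimes M))$ and then combine with the elementary identification $\Hom_A^\sigma(eA,A\otimes M)\cong Ae\otimes M$ (via $\phi\mapsto\phi(e)$) to reduce unimodularity of $f$ to that of $g$.

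Finally, for metabolic preservation, let $L\subseteq V$ be a Lagrangian with $V/L\in\rproj{A}$ and set $L'=Le$. Then $Ve/L'\cong(V/L)e\in\rproj{A_e}$ by Morita, so it suffices to show $L'=(L')^{\perp f_e}$. The inclusion $L'\subseteq(L')^{\perp f_e}$ is immediate from $L\subseteq L^{\perp f}$. For the reverse, suppose $x\in Ve$ satisfies $ef(L,x)e=0$. Since $x=xe$, we have $f(L,x)\subseteq(A\otimes M)e$, so $f(L,x)e=f(L,x)$ and hence $ef(L,x)=ef(L,x)e=0$. Moreover, $f(L,x)$ is closed under left multiplication by $A$, because $f(\ell a,x)=a^\sigma f(\ell,x)$ and $A^\sigma=A$. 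Therefore $AeA\cdot f(L,x)\subseteq Ae\cdot f(L,x)=A\cdot ef(L,x)=0$, and the assumption $AeA=A$ gives $f(L,x)=0$. Thus $x\in L^{\perp f}\cap Ve=L\cap Ve=Le=L'$, completing the argument.
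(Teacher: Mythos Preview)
Your argument is correct and follows essentially the same Morita-equivalence approach as the paper, which simply cites \cite[Proposition~2.5]{First_2015_Witts_extension_theorem} and records the one modification needed for $M$-valued forms: the natural isomorphism $i_V\colon \Hom_A^\sigma(V,A\otimes M)\cdot e \to \Hom_{A_e}^{\sigma_e}(Ve,A_e\otimes M)$, $\psi\mapsto\psi|_{Ve}$. Your direct treatment of full faithfulness and of metabolic preservation is clean and complete; in particular, the computation $(L')^{\perp f_e}=L'$ via $AeA=A$ is exactly the right idea.

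The only place worth tightening is the unimodularity step. Your tensor--hom adjunction yields $\Hom_A(V,A\otimes M)\cong\Hom_{A_e}(W,Ae\otimes M)$, but this lands in maps valued in $Ae\otimes M$, not $A_e\otimes M$, so one more identification is needed before you can invoke the unimodularity of $g$. The quickest way to close this is precisely the paper's observation: apply the Morita functor $(-)\cdot e$ to the $A$-module map $\hat f$ and note that, under $i_V$, the resulting $A_e$-map $Ve\to \Hom_A^\sigma(V,A\otimes M)\cdot e$ becomes $\widehat{f_e}$; since $(Ve,f_e)\cong(W,g)$ this is $\hat g$, hence an isomorphism, and Morita then gives that $\hat f$ is an isomorphism. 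This is equivalent to your plan, just packaged so that the passage from $Ae\otimes M$ to $A_e\otimes M$ is absorbed into the single natural isomorphism $i_V$.
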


\begin{proof}
	Since we assume $2\in\units{R}$, the
	proof of \cite[Proposition~2.5]{First_2015_Witts_extension_theorem} applies
	to our situation with the following modification:
	Replace $\phi$ in {\it op.\ cit.} with 
	the natural transformation 
	$i_V:  \Hom^\sigma_A(V,A\otimes M)\cdot e=\Hom_A(V,eA\otimes M)\to \Hom^{\sigma_e}_{A_e}(Ve,A_e\otimes M)$
	given by $i_V(\psi)=\psi|_{Ve}$; this  is an isomorphism by Morita Theory 
	\cite[Example~18.30]{Lam_1999_lectures_on_modules_rings}.
\end{proof}

The  isomorphism $W_\veps(A,\sigma;M)\to W_\veps(A_e,\sigma_e;M)$ induced by $e_*$
will also be denoted   $e_*$.

\begin{thm}\label{TH:e-transfer-is-compatible-GW}
	In the previous notation, 
	suppose that $R$ is regular 
	and $A$ is  separable projective over $R$.
	Then $e$-transfer 
	induces an isomorphism
	of cochain complexes $e=(e_i)_{i\in \Z}:\aGW{A,\sigma,\veps}\to \aGW{A_e,\sigma_e,\veps}$
	with $e_{-1}=e_*$ and $e_{\ell}=\bigoplus_{\frakp\in R^{(\ell)}}  {e}(\frakp)_*$
	for $\ell\geq 0$.	
\end{thm}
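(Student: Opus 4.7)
The plan is to adapt the arguments of Theorems~\ref{TH:base-change-is-compatible-with-GW} and~\ref{TH:pi-transfer-is-compatible}. Note that $A_e$ is separable projective over $R$ because it is Morita-equivalent to $A$ via the bimodule $eA$. Since $e$-transfer manifestly commutes with base change (restriction to $Ve$ is compatible with $-\otimes_R R_\frakp$), the square involving $\dd_{-1}$ commutes. For $\ell \geq 0$, the claim reduces, for each $\frakp\in R^{(\ell)}$ and $\frakq\in R^{(\ell+1)}$ with $\frakp\subseteq\frakq$, to the commutativity
\[
\partial_{\frakp,\frakq}^{A_e}\circ e(\frakp)_* = e(\frakq)_*\circ \partial_{\frakp,\frakq}^A,
\]
which may be checked after localizing at $\frakq$. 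Assume then that $R$ is regular local of dimension $\ell+1$ with maximal ideal $\frakq$, and adopt the notation of \ref{subsec:second-res} with $I=\frakp$.

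The heart of the argument is the following identity: for $(V,f)\in\tHerm[\veps]{A(\frakp)}$ and any $A$-lattice $U\subseteq V$ with $U^f\frakm\subseteq U\subseteq U^f$,
\[
(Ue)^{f_e} = U^f e
\]
as $A_e$-submodules of $Ve$, where $f_e = f|_{Ve\times Ve}$. Granting this, $Ue$ is an $A_e$-lattice in $Ve$ with $(Ue)^{f_e}\frakm \subseteq Ue\subseteq (Ue)^{f_e}$, so it computes $\partial_{\frakp,\frakq}^{A_e}[Ve,f_e]$. The natural $A_e(\frakq)$-linear map
\[
(U^f/U)e \longrightarrow U^f e / Ue,\qquad (x+U)e \mapsto xe+Ue,
\]
is bijective (its kernel is $U^f e \cap U = Ue$, since every $x\in U^f e$ satisfies $x=xe$), and an easy calculation with the definition of $\calT$ shows it intertwines $e(\frakq)_*(\partial f)$ with $\partial(f_e)$, giving the desired commutativity.

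For the key identity, the inclusion $U^f e\subseteq (Ue)^{f_e}$ is immediate, since for $y\in U^f$ and $u\in U$,
\[
f_e(ue, ye) = e^\sigma f(u,y) e \in e(\tilde{A}_S)e = A_e\otimes \tilde{S}.
\]
The reverse inclusion is where the hypothesis $AeA=A$ is used. Write $1 = \sum_i a_i e b_i$ with $a_i,b_i\in A$. For $y\in (Ue)^{f_e}$ (so $y=ye$) and $u\in U$, one has $ua_i\in U$ and hence $ua_i e\in Ue$, so
\[
f(u,y) = \sum_i b_i^\sigma f(ua_i e, y) = \sum_i b_i^\sigma f_e(ua_i e, y) \in \sum_i b_i^\sigma(A_e\otimes \tilde{S}) \subseteq \tilde{A}_S,
\]
forcing $y\in U^f$ and thus $y = ye\in U^f e$.

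The main obstacle is precisely this last computation: without $AeA=A$, elements of $(Ue)^{f_e}$ would only be constrained against test vectors in $Ue$, not all of $U$, and the identification $U^f e/Ue \cong (U^f/U)e$ would break down. The remaining verifications --- that the isomorphism above intertwines the $\veps$-hermitian structures, and that the resulting maps assemble into a morphism of complexes whose inverse is induced by the quasi-inverse of $e$-transfer established in the preceding lemma --- are formal.
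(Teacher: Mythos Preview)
Your proof is correct and follows essentially the same route as the paper. The only cosmetic difference is in the verification of the reverse inclusion $(Ue)^{f_e}\subseteq U^f e$: the paper writes it as $f(U,v)=f(UAeA,v)=A\cdot f(Ue,v)=A\cdot g(Ue,v)\subseteq \tilde{A}_S$, whereas you unpack the relation $1=\sum_i a_i e b_i$ explicitly, but these are the same computation.
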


\begin{proof}
	As in the proof of Theorem~\ref{TH:base-change-is-compatible-with-GW},
	we may assume that $R$ is local of dimension $\ell+1$ with maximal ideal
	$\frakq$, and the proof
	reduces to showing that $ {e}(\frakq)_*\circ \partial^A_{\frakp,\frakq}=
	\partial^{eAe}_{\frakp,\frakq}\circ  {e}(\frakp)_*$
	for all $\frakp\in R^{(\ell)}$. We use the notation of \ref{subsec:second-res} with $I=\frakp$.
	
	Let $(V,f)\in \tHerm[\veps]{A(\frakp) }$
	and let $U$ be an $A $-lattice in $V$
	such that $ U^f \frakm\subseteq U\subseteq U^f$.
	Then $Ue$ is an $ A_e $-lattice in $Ve$.
	Write $g=f_e$. Provided that $(Ue)^g=U^fe$, the natural
	map $(U^f/U)e\to U^fe /Ue$
	is an isometry
	from $(\partial f)_e$ to $\partial g$, and therefore $ {e}(\frakq)_*\partial^A_{\frakp,\frakq} [V,f]=
	\partial^{eAe}_{\frakp,\frakq} [Ve,f_e]$.
	
	That $U^f e\subseteq (Ue)^g$ is straightforward.
	Conversely, if $v\in (Ue)^g$,
	then $f(U,v)=f(UAeA,v)=A\cdot f(Ue,v)=A\cdot g(Ue,v)\subseteq \tilde{A}_S$,
	so $v\in U^f\cap Ve=U^fe$.
\end{proof}

\section{Surjectivity of The Last Differential}
\label{sec:surjectivity}

Throughout this section,  $R$ is a regular 
ring,  $(A,\sigma)$ is an Azumaya
$R$-algebra  with involution and  $\veps\in \mu_2(R)$.
We show that
some   cohomologies of $\aGW{A,\sigma,\veps}$
vanish under certain assumptions.
Our first and main result of this kind is:

\begin{thm}\label{TH:surj-at-last-term}
	If $R$ is semilocal of dimension $d$, then $\HH^d(\aGW{A,\sigma,\veps})=0$.
\end{thm}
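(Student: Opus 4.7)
The plan is to prove surjectivity of $\dd_{d-1}$ by handling each maximal ideal $\frakq\in R^{(d)}$ separately using a carefully chosen height-$(d-1)$ prime $\frakp$ below $\frakq$ only, so that the $\frakp$-component of $\dd_{d-1}$ lies entirely in the $\frakq$-component of the target.

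For each $\frakq\in R^{(d)}$, I would first construct $\frakp\in R^{(d-1)}$ with $\frakp\subseteq\frakq$, $\frakp\not\subseteq\frakq'$ for every other maximal $\frakq'$, and $S:=R_\frakq/\frakp R_\frakq$ a DVR. This is achieved by iterated application of the Chinese Remainder Theorem for the semilocal ring $R$: inductively produce $\pi_1,\dots,\pi_{d-1}\in R$ whose images in $R_\frakq$ extend to a regular system of parameters, with each $\pi_i\notin\frakq'$ for every maximal $\frakq'\neq\frakq$. At the $i$-th step, setting $J_{i-1}:=\bigl((\pi_1,\dots,\pi_{i-1})R_\frakq+\frakq_\frakq^2\bigr)\cap R$, the ideals $J_{i-1}$ and $\frakq'$ are pairwise comaximal (since $\pi_j\notin\frakq'$ for $j<i$ forces $J_{i-1}\not\subseteq\frakq'$), so CRT produces $\pi_i\in\frakq\setminus J_{i-1}$ with $\pi_i\notin\frakq'$ for every $\frakq'\neq\frakq$. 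Taking $\frakp:=(\pi_1,\dots,\pi_{d-1})R_\frakq\cap R$, the condition $\pi_i\in\frakp\setminus\frakq'$ gives $\frakp\not\subseteq\frakq'$, hence $\partial_{\frakp,\frakq'}=0$ for $\frakq'\neq\frakq$.

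It remains to show $\partial_{\frakp,\frakq}:\tilde{W}_\veps(A(\frakp))\to\tilde{W}_\veps(A(\frakq))$ is surjective. Completing to a regular system of parameters by some $\pi_d\in R$, Example~\ref{EX:second-residue-II} provides identifications $\tilde{k}(\frakp)\cong k(\frakp)$ and $\tilde{k}(\frakq)\cong k(\frakq)$ under which the target becomes $W_\veps(A(\frakq),\sigma(\frakq))$. By standard diagonalization of $\veps$-hermitian forms over an Azumaya algebra with involution over a field (using $2\in k(\frakq)^\times$), this Witt group is generated by classes $[\langle a\rangle]$ with $a\in\Sym_\veps(A(\frakq),\sigma(\frakq))\cap\units{A(\frakq)}$; in the degenerate cases where no such $a$ exists, the Witt group vanishes and the claim is vacuous. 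Given such $a$, lift it to a $\veps$-symmetric unit $\tilde{a}\in A_S$ by first lifting arbitrarily to $\tilde{a}'\in A_S$, symmetrizing via $\tfrac{1}{2}\bigl(\tilde{a}'+\veps\,(\tilde{a}')^{\sigma}\bigr)$, and noting $\frakm A_S\subseteq\Jac(A_S)$ (since $\frakm\subseteq\Jac(S)$ and $A_S$ is module-finite over $S$), so that units lift by Nakayama's lemma.

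Let $\pi\in S$ denote the image of $\pi_d$, and consider the rank-$1$ form $f:=\langle(-1)^{d-1}\pi^{-1}\tilde{a}\rangle$ on $A(\frakp)$. Applying the residue computation in Example~\ref{EX:second-residue-II} with lattice $U:=\pi A_S$, one verifies $U^f=A_S$, $U^f\frakm\subseteq U\subseteq U^f$, $U^f/U\cong A(\frakq)$, and the induced form $\partial f$ on $A(\frakq)$ equals $\langle a\rangle$; hence $\partial_{\frakp,\frakq}[f]=[\langle a\rangle]$. Summing such preimages for each $\frakq\in R^{(d)}$ and each diagonal generator of $\tilde{W}_\veps(A(\frakq))$ realizes every element of the target in the image of $\dd_{d-1}$, completing the proof. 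The main obstacle is the simultaneous achievement of the three properties of $\frakp$ in Step~1, which demands coordinated use of prime avoidance (for the regular-parameter condition) and CRT (for avoiding the other maximal ideals).
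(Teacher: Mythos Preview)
There are two genuine gaps. The more serious one is your claim that $\tilde{W}_\veps(A(\frakq))$ is generated by the classes $[\langle a\rangle_{A(\frakq)}]$ for $a$ an $\veps$-symmetric unit of $A(\frakq)$. Such a form lives on the free rank-$1$ module $A(\frakq)$, which under Morita equivalence to the underlying division algebra $D$ corresponds to an $m$-dimensional form over $D$, where $m=\deg A(\frakq)/\ind A(\frakq)$. When $m$ is even these do not generate: for $A(\frakq)=\nMat{\R}{2}$ with the transpose involution and $\veps=1$, you hit only the even-signature classes in $W(\R)\cong\Z$. Your residue computation is correct for the $f$ you chose, but the residues it produces are too coarse. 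The paper instead first reduces to forms on \emph{simple} $A(\frakm)$-modules via Lemma~\ref{LM:primitive-invariant-idempotent}, and then constructs $c\in A\frakm^{-1}$ (not a unit of $A_\frakm$, but with $c^{-1}\in A_\frakm$) so that $A_\frakm/c^{-1}A_\frakm$ is simple; the residue of $\langle c\rangle_{A_K}$ is then the prescribed length-$1$ form.

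The second gap is that your isolating prime $\frakp$ cannot exist when $d=1$ and $R$ is a domain with more than one maximal ideal: then $R^{(0)}=\{0\}$, so $\frakp=0$ lies below every maximal ideal and $\partial_{0,\frakq'}$ need not vanish. The isolation of $\frakq$ must therefore come from the form itself. The paper's Step~2 does exactly this over a semilocal Dedekind domain, using the Chinese Remainder Theorem to arrange in addition that $c\equiv 1\pmod{\frakq' A_{\frakq'}}$ for every $\frakq'\neq\frakm$, so that $\langle c\rangle$ has trivial residue there. This single, more delicate construction of $c$ handles both issues simultaneously.
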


As in Section~\ref{sec:functoriality},
the new construction of $\aGW{A,\sigma,\veps}$
in Section~\ref{sec:second-res} enables us
to   give a proof based on  classical methods.
A different proof   was given independently by Gille in \cite[Theorem~8.4]{Gille_2020_hermitian_Springer}.

\medskip

We begin with  the following well-known lemma.

\begin{lem}\label{LM:nice-prime-DVR-quotient}
	Let $R$ be a regular semilocal ring of dimension $d>1$
	and let $\frakq\in R^{(d)}$.
	Then there exists $\frakp\in R^{(d-1)}$
	contained in $\frakq$  such that $R/\frakp$
	is a discrete valuation ring.
	In particular, $\frakq$ is the only height-$d$ prime ideal
	containing $\frakp$.
\end{lem}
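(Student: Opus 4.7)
The plan is to produce $\frakp$ as the contraction to $R$ of a height-$(d-1)$ prime of the regular local ring $R_{\frakq}$, chosen so that $\frakp$ avoids the other maximal ideals of $R$.

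First I would set up the following. Let $\frakm_1:=\frakq,\frakm_2,\ldots,\frakm_n$ be the distinct maximal ideals of $R$. Since $\frakq\nsubseteq \frakq^2$ (as $R_{\frakq}$ is regular of dimension $d\geq 2$) and $\frakq\nsubseteq \frakm_i$ for $i\geq 2$ (by incomparability of maximal ideals), the Prime Avoidance Lemma (\cite[Exercise~16.8]{Matsumura_1989_commutative_ring_theory}) produces an element
\[
x_1\in \frakq\setminus\Bigl(\frakq^2\cup \bigcup_{i\geq 2}\frakm_i\Bigr).
\]

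Next, since $\frakq/\frakq^2=\frakq R_{\frakq}/(\frakq R_{\frakq})^2$ is a $d$-dimensional $k(\frakq)$-vector space, the image of $x_1$ is part of a basis, and I can (via Nakayama) complete $x_1$ to a regular system of parameters $x_1,\ldots,x_d\in \frakq$ for $R_{\frakq}$. Let $I:=(x_1,\ldots,x_{d-1})R_{\frakq}$. Then $R_{\frakq}/I$ is a regular local ring of dimension $1$, hence a DVR, so $I$ is a prime ideal of $R_{\frakq}$ of height $d-1$. Set $\frakp:=I\cap R$; this is a prime of $R$ with $\frakp R_{\frakq}=I$ and $\hgt\frakp=d-1$, i.e.\ $\frakp\in R^{(d-1)}$.

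Finally I would verify the remaining properties. Since $x_1\in I\cap R=\frakp$ and $x_1\notin \frakm_i$ for $i\geq 2$, we conclude $\frakp\nsubseteq \frakm_i$ for all $i\geq 2$. Consequently $\frakq$ is the unique maximal ideal of $R$ containing $\frakp$, so $R/\frakp$ is local with maximal ideal $\frakq/\frakp$, and the localization map
\[
R/\frakp\longrightarrow R_{\frakq}/\frakp R_{\frakq}=R_{\frakq}/I
\]
is an isomorphism (it is the localization of a local ring at its maximal ideal). Since $R_{\frakq}/I$ is a DVR, so is $R/\frakp$. The ``in particular'' statement then follows because a DVR has a unique nonzero prime, which corresponds to $\frakq$, so $\frakq$ is the only height-$d$ prime of $R$ containing $\frakp$.

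The only substantive obstacle is arranging that the contracted prime $\frakp$ is not contained in any $\frakm_i$ with $i\geq 2$; everything else is routine. This is resolved entirely at the first step by a single prime-avoidance argument to pick $x_1$, since whatever prime $\frakp$ one produces by contracting an ideal containing $x_1$ is automatically forced out of the other maximals.
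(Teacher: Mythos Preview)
Your argument is correct. The paper does not supply a proof of this lemma; it is stated as ``well-known'' and the proof is omitted, so there is nothing to compare against beyond noting that your approach---contracting a partial regular system of parameters of $R_{\frakq}$ after first forcing one generator to miss the other maximal ideals via prime avoidance---is a standard and efficient way to establish the result.
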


%

\begin{lem}\label{LM:primitive-invariant-idempotent}
	Let $(A,\sigma)$ be a central simple algebra
	with involution over a field $F$ such that $\Cent(A)\ncong F\times F$.
	If $\sigma$ is symplectic, we also require that $\ind A$ is even.
	Then every unimodular $1$-hermitian space
	over $(A,\sigma)$ (see Example~\ref{EX:herm-ring-with-inv}) is isomorphic to an orthogonal 
	sum of $1$-hermitian spaces of $A$-length $1$.
\end{lem}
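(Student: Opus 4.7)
The plan is to argue by induction on the $A$-length of $V$. The base case $V=0$ is trivial, so suppose $V\neq 0$. It will suffice to exhibit a simple right $A$-submodule $W\subseteq V$ with $W\cap W^\perp=0$, for then the unimodularity of $f$ forces $V=W\oplus W^\perp$ as $1$-hermitian spaces, the restriction $f|_{W\times W}$ is itself unimodular and of $A$-length $1$, and the induction hypothesis applied to $(W^\perp,f|_{W^\perp\times W^\perp})$ completes the decomposition.

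To produce $W$, I would pass to the endomorphism algebra $E:=\End_A(V)$ equipped with the adjoint involution $\tau_f$, characterized by $f(\phi x,y)=f(x,\tau_f(\phi)y)$ for $\phi\in E$ and $x,y\in V$. Writing $V\cong S^k$ for the (essentially unique) simple right $A$-module $S$ and $D=\End_A(S)$, we have $E\cong M_k(D^{\op})$; in particular $\ind E=\ind A$, and since $\Cent(A)\ncong F\times F$ is a field, $E$ is central simple over $\Cent(E)=\Cent(A)$. A primitive $\tau_f$-invariant idempotent $e\in E$ would then yield what we want: primitivity of $e$ forces $W:=eV$ to be a simple right $A$-submodule of $V$, while $\tau_f(e)=e$ ensures $W^\perp=(1-e)V$, giving an orthogonal decomposition $V=eV\oplus(1-e)V$ with respect to $f$.

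A standard computation shows that, because $\veps=1$, the adjoint involution $\tau_f$ has the same kind and the same type as $\sigma$. I would then conclude by case analysis. If $\sigma$ (hence $\tau_f$) is unitary or orthogonal, the hermitian Morita-equivalent division algebra with involution $(D_0,\tau_0)$ satisfies $1\in \Sym_1(D_0,\tau_0)\cap D_0^\times$, and a Gram--Schmidt-type argument applied to the transported form produces an anisotropic vector, hence a primitive $\tau_f$-invariant idempotent in $E$. If $\sigma$ is symplectic, the hypothesis $\ind A=\ind E$ even forces $D_0$ to be noncommutative, so $\Sym_1(D_0,\tau_0)\cap D_0^\times$ is again nonempty and the same argument applies.

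The main obstacle is confirming that the adjoint involution $\tau_f$ always falls into one of the three favorable cases above, and then justifying the existence of a primitive $\tau$-invariant idempotent in each via Morita equivalence. The hypothesis $\ind A$ even in the symplectic case is precisely what rules out the single bad scenario, namely $(D_0,\tau_0)=(F,\id_F)$ with a $(-1)$-hermitian form, in which every vector is isotropic and no such idempotent exists; the exclusion $\Cent(A)\ncong F\times F$ plays the analogous role in the unitary case, ensuring that $E$ is simple rather than a product of two factors where the argument would collapse.
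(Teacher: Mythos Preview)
Your argument is correct and rests on the same key ingredient as the paper's proof: the existence of a primitive involution-invariant idempotent in a central simple algebra with involution, under precisely the stated hypotheses (this is the result cited as \cite[Proposition~1.25]{First_2018_octagon_preprint}). The difference is purely organizational. The paper applies this fact once to $(A,\sigma)$ itself, obtaining a primitive $\sigma$-invariant idempotent $u\in A$; the resulting $e$-transfer equivalence $u_*:\Herm[1]{A,\sigma}\to\Herm[1]{uAu,\sigma_u}$ reduces the whole category to hermitian forms over a division ring, where diagonalization is classical. You instead run an explicit induction on $\operatorname{length}_A V$ and, at each step, apply the same fact to $(\End_A(V),\tau_f)$ to peel off a single length-$1$ orthogonal summand. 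Your verification that $\tau_f$ has the same type as $\sigma$ and that $\ind\End_A(V)=\ind A$ is exactly what is needed to see that the hypotheses of the idempotent lemma transfer from $(A,\sigma)$ to $(\End_A(V),\tau_f)$.

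What the paper's arrangement buys is economy: one Morita reduction, then a single appeal to the classical division-algebra case. Your arrangement is more self-contained in that it makes the induction explicit and spells out why the primitive invariant idempotent exists (via the anisotropic-vector argument over $(D_0,\tau_0)$), but at the cost of reproving that existence inside the inductive loop rather than invoking it once.
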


\begin{proof}
By \cite[Proposition~1.27]{First_2022_octagon},
there exists a primitive idempotent $u\in A$ with $u^\sigma=u$.
Let $B=u Au$
and let $\tau=\sigma|_B$. Since $A$ is a simple artinian ring,
$B$ is a division ring
and  $Au A=A$. 
As explained in \ref{subsec:herm-Morita}, 
we have an equivalence of categories $u_*:\Herm[1]{A,\sigma}\to \Herm[1]{B,\tau}$,
which is easily seen to respect orthogonal sums and preserve length.
It is therefore enough to prove
the claim when $A$ is a division ring.
This this is well-known, see
\cite[Theorem~7.6.3]{Scharlau_1985_quadratic_and_hermitian_forms},
for instance. 
\end{proof}

\begin{proof}[Proof of Theorem~\ref{TH:surj-at-last-term}]

\Step{1} 
Since $R$ is regular semilocal, it is a finite product of regular domains.
By working over each factor separately,
we may assume that $R$ is a domain.

Let $d=\dim R$.
The theorem is clear if $d=0$, so assume $d>0$.

The case $d>1$ can be reduced
to the   case $d=1$ as follows: 
By Lemma~\ref{LM:nice-prime-DVR-quotient}, for every
$\frakq\in R^{(d)}$, there is $\frakp\in R^{(d-1)}$ such that $R/\frakp$ 
is a discrete valuation ring and $\frakq$ is the only height-$d$ prime containing $\frakp$.
As a result, the theorem will follow if we verify
that $\partial_{\frakp,\frakq}:\tilde{W}(A(\frakp))\to \tilde{W}(A(\frakq))$
is surjective for all  such $\frakp$ and $\frakq$. 
By the last paragraph of Example~\ref{EX:second-residue-II},
we may replace $R$, $\frakp$, $\frakq$, $A$ with $R/\frakp$, $\frakp/\frakp$, $\frakq/\frakp$, $A_{R/\frakp}$ 
and   reduce into proving the surjectivity
of $\partial_{0,\frakm}: W(A_K)\to \tilde{W}(A(\frakm))$
when
$R$ is a discrete valuation ring with maximal ideal $\frakm$
and fraction field $K$.

Note that the case $d=1$ cannot be similarly reduced to the case where
$R$ is a discrete valuation ring.

To conclude the previous discussion, we may
assume that $R$ is a semilocal Dedekind
domain. Write $K=\mathrm{Frac}(R)$.
For $\frakm\in \Max R$, we abbreviate $\partial_{0,\frakm}$
to $\partial_\frakm$. We identify
$\tilde{k}(\frakm)$ with $\frakm_\frakm^{-1}/R_\frakm\cong \frakm^{-1}/R$
and $\tilde{A}(\frakm)$ with $A_\frakm \frakm_\frakm^{-1} /A_\frakm\cong A\frakm^{-1} /A$
as in Example~\ref{EX:second-residue-II}.
The map   $\calT =\calT_{0,\frakm,A} $ is  just    
the quotient map $A_\frakm \frakm^{-1} \to A_\frakm \frakm^{-1} /A_\frakm$.

\medskip

\Step{2} Fix some   $\frakm\in \Max R$ and let $(W,g)\in \tilde{W}(A(\frakm))$.
It is enough to construct  
$(V,f)\in \Herm[\veps]{A_K,\sigma_K}$
such that $\partial_\frakm[V,f]=[W,g]$ and
$\partial_\frakq[V,f]=0$ for all $\frakq\in \Max R-\{\frakm\}$.

If  $[A(\frakm)]=0$ and $(\sigma(\frakm),\veps(\frakm))$ is symplectic
(see~\ref{subsec:Azumaya}), or  
$\Cent(A(\frakm))\cong k(\frakm)\times k(\frakm)$, then $W_\veps(A(\frakm))=0$
(\cite[Example~2.4, Proposition~6.8(ii)]{First_2022_octagon},
for instance)
and we can take $f=0$.
We therefore exclude these cases until the end of the proof.
As the order of $[A(\frakm)]$
in $\Br \Cent(A(\frakm))$ divides $\ind A(\frakm)$,
this means that $\ind A(\frakm)$ is even
when $(\sigma(\frakm),\veps(\frakm))$
is symplectic. Consequently, $\deg A$ is even when $(\sigma,\veps)$
is symplectic.

We may assume that $\veps=1$. 
Indeed, if $\veps=-1$, then by 
\cite[Lemma~1.26]{First_2022_octagon},
there exists $v\in \Sym_{-1}(A,\sigma)\cap \units{A}$.
Applying $v$-conjugation (Example~\ref{EX:conjugation-is-compatible-with-GW}),
we may replace $\sigma$ and $g$
with $\Int(v)\circ \sigma$ an $vg$, and assume that $\veps=1$.
The type
of $(\sigma,\veps)$ is unchanged by this transition \cite[Corollary~1.22(i)]{First_2022_octagon}.

Now, by Lemma~\ref{LM:primitive-invariant-idempotent}, 
$(W,g)$ is the orthogonal sum of $1$-hermitian spaces
with a simple underlying module.
It is therefore enough to consider the case
where $W$ is a simple $A(\frakm)$-module.

By  
\cite[Proposition~1.27]{First_2022_octagon}, 
there exists a $\sigma$-invariant primitive idempotent
$u\in A(\frakm)$. 
We may assume that $W=u A(\frakm)$.  
Writing $a= {g}(u,u)\in u(A\frakm^{-1}  /A)u$,
we have  $ {g}(x,y)= {g}(ux,uy)=x^\sigma a y$ for all $x,y\in u A(\frakm)$.
Fixing a generator $\pi$ to $\frakm$, it is also easy to see that the image
of $a$ under $ x+A \mapsto \pi x+\frakm A: u(A\frakm^{-1}/A)u \to u (A / A \frakm)u$
must be in $\units{(u(A/ A \frakm)u)}$, otherwise $g$ would not be unimodular.

It is well-known that $u\in A/A \frakm $ can be lifted
to an idempotent $u_1\in A/ A \frakm^2$. By  \cite[Lemma~1.28]{First_2022_octagon},
$u_1$ can be chosen so that $u_1^\sigma= u_1$.
Choose some $a_1\in u_1(A\frakm^{-1} /A \frakm )u_1$
projecting onto $a$. Replacing $a_1$ with $\frac{1}{2}(a_1+a_1^\sigma)$, we
may  assume that $a_1^\sigma=a_1$.
By 
\cite[Lemma~1.26]{First_2022_octagon},
there exists 
a $\sigma$-invariant  element $b\in \units{((1-u)(A/ A \frakm)(1-u))}$.
Then $a_2:=a_1+b\in A\frakm^{-1} /A \frakm $ satisfies
$a_2^\sigma=a_2$.

For all $\frakq\in \Max R-\{\frakm\}$,
the natural map $A \frakm^{-1} / A \frakm^{-1}\frakq \to (A \frakm^{-1})_\frakq/
( A \frakm^{-1}\frakq)_\frakq=A_\frakq/\frakq A_\frakq$ is an isomorphism.
This allows us to apply the Chinese Remainder Theorem and
choose
$c\in A \frakm^{-1}$   projecting onto $a_2$ and such
that  the image of $c$ in $A_\frakq/\frakq A_\frakq$ is $1$ for all $\frakq\in \Max R-\{\frakm\}$. Replacing
$c$ with $\frac{1}{2}(c+c^\sigma)$, we may assume that $c^\sigma=c$.

We claim that $c\in \units{A_K}$,
$c^{-1}\in A_\frakm$,
$u\cdot c^{-1}(\frakm)=0$
in $A(\frakm)$
and $A_\frakm/c^{-1}A_\frakm$ is a simple $A(\frakm)$-module.
It is enough to check  this after base-changing to the completion of $R$
at $\frakm$, so \emph{we assume that $R$ is a complete  discrete valuation ring until the end of the paragraph}.
In this case, $u_1$ can be lifted to an idempotent $u_2\in A$ 
\cite[Theorem~6.18]{Reiner_2003_maximal_orders_reprint}.
Let $c'=\pi u_2+(1-u_2)\in A$.
Then, working in the $A$-bimodule $A_K/A \frakm $,
we have $cc'+ A\frakm =\pi a_1+b \in \units{A(\frakm)}$.
Thus, $cc'\in \units{A}$ and it follows that $c\in \units{A_K}$
and $c^{-1}\in c'\units{A}\subseteq A$.
Moreover, 
we have $c^{-1}A=c'A$, so $A/c^{-1}A=A/c'A\cong u A(\frakm)$, which is a simple
$A(\frakm)$-module. 
Finally, $u_2c^{-1}=u_2c'(cc')^{-1}\in \pi u_2\units{A}\subseteq  A \frakm$,
so $u \cdot c^{-1}(\frakm)=0$.

Let $ {f}:A_K\times A_K\to A_K$ be given by $ {f}(x,y)= x^\sigma cy$.
Since $c\in\units{A_K}$, we have $(A_K,f)\in \Herm[1]{A_K,\sigma_K}$.
Let $U=c^{-1} A_\frakm$. Using $c^\sigma=c$, one readily checks that
$U^f=\{v\in A_K\suchthat  {f}(U,v)\subseteq A_\frakm\}= \{v\in A_K\suchthat
 A_\frakm v\subseteq A_\frakm\}=A_\frakm$.
Since $U^f/U=A_\frakm/c^{-1}A_\frakm$ is a simple $A_\frakm$-module, $\partial_\frakm [A_K,f]$
is represented by the $1$-hermitian space 
$(A_\frakm /c^{-1}A_\frakm,\partial f)$,
where 
\[ {\partial f}(x+c^{-1} A_\frakm,y+c^{-1} A_\frakm)=x^\sigma cy+A_\frakm=
(x^\sigma+A_\frakm) a (y+A_\frakm)
.\]
Since $u\cdot c^{-1}(\frakm)=0$ and $uau=a$,
it follows that   $(x+c^{-1} A_\frakm)\mapsto u\cdot x(\frakm):A_\frakm/c^{-1} A_\frakm 
\to u A(\frakm)$
defines an isometry from $(U^f/U,\partial f)$ to $(W,g)$ (it is invertible because
it is surjective and the source and target are simple).

To finish, note that for all $\frakq\in \Max R-\{\frakm\}$,
we have $c\in\units{A_\frakq}$  because $c+\frakq A_\frakq=1+\frakq A_\frakq$.
Computing $\partial_\frakq[A_K,f]$ using  $U=c^{-1}A_\frakq=A_\frakq$,
we get $U^f=A_\frakq=U$, so $\partial_\frakq[A_K,f]=0$.  
\end{proof}

\begin{lem}\label{LM:purity-and-dd-zero}
	Assume that $R$ is a regular domain with fraction field $K$.
	Let $(V,f)\in \Herm[\veps]{A_K,\sigma_K}$ and  $\frakp\in R^{(1)}$.
	If   
	$\partial^A_{0,\frakp}[V,f]=0$, then 
	there exists $(U,g )\in \Herm[\veps]{A_\frakp,\sigma_\frakp}$
	such that $(U_K,g_K)\cong (V,f )$.
\end{lem}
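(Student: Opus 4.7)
The plan is a lattice-lifting argument. Since the desired $(U,g)$ lives over $A_\frakp$, I may replace $R$ with $R_\frakp$ from the outset, so I will assume throughout that $R$ is a discrete valuation ring with maximal ideal $\frakm=\frakp$, fraction field $K$, and residue field $k=k(\frakp)$. Under the identifications of Example~\ref{EX:second-residue-II}, $\tilde{k}(\frakp)\cong\frakm^{-1}/R$ and $\tilde{A}(\frakp)\cong A\frakm^{-1}/A$. By Lemma~\ref{LM:U-prime-properties}(ii), I fix an $A$-lattice $L\subseteq V$ satisfying $L^f\frakm\subseteq L\subseteq L^f$; then $M:=L^f/L$ is a finite-length $A_k$-module, $(M,\partial f)$ is a unimodular $\veps$-hermitian space over $(A_k,\sigma_k)$, and its Witt class in $\tilde{W}_\veps(A(\frakp))$ equals $\partial_{0,\frakp}[V,f]=0$ by hypothesis.

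The main ingredient is the classical Witt decomposition for $\veps$-hermitian forms over the semisimple $k$-algebra with involution $(A_k,\sigma_k)$ (with $2\in\units{k}$): every such unimodular form splits orthogonally, uniquely up to isomorphism, into an anisotropic summand and a hyperbolic summand, and its Witt class vanishes precisely when the anisotropic summand is zero. Applied to $(M,\partial f)$, this yields an $A_k$-submodule $N\subseteq M$ with $N=N^{\perp_{\partial f}}$, i.e.\ a Lagrangian. I then lift $N$ by setting $L':=\pi^{-1}(N)\subseteq L^f$, where $\pi:L^f\onto M$ is the canonical quotient, so that $L\subseteq L'\subseteq L^f$.

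The conclusion will then follow from the sublagrangian-reduction identity $L'=L'^f$. For $x,y\in L'$, the image of $f(x,y)\in A\frakm^{-1}$ in $A\frakm^{-1}/A$ equals $\partial f(\bar x,\bar y)=0$ since $N\subseteq N^{\perp_{\partial f}}$, so $f(L',L')\subseteq A$ and $L'\subseteq L'^f$; conversely, any $v\in L'^f\subseteq L^f$ has $\bar v\in N^{\perp_{\partial f}}=N$, whence $v\in L'$. The $A_K$-unimodularity of $f$ yields the duality $x\mapsto f(x,-):L^f\xrightarrow{\sim}\Hom_A^\sigma(L,A)$, and applying this to $L'=L'^f$ shows that $(U,g):=(L',f|_{L'\times L'})$ is a unimodular $\veps$-hermitian space over $(A_\frakp,\sigma_\frakp)$ whose base change to $K$ recovers $(V,f)$. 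The hard (but entirely standard) point is the existence of the Lagrangian $N$; everything else is the sublagrangian-lifting calculation, packaged inside the formalism of residue maps developed in Section~\ref{sec:second-res}.
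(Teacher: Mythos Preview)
Your proof is correct and follows essentially the same approach as the paper's: both localize at $\frakp$, pick a lattice $L$ with $L^f\frakm\subseteq L\subseteq L^f$, use the vanishing of the Witt class to find a Lagrangian in $(L^f/L,\partial f)$, and lift it to a self-dual lattice $L'=L'^f$ whose restricted form is the desired $(U,g)$. The paper cites \cite[Theorem~2.8]{First_2018_octagon_preprint} for the existence of the Lagrangian (i.e.\ that Witt-trivial implies metabolic) where you invoke Witt decomposition over the semisimple algebra $A(\frakp)$; it also makes explicit that $L'\in\rproj{A_\frakp}$ because $R_\frakp$ is a DVR, which you use implicitly when asserting unimodularity.
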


\begin{proof}
	Since $\partial_{0,\frakp}[V,f]=0$,
	there exists an $A_\frakp$-lattice $U\subseteq V$
	such that $U^f\frakp\subseteq U\subseteq U^f$ and $[U^f/U,\partial f]=0$.
	By \cite[Theorem~2.8(ii)]{First_2022_octagon} (for instance),
	$\partial f$ is metabolic. Let $L$ be an
	$A$-submodule such that $L=L^\perp:=\{v\in U^f/U\suchthat \partial f(U^f/U,v)=0\}$.
	Then $L=M/U$ for some $A_\frakp$-module $M$ lying between $U$ and $U^f$.
	That $L^\perp=L$ implies readily that $M^f=M$.
	Since $R_\frakp$ is a discrete valuation ring, 
	$M\in \rproj{A_\frakp}$, so  the restriction of $f$ to $M\times M$ --- 
	call it $g$ --- is a unimodular $\veps$-hermitian
	form over $(A_\frakp,\sigma_\frakp)$. Since $(M_K,g_K)\cong (V,f)$,
	we are done.
\end{proof}

The following theorem is 
a consequence of 
a   purity theorem of Colliot-Th\'el\`ene and Sansuc
\cite[Corollaire~2.5]{Colliot_1979_quadratic_fiberations}
and Lemma~\ref{LM:purity-and-dd-zero}.
(Colloit-Th\'el\`ene and Sansuc consider only the case
	$(A,\sigma,\veps)=(R,\id_R,1)$, but their proof applies, essentially
	verbatim, to Azumaya algebras with involution; 
	use \cite[Proposition~2.14]{Saltman_1999_lectures_on_div_alg}.
	See also  \cite[\S4]{Auel_2015_quadratic_surface_bundles}  
	for relevant generalizations of  patching arguments
	from \cite[\S2]{Colliot_1979_quadratic_fiberations}.)

\begin{thm}\label{TH:purity-dim-two}
	If $\dim R\leq 2$, then $\HH^0(\aGW{A,\sigma,\veps})=0$.
\end{thm}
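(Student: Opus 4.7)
The plan is to combine Lemma~\ref{LM:purity-and-dd-zero} with a hermitian version of the purity theorem of Colliot-Th\'el\`ene and Sansuc~\cite{Colliot_1979_quadratic_fiberations}, as indicated in the preamble to the theorem statement.

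First I would decompose $\aGW{A,\sigma,\veps}$ as a direct sum over connected components of $\Spec R$, reducing to the case where $R$ is a regular domain of dimension $\leq 2$ with fraction field $F$. In this case $R^{(0)} = \{(0)\}$, the degree-$0$ term of the complex is canonically identified with $W_\veps(A_F,\sigma_F)$, and $\dd_{-1}$ is base change from $R$ to $F$. Consequently $\HH^0(\aGW{A,\sigma,\veps})$ is the quotient of $\ker(\dd_0)$ by the image of this base-change map.

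Next, given a class $[V,f] \in \ker(\dd_0) \subseteq W_\veps(A_F,\sigma_F)$, we have $\partial^A_{0,\frakp}[V,f] = 0$ for every $\frakp \in R^{(1)}$. Applying Lemma~\ref{LM:purity-and-dd-zero} at each such $\frakp$ produces a unimodular $\veps$-hermitian form $(U_\frakp, g_\frakp)$ over $(A_\frakp,\sigma_\frakp)$ whose generic fiber is isometric to $(V,f)$. So $(V,f)$ extends unimodularly to $(A,\sigma)$ at every codimension-one point of $\Spec R$.

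Finally, I would invoke the hermitian analogue of CT-S purity: when $\dim R \leq 2$, a unimodular $\veps$-hermitian form over $(A_F,\sigma_F)$ which admits unimodular extensions to $(A_\frakp,\sigma_\frakp)$ for every $\frakp \in R^{(1)}$ extends to a unimodular $\veps$-hermitian form $(U,g)$ over $(A,\sigma)$. Applying this to $(V,f)$ and the family $\{(U_\frakp,g_\frakp)\}_{\frakp \in R^{(1)}}$ yields such an extension $(U,g)$, and then $\dd_{-1}[U,g] = [V,f]$ in $\HH^0(\aGW{A,\sigma,\veps})$, proving the theorem.

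The main obstacle is justifying this last step, since Colliot-Th\'el\`ene and Sansuc state their theorem only in the classical quadratic case $(A,\sigma,\veps)=(R,\id_R,1)$. Their argument is, however, a Zariski patching argument: one first produces an extension over an open subscheme $U \subseteq \Spec R$ whose complement has codimension $\geq 2$ --- hence consists of a finite set of closed points, since $\dim R \leq 2$ --- and then patches through these missing points using the behaviour of the second residue. To upgrade this to Azumaya algebras with involution one uses that such algebras are Zariski-locally matrix algebras over a (quadratic \'etale extension of the) base ring \cite[Proposition~2.14]{Saltman_1999_lectures_on_div_alg}, so the local steps of \cite[\S2]{Colliot_1979_quadratic_fiberations}, together with the refined patching techniques of \cite[\S4]{Auel_2015_quadratic_surface_bundles}, go through essentially verbatim in the hermitian setting.
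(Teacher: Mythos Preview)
Your proposal is correct and follows exactly the approach the paper takes: reduce to a domain, apply Lemma~\ref{LM:purity-and-dd-zero} at each height-one prime, and then invoke the Colliot-Th\'el\`ene--Sansuc purity argument in its hermitian form, justified via \cite[Proposition~2.14]{Saltman_1999_lectures_on_div_alg} and the patching of \cite[\S4]{Auel_2015_quadratic_surface_bundles}.

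One small correction to your final paragraph: Azumaya algebras are \emph{not} Zariski-locally matrix algebras (that would force trivial Brauer class), and this is not what \cite[Proposition~2.14]{Saltman_1999_lectures_on_div_alg} says. What that proposition actually gives is that an $A$-module is $A$-projective if and only if it is $R$-projective; this is the technical input needed to make the lattice-extension and patching steps of \cite[\S2]{Colliot_1979_quadratic_fiberations} go through over $(A,\sigma)$ without change.
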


\section{An Octagon of Witt Groups}
\label{sec:octagon}

In  \cite[\S6]{Grenier_2005_octagon_of_Witt_grps},
Grenier and Mahmoudi
associated with a  
central simple algebra 
and some auxiliary data an $8$-periodic cochain complex ---
octagon, for short --- of Witt groups; special cases
have been observed before, e.g., \cite[Appendix]{Bayer_1995_Serre_conj_II} and \cite{Lewis_1982_improved_exact_sequences}.
This   was extended to Azumaya algebras in  \cite{First_2022_octagon}.
In this section, we recall the    octagon's construction  
and prove that it is compatible with the Gersten--Witt complex.
	
\medskip

Following \cite[\S3.1]{First_2022_octagon},
suppose that:
\begin{enumerate}[label=(G\arabic*)]
		\item \label{item:octagon:given:first}
		$(A,\sigma)$ is an Azumaya $R$-algebra with involution;
		\item $\veps\in \mu_2(R)$;
		\item \label{item:octagon:given:last}
		$\lambda,\mu\in \units{A}$ and satisfy
		$\lambda^\sigma=-\lambda$, $\mu^\sigma=-\mu$, $\lambda\mu=-\mu\lambda$,
		$\lambda^2\in \Cent(A)$.
	\end{enumerate}
	Now  define the following:
	\begin{enumerate}[label=(N\arabic*)]
	\item  $S=\Cent(A)$;
	\item  $B$ is the commutant of $\lambda$ in $A$;
	\item  $T=\Cent(B)$;
	\item  $\tau_1:=\sigma|_B$;
	\item  $\tau_2:=\Int(\mu^{-1})\circ \sigma|_B$ (i.e.\ $x^{\tau_2}=\mu^{-1}x^\sigma \mu$).
	\end{enumerate}
	It is shown in \cite[\S3.1]{First_2022_octagon}
	that $A$ is an Azumaya   $S$-algebra, $B$ is an Azumaya   $T$-algebra, $T$ is a quadratic
	\'etale $S$-algebra, $\deg A=\frac{1}{2}\deg B$ (if $\deg A$ is constant), 
	$\mu B=B\mu$
	and $A=B\oplus \mu B$.
	Using the last fact:
	\begin{enumerate}[label=(N\arabic*), start=6]
	\item  $\pi,\pi':A\to B$  are defined by
	$\pi(b_1+\mu b_2)=b_1$ and $\pi'(b_1+\mu b_2)=b_2$
	for all
	$b_1,b_2\in B$.
	\end{enumerate}
	
	Note that $\pi$ is an involution $1$-trace from $(A,\sigma)$
	to $(B,\tau_1)$ and $\pi'$ is an involution
	$(-1)$-trace from $(A,\sigma)$ to $(B,\tau_2)$; condition \ref{item:inv-trans:regularity} of \ref{subsec:Scharlau}
	is verified in the proof of \cite[Lemma~3.3]{First_2022_octagon}.
	
	Write $\iota$ for the inclusion morphism $B\to A$.
	We shall view $\iota$ as morphism of $R$-algebras
	with involution from $(B,\tau_1)$ to $(A,\sigma)$,
	or from  $(B,\tau_2)$ to $(A,\sigma_2)$, where $\sigma_2:=\Int( (\lambda \mu)^{-1})\circ \sigma$.
	Recall from  \ref{subsec:base-change} that $\iota_*$
	denotes the induced map between the respective Witt groups.
	We write
	\begin{align*}
	\rho_*&:=\iota_*\circ \lambda_*  :W_\veps(B,\tau_1)\to W_{-\veps}(A,\sigma),\\
	\rho'_*&:= (\lambda\mu)_*\circ \iota_*  :W_\veps(B,\tau_2)\to W_{-\veps}(A,\sigma),
	\end{align*}
	where $\lambda_*$ and $(\lambda\mu)_*$ denote $\lambda$-conjugation 
	and $\lambda\mu$-conjugation, respectively; see Example~\ref{EX:conjugation-is-compatible-with-GW}.

	The octagon of Witt groups associated with the data \ref{item:octagon:given:first}--\ref{item:octagon:given:last}
	is:
\begin{equation}\label{EQ:octagon}
\xymatrix{
W_\veps(A,\sigma) \ar[r]^{\pi_*} &
W_{\veps}(B,\tau_1) \ar[r]^{\rho_*} &
W_{-\veps}(A,\sigma) \ar[r]^{\pi'_*} &
W_{\veps}(B,\tau_2) \ar[d]^{\rho'_*} \\
W_{-\veps}(B,\tau_2) \ar[u]^{\rho'_*}&
W_{\veps}(A,\sigma) \ar[l]^{\pi'_*}&
W_{-\veps}(B,\tau_1) \ar[l]^{\rho_*}  &
W_{-\veps}(A,\sigma) \ar[l]^{\pi_*}
}
\end{equation}

\begin{thm}[{\cite[Theorem~3.4, Proposition~3.5]{First_2022_octagon}}]
	\label{TH:octagon-is-exact}
	The octagon \eqref{EQ:octagon} is a cochain complex.
	If $R$ is semilocal, then it is   exact.
\end{thm}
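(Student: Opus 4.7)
The plan is to verify the complex property by direct construction of Lagrangians, and then to prove exactness for semilocal $R$ by a structural reduction.

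\textbf{Complex property.} By the $\veps \leftrightarrow -\veps$ symmetry of the octagon (which rotates it by four positions), it suffices to verify the four consecutive compositions $\rho_*\pi_*$, $\pi'_*\rho_*$, $\rho'_*\pi'_*$, and $\pi_*\rho'_*$ are zero. I would unwind each composition explicitly using the decomposition $A=B\oplus \mu B$, the relation $\lambda\mu=-\mu\lambda$, and the identities $\lambda^\sigma=-\lambda$, $\mu^\sigma=-\mu$. For instance, $\rho_*\pi_*[V,f]$ is represented by $(V\otimes_B A,g)$ with $g(v\otimes a,v'\otimes a')=a^\sigma\lambda \pi(f(v,v'))a'$, and I would show that the kernel of the $A$-linear multiplication map $V\otimes_B A\to V$, $v\otimes a\mapsto va$, is a Lagrangian. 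Its self-perpendicularity is a direct calculation leveraging $\pi(\mu B)=0$ and the anti-commutation of $\lambda$ with $\mu$, while being a direct summand of rank half that of $V\otimes_BA$ follows from the freeness of $A$ over $B$. The other three compositions admit entirely analogous Lagrangians.

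\textbf{Exactness for semilocal $R$.} Since Witt groups and all four maps $\pi_*,\pi'_*,\rho_*,\rho'_*$ are compatible with localization and with products, I would first decompose $\Spec R$ into connected components and reduce to the case where $R$ is a connected semilocal ring. The structural input I plan to exploit is \cite[Proposition~1.25]{First_2018_octagon_preprint}, which in the semilocal setting produces a $\sigma$-invariant primitive idempotent $e\in A$ compatible with the decomposition $A=B\oplus\mu B$. Applying hermitian Morita equivalence via $e$-transfer (Theorem~\ref{TH:e-transfer-is-compatible-GW}), together with its analogue for $B$, I would reduce to algebras $(A,\sigma)$ of minimal degree (relative to type and index), in which case the Witt groups $W_{\pm\veps}(A,\sigma)$ and $W_{\pm\veps}(B,\tau_i)$ admit explicit descriptions as Witt groups of hermitian forms over a division algebra (in the field case) or a local ring. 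Exactness is then verified vertex by vertex: given a Witt class killed by the next map, one exhibits a preimage under the previous map by direct construction, using the classical exactness of the Lewis--type octagon at the residue fields and then a Nakayama/Hensel lifting argument to pass back to the semilocal ring.

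\textbf{Main obstacle.} The complex property reduces to a calculation; the genuinely nontrivial content is exactness. The reduction to minimal degree via $e$-transfer must be shown to be compatible with the octagon data $(\lambda,\mu)$, which requires choosing $e$ carefully so that the transferred algebra still carries antisymmetric anticommuting units playing the roles of $\lambda$ and $\mu$; alternatively, one checks that $e$-transfer fits into a morphism of octagons. The hardest single step is lifting a Lagrangian (or a hyperbolic decomposition) from the residue field back to the semilocal ring, which uses that unimodular hermitian forms over semilocal rings satisfy Witt cancellation and have transparent idempotent structure, as encoded in the aforementioned lemmas of \cite{First_2018_octagon_preprint}.
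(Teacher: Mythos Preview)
The paper itself does not prove this theorem; it simply cites \cite[Theorem~3.1, Proposition~3.4]{First_2018_octagon_preprint}. So I compare your proposal to the approach in that reference and to the standard arguments for such sequences.

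Your plan for the complex property is fine and is essentially the standard one: for each composite one writes down an explicit Lagrangian inside the underlying module, using the decomposition $A=B\oplus\mu B$ and the relations among $\lambda,\mu$. This is what \cite{First_2018_octagon_preprint} does.

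For exactness, however, your strategy has a genuine gap. The step ``verify exactness at the residue fields and then lift via Nakayama/Hensel'' does not work for Witt groups. Witt classes do not lift along $R\to\prod_{\frakm} k(\frakm)$: a family of preimages over the residue fields need not patch to a preimage over $R$, and a metabolic form over each $k(\frakm)$ need not come from a metabolic form over $R$. There is no Hensel-type statement for Witt classes that would make this go through. Your $e$-transfer reduction is also problematic as stated: a primitive $\sigma$-invariant idempotent $e$ has no reason to commute with $\lambda$ or $\mu$, so $e\lambda e$ and $e\mu e$ are typically not units in $eAe$, let alone elements satisfying condition (G3). You flag this as an obstacle, but it is not one that can be finessed --- the octagon data do not descend along $e$-transfer in general. (Theorem~\ref{TH:e-transfer-is-compatible-GW}, which you cite, concerns compatibility of $e$-transfer with the Gersten--Witt differentials, not with the octagon.)

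The argument in \cite{First_2018_octagon_preprint} is instead direct over the semilocal ring. Given, say, $[V,f]\in W_\veps(A,\sigma)$ in the kernel of $\pi_*$, one knows $(V_B,\pi f)$ is metabolic, hence hyperbolic (since $2\in\units{R}$). One then chooses a Lagrangian $L\subseteq V_B$ and builds the required preimage under $\rho'_*$ directly out of $L$ and the action of $\mu$, using the structure theory of projective modules over Azumaya algebras over semilocal rings (free modules of the same rank are isomorphic, cancellation holds, etc.; cf.\ \cite[Lemma~1.22, Theorem~2.8]{First_2018_octagon_preprint}). No reduction to residue fields enters.
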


\begin{thm}\label{TH:octagon-is-compatible}
	Suppose $R$   is regular.
	Then there is an octagon of cochain complexes:
	\[
	\xymatrix{
\aGW{A/R,\sigma,\veps} \ar[r]^{\pi_*} &
\aGW{B/R,\tau_1,\veps} \ar[r]^{\rho_*} &
\aGW{A/R,\sigma,-\veps} \ar[r]^{\pi'_*} &
\aGW{B/R,\tau_2,\veps} \ar[d]^{\rho'_*} \\
\aGW{B/R,\tau_2,-\veps} \ar[u]^{\rho'_*}&
\aGW{A/R,\sigma,\veps} \ar[l]^{\pi'_*}&
\aGW{B/R,\tau_1,-\veps} \ar[l]^{\rho_*}  &
\aGW{A/R,\sigma,-\veps} \ar[l]^{\pi_*} 
}
	\]
	It is a cochain complex of cochain complexes, and its $e$-level
	is exact for all $e\geq 0$. If $R$ is semilocal, then all its levels are exact.
\end{thm}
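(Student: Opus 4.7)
The plan is to build the octagon of complexes using the functoriality results of Section~\ref{sec:functoriality}, and then to verify the complex and exactness properties one level at a time by reducing to Theorem~\ref{TH:octagon-is-exact}.

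First, each map in~\eqref{EQ:octagon} lifts to a morphism of cochain complexes: the involution traces $\pi_*$ and $\pi'_*$ by Theorem~\ref{TH:pi-transfer-is-compatible}; the base change along the inclusion $\iota:B\hookrightarrow A$ by Theorem~\ref{TH:base-change-is-compatible-with-GW}; and the conjugations $\lambda_*$ and $(\lambda\mu)_*$ by Example~\ref{EX:conjugation-is-compatible-with-GW}, where I will use that $\lambda,\lambda\mu\in\Sym_{-1}(A,\sigma)\cap\units{A}$ and that $\lambda$ is central in $B$ so $\Int(\lambda)|_B=\id$. Composing these appropriately produces $\rho_*$ and $\rho'_*$ as morphisms of cochain complexes with the correct source and target, yielding the displayed octagon-shaped diagram.

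To show that this is a cochain complex of cochain complexes, I will verify the vanishing of each horizontal composition levelwise. At the $(-1)$-level this is precisely Theorem~\ref{TH:octagon-is-exact} applied to the data over $R$. At each level $e\geq 0$ the complexes decompose as $\bigoplus_{\frakp\in R^{(e)}}$, and the morphisms respect this decomposition because they come from functors compatible with base change to each $k(\frakp)$. For a fixed $\frakp\in R^{(e)}$, the induced octagon is the one associated to the base-changed data $(A(\frakp),\sigma(\frakp),\lambda(\frakp),\mu(\frakp),\veps(\frakp))$, acting on Witt groups with the invertible $k(\frakp)$-coefficient $\tilde{k}(\frakp)$. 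The octagon hypotheses \ref{item:octagon:given:first}--\ref{item:octagon:given:last} descend by routine verification: all defining relations are algebraic and preserved by base change, $\lambda(\frakp),\mu(\frakp)\in\units{A(\frakp)}$ since $\lambda,\mu\in\units{A}$, and $B(\frakp)=\Cent_{A(\frakp)}(\lambda(\frakp))$ follows from the $R$-module decomposition $A=B\oplus\mu B$. Theorem~\ref{TH:octagon-is-exact} then gives the complex property for the $k(\frakp)$-coefficient octagon, and the passage to the $\tilde{k}(\frakp)$-coefficient version is harmless: any $k(\frakp)$-linear isomorphism $\tilde{k}(\frakp)\cong k(\frakp)$ transports one octagon diagram to the other, since all four structural operations involved are natural in the invertible coefficient module.

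The exactness statements follow by the same termwise reduction. For each $e\geq 0$ and each $\frakp\in R^{(e)}$, the field $k(\frakp)$ is trivially semilocal, so the semilocal case of Theorem~\ref{TH:octagon-is-exact} applies to the $\tilde{k}(\frakp)$-twisted octagon over $A(\frakp)$ and gives exactness, which then passes to the direct sum over $\frakp$. When $R$ itself is semilocal, Theorem~\ref{TH:octagon-is-exact} additionally provides exactness at the $(-1)$-level. The main point to justify carefully is the compatibility between the plain octagon and its $\tilde{k}(\frakp)$-coefficient variant at levels $e\geq 0$; I expect this to reduce, as above, to the observation that the entire construction of Section~\ref{sec:octagon} is visibly natural in the invertible coefficient module, so the proof of Theorem~\ref{TH:octagon-is-exact} carries over verbatim (or, alternatively, one transports via a chosen trivialization of $\tilde{k}(\frakp)$).
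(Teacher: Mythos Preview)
Your proposal is correct and follows essentially the same approach as the paper's own proof: build the morphisms of complexes from Theorems~\ref{TH:base-change-is-compatible-with-GW}, \ref{TH:pi-transfer-is-compatible} and Example~\ref{EX:conjugation-is-compatible-with-GW}, then verify the complex and exactness properties levelwise by reducing the $e$-level (via a chosen trivialization $\tilde{k}(\frakp)\cong k(\frakp)$) to Theorem~\ref{TH:octagon-is-exact} applied to $(A(\frakp),\sigma(\frakp),\lambda(\frakp),\mu(\frakp))$. Your write-up is in fact somewhat more detailed than the paper's, spelling out why the base-changed data still satisfy \ref{item:octagon:given:first}--\ref{item:octagon:given:last} and why the $\tilde{k}(\frakp)$-twist is harmless.
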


\begin{proof}
	The existence of the octagon follows from 
	Theorem~\ref{TH:base-change-is-compatible-with-GW}, 
	Theorem~\ref{TH:pi-transfer-is-compatible}
	and Example~\ref{EX:conjugation-is-compatible-with-GW}.
	By Theorem~\ref{TH:octagon-is-exact},  the $(-1)$-level of
	the octagon is a cochain complex, which is also exact
	when $R$ is semilocal. To see that
	the $e$-level is exact for $e\geq 0$, fix isomorphisms $\tilde{k}(\frakp)\to k(\frakp)$
	for all $\frakp\in R^{(e)}$
	and use them to identify the $\frakp$-component of the $e$-level of the octagon
	with the  octagon of Witt groups associated to $A(\frakp),\sigma(\frakp),\mu(\frakp),\lambda(\frakp)$.
	The latter is exact by Theorem~\ref{TH:octagon-is-exact} (or, alternatively,
	\cite[Corollary~6.1]{Grenier_2005_octagon_of_Witt_grps}).
\end{proof}

\begin{remark}\label{RM:base-ring-for-oct}
	In general,  $(B,\tau_{i})$ ($i=1,2$)
	is not always Azumaya over $R$.
	However, it is Azumaya over $R_i:=\Cent(B )^{\{\tau_i\}}$,
	which is a finite \'etale
	$R$-algebra (see~\ref{subsec:Azumaya}),
	and 
	by  Theorem~\ref{TH:base-does-not-matter},
	we have $\aGW{B/R,\tau_i,\veps}\cong \aGW{B/R_i,\tau_i,\veps}$.
\end{remark}

Overriding previous notation,
let $S$ be a quadratic \'etale $R$-algebra, 
let $\theta$
be its standard $R$-involution (see~\ref{subsec:Azumaya})  and suppose that
there exists $\lambda\in S$ such that $\{1,\lambda\}$
is an $R$-basis of $S$
and $\lambda^2\in\units{R}$; such $\lambda$ always exists if $R$ is semilocal
\cite[Lemma~1.19]{First_2022_octagon}.
Write $\Tr=\Tr_{S/R}$. It is easy to check that $\Tr:S\to R$
is an involution $1$-trace relative to both $(\theta,\id_R)$
and $(\id_S,\id_R)$. Writing $\iota$ for the inclusion map $R\to S$,
consider the sequence
\begin{align*}\label{EQ:five-terms}
		0 \to 
		W_{1}(S,\theta)\xrightarrow{\Tr_* }
		W_{1}(R,\id )\xrightarrow{\lambda_*\iota_*}
		W_{1}(S,\id )\xrightarrow{\Tr_* }
		W_{1}(R,\id )\xrightarrow{\lambda_*\iota_*}
		W_{-1}(S,\theta) 
		\to 0 .
\end{align*}
This  is in fact a special case of the octagon \eqref{EQ:octagon};
see \cite[Corollary~8.3]{First_2022_octagon} and its proof.
Thus, 
the sequence is a cochain complex and it is exact when $R$ is semilocal.
Arguing as in the proof of Theorem~\ref{TH:octagon-is-compatible}, we get:

\begin{thm}\label{TH:five-term-is-compatible}
	In the previous notation, when $R$ is regular,
	there exists a $5$-term  cochain complex of cochain complexes
	\[
	0 \to 
	\aGW{S,\theta,1}\xrightarrow{\Tr_* }
	\aGW{R,\id ,1}\xrightarrow{\lambda_*\iota_*}
	\aGW{S,\id ,1} \xrightarrow{\Tr_* }
	\aGW{R,\id ,1} \xrightarrow{\lambda_*\iota_*}
	\aGW{S,\theta,-1} 
	\to 0 .
	\] 
	The $e$-level of the complex  
	is exact for all $e\geq 0$. If $R$ is semilocal, then all  levels are exact.
\end{thm}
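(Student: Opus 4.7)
The plan is to follow the template of Theorem~\ref{TH:octagon-is-compatible} almost verbatim: construct each arrow of the $5$-term diagram at the level of Gersten--Witt complexes using the functoriality results of Section~\ref{sec:functoriality}, then verify the cochain-complex and exactness properties by reducing to the corresponding Witt-group statement (which is the displayed sequence recalled just before the theorem, itself a specialization of the octagon of \cite{First_2018_octagon_preprint}).

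First I would construct the maps. The trace $\Tr\colon S\to R$ is an involution $1$-trace relative to both $(\theta,\id_R)$ and $(\id_S,\id_R)$, so Theorem~\ref{TH:pi-transfer-is-compatible} yields morphisms of Gersten--Witt complexes $\Tr_*\colon\aGW{S,\theta,\veps}\to\aGW{R,\id,\veps}$ and $\Tr_*\colon\aGW{S,\id,\veps}\to\aGW{R,\id,\veps}$. The inclusion $\iota\colon R\to S$ is a morphism of $R$-algebras with involution $(R,\id)\to(S,\id)$, yielding $\iota_*$ by Theorem~\ref{TH:base-change-is-compatible-with-GW}. Since $\lambda^\theta=-\lambda$ and $\lambda^2\in\units{R}$, Example~\ref{EX:conjugation-is-compatible-with-GW} gives an isomorphism of cochain complexes $\lambda_*\colon\aGW{S,\id,\veps}\xrightarrow{\sim}\aGW{S,\theta,-\veps}$. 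Composition produces the displayed arrows.

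Next, I would verify that the diagram is a cochain complex of cochain complexes and establish exactness level-by-level. The $(-1)$-level of the diagram is, by construction, the Witt-group $5$-term sequence recalled just before the theorem; that sequence is a complex and, when $R$ is semilocal, is exact. For $e\geq 0$, after fixing isomorphisms $\tilde{k}(\frakp)\cong k(\frakp)$ for each $\frakp\in R^{(e)}$ and $\tilde{k}(\frakP)\cong k(\frakP)$ for each $\frakP\in S^{(e)}$, the $\frakp$-component of the $e$-level is canonically identified with the analogous $5$-term Witt-group sequence over the residue field $k(\frakp)$, with quadratic \'etale extension $S(\frakp)/k(\frakp)$ and element $\lambda(\frakp)$ (which indeed satisfies $\lambda(\frakp)^2\in \units{k(\frakp)}$). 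This identification intertwines $\iota_*$, $\Tr_*$, and $\lambda_*$ with their residue-field counterparts, by the functoriality Theorems~\ref{TH:base-change-is-compatible-with-GW} and~\ref{TH:pi-transfer-is-compatible} and Example~\ref{EX:conjugation-is-compatible-with-GW}; the resulting Witt-group sequence over a field is then exact by Theorem~\ref{TH:octagon-is-exact}.

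The main obstacle will be the bookkeeping when a prime $\frakp\in R^{(e)}$ splits in $S$: several primes $\frakP_1,\dots,\frakP_r\in S^{(e)}$ lie above it, and one must check, in the spirit of the proof of Theorem~\ref{TH:base-does-not-matter}, that $\bigoplus_{i=1}^r\tilde{W}_\veps(S(\frakP_i))$ reassembles consistently under $\Tr_*$ and $\lambda_*\iota_*$ with the corresponding decomposition $S(\frakp)=\prod_i k(\frakP_i)$. Once this is granted, the residue-field reduction above is routine; the rest is a direct application of the machinery already in place.
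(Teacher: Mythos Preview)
Your proposal is correct and matches the paper's approach, which literally reads ``Arguing as in the proof of Theorem~\ref{TH:octagon-is-compatible}'': build the maps via Theorems~\ref{TH:base-change-is-compatible-with-GW}, \ref{TH:pi-transfer-is-compatible} and Example~\ref{EX:conjugation-is-compatible-with-GW}, then check exactness level-by-level by reducing to the Witt-group $5$-term sequence over each residue field, which is exact as a specialization of the octagon.

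One remark: the ``main obstacle'' you anticipate is not actually present. All five complexes in the statement are $\aGW{-/R,-,-}$, so their $e$-levels are uniformly indexed by $R^{(e)}$, and the $\frakp$-component of the $e$-level is already the $5$-term Witt-group sequence for the quadratic \'etale $k(\frakp)$-algebra $S(\frakp)$ with element $\lambda(\frakp)$. There is no need to pass to primes $\frakP\in S^{(e)}$ and reassemble; the bookkeeping of Theorem~\ref{TH:base-does-not-matter} only enters if you choose to rewrite $\aGW{S/R,\id,1}$ as $\aGW{S/S,\id,1}$, which is optional here.
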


\section{Springer's Theorem on Odd-Rank Extensions}
\label{sec:Springer}

We recall the Scharlau transfer, which is 
a special kind of involution $1$-trace (see \ref{subsec:Scharlau}), and establish 
a version of the weak Springer Theorem on 
odd-rank extensions for  algebras with involution.

As before, $(A,\sigma)$ denotes an $R$-algebra with involution.
Recall that an $R$-algebra
$S$ is called \emph{monogenic} if there exists $x\in S$  and $n\in\N$
	such that $\{1,x,\dots,x^{n-1}\}$
	is an $R$-basis of $S$, or equivalently, if $S\cong R[X]/(f)$
	for some monic polynomial $f$.

\medskip

	Let $S$ be a monogenic $R$-algebra of odd  rank $n$ and let
	$x\in S$ be an element such that $\{1,x,\dots,x^{n-1}\}$
	is an $R$-basis of $S$. Denote the inclusion
	$A\to A_S$  by $\rho$.
	The \emph{Scharlau transfer} is the map
	$\pi=\pi_x:A_S\to A$  defined by
	\[
	\pi(a_0+a_1x+\dots+a_{n-1}x^{n-1})=a_{0}
	\] 
	for all $a_0,\dots,a_{n-1}\in A$
	(Scharlau  \cite{Scharlau_1969_transfer} considered the case $A=R$).
	Arguing as in 
	the proof of \cite[Proposition~1.2]{Bayer_1990_odd_degree_extensions}
	or \cite[Proposition~2.1]{Fainsilber_1994_formes_hermitiennes},
	one sees that  $\pi$
	is an involution $1$-trace from $(A_S,\sigma_S)$ to $(A,\sigma)$
	in the sense of \ref{subsec:Scharlau}, and  the following 
	version of the weak Springer  Theorem holds.

	\begin{prp}\label{PR:Springer}
		In the previous notation,
		the composition
		\[
		W_\veps(A,\sigma)\xrightarrow{\rho_*} W_\veps(A_{S},\sigma_{S})
		\xrightarrow{\pi_*} W_\veps(A,\sigma)
		\]
		is the identity. In particular,
		the base change
		map $\rho_*:W_\veps(A,\sigma)\to W_\veps(A_S,\sigma_S)$
		is injective.
	\end{prp}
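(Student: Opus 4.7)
The plan is to exhibit, for every $(V,f) \in \Herm[\veps]{A,\sigma}$, an orthogonal decomposition $\pi_{*}\rho_{*}(V,f) \cong (V,f) \perp (W,h)$ in $\Herm[\veps]{A,\sigma}$ with $(W,h)$ metabolic; this gives $[\pi_{*}\rho_{*}(V,f)] = [V,f]$ in $W_{\veps}(A,\sigma)$, and the injectivity of $\rho_{*}$ is then immediate.

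Let $M := Rx \oplus Rx^{2} \oplus \cdots \oplus Rx^{n-1} \subseteq S$, so that $S = R \cdot 1 \oplus M$ as $R$-modules, with induced $A$-module decomposition $V_{S} = V \oplus (V \otimes_{R} M)$. Since $\sigma_{S}$ fixes $S$ and $A$ commutes with $S$ inside $A_{S}$, the base-changed form is $f_{S}(v \otimes s, v' \otimes s') = ss' \cdot f(v,v')$; moreover $\pi$ is $A$-bilinear, being the projection onto the first summand of the $A$-bimodule decomposition $A_{S} = A \oplus Ax \oplus \cdots \oplus Ax^{n-1}$, so $(\pi f_{S})(v \otimes s, v' \otimes s') = \pi(ss')\,f(v,v')$. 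Because $\pi(1) = 1$ and $\pi(x^{i}) = 0$ for $1 \le i \le n-1$, the decomposition $V_{S} = V \oplus (V \otimes_{R} M)$ is orthogonal for $\pi f_{S}$ and restricts to $f$ on the first summand; it therefore suffices to show that $(V \otimes_{R} M, f \otimes b)$ is metabolic over $(A,\sigma)$, where $b \colon M \times M \to R$ is the symmetric form $b(m,m') := \pi(mm')$.

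The odd-rank hypothesis enters via the choice of Lagrangian. Setting $k := (n-1)/2$, take $L_{0} := Rx \oplus Rx^{2} \oplus \cdots \oplus Rx^{k} \subseteq M$; for $1 \le i,j \le k$ we have $2 \le i+j \le 2k = n-1$, whence $b(x^{i},x^{j}) = \pi(x^{i+j}) = 0$ and $L_{0}$ is totally isotropic. The form $b$ on $S$ is unimodular by condition \ref{item:inv-trans:regularity} applied to $\pi$, and the orthogonal decomposition $S = \langle 1 \rangle \perp (M, b|_{M})$ (with $b(1,1) = 1$) forces $b|_{M}$ to be unimodular as well. Choosing an $R$-complement $N$ of $L_{0}$ in $M$, the Gram matrix of $b|_{M}$ takes the block form $\bigl[\begin{smallmatrix} 0 & B \\ B^{\top} & C \end{smallmatrix}\bigr]$, and unimodularity forces $B$ to be invertible.

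Applying the same block decomposition to $f \otimes b$ on $(V \otimes_{R} L_{0}) \oplus (V \otimes_{R} N)$, and using the unimodularity of $f$ together with the invertibility of $B$, one obtains $(V \otimes_{R} L_{0})^{\perp} = V \otimes_{R} L_{0}$ inside $V \otimes_{R} M$. Since $V \otimes_{R} L_{0} \cong V^{k}$ and $(V \otimes_{R} M)/(V \otimes_{R} L_{0}) \cong V \otimes_{R} N \cong V^{k}$ both lie in $\rproj{A}$, the form $(V \otimes_{R} M, f \otimes b)$ is metabolic, completing the proof. The main step — and the only one where the parity of $n$ is used — is the identification of $L_{0}$ as a Lagrangian of $(M, b|_{M})$ from the lower half of the powers of $x$; for even $n$ this trick breaks down, consistent with the failure of the weak Springer theorem for even-degree extensions.
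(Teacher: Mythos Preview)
Your argument is correct and is precisely the classical Scharlau argument (as in Bayer--Fluckiger--Lenstra and Fainsilber) that the paper cites in lieu of a proof: split $S = R\cdot 1 \perp M$ orthogonally for the trace form $b(s,s')=\pi(ss')$, and exhibit the span of the first $(n-1)/2$ powers of $x$ as a Lagrangian of $(M,b|_M)$, whence $(V\otimes M, f\otimes b)$ is metabolic. The one step you leave slightly implicit---that invertibility of the block Gram matrix $\bigl[\begin{smallmatrix}0 & B\\ B^{\top} & C\end{smallmatrix}\bigr]$ over $R$ forces $B\in\GL_k(R)$---is fine: reduce modulo each $\frakm\in\Max R$, observe that a nonzero $v$ with $B^{\top}v=0$ would put $(v,0)$ in the kernel, and conclude $\det B\in\units{R}$.
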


	We would like to have an analogue of  Proposition~\ref{PR:Springer}
	when $S$ is a finite \'etale $R$-algebra of odd rank.
	Such algebras are   not 
	monogenic in general.
	When $R$ is semilocal, we   achieve  this  
	by showing
	that every odd-rank finite \'etale $R$-algebra can be embedded
	in an odd-rank monogenic \'etale $R$-algebra.

	We begin with the following lemma, which will also be needed
	in Section~\ref{sec:odd-extension}. The standard  proof is omitted.

	\begin{lem}\label{LM:free-generation}
		Assume $R$ is semilocal. Let $M\in \rproj{R}$ and let $m_1,\dots,m_n\in M$.
		\begin{enumerate}[label=(\roman*)]
			\item 
			If $m_1(\frakm),\dots,m_n(\frakm)$ form
			a $k(\frakm)$-basis to $M(\frakm)$ for all 
			$\frakm\in \Max R$, then $m_1,\dots,m_n$
			form an $R$-basis to $M$.
			\item 
			If $m_1(\frakm),\dots,m_n(\frakm)$  
			are $k(\frakm)$-linearly independent in  $M(\frakm)$ for all 
			$\frakm\in \Max R$, then $m_1,\dots,m_n$
			are $R$-linearly independent in $M$
			and $\sum_i m_i R$ is a summand of $M$.
		\end{enumerate}
	\end{lem}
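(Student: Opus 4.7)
My plan is to deduce both parts from the analysis of the $R$-module homomorphism
\[
\phi:R^n\to M,\qquad \phi(r_1,\dots,r_n)=\sum_{i=1}^n r_i m_i,
\]
combined with two standard facts: (a) a finite projective module over a commutative ring becomes free after localizing at a maximal ideal, and (b) surjectivity (and hence isomorphism) of a map between finitely presented modules is a local property on $\Spec R$.

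For (i), fix $\frakm\in\Max R$. Then $M_\frakm$ is free over $R_\frakm$ of rank $r_\frakm:=\dim_{k(\frakm)}M(\frakm)$, which the hypothesis forces to be $n$. The reduction $\phi_\frakm\otimes k(\frakm)$ is a surjection between $k(\frakm)$-vector spaces of dimension $n$, hence an isomorphism; by Nakayama, $\phi_\frakm$ is surjective; and a surjection between free $R_\frakm$-modules of the same finite rank is an isomorphism (by the determinant trick). So $\phi_\frakm$ is an isomorphism for every $\frakm$, whence $\phi$ itself is, which proves (i).

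For (ii) the goal is to show that $\phi$ is a split injection; the two claimed conclusions then follow at once. Locally, I would extend the linearly independent family $\{m_i(\frakm)\}_{i\le n}$ to a $k(\frakm)$-basis of $M(\frakm)$ by adjoining $\bar v_{n+1},\dots,\bar v_{r_\frakm}$, lift these arbitrarily to elements $v_j\in M_\frakm$, and observe (by Nakayama together with a rank count in the free module $M_\frakm$) that $\{m_1,\dots,m_n,v_{n+1},\dots,v_{r_\frakm}\}$ is an $R_\frakm$-basis of $M_\frakm$; this exhibits $\phi_\frakm$ as a split injection. The main obstacle is globalizing these local splittings, which I would handle by duality. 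Since $M$ is finitely presented, $(\phi^\vee)_\frakm=(\phi_\frakm)^\vee$ for each $\frakm$, and each of these is surjective as the dual of a split injection. Surjectivity being local, $\phi^\vee:M^\vee\to R^n$ is surjective, and then splits because $R^n$ is projective. Dualizing once more and using reflexivity $M^{\vee\vee}\cong M$ of the finite projective module $M$ yields that $\phi$ is a split injection, which completes (ii).
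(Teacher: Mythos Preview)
Your proof is correct. The paper omits its own proof entirely, stating only that ``the standard proof is omitted'', so there is no argument to compare against in detail.

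Your treatment of (i) is exactly the expected localization-plus-Nakayama argument. For (ii), your duality approach---showing $\phi^\vee$ is surjective locally, hence globally, and then double-dualizing using reflexivity of finite projective modules---is clean and correct. It is worth noting that this argument does not actually use the semilocal hypothesis: it works over any commutative ring. The more ``classical'' proof the authors may have had in mind would exploit semilocality directly: write $J=\Jac(R)$, use the Chinese Remainder Theorem to lift a complement of $\mathrm{span}\{m_i(\frakm)\}$ in each $M(\frakm)$ simultaneously to elements of $M$, and then apply part (i) (after splitting $R$ into connected components to make the rank constant). Your route avoids this case analysis and the CRT bookkeeping at the cost of invoking reflexivity; both are standard, and yours is arguably more conceptual.
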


%

	\begin{prp}\label{PR:simple-extension}
		Suppose that $R$ is semilocal,
		let $S$ be a finite \'etale $R$-algebra of constant   rank
		and let $c\in\N$.
		Then there exists an odd-rank finite \'etale $R$-algebra $T$
		such that $S\otimes T$ is monogenic
		and $|(S\otimes T)/\frakm|\geq c$ for all $\frakm\in\Max(S\otimes T)$.
	\end{prp}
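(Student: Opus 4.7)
The plan is to take $T$ of the form $R[X]/(f(X))$ for a carefully chosen monic separable $f\in R[X]$ of odd degree $d$, obtained by Chinese Remainder Theorem from polynomials $f_i\in k_i[X]$ over the residue fields of $R$.

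Let $\frakm_1,\dots,\frakm_r$ be the maximal ideals of $R$ with residue fields $k_i$; write $\bar S_i:=S\otimes_R k_i=\prod_j L_{i,j}$ as a product of finite separable field extensions with $d_{i,j}:=[L_{i,j}:k_i]$, and set $N:=\rank_R S$. The reduction to residue fields rests on Lemma~\ref{LM:free-generation}: given a monic $f\in R[X]$, the $R$-algebra $R[X]/(f)$ is \'etale if and only if each reduction $f_i:=f\bmod\frakm_i$ is separable (as then the discriminant of $f$, a unit modulo every $\frakm_i$, is a unit in $R$); and for $U=R[X]/(f)$, $S\otimes_R U$ is monogenic as an $R$-algebra if and only if $\bar S_i\otimes_{k_i}(U\otimes_R k_i)=\bar S_i[X]/(f_i)$ is monogenic over $k_i$ for every $i$ (lift generators by CRT and apply the lemma to the module spanned by their successive powers). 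Likewise, the maximal ideals of $S\otimes_R U$ above $\frakm_i$ correspond to the field factors of $\bar S_i[X]/(f_i)$. Thus the task reduces to constructing, for each $i$, a monic separable polynomial $f_i\in k_i[X]$ of a common odd degree $d$ such that $\bar S_i[X]/(f_i)$ is monogenic over $k_i$ with every field factor of cardinality $\geq c$; a coefficientwise CRT lift then yields $f\in R[X]$ and hence $T:=R[X]/(f)$ with the required properties.

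For the local construction I will fix an odd integer $d$ that (i) is coprime to every $d_{i,j}$ and (ii) satisfies $q_i^d\geq\max(c,2Nd)$ for every $i$ with $k_i=\bbF_{q_i}$ finite. For infinite $k_i$ I take $f_i$ to be a product of $d$ distinct linear factors, and for finite $k_i$ I take $f_i$ irreducible of degree $d$ (which exists over every finite field in every positive degree). In the infinite case $k_i[X]/(f_i)=k_i^d$ and $\bar S_i[X]/(f_i)=\prod_j L_{i,j}^d$; monogenicity amounts to choosing, for each isomorphism class of field factor appearing with some multiplicity, that many elements with pairwise distinct minimal polynomials over $k_i$, which is possible as each $L_{i,j}$ is infinite. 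In the finite case the coprimality $\gcd(d,d_{i,j})=1$ implies $L_{i,j}\otimes_{k_i}\bbF_{q_i^d}=\bbF_{q_i^{d\,d_{i,j}}}$ is a single field, hence $\bar S_i[X]/(f_i)=\prod_j\bbF_{q_i^{d\,d_{i,j}}}$; monogenicity then follows because the number of monic irreducible polynomials of degree $m$ over $\bbF_{q_i}$ is at least $q_i^m/(2m)$, which by (ii) exceeds $N$ for every $m=d\,d_{i,j}$, whereas each field factor appears with multiplicity at most $N$. The residue field condition is automatic: each residue is either some infinite $L_{i,j}$, or some $\bbF_{q_i^{d\,d_{i,j}}}$ of cardinality $\geq q_i^d\geq c$.

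The principal obstacle is arranging monogenicity of $\bar S_i[X]/(f_i)$ when $\bar S_i$ has repeated field factors while simultaneously keeping all residue fields large. This motivates the coprimality condition on $d$ (ensuring that tensoring $L_{i,j}$ with $k_i[X]/(f_i)$ does not split it further, thereby bounding multiplicities by those already present in $\bar S_i$) and the lower bound on $q_i^d$ (ensuring that enough distinct irreducible minimal polynomials are available in each required degree).
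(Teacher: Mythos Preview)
Your proof is correct and follows essentially the same route as the paper: both take $T=R[X]/(f)$ with $f$ built via the Chinese Remainder Theorem from residue-field data, handle finite residue fields by choosing an irreducible polynomial of odd degree coprime to the degrees of the field factors of $\bar S_i$ (the paper uses a large prime $\ell$, which achieves the same coprimality), and verify monogenicity by counting irreducible polynomials. The only minor variation is in the infinite residue-field case, where you take $f_i$ split and argue monogenicity directly, whereas the paper takes an arbitrary separable $h_i$ and cites a general monogenicity result for \'etale algebras over infinite fields; both are straightforward.
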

	
	\begin{proof}
		Suppose first that $R$ is a finite field of cardinality $q$.
		Then $S=F_1\times\dots\times F_t$, where $F_1,\dots,F_t$ are finite $R$-fields.
		Let $\ell$ be a prime number and let $T$ be the field with $q^\ell$
		elements. 
		We claim that $T$ satisfies the requirements of the lemma   for all sufficiently
		large $\ell$.		
		To see this, write $n_i:=\dim_RF_i$ ($1\leq i\leq t$).
		Observe first that if $\ell>\max\{n_1,\dots,n_t\}$,
		then $F_i\otimes T$ is an $R$-field of dimension $n_i\ell$.
		For all $\ell$ large enough, we shall have $|F_i\otimes T|\geq c$.
		Choose a monic polynomial $g_i\in R[X]$ such that $F_i\otimes T\cong R[X]/(g_i)$.
		It is well-known that the number of monic prime polynomials of degree
		$n_i\ell$ over $R$
		is $(n_i\ell)^{-1}q^{n_i\ell}+O_{\ell}(q^{n_i\ell/2})$.
		Thus, for every $\ell$ sufficiently large, the polynomials
		$g_1,\dots,g_t$ can be chosen to be distinct (even when
		$n_1,\dots,n_t$ are not distinct).
		Let $f=\prod_i g_i$. By the Chinese Remainder Theorem,
		$S\otimes T=\prod_i F_i\otimes T\cong \prod_i R[X]/(g_i)\cong R[X]/(f)$,
		so $S\otimes T$ is monogenic.

		Suppose now that $R$ is an arbitrary semilocal ring
		with maximal ideals $\frakm_1,\dots,\frakm_s$.
		For every $i\in \{1,\dots, s\}$ such that $k(\frakm_i)$ is finite,
		use the previous paragraph  to choose a prime number $\ell_i$
		and a $k(\frakm_i)$-field   $T_i$  
		of dimension $\ell_i$
		such that  $S\otimes T_i$
		is a monogenic $k(\frakm_i)$-algebra
		which cannot surject onto a field of cardinality less than $c$.
		Choose also  a monic   polynomial $h_i\in k(\frakm_i)[X]$
		such that $T_i\cong k(\frakm_i)[X]/(h_i)$.
		Enlarging $\ell_i$ if needed, we may assume that $\ell_i$ is odd and independent of $i$;
		write $\ell=\ell_i$. If $k(\frakm_i)$ is infinite for all $i$, take $\ell=1$.
		
		For every $i\in \{1,\dots, s\}$ such that $k(\frakm_i)$
		is infinite, choose an arbitrary separable
		polynomial $h_i\in k(\frakm_i)[X]$ of degree $\ell$ and set
		$T_i=k(\frakm_i)[X]/(h_i)$.
		Then $S\otimes T_i$ is \'etale over the infinite field $k(\frakm_i)$,
		hence monogenic (see \cite[\S4]{First_2017_number_of_generators},
		for instance).
		
		By the Chinese Remainder Theorem,
		there exists a   monic polynomial $h\in R[X]$ such that 
		$\deg h=\ell$ and $h(\frakm_i)=h_i$
		for all $i\in \{1,\dots, s\}$.
		Let $T=R[X]/(h)$.
		Then $T(\frakm_i)=T_i$ is \'etale
		over $k(\frakm_i)$ for all $i$, hence $T$ is a finite \'etale $R$-algebra
		of rank $\ell$.
		Furthermore, $(S\otimes T)(\frakm_i)\cong S\otimes T_i$ is monogenic for all $i$.
		Write $m=\rank_RS$ and choose $x_i\in (S\otimes T)(\frakm_i)$
		such that $\{1,x_i,\dots,x_i^{\ell m-1}\}$
		is a $k(\frakm_i)$-basis of $(S\otimes T)(\frakm_i)$.
		By the Chinese Remainder Theorem, there exists
		$x\in S\otimes T$
		such that $x(\frakm_i)=x_i$ for all $i$,
		and 
		by Lemma~\ref{LM:free-generation}(i),
		$\{1,x,\dots,x^{\ell m-1}\}$ is an $R$-basis for $S\otimes T$.
	\end{proof}

	\begin{cor}\label{CR:Springer-odd-etale}
		Let $(A,\sigma)$ be an algebra with involution over a semilocal
		ring $R$ and let $S$ be a finite \'etale $R$-algebra of odd rank.
		Then the base change map
		\[
		\rho_*:W_\veps(A,\sigma)\to W_\veps(A_S,\sigma_S)
		\]
		admits a splitting. In particular, this map is injective.
		
		If, in addition, $R$ is regular and   
		$(A,\sigma)$ is Azumaya
		over $R$,
		then the base change morphism
		$\rho_*:\aGW{A/R,\sigma,\veps}\to \aGW{A_S/R,\sigma_S,\veps}$
		of   Theorem~\ref{TH:base-change-is-compatible-with-GW}
		admits a splitting  in the category of abelian  cochain complexes.
	\end{cor}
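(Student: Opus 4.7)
The plan is to combine Propositions~\ref{PR:simple-extension} and~\ref{PR:Springer} with the functoriality results of Section~\ref{sec:functoriality}. First I decompose $R$ along the clopen partition of $\Spec R$ induced by the locally constant rank function of $S$ so as to reduce to the case where $S$ has constant odd rank; each piece inherits both semilocality and (when relevant) regularity. Applying Proposition~\ref{PR:simple-extension} then yields an odd-rank finite \'etale $R$-algebra $T$ with $S\otimes T$ monogenic over $R$; since $\rank_R S$ and $\rank_R T$ are both odd, so is $n:=\rank_R(S\otimes T)$. Fix $x\in S\otimes T$ with $\{1,x,\dots,x^{n-1}\}$ an $R$-basis, and let $\pi=\pi_x:A_{S\otimes T}\to A$ be the associated Scharlau transfer, which is an involution $1$-trace from $(A_{S\otimes T},\sigma_{S\otimes T})$ to $(A,\sigma)$.

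Write $\rho:A\to A_S$ and $\rho':A_S\to A_{S\otimes T}$ for the canonical inclusions, so that $\rho'\circ\rho:A\to A_{S\otimes T}$ is the structural inclusion into the monogenic extension. By Proposition~\ref{PR:Springer} applied to $S\otimes T$, the composition $\pi_*\circ\rho'_*\circ\rho_*$ is the identity on $W_\veps(A,\sigma)$; hence $\pi_*\circ\rho'_*$ is a left inverse of $\rho_*$, giving the required splitting at the Witt-group level.

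For the Gersten--Witt assertion, assume additionally that $R$ is regular and $(A,\sigma)$ is Azumaya. Then $(A_S,\sigma_S)$ and $(A_{S\otimes T},\sigma_{S\otimes T})$ are separable projective over $R$ (tensor products of an Azumaya with finite \'etale algebras), so Theorems~\ref{TH:base-change-is-compatible-with-GW} and~\ref{TH:pi-transfer-is-compatible} lift $\rho$, $\rho'$, and $\pi$ to morphisms of Gersten--Witt complexes over $R$. Set $\Phi:=\pi_*\circ\rho'_*:\aGW{A_S/R,\sigma_S,\veps}\to\aGW{A/R,\sigma,\veps}$. To verify that $\Phi\circ\rho_*$ is the identity morphism, I check it term by term. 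The $(-1)$-level is exactly the Witt-group identity proved above. For $e\geq 0$, the cited theorems present the $e$-level of $\Phi\circ\rho_*$ as the direct sum over $\frakp\in R^{(e)}$ of the compositions
\[
\tilde{W}_\veps(A(\frakp))\xrightarrow{\rho(\frakp)_*}\tilde{W}_\veps(A_S(\frakp))\xrightarrow{\rho'(\frakp)_*}\tilde{W}_\veps(A_{S\otimes T}(\frakp))\xrightarrow{\pi(\frakp)_*}\tilde{W}_\veps(A(\frakp)),
\]
where $\pi(\frakp)$ is the Scharlau transfer with respect to the reduced basis $\{1,x(\frakp),\dots,x(\frakp)^{n-1}\}$ of the still-monogenic odd-rank $k(\frakp)$-algebra $(S\otimes T)(\frakp)$. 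Fixing an isomorphism $\tilde{k}(\frakp)\cong k(\frakp)$ to identify $\tilde{W}_\veps(A(\frakp))$ with $W_\veps(A(\frakp),\sigma(\frakp))$, a further application of Proposition~\ref{PR:Springer} over $k(\frakp)$ shows this residue composition is the identity.

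No step is a real obstacle: the genuine inputs (obtaining a monogenic odd-rank cover in Proposition~\ref{PR:simple-extension}, and the classical weak Springer theorem in Proposition~\ref{PR:Springer}) are already established, and the explicit descriptions of the $e$-level maps in Section~\ref{sec:functoriality} are exactly what is needed to reduce the complex-level statement to a pointwise check on residue Witt groups. The only minor points to watch are the reduction to constant rank and the compatibility of the chosen basis with residue reduction, both of which are immediate.
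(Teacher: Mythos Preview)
Your proof is correct and follows the same approach as the paper: reduce to constant rank, invoke Proposition~\ref{PR:simple-extension} to make $S\otimes T$ monogenic of odd rank, and then use Proposition~\ref{PR:Springer} together with Theorems~\ref{TH:base-change-is-compatible-with-GW} and~\ref{TH:pi-transfer-is-compatible}. The paper is terser in the second part, simply citing the two functoriality theorems, whereas you spell out the level-by-level verification (fixing $\tilde{k}(\frakp)\cong k(\frakp)$ and reapplying Proposition~\ref{PR:Springer} over each residue field); this elaboration is correct and is exactly what underlies the paper's one-line justification.
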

	
	\begin{proof}
		We may assume $R$ is connected,
		otherwise write $R$ as a finite product of connected rings and work  over each
		factor separately. Then $\rank_RS$ is constant.
		By   Proposition~\ref{PR:simple-extension}, there exists an odd-rank finite \'etale
		$R$-algebra $T$  such that 
		$S\otimes T$ is monogenic.
		By Proposition~\ref{PR:Springer},
		the composition
		\[
		W_\veps(A,\sigma)\to W_\veps(A_S,\sigma_S)\to  W_\veps(A_{S\otimes T},\sigma_{S\otimes T})
		\xrightarrow{\pi_*} W_\veps(A,\sigma)
		\]
		is the indentity, hence the first part of the corollary.
		The second part   follows from
		Theorems~\ref{TH:base-change-is-compatible-with-GW}
		and~\ref{TH:pi-transfer-is-compatible}.
	\end{proof}

\section{Applying the Octagon After Odd-Rank Extensions}
\label{sec:odd-extension}

Let $(A,\sigma)$ be an Azumaya $R$-algebra with involution.
In order to apply the octagon of Section~\ref{sec:octagon}, one needs
to find elements $\lambda,\mu\in \units{A}$ satisfying condition \ref{item:octagon:given:last}. 
In this section, we shall see that after applying operations
such as conjugation (Example~\ref{EX:conjugation-is-compatible-with-GW}), 
$e$-transfer (see \ref{subsec:herm-Morita}) and tensoring with an odd-rank finite \'etale $R$-algebra, 
we can guarantee the existence of $\lambda$ and $\mu$.

Some  proofs in this section      use   Galois theory of commutative rings.
We refer the reader to \cite[Chapter 12]{Ford_2017_separable_algebras}
for the necessary definitions and an extensive discussion.

	\begin{lem}\label{LM:splitting-with-fin-et-II}
		Assume $R$ is semilocal  and let $A$ be a separable
		projective $R$-algebra with center $S$.
		Suppose that $\ell:=\rank_RS$ and $n:=\deg A$ are constant.
		Then there exists a finite \'etale $R$-algebra $T$
		such that 
		$A_T\cong  \nMat{S_T}{n}$ as $S_T$-algebras.
	\end{lem}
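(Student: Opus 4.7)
The strategy is to split the center $S$ by a finite \'etale base change of $R$, decomposing $A$ as a product of Azumaya algebras over a larger base, and then split each of these factors in turn. I may assume $R$ is connected, since otherwise one works over each connected component of $\Spec R$ separately and takes a product of the resulting $T$'s.

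First, by the Galois theory of finite \'etale covers over a connected ring, there exists a finite \'etale connected $R$-algebra $T_0$ such that $S \otimes_R T_0 \cong T_0^{\ell}$ as $S$-algebras; for instance, a connected component of a Galois closure of $S/R$ will do. This induces a decomposition
\[ A \otimes_R T_0 \;=\; A \otimes_S (S \otimes_R T_0) \;\cong\; \prod_{i=1}^{\ell} A_i, \]
where each $A_i$ is an Azumaya $T_0$-algebra of degree $n$ (the $i$-th factor corresponding to the $i$-th embedding $S \to T_0$). To prepare for splitting each $A_i$ by a maximal \'etale subalgebra, I would then enlarge $T_0$ to a finite \'etale $R$-algebra $T_0'$ whose residue fields are all of size at least $c$, for a sufficiently large $c$; Proposition~\ref{PR:simple-extension} (applied to $T_0$ viewed as an $R$-algebra) furnishes such a $T_0'$.

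Over the connected semilocal base $T_0'$, or rather over each of its connected components, each Azumaya algebra $A_i \otimes_{T_0} T_0'$ admits a maximal \'etale subalgebra $E_i$ of rank $n$---a classical fact about Azumaya algebras over semilocal rings with sufficiently many units in their residue fields. Such $E_i$ splits $A_i \otimes_{T_0} T_0'$, meaning $(A_i \otimes_{T_0} T_0') \otimes_{T_0'} E_i \cong \nMat{E_i}{n}$. Setting $T = E_1 \otimes_{T_0'} E_2 \otimes_{T_0'} \cdots \otimes_{T_0'} E_{\ell}$, which is finite \'etale over $R$ through $R \to T_0 \to T_0' \to T$, I obtain
\[ A_T \;\cong\; \prod_{i=1}^{\ell}(A_i \otimes_{T_0} T) \;\cong\; \prod_{i=1}^{\ell}\nMat{T}{n} \;\cong\; \nMat{T^{\ell}}{n} \;\cong\; \nMat{S_T}{n} \]
as $S_T$-algebras, using $S_T = S \otimes_R T \cong (S \otimes_R T_0) \otimes_{T_0} T \cong T^{\ell}$.

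The main technical point will be the existence of maximal \'etale subalgebras in each Azumaya factor $A_i$ over the semilocal base $T_0'$. This requires residue fields of sufficient size, which is precisely what Proposition~\ref{PR:simple-extension} is invoked to arrange; the rest is assembly via Galois theory and bookkeeping of the $S$-algebra structures.
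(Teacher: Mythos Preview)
Your argument is correct and follows essentially the same route as the paper: split the center $S$ by a finite \'etale $R$-algebra $T_0$, decompose $A_{T_0}$ into Azumaya $T_0$-algebras $A_i$, find a finite \'etale splitting $T_i/T_0$ for each factor, and take $T$ to be the tensor product of the $T_i$. The only notable difference is that you arrange for large residue fields via Proposition~\ref{PR:simple-extension} and then invoke the existence of a maximal \'etale subalgebra $E_i\subseteq A_i\otimes_{T_0}T_0'$ as the splitting field, whereas the paper simply cites standard references (Knus, Ford) for the existence of a finite \'etale splitting of an Azumaya algebra over a semilocal base, noting that Lemma~\ref{LM:free-generation} handles the passage from local to semilocal; your extra residue-field step is harmless but not strictly needed once those references are in hand.
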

	
	\begin{proof}
		By  \cite[Proposition~2.18]{Saltman_1999_lectures_on_div_alg},
		there is an finite \'etale $R$-algebra $T_0$ such that $S_{T_0}\cong T_0\times \dots \times T_0$
		($\ell$ times). 
		There is a corresponding decomposition $A_{T_0}=A_1\times \dots\times A_\ell$ where
		each $A_i$ is Azumaya of degree $n$ over $T_0$. 
		By 	\cite[III.5.1.17]{Knus_1991_quadratic_hermitian_forms}
		and \cite[Theorem~7.4.4]{Ford_2017_separable_algebras},
		for each $i\in\{1,\dots,\ell\}$, there is a finite \'etale $T_0$-algebra
		$T_i$ such that $A_i\otimes_{T_0}T_i\cong \nMat{T_i}{n}$ 
		(the first source assumes that $R$ is local, but the proof
		also applies in the semilocal case by Lemma~\ref{LM:free-generation}(ii)).
		Take $T=T_1\otimes_{T_0} \cdots\otimes_{T_0} T_\ell$.
	\end{proof}

	\begin{lem} 
	\label{LM:index-bound}
		Let $A$ be an Azumaya $R$-algebra and 
		let $S$ be a finite \'etale $R$-algebra
		such that $[A_S]=0$ in $\Br S$.
		Then $\ind A\mid \rank_R S$.
	\end{lem}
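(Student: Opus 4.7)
The plan is to construct an Azumaya $R$-algebra $C$ of degree $n := \rank_R S$ that is Brauer-equivalent to $A$; then $\ind A \mid n$ because the index divides the degree of any Azumaya algebra representing $[A]$.

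First I would reduce to $R$ connected so that $\ind A$ and $n$ are well-defined integers, excluding the trivial case $n = 0$ (which forces $S = 0$). Because $[A_S] = 0$ in $\Br S$, one may pick a finite projective $S$-module $P$ of $S$-rank $\deg A$ (so of $R$-rank $n\deg A$) together with an $S$-algebra isomorphism $A_S \cong \End_S(P)$. Regarded as a left $A$-module through $A \hookrightarrow A_S$, the module $P$ is finite projective over $R$, hence projective over $A$ by the fact recalled in \ref{subsec:Azumaya}, and faithful over $A$ because faithful flatness of $S/R$ makes $A \to A_S \hookrightarrow \End_R(P)$ injective.

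Next I would set $C := \End_A(P)^{\op}$ and show fiberwise that $C$ is Azumaya over $R$ of constant degree $n$ with $[C] = [A]$ in $\Br R$. Fix $\frakp \in \Spec R$ and write $A(\frakp) \cong \nMat{D}{m}$ with $D$ a central division $k(\frakp)$-algebra of degree $d = \ind A(\frakp)$, so $md = \deg A$. The faithful $A(\frakp)$-module $P(\frakp)$ has $k(\frakp)$-dimension $nmd$, while the unique simple left $A(\frakp)$-module $V$ has dimension $md^2$; hence $P(\frakp) \cong V^{n/d}$, which already forces $d \mid n$, and a Schur-type computation gives $C(\frakp) \cong \nMat{D}{n/d}$, a central simple $k(\frakp)$-algebra of degree $n$ that is Brauer-equivalent to $A(\frakp)$. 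Constant degree and central simple fibers make $C$ into an Azumaya $R$-algebra of degree $n$, and the natural $(A, C^{\op})$-bimodule structure on $P$ realizes a Morita equivalence between the $R$-algebras $A$ and $C$, which for Azumaya algebras yields $[C] = [A]$ in $\Br R$.

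The main obstacle will be justifying that $P$ is an $A$-progenerator, and not merely faithful and projective, without which the Morita equivalence between $A$ and $C$ fails. This reduces to showing that the trace ideal $\im\bigl(P \otimes_{\End_A(P)} \Hom_A(P, A) \to A\bigr)$ equals $A$; the formation of this ideal commutes with localization at elements of the center $\Cent(A)$, and on each fiber $\frakp$ the nonzero $A(\frakp)$-module $P(\frakp)$ is automatically a generator of the category of $A(\frakp)$-modules, since $A(\frakp)$ is simple Artinian. A standard local-to-global (Nakayama) argument then propagates this to $A$. With this point secured, $\ind A$ divides $\deg C = n$, which is the desired conclusion.
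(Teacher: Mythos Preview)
Your proof is correct and self-contained; the paper's proof, by contrast, is a single sentence that invokes \cite[Theorem~7.4.3]{Ford_2017_separable_algebras} to obtain an Azumaya algebra $B\in[A]$ with $\deg B=\rank_R S$, and then concludes since $\ind A\mid \deg B$. What you have done is essentially supply a proof of that cited theorem: you take $P$ with $A_S\cong\End_S(P)$, set $C=\End_A(P)^{\op}$, check fiberwise that $C$ is Azumaya of degree $n$, and then globalize the Brauer equivalence via the progenerator property of $P$. The trace-ideal/Nakayama argument you sketch for the latter is correct (the key inputs being that $P$ is finite projective over $A$, so formation of the trace ideal commutes with base change, and that two-sided ideals of an Azumaya algebra are extended from the center). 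An equivalent and slightly slicker endpoint is to verify directly that the natural map $A\otimes_R C^{\op}\to\End_R(P)$ is an isomorphism by checking it on fibers, which again gives $[A]=[C]$; this bypasses mentioning Morita equivalence explicitly but is the same argument. In short, the two approaches agree in substance: the paper cites the construction as a black box, while you carry it out by hand.
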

	
	\begin{proof}
		By \cite[Theorem~7.4.3]{Ford_2017_separable_algebras},
		there exists $B\in [A]$ such that $\deg B=\rank_RS$,
		hence the lemma.
	\end{proof}

	\begin{lem}\label{LM:quad-etale-after-odd-rank}
		Suppose that $R$ is 
		connected  
		semilocal
		and let $S$ be a 
		connected 
		finite \'etale
		$R$-algebra of rank $2^n$ ($n\geq 1$).
		Then there is a  connected 
		odd-rank finite \'etale
		$R$-algebra $T$ such that $S_T$ 
		is connected and 
		contains a quadratic \'etale $T$-subalgebra.
	\end{lem}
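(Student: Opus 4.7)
My plan is to translate the statement into Galois theory of commutative rings and then reduce it to an elementary property of finite $2$-groups.

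First, since $R$ is connected semilocal and $S$ is connected finite \'etale over $R$, the algebra $S$ admits a Galois closure (see \cite[Chapter~12]{Ford_2017_separable_algebras}): a connected Galois $R$-algebra $N\supseteq S$ with finite Galois group $G:=\Gal(N/R)$ and a subgroup $H\leq G$ such that $S=N^H$ and $[G:H]=\rank_R S=2^n$. Under the Galois correspondence, connected finite \'etale $R$-algebras split by $N$ correspond to transitive $G$-sets; in these terms $S$ corresponds to $G/H$, and any intermediate algebra $N^K$ corresponds to $G/K$. Now choose a Sylow $2$-subgroup $P$ of $G$. By the standard Sylow argument applied to $H$ (using that $|G|/|P|$ is odd), we may replace $P$ by a conjugate so that $P\cap H$ is a Sylow $2$-subgroup of $H$; in particular $|P\cap H|=|H|_2$. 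Set $T:=N^P$, a connected finite \'etale $R$-algebra of odd rank $[G:P]=|G|_{2'}$.

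The $G$-set corresponding to $S\otimes_R T$ is $G/H\times G/P$, so $S_T$ is connected iff $PH=G$; and this holds because
\[
[P:P\cap H] \;=\; |G|_2/|H|_2 \;=\; (|G|/|H|)_2 \;=\; 2^n \;=\; [G:H].
\]
It remains to exhibit a quadratic \'etale $T$-subalgebra of $S_T$. Viewing $S_T$ as a $T$-algebra and noting that $N/T$ is Galois with group $P=\Gal(N/T)$, the Galois correspondence over $T$ identifies connected finite \'etale $T$-algebras split by $N$ with transitive $P$-sets. In these terms, $S_T$ corresponds to $\Hom_T(S_T,N)=\Hom_R(S,N)\cong G/H$, which, by the transitivity of the $P$-action established above, is identified as a $P$-set with $P/(P\cap H)$. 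A quadratic \'etale $T$-subalgebra of $S_T$ therefore amounts to a $P$-equivariant surjection from $P/(P\cap H)$ onto a $P$-set of cardinality $2$.

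Since $n\geq 1$, the subgroup $P\cap H$ is proper in the $2$-group $P$, hence is contained in some maximal subgroup $P'\subsetneq P$; because $P$ is a $2$-group, $P'$ is automatically normal of index $2$. The $P$-equivariant surjection $P/(P\cap H)\onto P/P'$ then yields the required quadratic \'etale $T$-subalgebra (abstractly isomorphic as a $T$-algebra to $N^{P'}$). The argument contains no genuinely hard step; the main care needed is in handling the two Galois correspondences (over $R$ and over $T$) consistently, and in verifying that a finite \'etale $R$-subalgebra of $S_T$ containing $T$ is automatically \'etale over $T$, which is standard.
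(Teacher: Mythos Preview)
Your proof is correct and is precisely the ``standard argument similar to the case of fields using Galois theory'' that the paper alludes to without writing out. The references the paper cites (Ford's Theorems~12.5.4 and 12.6.3) provide exactly the Galois closure and the Galois correspondence over a connected semilocal base that you invoke, and your Sylow/$2$-group argument is the expected group-theoretic core.
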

	
	\begin{proof}
		The proof goes by a standard argument similar to the case of fields
		using Galois theory of fields.
		Use \cite[Theorems~12.5.4, 12.6.3]{Ford_2017_separable_algebras}
		and \cite[Lemma~1.3]{First_2022_octagon}.
	\end{proof}

	\begin{lem}\label{LM:involution-inv-max-etale-alg}
		Suppose that $R$ is semilocal 
		and let $(A,\sigma)$ be an Azumaya
		$R$-algebra with involution such that
		$\sigma$ is orthogonal or unitary. 
		Assume $n:=\deg A$ is constant and let $S=\Cent (A)$.
		Then there exists an odd-rank finite \'etale $R$-algebra 
		$T$ and $x\in\Sym_1(A_T,\sigma_T)$
		such that $S_T[x]$ is a finite \'etale $S_T$-algebra
		of rank $n$.
	\end{lem}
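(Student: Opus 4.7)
The plan is to construct $x$ pointwise over the residue fields via a density argument, and then lift by the Chinese Remainder Theorem. First, I would reduce to the case where $R$ is connected by working on each connected component separately, so that $n = \deg A$ is constant. Let $Q := \Sym_1(A,\sigma)$, which is a direct summand of $A$ and hence a finite projective $R$-module. For $y \in Q$ (or $y \in Q_{T'}$ for any $R$-ring $T'$), the reduced characteristic polynomial $\chi_A(y) \in S_{T'}[t]$ is fixed by $\sigma|_S$ on its coefficients (since $y^\sigma = y$), so in fact $\chi_A(y) \in R_{T'}[t]$. Moreover, $S_{T'}[y]$ is \'etale of rank $n$ over $S_{T'}$ exactly when $\chi_A(y)$ is separable (has unit discriminant): separability means $n$ distinct roots in an algebraic closure, which forces $\chi_A(y)$ to equal the minimal polynomial of $y$, thereby ensuring both the rank-$n$ condition and \'etaleness. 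Let $U \subseteq Q$ denote the open subscheme cut out by $\disc(\chi_A) \neq 0$.

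The key geometric input is non-emptiness of $U \otimes_R \bar k$ for $\bar k$ an algebraic closure of each residue field of $R$. This reduces to an explicit split computation. In the orthogonal case, $A \otimes \bar k \cong \Mat_n(\bar k)$ and $\sigma \otimes 1$ is adjoint to a non-degenerate symmetric bilinear form $\phi$; after diagonalizing $\phi$ (possible over a field with $2$ invertible), diagonal matrices lie in $\Sym_1$ and those with distinct entries give the desired element. In the unitary case, $S \otimes \bar k \cong \bar k \times \bar k$ and $(A,\sigma) \otimes \bar k \cong (\Mat_n(\bar k) \times \Mat_n(\bar k)^{\op}, (a,b^{\op}) \mapsto (b,a^{\op}))$, and an element $(a,a^{\op}) \in \Sym_1$ works whenever $a$ is regular semisimple. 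Since $\disc(\chi_A)$ is a polynomial of degree bounded by some explicit $D = D(n)$ (roughly $2n^2-2n$) in coordinates of $Q$, the Schwartz--Zippel lemma yields $U(k) \neq \emptyset$ whenever $|k| \geq D+1$.

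I would then apply Proposition~\ref{PR:simple-extension} to the trivial finite \'etale $R$-algebra $R$ itself with $c = D+1$, obtaining an odd-rank finite \'etale $R$-algebra $T$ with $|k(\frakM)| \geq D+1$ for every $\frakM \in \Max T$. For each such $\frakM$, choose $\bar x_\frakM \in U(k(\frakM))$. Since $Q_T$ is finite projective over the semilocal ring $T$, the reduction map $Q_T \twoheadrightarrow \prod_\frakM Q_T(\frakM)$ is surjective, so the family $(\bar x_\frakM)$ lifts to some $x \in Q_T = \Sym_1(A_T,\sigma_T)$. The discriminant $\disc(\chi_{A_T}(x))$ is a unit modulo every $\frakM$, hence a unit in $T$. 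By Cayley--Hamilton together with Lemma~\ref{LM:free-generation}(ii) (applied to $1, x, \dots, x^{n-1} \in A_T$; linear independence modulo each $\frakM$ follows because $k(\frakM)[\bar x_\frakM]$ has rank $n$), one obtains $R_T[x] \cong T[t]/(\chi_{A_T}(x))$, which is \'etale of rank $n$ over $R_T$. Consequently $S_T[x] = S_T \otimes_{R_T} R_T[x]$ is \'etale of rank $n$ over $S_T$, as required.

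The hard part will be the geometric claim in the second paragraph: non-emptiness of $U$ over an algebraic closure in both types, together with the uniform degree bound $D = D(n)$ that makes the constant $c$ used in Proposition~\ref{PR:simple-extension} depend only on $n$. The split-case computations sketched above handle non-emptiness directly, but one must verify that the bound on the degree of $\disc(\chi_A)$ (viewed as a polynomial in the coordinates of $Q$) depends only on $n$, and not on the particular Azumaya algebra at hand.
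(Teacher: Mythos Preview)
Your argument is correct and takes a genuinely different route from the paper's. The paper first passes to an odd-rank \'etale extension to enlarge residue fields beyond $n$, then over each residue field invokes the theorem of Becher--Grenier-Boley--Tignol \cite[Theorem~4.1]{Becher_2018_involutions_and_stable_subalgebras} to obtain a $\sigma$-fixed maximal \'etale subalgebra $E\subseteq A$, and finally uses a result on generators of separable algebras to find a primitive element $x$ of $E$; the passage from $R$-linear to $S$-linear independence of $1,x,\dots,x^{n-1}$ is handled by a short direct argument with an element $\lambda\in S$ satisfying $\lambda^\sigma=-\lambda$. Your approach bypasses the external structural input entirely: you work directly with the discriminant of the reduced characteristic polynomial restricted to $\Sym_1(A,\sigma)$, verify non-vanishing over $\overline{k}$ by an explicit split computation, and then use Schwartz--Zippel to find a $k$-point once the residue fields are large enough. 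The price is a worse (but irrelevant) lower bound on residue-field size, roughly $2n^2$ versus $n$; the gain is that the proof becomes self-contained. One small point you glossed over: the identification $S_T[x]=S_T\otimes_T R_T[x]$ amounts to injectivity of $S_T[t]/(\chi_{A_T}(x))\to A_T$, which you should justify by reducing modulo each $\frakM\in\Max T$ and using that $\chi_{A_T}(x)$ remains separable over $S_T(\frakM)$ (treating the split and non-split cases of $S_T(\frakM)$ separately), but this is routine.
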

	
	\begin{proof}
		By Proposition~\ref{PR:simple-extension} (applied with $S=R$),
		there exists an odd-rank finite \'etale $R$-algebra
		$T$ such that $|T/\frakm|>n$ for all $\frakm\in\Max T$.
		We may replace $R,A,\sigma$ with $T,A_T,\sigma_T$ and assume
		that $|k(\frakm)|>n$ for all $\frakm\in \Max R$.
		
		Suppose first that $R$ is a field.
		By \cite[Theorem~4.1]{Becher_2018_involutions_and_stable_subalgebras},
		$A$ contains a finite \'etale $R$-subalgebra $E$ of rank $n$ fixed pointwise by $\sigma$
		(here we need $\sigma$ to be orthogonal or unitary).
		By \cite[Corollary~4.2]{First_2017_generators_of_sep_algebras_fin_fld_preprint}
		(see also \cite[Theorem~6.3]{Kravchenko_2012_number_of_generators_of_alg})
		and our assumption that $|R|>n$, the $R$-algebra $E$ is monogenic, say
		$E=R+xR+\dots+x^{n-1} R$. It is therefore enough to show that 
		$ 1,x,\dots,x^{n-1} $ are linearly independent over $S$, or equivalently,
		that $\dim_R ES=n\cdot \dim_R S$. This is clear if $S=R$. Otherwise,
		$S$ is quadratic \'etale over $R$, so there is $\lambda\in \units{S}$
		such that $\{1,\lambda\}$ is an $R$-basis of $S$ and $\lambda^{\sigma}=-\lambda$.
		Since $ E \cap \lambda E\subseteq   \Sym_{1}(A,\sigma)\cap  \Sym_{-1}(A,\sigma)=0$,
		we have $ES= E\oplus \lambda E$, hence our claim.
		
		The case where $R$ is a general semilocal ring
		can be deduced from the previous paragraph using the Chinese Remainder
		Theorem and Lemma~\ref{LM:free-generation}(ii).
	\end{proof}

	The following generalization of the Skolem-Noether theorem
	is known to experts. We include a proof for the sake of completeness.

	\begin{thm}\label{TH:Skolem-Noether}
		Suppose that $R$ is connected semilocal,
		let $A$ be an Azumaya $R$-algebra
		and let $B$ be a separable projective  subalgebra of $A$
		with connected center.
		Then any $R$-algebra homomorphism $\vphi:B\to A$
		is the restriction of an inner automorphism of $A$.
	\end{thm}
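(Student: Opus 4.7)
The plan is to recast the problem as a module-isomorphism question over a semilocal Azumaya algebra, verify the isomorphism fibrewise via classical Skolem--Noether, and lift to the global level.

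Set $\tilde B:=B\otimes_R A^{\op}$. Since $A$ is Azumaya over $R$ and $B$ is separable projective over $R$ with center $T=\Cent(B)$, the algebra $\tilde B$ is separable projective over $R$ with center $T$, hence Azumaya over the finite \'etale $R$-algebra $T$; moreover, $T$ is connected semilocal. Endow $A$ with two left $\tilde B$-module structures: $M$ via $(b\otimes a^{\op})\cdot x=\iota(b)xa$, and $N$ via $(b\otimes a^{\op})\cdot x=\vphi(b)xa$, where $\iota:B\embeds A$ denotes the given inclusion. Every $\tilde B$-linear map $M\to N$ is left multiplication by some $u\in A$ (by $A^{\op}$-linearity), and is an isomorphism precisely when $u\in \units{A}$; the $B$-equivariance condition then reads $\vphi(b)=u\iota(b)u^{-1}$ for all $b\in B$. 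Hence the theorem reduces to exhibiting an isomorphism $M\cong N$ of $\tilde B$-modules.

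The modules $M$ and $N$ are finitely generated projective over $\tilde B$: they are free of rank~$1$ as right $A^{\op}$-modules, and finite projective as left $B$-modules via either $\iota$ or $\vphi$ by the classical fact that an Azumaya algebra is faithfully projective over any separable projective subalgebra. (The map $\vphi$ is automatically injective, as any restriction of an inner automorphism is injective; this also ensures $\vphi(B)\subseteq A$ is separable projective.) Reducing modulo each maximal ideal $\frakm\in\Max R$, $\tilde B(\frakm)$ is semisimple, $A(\frakm)$ is central simple over $k(\frakm)$, and classical Skolem--Noether applied to $\iota(\frakm),\vphi(\frakm):B(\frakm)\to A(\frakm)$ yields some $u_\frakm\in \units{A(\frakm)}$ with $\vphi(\frakm)=\Int(u_\frakm)\circ\iota(\frakm)$, so $M(\frakm)\cong N(\frakm)$ as $\tilde B(\frakm)$-modules.

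To globalize, one uses that finitely generated projective modules over an Azumaya algebra with connected semilocal center are determined by a numerical ``reduced rank'' invariant. For both $M$ and $N$ this rank equals $\rank_R A/\rank_R T$, since $\iota|_T$ and $\vphi|_T$ are injective $R$-algebra embeddings of the connected $T$ into $A$, each making $A$ a faithfully projective $T$-module of constant rank (connectedness of $T$ is essential for rank-constancy). Combined with the fibrewise isomorphism, this produces $M\cong N$, and reading off $u:=\psi(1)$ from any such isomorphism $\psi:M\to N$ gives the desired inner automorphism by the first paragraph.

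The main obstacle is justifying the classification of finite projective $\tilde B$-modules in the last step: the Brauer class $[\tilde B]\in\Br(T)$ need not vanish, so $\tilde B$ is not globally Morita equivalent to $T$, and reduced rank cannot be invoked by immediate Morita reduction. A clean route is instead to use the Chinese Remainder Theorem to lift the fibrewise $u_\frakm$ to a candidate $u\in A$ and then correct it using a Nakayama-style argument, exploiting that the centralizer $Z_A(\iota(B))$ is Azumaya over $T$ by the double centralizer theorem and that its unit group is smooth; this permits one to deform the approximate conjugator into an exact one, using that $R$ (and hence $T$) is semilocal.
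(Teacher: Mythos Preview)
Your approach is essentially the paper's: reduce to an isomorphism of two $B\otimes_R A^{\op}$-module structures on $A$, check equality of ranks over the connected center, and conclude. Two remarks.

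First, your justification that $N$ is projective over $\tilde B$ via ``$\vphi$ is automatically injective, as any restriction of an inner automorphism is injective'' is circular: that is precisely what you are proving. This step is unnecessary. Since $\tilde B=B\otimes_R A^{\op}$ is separable projective over $R$, a $\tilde B$-module is projective over $\tilde B$ if and only if it is projective over $R$; both $M$ and $N$ have underlying $R$-module $A$, which is finite projective. This is how the paper argues, and it bypasses any question about $\ker\vphi$.

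Second, the ``main obstacle'' you flag is not an obstacle. The paper simply invokes \cite[Lemma~1.22]{First_2018_octagon_preprint}, which says exactly that two finite projective modules over a separable projective algebra with connected center over a connected semilocal base are isomorphic once their ranks over the center agree. The rank computation you give (both equal $\rank_R A/\rank_R T$) matches the paper's. With that lemma in hand, your fibrewise Skolem--Noether step and the CRT/Nakayama/smoothness workaround are both redundant; the rank equality alone suffices. If you want to see why the lemma holds: $\tilde B$ is a semilocal ring (noncommutative), so finite projective $\tilde B$-modules are determined by their reductions modulo $\Jac(\tilde B)$, and over $\tilde B/\Jac(\tilde B)$ (a product of central simple algebras over the residue fields of $T$) this is classical.
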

	
	\begin{proof}
		Let $S=\Cent(B)$.
		View $A$ as a right $A\otimes B^\op$-module
		by setting $a\star (a'\otimes b^\op)=baa'$
		and let $M$ denote $A$
		with the right $A\otimes B^\op$-module
		structure given by
		$a\ast (a'\otimes b^\op)=\vphi(b)aa'$.
		Since $R$ and $S$ are connected,
		$\rank_S A_{A\otimes B^\op}=\frac{\rank_R A}{\rank_R S}=
		\rank_S M$, and since $A\otimes B^\op$
		is separable   over $R$, we have $A ,M\in\rproj{A\otimes B^\op}$ (see \ref{subsec:Azumaya}).
		Thus, by \cite[Lemma~1.24]{First_2022_octagon},
		there is an $A\otimes B^\op$-module isomorphism $\psi:A\to M$.
		Let $a=\psi(1)$. Then for all $b\in B$, we have
		$\vphi(b)a =\psi(1)\ast (1\otimes b^\op)=\psi(1\star(1\otimes b^\op))=\psi(b)=\psi(1)b=ab$.
		It is easy to see that $a\in \units{A}$, so the theorem follows.
	\end{proof}
	
	\begin{thm}\label{TH:involution-ext}
		Suppose that  $R$ is connected semilocal.
		Let  $(A,\sigma)$ be an Azumaya
		$R$-algebra with involution and let $S=\Cent(A)$. 
		Let $B$ be a separable projective
		$S$-subalgebra of $A$ with connected center $T$,
		and let $\tau:B\to B$ be an involution
		such that $\tau|_S=\sigma|_S$.
		Then there
		exists $\veps\in \{\pm 1\}$ and $a\in \units{A}\cap\Sym_\veps(A,\sigma)$
		such that $\tau =\Int(a)\circ \sigma|_B$. When    $\tau|_T\neq\id_T$,
		one can take any prescribed $\veps\in \{\pm1\}$.
	\end{thm}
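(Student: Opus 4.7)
The strategy is a Skolem--Noether argument followed by a Hilbert-90-type adjustment. First, I would observe that the map $\vphi\colon B\to A$ defined by $\vphi(b)=\sigma(\tau(b))$ is an $S$-algebra homomorphism: it is multiplicative because $\sigma$ and $\tau$ are anti-homomorphisms, and it fixes $S$ pointwise because $\tau|_S=\sigma|_S$ and $\sigma^2|_S=\id_S$. Applying Theorem~\ref{TH:Skolem-Noether} to $\vphi$ yields some $a\in\units{A}$ with $\vphi(b)=aba^{-1}$ for all $b\in B$. Setting $c:=\sigma(a)^{-1}$ turns this into the identity $\tau=\Int(c)\circ\sigma|_B$, which is already the shape we want, except for the constraint that $c$ be $\veps$-symmetric.

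The second step is to unpack $\tau^2=\id_B$. A direct computation then yields $w:=\sigma(c)c^{-1}\in C:=\Cent_A(B)$; the anti-automorphism $\phi\colon C\to C$ defined by $\phi(v)=c\sigma(v)c^{-1}$ (which is well-defined because $\sigma(B)=c^{-1}Bc$, hence $\sigma(C)=c^{-1}Cc$) satisfies $\phi(w)=w^{-1}$ and restricts to $\tau|_T$ on $T:=\Cent(B)=\Cent(C)$.

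The remaining task is to replace $c$ by $c':=vc$ for a suitable $v\in C^\times$ (which preserves $\tau=\Int(c')\circ\sigma|_B$, as $v$ commutes with $B$) so that $w':=\sigma(c')(c')^{-1}=w\phi(v)v^{-1}$ lies in $\{\pm 1\}$. I would proceed in two substeps. First, I would arrange $w'\in T$, equivalently that $\Int(c')\circ\sigma$ extends $\tau$ to a genuine involution of $A$. For this, I would invoke the Azumaya double-centralizer identification $A\cong B\otimes_T C$: any involution $\nu$ of $C$ with $\nu|_T=\tau|_T$ yields an involution $\tau\otimes\nu$ of $A$ extending $\tau$, and the existence of such a $\nu$ is a Brauer-class computation, since $[C]\in\Br(T)$ is $\tau|_T$-invariant because $[A]\in\Br(S)$ is $\sigma|_S$-invariant and $[B]\in\Br(T)$ is $\tau|_T$-invariant, with $R$ (hence $T$) connected semilocal. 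Second, once $w'\in T$, I would use the identity $\phi(w')=w'^{-1}$ (which becomes $\tau|_T(w')=w'^{-1}$, i.e., $w'$ is norm-$1$ for the pair $(T,\tau|_T)$) and apply Hilbert~90 in the semilocal setting to adjust $w'$ by a further multiplier in $T^\times$.

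The sign analysis is then the easier half of the argument. When $\tau|_T\neq\id_T$, the ring $T$ is quadratic \'etale over $T^{\{\tau|_T\}}$, and semilocal Hilbert~90 realizes every norm-$1$ element of $T^\times$ as $v/\tau|_T(v)$; since both $w'$ and $-w'$ are norm-$1$, both signs of $\veps$ are attainable, yielding the final clause of the theorem. When $\tau|_T=\id_T$, the relevant coboundary set degenerates, and one is forced into $w'\in\mu_2(T)=\{\pm 1\}$ (using connectedness of $T$ and $2\in\units{T}$ to compute $\mu_2(T)$); this determines $\veps$ uniquely. The main obstacle I anticipate is the first substep of Step~3—producing an involution on $A$ that extends $\tau$—which hinges on the Brauer-theoretic existence of $\nu$ on the Azumaya $T$-algebra $C$; all remaining manipulations are direct applications of Hilbert~90 over the semilocal base.
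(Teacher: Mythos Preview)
Your approach is essentially the paper's: Skolem--Noether produces the initial $c$ with $\tau=\Int(c)\circ\sigma|_B$, an involution $\nu$ on $C=\Cent_A(B)$ with $\nu|_T=\tau|_T$ lets you adjust $c$ so that $w'\in T$, and semilocal Hilbert~90 finishes the sign analysis. The paper organizes the same ingredients as a reduction---first the special case $\Cent_A(B)=T$ (where $w$ lands in $T$ automatically), then enlarge $B$ to $B':=\Cent_A(T)$ equipped with $\tau\otimes\theta$---but the content is identical; in particular, the paper invokes Saltman \cite[Theorems~3.1b,~4.4b]{Saltman_1978_Azumaya_algebras_w_involution} for the existence of $\nu$ (your ``Brauer-class computation'').

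One slip to correct: the double-centralizer decomposition is $B':=\Cent_A(T)\cong B\otimes_T C$, not $A\cong B\otimes_T C$ (unless $T=S$). Correspondingly, the condition $w'\in T$ is equivalent to $\Int(c')\circ\sigma$ restricting to an involution of $B'$ (equivalently, of $C$), not of all of $A$; for the latter you would need $w'\in S=\Cent(A)$. This does not damage your argument, since what you actually need is only $\nu$ on $C$: once it exists, $\nu\circ\phi^{-1}$ is a $T$-algebra automorphism of the Azumaya $T$-algebra $C$, hence inner by Theorem~\ref{TH:Skolem-Noether} applied with $B=A=C$, which produces the $v\in C^\times$ you want. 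You should make that Skolem--Noether step for $C$ explicit, as your write-up jumps from ``$\nu$ exists'' to ``$w'\in T$'' without it.
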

	
	\begin{proof}
		Suppose first that  $T=\Cent_A(B)$.
		By Theorem~\ref{TH:Skolem-Noether},
		$\sigma\circ \tau$ is the restriction of an inner
		automorphism of $A$. This implies that there is $x\in \units{A}$
		such that $\tau=\Int(x)\circ \sigma|_B$.
		Set $t=x^\sigma x^{-1}$. 
		Since $\tau$ is an involution,
		for all $b\in B$, we have
		$b=b^{\tau\tau}=(x^\sigma x^{-1})^{-1}b(x^\sigma x^{-1})$,
		so $t\in\Cent_A(B)=T$.
		Furthermore,
		$t^\tau t=xt^\sigma x^{-1} t=x(x^{-1})^\sigma xx^{-1}x^\sigma x^{-1}=1$.
		
		If $\tau|_T=\id_T$, then $t\in\mu_2(T)=\{\pm 1\}$, hence $x^\sigma\in \{\pm x\}$,
		so take $x=a$. 
		
		If $\tau|_T\neq\id_T$, choose
		some $\veps\in \{\pm 1\}$.
		Since $T$ is connected and finite \'etale over
		$R$, and since $\tau|_T\neq\id_T$, the algebra $T$
		is quadratic \'etale over $T^{\{\tau\}}$ (see~\ref{subsec:Azumaya}).
		As $T^{\{\tau\}}$ is semilocal, we may apply Hilbert's Theorem 90 to find $s\in \units{T}$
		such that $s^{-1}s^\tau=\veps t^{-1}$.
		Then $(sx)^\sigma=x^\sigma s^\sigma=x^\sigma x^{-1} s^\tau x=
		ts^\tau x=\veps s x $.
		Since $\Int(sx)\circ\sigma$ agrees with $\Int(x)\circ \sigma$
		on $B$, we can take $a=sx$.
		
		Now assume $B$ is arbitrary, and let
		$B'=\Cent_A(T)$ and $C=\Cent_{B'}(B)$. Then $B\otimes_T C\cong B'$
		via $b\otimes c\mapsto bc$
		and $[B']=[A\otimes_S T]$ in $\Br T$,
		hence $[C]=[A\otimes_ST]-[B]$.
		Note that both
		$A\otimes_ST$
		and $B$ carry involutions
		restricting to $\tau|_T$ on the center
		(for $A\otimes_ST$, take $\sigma\otimes_S (\tau|_T)$).	
		Thus, theorems of Saltman \cite[Theorems~3.1b, 4.4b]{Saltman_1978_Azumaya_algebras_w_involution}
		imply that $C$ also admits an involution $\theta$ with
		$\theta|_T=\tau|_T$.
		Now apply the previous paragraphs to $B'\cong B\otimes_T C$
		and the involution $\tau\otimes_T\theta$.
	\end{proof}

	We are now ready to prove that the octagon
	of Section~\ref{sec:octagon} can be applied after an odd-degree extension.

	\begin{thm}\label{TH:reduction-for-octagon}
		Suppose that $R$ is    a   regular semilocal domain, let $(A,\sigma)$
		be an Azumaya $R$-algebra with involution  
		and let $\veps\in \{\pm 1\}$.
		Then there exist a connected odd-rank finite \'etale $R$-algebra
		$R_1$,  an Azumaya $R_1$-algebra
		with involution $(A_1,\sigma_1)$
		and $\veps_1\in\{\pm 1\}$
		such that 
		\begin{enumerate}[label=(\roman*)]
			\item $[A_{R_1}]=[A_1]$ in $\Br R_1$
			and $(\sigma,\veps)$
			has the same type as $(\sigma_1,\veps_1)$
			(see \ref{subsec:Azumaya});
			\item $\aGW{A,\sigma,\veps}$
			is isomorphic to a summand
			of $\aGW{A_1,\sigma_1,\veps_1}$;
		\end{enumerate}
		and at least
		one of the following hold:
		\begin{enumerate}[label=(iii-\arabic*)]
			\item \label{item:TH:oct-reduction:S-not-connected} $\Cent(A_1)\cong R_1\times R_1$;
			\item \label{item:TH:oct-reduction:deg-one} $\deg A_1=1$;
			\item \label{item:TH:oct-reduction:lambda-mu} $\ind A_1=\deg A_1$,
			$\deg A_1$ is a power of $2$
			dividing $\ind A$ 
			and there exist $\lambda,\mu\in \units{A_1}$
			such that $\lambda^2\in\units{R_1}$, $\lambda^{\sigma_1}=-\lambda$,
			$\mu^{\sigma_1}=-\mu$
			and $\lambda\mu=-\mu\lambda$.
		\end{enumerate}
	\end{thm}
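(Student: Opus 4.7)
The plan is to construct $(R_1,A_1,\sigma_1,\veps_1)$ through a chain of reductions, each realized by one of the Gersten--Witt-complex-preserving operations assembled in Section~\ref{sec:functoriality}: base change along a connected odd-rank finite \'etale extension (Corollary~\ref{CR:Springer-odd-etale}), $u$-conjugation (Example~\ref{EX:conjugation-is-compatible-with-GW}), or $e$-transfer (Theorem~\ref{TH:e-transfer-is-compatible-GW}). Each preserves the Brauer class and the type, and gives an isomorphism (or summand) of Gersten--Witt complexes, so their composition will yield (i) and (ii). I would first decompose $R$ into its connected components and argue over each, so that $S:=\Cent(A)$ is $R$, $R\times R$, or a connected nonsplit quadratic \'etale $R$-algebra. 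If $S\cong R\times R$, conclusion (iii-1) is immediate. Using $u$-conjugation with a skew unit (which exists by \cite[Lemma~1.24]{First_2018_octagon_preprint}) I may further assume $\sigma$ is orthogonal or unitary, since the type of $(\sigma,\veps)$ is preserved under replacement of $(\sigma,\veps)$ with $(\Int(u)\sigma,-\veps)$.

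The next goal is to make $\ind A$ a power of $2$. Writing $\ind A = 2^k m$ with $m$ odd, I would produce a connected odd-rank finite \'etale $R$-algebra $R'$ over which $\ind A_{R'} = 2^k$. The construction proceeds by combining Lemma~\ref{LM:splitting-with-fin-et-II} (a finite \'etale extension that fully splits $A$) with Proposition~\ref{PR:simple-extension} and Lemma~\ref{LM:index-bound}: extract from the splitting extension a connected odd-rank piece that absorbs precisely the odd part $m$, using a Chinese Remainder Theorem argument across $\Max R$ and Lemma~\ref{LM:free-generation} to glue fiberwise data. After replacing $R$ by $R'$ I may assume $\ind A = 2^k$. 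A further application of $e$-transfer with a $\sigma$-invariant primitive idempotent --- produced fiberwise via Lemma~\ref{LM:primitive-invariant-idempotent} and lifted after yet another odd-rank extension if needed --- reduces to the case $\deg A_1 = \ind A_1 = 2^k$. If $k=0$, we have conclusion (iii-2).

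For $k\geq 1$, it remains to construct $\lambda$ and $\mu$ satisfying (iii-3). I would first invoke Lemma~\ref{LM:involution-inv-max-etale-alg} (after another odd-rank base change) to obtain $x \in \Sym_1(A_1,\sigma_1)$ generating a finite \'etale $S_1$-subalgebra $E := S_1[x] \subseteq A_1$ of rank $2^k$ over $S_1$. Applying Lemma~\ref{LM:quad-etale-after-odd-rank} to $E$ (viewed as a connected finite \'etale $R_1$-algebra) --- iterated if necessary --- I would extract, after further odd-rank extension, a skew element $\lambda\in A_1$ with $\lambda^2\in\units{R_1}$: in the orthogonal case the quadratic \'etale subalgebra $R_1[\lambda_0]\subseteq E$ provided by the lemma is made $\sigma$-stable with $\sigma|_{R_1[\lambda_0]}$ nontrivial by applying Theorem~\ref{TH:involution-ext} (which lets me replace $\sigma$ by $\Int(a)\circ\sigma$ extending the standard involution on $R_1[\lambda_0]$), and then $\lambda:=\lambda_0$ works. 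In the unitary case I would instead use $\lambda := \delta\cdot y$ where $\delta\in S_1$ is a skew generator with $\delta^2\in\units{R_1}$ and $y$ is an appropriately trace-adjusted element of $E$ with $y^2\in R_1^\times$; Theorem~\ref{TH:involution-ext} is again applied, with prescribable sign, to arrange that $\lambda^{\sigma_1}=-\lambda$. Once $\lambda$ is in hand, the commutant $B:=\Cent_{A_1}(\lambda)$ is Azumaya of degree $2^{k-1}$ with connected quadratic \'etale center $T$, and $\sigma_1|_B$ has $\sigma_1|_T\neq\id_T$; a second application of Theorem~\ref{TH:involution-ext} (with prescribed sign $-1$) to the inclusion $B\hookrightarrow A_1$ yields $\mu\in\units{A_1}\cap\Sym_{-1}(A_1,\sigma_1)$ whose conjugation on $B$ realizes the $\tau_2$-involution; by construction $\mu$ satisfies $\lambda\mu=-\mu\lambda$ (it conjugates $\lambda$ to $-\lambda$, since $\lambda^{\tau_2}$ and $\lambda^{\sigma_1}$ differ by an inner automorphism that must flip the sign on the generator of $T$ over $R_1$).

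The hard part is step~(3), the reduction to $2$-primary index: while the analogous statement over fields is classical, the semilocal version requires simultaneously engineering the \'etale splitting data at every maximal ideal of $R$ and guaranteeing connectedness (needed for the later applications of Theorems~\ref{TH:Skolem-Noether} and~\ref{TH:involution-ext}). A parallel technical difficulty is verifying that $e$-transfer can be iterated to achieve $\deg A_1 = \ind A_1$, because lifting a fiberwise $\sigma$-invariant primitive idempotent to an honest $\sigma$-invariant one may force yet another odd-rank extension. A final bookkeeping burden is that each $u$-conjugation flips $\veps$ by the sign $\gamma$ of $u$ under $\sigma$, so one must track the cumulative sign to ensure the type of $(\sigma_1,\veps_1)$ matches that of $(\sigma,\veps)$ at the end of the chain.
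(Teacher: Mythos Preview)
Your overall strategy matches the paper's: reduce to the non-symplectic case by conjugation, arrange $\deg A=\ind A$ a $2$-power via odd-rank base change and $e$-transfer, then manufacture $\lambda,\mu$ using Lemma~\ref{LM:involution-inv-max-etale-alg}, Lemma~\ref{LM:quad-etale-after-odd-rank}, and Theorem~\ref{TH:involution-ext}. Two points deserve correction.

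\textbf{The $2$-primary index reduction.} Your phrase ``extract from the splitting extension a connected odd-rank piece that absorbs precisely the odd part $m$'' is not a method. A finite \'etale splitting algebra $T$ need not have any odd-rank connected factor, and nothing in Lemma~\ref{LM:splitting-with-fin-et-II} or Proposition~\ref{PR:simple-extension} singles one out. The paper's argument is a genuine idea you are missing: embed $T$ in a $G$-Galois $R$-algebra, take a $2$-Sylow subgroup $P\leq G$, and set $E=T^{P}$. Then $\rank_R E=[G:P]$ is odd by construction, $T$ is $P$-Galois over $E$, and Lemma~\ref{LM:index-bound} gives $\ind A_E\mid |P|$. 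Passing to an odd-rank connected factor of $E$ then finishes. (Separately, your worry about lifting $\sigma$-invariant idempotents is unnecessary: \cite[Theorem~1.28]{First_2018_octagon_preprint}, cited in the paper's Step~1, produces a $\sigma$-invariant full idempotent with $\deg eAe=\ind A$ directly over the semilocal base.)

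\textbf{The construction of $\mu$.} Your ``second application'' of Theorem~\ref{TH:involution-ext}, as stated, fails in the orthogonal case. Once you have arranged $\lambda^{\sigma_1}=-\lambda$, the anticommutation $\mu\lambda\mu^{-1}=-\lambda$ is equivalent to $(\Int(\mu)\circ\sigma_1)(\lambda)=\lambda$; that is, the involution $\tau:=\Int(\mu)\circ\sigma_1$ you are trying to realize must fix $\lambda$. In the orthogonal case $T=\Cent(B)=R_1[\lambda]$, so $\tau|_T=\id_T$, and Theorem~\ref{TH:involution-ext} then gives \emph{no} control over the sign of $\mu$. The fix is already implicit in your first application: in the orthogonal case $\lambda_0\in E\subseteq\Sym_1(A,\sigma)$, and the element $a\in\Sym_{-1}(A,\sigma)$ you produce to make $\Int(a)\circ\sigma|_{R_1[\lambda_0]}$ the standard involution automatically satisfies $a\lambda_0=-\lambda_0 a$ and $a^{\Int(a)\circ\sigma}=-a$. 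So $(\sigma_1,\lambda,\mu):=(\Int(a)\circ\sigma,\lambda_0,a)$ already has all the required properties --- your second step is superfluous. This single-conjugation construction is exactly what the paper does (uniformly in the orthogonal and unitary cases, by working with $QS$ and the involution $\theta\otimes(\sigma|_S)$ rather than your separate $\delta\cdot y$ recipe).
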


	\begin{proof}
		Write $S=\Cent(A)$, $\ell=\rank_RS\in\{1,2\}$ and $n=\deg A$.
		
		We may assume throughout that $\sigma$ is orthogonal or unitary.
		Indeed, if $\sigma$ is symplectic, choose $u\in \Sym_{-1}(A,\sigma)\cap \units{A}$
		(use \cite[Lemma~1.26]{First_2022_octagon}) and replace $ \sigma,\veps $
		with $\Int(u)\circ\sigma,-\veps$. This does not
		affect the isomorphism class of $\aGW{A,\sigma,\veps}$ 
		(Example~\ref{EX:conjugation-is-compatible-with-GW}) or
		the type of $(\sigma,\veps)$ \cite[Corollary~1.22(i)]{First_2022_octagon}.		
	
		We   prove the theorem by induction on $n=\deg A$.
		The case $n=1$ is clear, so assume that $n>1$ and the theorem
		holds for Azumaya algebras of  degree smaller than $n$.
		Note if $T$ is an odd-rank
		connected finite \'etale $R$-algebra, then  Corollary~\ref{CR:Springer-odd-etale}
		allows us to replace $R,A,\sigma$
		with $T,A_T,\sigma_T$.
		
\medskip

		\Step{1} 
		We first show that the induction hypothesis implies
		the theorem  
		if at least one of the following conditions \emph{fail}:
		(1) $S$ is connected, 
		(2) $\ind A=\deg A$, 
		(3) $A$ contains no nontrivial
		idempotents.

		Indeed, if $S$ is not connected, then \ref{item:TH:oct-reduction:S-not-connected}
		holds for $A_1=A$ by \cite[Lemma~1.16]{First_2022_octagon}.

		Next, if   $\ind A<\deg A$,
		then
		by \cite[Theorem~1.30]{First_2022_octagon},
		there exists a full idempotent $e\in A $
		such that $e^\sigma=e$ and $\deg eA e=\ind A $
		(here we need $\sigma$ to be non-symplectic).
		By Theorem~\ref{TH:e-transfer-is-compatible-GW}, $\aGW{A,\sigma,\veps}\cong \aGW{eAe,\sigma_e, \veps}$,
		and since $\deg eAe=\ind A\mid \deg A$,
		we may apply the induction hypothesis to $(eAe,\sigma_e,\veps)$ and finish.
		The type of $(\sigma,\veps)$ remains unchanged
		by \cite[Corollary~1.22(ii)]{First_2022_octagon}.
		
		If $S$ is connected and $A$ contains a nontrivial idempotent
		$e$, then $eAe$ is an Azumaya $S$-algebra with $[eAe]=[A]$
		and $\deg eAe<\deg A$ \cite[Corollary~1.12]{First_2022_octagon}, so $\ind A<\deg A$ and we can proceed
		as in the previous paragraph.
		
\medskip

		\Step{2} We claim that the 	theorem holds if $\deg A$ is not  a power of $2$.
		
		Indeed,
		by Lemma~\ref{LM:splitting-with-fin-et-II}, 
		there exists a finite \'etale $R$-algebra $T$ such that
		$A_{T}\cong \nMat{S_{T} }{n}$ as $S_{T}$-algebras.
		By \cite[Theorem~12.6.1]{Ford_2017_separable_algebras},
		there exists a finite group 
		$G$ such that $T$ can
		be embedded in a $G$-Galois $R$-algebra.
		Replace $T$ with this $G$-Galois algebra.
		Let $P$ be a $2$-Sylow subgroup of $G$ and write
		$E:=T^P$.
		Then $E$ is   a finite \'etale
		$R$-algebra of rank $|G/P|$, which is odd.
		Furthermore, $T$ is a $P$-Galois $E$-algebra
		such that $[(A_{E})\otimes_{E} T]=[A_T]=0$.
		By Lemma~\ref{LM:index-bound}, 
		$\ind A_{E}\mid \rank_{E} T=|P|$,
		so $\ind A_{E}$ is a power of $2$.
		
		Write $E$ as a product of connected finite \'etale $R$-algebras.
		At least one of these algebras has odd $R$-rank.
		Replacing $E$ with that algebra, we may assume that
		$E$ is connected.
		As explained above, 
		we may now replace replace $R,A,\sigma$ with $E,A_{E},\sigma_{E}$
		to assume that $\ind A$ is a power of $2$.
		Since we assumed that $\deg A$ is not a power
		of $2$, we have $\ind A <\deg A $ and the theorem holds
		by Step~1.

\medskip

		\Step{3}
		By Lemma~\ref{LM:involution-inv-max-etale-alg},
		there exists an odd-rank finite \'etale $R$-algebra
		$T$ and $x\in \Sym_1(A_T,\sigma_T)$
		such that $L:=S_T[x]$
		is a finite \'etale $S_T$-algebra
		satisfying  $\rank_{S_T} L=\deg A_T$.
		If $T$ is not connected, express it as a product
		of connected  $R$-algebras  and replace $T$
		with one of the odd-rank factors $T_1$ and $x$ with its image in $A_{T_1}$.
		We may replace $R,A,\sigma$ with $T,A_T,\sigma_T$. 
		By Steps~1 and~2, we may  assume that 
		$\deg A$ is  a  power of $2$ greater than $1$
		and $A$ contains no nontrivial idempotents.
		In particular, all commutative $R$-subalgebras of $A$ are connected.

		Let $M=L^{\{\sigma\}}$. 
		If $S=R$, then $M=L$.
		Otherwise, $\sigma|_L\neq \id_L$,
		hence  $L$ is quadratic \'etale over $M $
		and $M$ is finite \'etale over $R$
		(see~\ref{subsec:Azumaya}).
		In any case, $\rank_ML=\rank_RS$.
		Since $M$ and $S$ are connected,  
		\[
		\rank_R M\cdot \rank_ML=\rank_R L=\rank_R S\cdot \rank_SL = \rank_R S \cdot \deg A,
		\]
		so $\rank_R M=\deg A$ is a  power of $2$ greater than $1$.

		By Lemma~\ref{LM:quad-etale-after-odd-rank},
		there exists a connected  odd-rank finite \'etale
		$R$-algebra $E$
		such that $M_{E}$ contains a quadratic \'etale $E$-algebra
		$Q$. We may replace $R,A,\sigma$ with $E,A_E,\sigma_E$
		and, thanks to Steps~1 and~2, continue to assume that $\deg A$
		is an even power of $2$ and $A$ contains no nontrivial idempotents.

		We claim that the map $q\otimes s\mapsto qs: Q\otimes S\to Q\cdot S$
		is an isomorphism. This is immediate if $S=R$, so assume that
		$S$ is quadratic \'etale over $R$.
		Note first that $QS$ is an epimorphic image of $Q\otimes S$, hence
		separable over $R$ \cite[Proposition~4.3.6]{Ford_2017_separable_algebras}.
		By \cite[Lemma~1.3]{First_2022_octagon} (applied with $A=L$),
		$QS$ is also projective over $R$.
		This means that $\ker (Q\otimes S\to QS)$ is a projective $R$-module
		of rank $\rank_R Q\otimes S-\rank_R QS$, so we need to show that $\rank_R Q\otimes S=\rank_R Q  S$.
		Clearly, $\rank_R QS\leq \rank_R Q\otimes S=4$.
		On the other hand, $QS\neq S$ because $Q\cap S\subseteq \Sym_1(S,\sigma)=R$,
		so $\rank_R QS=\rank_S QS\cdot \rank_R S\geq 2\cdot 2=4$,
		forcing   $\rank_R QS=4=\rank_R Q\otimes S$.
			
		We identify $Q\otimes S$ with $QS$ henceforth.
		
\medskip

		\Step{4} By \cite[Lemma~1.19]{First_2022_octagon},
		there exists $\lambda\in Q$ such that $Q=R\oplus  \lambda R	$
		and $\lambda^2\in \units{R}$. We have $\lambda^\sigma=\lambda$ because $Q\subseteq M$.
		Let $\theta$ denote the standard   $R$-involution
		of $Q$ and let $\tau:=\theta\otimes (\sigma|_S):QS\to QS$.
		Then $\tau $ is an involution of $QS$ agreeing
		with $\sigma$ on $S$ and satisfying $\lambda^\tau=-\lambda$.

		By Theorem~\ref{TH:involution-ext},
		there exists $\mu\in \Sym_{-1}(A,\sigma)\cap\units{A}$
		such that $\Int(\mu)\circ \sigma|_{QS}=\tau$.
		Let $\sigma_1=\Int(\mu)\circ \sigma$
		and $\veps_1=-\veps$.
		Then 
		\begin{align*}
		\lambda^{\sigma_1}&=\lambda^\tau=-\lambda,\\
		\mu^{\sigma_1}&=\mu\mu^\sigma \mu^{-1}=-\mu,\\
		\mu\lambda\mu^{-1}&=\mu\lambda^\sigma \mu^{-1}=\lambda^{\sigma_1}=-\lambda,
		\end{align*}
		and we have established \ref{item:TH:oct-reduction:lambda-mu} with $A_1=A$.
		By Example~\ref{EX:conjugation-is-compatible-with-GW},
		$\aGW{A,\sigma,\veps}\cong \aGW{A,\sigma_1,\veps_1}$,
		and the type of $(\sigma_1,\veps_1)$
		is the same as the type of $(\sigma,\veps)$ by \cite[Corollary~1.22(i)]{First_2022_octagon},
		so (i) and (ii) also hold.
	\end{proof}
	
	\begin{remark}
	Without condition (ii),   Theorem~\ref{TH:reduction-for-octagon} holds
	under the milder assumption that $R$ is connected semilocal.
	\end{remark}

\section[The Grothendieck--Serre Conjecture]{The Grothendieck--Serre Conjecture and Exactness of The Gersten--Witt Complex in Dimension $2$}
\label{sec:exactness}

Let
$R$ denote   a regular ring, let $(A,\sigma)$ be an Azumaya $R$-algebra with involution and let
$\veps\in \mu_2(R)$. 
In this section, we  
put the machinery of the previous sections 
to exhibit
new cases where $\aGW{A,\sigma,\veps}$ is exact, and as
a consequence,   verify some open cases of the Grothendieck--Serre conjecture.
We achieve this by appealing to a theorem of Balmer, Preeti and Walter:

\begin{thm}[Balmer, Preeti, Walter]
	\label{TH:Balmer-Preeti-Walter}
	$\aGW{R,\id_R,\veps}$ is exact when $R$ is   regular semilocal  of dimension $\leq 4$.
\end{thm}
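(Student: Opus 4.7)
The plan is to reduce the statement to the exactness results of Balmer--Walter and Balmer--Preeti in the literature, using the comparison isomorphism provided by Proposition~\ref{PR:comparison-with-BW}. First I would write $R$ as a finite product of connected regular semilocal rings. The complex $\aGW{R,\id_R,\veps}$ decomposes as a direct sum of the complexes over the factors, so I may assume throughout that $R$ is connected and $\veps\in\{\pm 1\}$.

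For $\veps=-1$ the complex vanishes term by term. Indeed, over a commutative ring with trivial involution and with $2\in R^{\times}$, a unimodular $(-1)$-hermitian form is alternating, and it is classical that any unimodular alternating form on a finite projective module over a semilocal ring (or over a field) is hyperbolic. Hence $W_{-1}(R,\id_R)=0$ and, fixing isomorphisms $\tilde{k}(\frakp)\cong k(\frakp)$, also $\tilde{W}_{-1}(k(\frakp))\cong W_{-1}(k(\frakp))=0$ for every $\frakp\in\Spec R$. The complex $\aGW{R,\id_R,-1}$ is therefore the zero complex and is trivially exact.

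For $\veps=+1$, Proposition~\ref{PR:comparison-with-BW} provides an isomorphism between $\aGW{R,\id_R,1}$ and the Gersten--Witt complex of $R$ constructed by Balmer and Walter. When $R$ is a regular local ring of dimension at most $4$, the exactness of the latter is the main theorem of \cite{Balmer_2002_Gersten_Witt_complex}. The extension to regular semilocal rings of dimension at most $4$ follows from the semilocal techniques of Balmer--Preeti in \cite{Balmer_2005_shifted_Witt_groups_semilocal}.

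The main obstacle, should one wish to give a self-contained proof rather than appeal to the literature, would be the exactness at the middle terms of the local case: Balmer--Walter establish this via a careful analysis of shifted Witt groups of the bounded derived category of finitely generated $R$-modules combined with devissage and localization sequences, and passing from local to semilocal then requires patching these arguments. Since both steps are already available, the present theorem reduces once the comparison of Proposition~\ref{PR:comparison-with-BW} is in hand.
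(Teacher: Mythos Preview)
Your proposal is correct and follows essentially the same approach as the paper: invoke Proposition~\ref{PR:comparison-with-BW} to identify $\aGW{R,\id_R,1}$ with the Balmer--Walter Gersten--Witt complex, cite \cite{Balmer_2002_Gersten_Witt_complex} for the local case and \cite{Balmer_2005_shifted_Witt_groups_semilocal} for the semilocal extension, and observe that the $\veps=-1$ case is vacuous since the complex is zero. Your reduction to connected $R$ and the extra justification for the vanishing of $W_{-1}$ are reasonable embellishments but not needed.
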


\begin{proof}
	We first note that by 
	Proposition~\ref{PR:comparison-with-BW},
	the   complex  $\aGW{R,\id_R,1}$   is isomoprhic to the Gersten--Witt
	complex of $R$ defined in \cite{Balmer_2002_Gersten_Witt_complex}.

	The case $\veps=1$
	was verified by Balmer and Walter
	when $R$ is local, \cite[Corollary~10.4]{Balmer_2002_Gersten_Witt_complex},
	and  Balmer and Preeti 
	\cite[p.~3]{Balmer_2005_shifted_Witt_groups_semilocal}
	showed that the   assumption on $R$ can be relaxed to $R$ being semilocal.

	The case $\veps=-1$ is vacuous because $\aGW{R,\id_R,-1}$ is the zero complex. 
\end{proof}

The fact that Theorem~\ref{TH:Balmer-Preeti-Walter} applies only in dimension $\leq 4$ is the reason why  our
results require a similar assumption on $\dim R$ --- extending it to higher
dimensional rings will result in similar improvements to some of our main results.
The  precise formulation of this principle is the content of  Theorems~\ref{TH:cond-exactness-odd-ind}
and~\ref{TH:cond-exactness-even-cohomologies}.

\begin{lem}\label{LM:grid-lemma}
	Consider a double cochain complex  $A_{\bullet,\bullet}$ of abelian groups 
	(partially illustrated below).
	\[
	\xymatrix{
	A_{-2,-1} \ar[r] \ar[d] &
	A_{-1,-1} \ar[r] \ar[d] &
	A_{0,-1} \ar[r] \ar[d] &
	A_{1,-1} \ar[r] \ar[d] &
	\boxed{A_{2,-1}}  \ar[d] \\
	A_{-2,0} \ar[r] \ar[d] &
	\boxed{A_{-1,0}} \ar[r] \ar[d] &
	A_{0,0} \ar[r] \ar[d] &
	\boxed{A_{1,0}} \ar[r] \ar[d] &
	A_{2,0}  \ar[d]  \\
	\boxed{A_{-2,1}} \ar[r]   &
	A_{-1,1} \ar[r]  &
	A_{0,1} \ar[r]  &
	A_{1,1} \ar[r]  &
	A_{2,1}    
	}
	\]
	Suppose that there exist  $s,t\in\N$
	such that:
	\begin{enumerate} 
		\item $A_{ t,-t}=0$ and $A_{-s, s}=0$;
		\item the rows   are exact at
		$A_{i,-i}$ for $-s< i< t$;
		\item the columns  are exact
		at $A_{1,0},A_{2,-1},\dots,A_{t,1-t }$
		and $A_{-1,0}, A_{-2,1},\dots,A_{-s,s-1} $
		(these places are indicated by boxes in the illustration).
	\end{enumerate}
	Then the $0$-column is exact at $A_{0,0}$.
\end{lem}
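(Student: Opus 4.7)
The plan is to prove exactness at $A_{0,0}$ of the $0$-column by a direct diagram chase that performs a double zig-zag around $(0,0)$: column exactness on the upper-right antidiagonal $i + j = 1$ and the lower-left antidiagonal $i + j = -1$ will be bridged by row exactness along the main antidiagonal $i + j = 0$. For concreteness I adopt the convention $d_h d_v + d_v d_h = 0$; any other sign convention differs by signs that can be absorbed in the auxiliary elements. Given $a \in A_{0,0}$ with $d_v a = 0$, the goal is to exhibit $b \in A_{0,-1}$ with $d_v b = a$.

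The first step constructs, by iterated application of column exactness at $A_{k, 1-k}$ for $k = 1, \ldots, t$, a chain $\alpha_0 = a, \alpha_1, \ldots, \alpha_{t-1}$ with $\alpha_k \in A_{k, -k}$ and $d_v \alpha_k = d_h \alpha_{k-1}$: at each step $d_h \alpha_{k-1}$ is $d_v$-closed by induction plus anticommutation, so column exactness provides a preimage. The chain terminates because $d_h \alpha_{t-1}$ is a $d_v$-cocycle in $A_{t, 1-t}$, and column exactness there combined with $A_{t, -t} = 0$ forces $d_h \alpha_{t-1} = 0$. Next I fold the chain back using row exactness at $A_{k, -k}$ for $k = t-1, \ldots, 0$, producing $\beta_k \in A_{k-1, -k}$ with $d_h \beta_k = \alpha_k + d_v \beta_{k+1}$ (with $\beta_t := 0$); a short anticommutation check shows $\alpha_k + d_v \beta_{k+1}$ is $d_h$-closed at each step. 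Evaluating at $k = 0$ rewrites $a = d_h \beta_0 - d_v \beta_1$ with $\beta_0 \in A_{-1, 0}$ and $\beta_1 \in A_{0, -1}$, so the task reduces to exhibiting $d_h \beta_0 \in \im(d_v : A_{0, -1} \to A_{0, 0})$.

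The mirror zig-zag on the left will handle this. Starting from $\gamma_0 := \beta_0 \in A_{-1, 0}$ (noting $d_h \beta_0$ is automatically $d_v$-closed), I use row exactness at $A_{-k, k}$ for $k = 1, \ldots, s-1$ to build $\gamma_k \in A_{-k-1, k}$ with $d_h \gamma_k = d_v \gamma_{k-1}$, terminating because $d_v \gamma_{s-1}$ lands in $A_{-s, s} = 0$. Folding back via column exactness at $A_{-k-1, k}$ for $k = s-1, \ldots, 0$ yields $\delta_k \in A_{-k-1, k-1}$ with $d_v \delta_k = \gamma_k + d_h \delta_{k+1}$ (setting $\delta_s := 0$; $d_v$-closedness of the right-hand side is again immediate from anticommutation). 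At $k = 0$ this reads $\beta_0 = d_v \delta_0 - d_h \delta_1$, and applying $d_h$ gives $d_h \beta_0 = -d_v(d_h \delta_0)$. Combined with the conclusion of the previous phase, $a = d_v(-d_h \delta_0 - \beta_1)$, as required.

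The main obstacle is purely organizational: keeping track of signs arising from anticommutation across three phases, and handling the degenerate cases $s = 1$ and $t = 1$ where certain intermediate diagonals are empty. In each such case the corresponding zig-zag collapses to a single application of column exactness at $A_{1, 0}$ (respectively $A_{-1, 0}$), the relevant $d_v$-cocycle being automatically in the trivial image because of the vanishing $A_{1, -1} = 0$ (respectively $A_{-1, 1} = 0$), so no additional hypothesis is needed.
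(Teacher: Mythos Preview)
Your proof is correct and follows essentially the same four-phase diagram chase as the paper: build a staircase $\alpha_k$ to the upper right using column exactness, fold it back with $\beta_k$ via row exactness to rewrite $a$ as $d_h\beta_0 - d_v\beta_1$, then repeat the mirror construction on the lower left with $\gamma_k$ and $\delta_k$. The only differences from the paper's argument are cosmetic: you work with the anticommuting convention $d_hd_v+d_vd_h=0$ (the paper uses commuting differentials), and you terminate each staircase one step before the paper does, invoking the vanishing hypothesis directly rather than constructing a final element that is then observed to be zero.
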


\newcommand{\hd}{h}  
\newcommand{\vd}{v}  

\begin{proof}
	The proof is by diagram chasing.
	Throughout, subscripts of elements
	indicate the row in which they live.
	We write $\hd$ for the horizontal maps in
	the diagram and $\vd$ for the vertical maps.

	Let $a_0\in A_{0,0}$ be an element such that $\vd a_0=0$.	
	Define elements $a_{-n}\in A_{n,-n}$ for $n\in\{1,\dots,t\}$
	satisfying 
	\[\vd a_{-n}=\hd a_{1-n}
	.\]
	as follows: Assuming $a_{-n}$ has been defined,
	we have $\vd\hd a_{-n}=\hd\hd a_{1-n}=0$ if $n>0$
	and $\vd\hd a_{-n}=\hd\vd a_{-n}=0$ if $n=0$.
	Use the
	exactness of the columns at $A_{ n+1,-n}$
	to choose $a_{-n-1}\in A_{n+1,-n-1}$
	such that $\vd a_{-n-1}=\hd a_{-n}$.
	
	Since $A_{t,-t}=0$, we must have $a_{-t}=0$.
	Set $b_{-t}:=0\in A_{t-1,-t}$ and $b_{-t-1}:=0\in A_{t,-t-1}$.
	For $n\in \{t-1,\dots,0\}$,  
	define elements $b_{-n}\in A_{ n-1,-n}$ satisfying
	\[
	\hd b_{-n}=a_{-n}-\vd b_{-n-1}
	\]
	inductively as follows:
	Assuming  $b_{-n}$ has been defined,
	we have $\hd(a_{1-n}-\vd b_{-n})=\hd a_{1-n}-\vd \hd b_{-n}=
	\hd a_{1-n}-\vd (a_{-n}-\vd b_{-n-1})=\hd a_{1-n}-\vd a_{-n}=0$.
	By the exactness of the rows
	at $A_{1-n,1-n}$, there exists $b_{1-n}\in A_{n-2,1-n}$
	such that $\hd b_{1-n}=a_{1-n}-\vd b_{-n}$.

	Write $c_0:=b_0$ and observe that $\vd \hd c_0=\vd a_0-\vd\vd b_{-1}=0$.
	For $n\in\{1,\dots,s\}$, we define elements $c_n\in A_{-n-1,n}$
	satisfying 
	\[\hd c_n=\vd c_{n-1}\]
	by induction. Assuming $c_{n-1}$ has been
	defined, we have 
	$\hd\vd c_{n-1}=\vd\vd c_{n-2}=0$ if $n>1$
	and $\hd\vd c_{n-1} =\vd\hd  c_0=0$ if $n=1$.
	By the exactness of the rows at $A_{-n,n}$,
	there exists $c_n\in A_{-n-1,n}$
	such that $\hd c_n=\vd c_{n-1}$.
	
	Since $A_{ -s,s}=0$, we have $c_{s}=0$.
	Let $d_{s}:=0\in A_{-s-2,s}$
	and $d_{s-1}:=0\in A_{-s-1,s-1}$.
	For $n\in \{ s-2,\dots,-1\}$,
	define  $d_n\in A_{-n-2,n}$ satisfying
	\[
	\vd d_n= c_{n+1}-\hd d_{n+1} 
	\]
	as follows: Assuming $d_{n+1}$ has
	been defined,
	we have $\vd (c_{n+1}-\hd d_{n+1})=
	\vd c_{n+1}-\hd \vd d_{n+1}=
	\vd c_{n+1}-\hd (c_{n+2}-\hd d_{n+2})=\vd c_{n+1}-\hd c_{n+2}=0$.
	Thus, by the exactness of the columns at 
	$A_{-n-2,n+1}$, there exists $d_{n}\in A_{-n-2,n}$
	such that $\vd d_n=c_{n+1}-\hd d_{n+1}$.
	
	Finally, note that $\vd d_{-1}=c_0-\hd d_0$,
	and $\hd c_0=\hd b_0=a_0-\vd b_{-1}$.
	Thus, $\vd (b_{-1}+\hd d_{-1})=
	\vd b_{-1}+\hd \vd d_{-1}=
	\vd b_{-1}+\hd(c_0-\hd d_0)=\vd b_{-1}+ a_0-\vd b_{-1}=a_0$,
	which is what we want.
\end{proof}

\begin{thm}\label{TH:cond-exactness-odd-ind}
	Assume $R$ is   regular semilocal 
	and $\ind A$ is odd.
	If $\aGW{R_1,\id,1}$ is exact for every
	finite  \'etale $R$-algebra $R_1$, then $\aGW{A,\sigma,\veps}$
	is exact.
\end{thm}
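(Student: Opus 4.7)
The plan is to first apply Theorem~\ref{TH:reduction-for-octagon} to reduce the problem to a short list of essentially manageable cases (exploiting that $\ind A$ is odd to collapse the hardest case), and then to treat the only subtle remaining case via the five-term cochain complex of Theorem~\ref{TH:five-term-is-compatible} combined with the conjugation isomorphism of Example~\ref{EX:conjugation-is-compatible-with-GW}.

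First I invoke Theorem~\ref{TH:reduction-for-octagon} to obtain a connected odd-rank finite \'etale $R$-algebra $R_1$, an Azumaya $R_1$-algebra with involution $(A_1,\sigma_1)$, and $\veps_1\in\{\pm 1\}$ such that $\aGW{A,\sigma,\veps}$ is isomorphic to a summand of $\aGW{A_1,\sigma_1,\veps_1}$ and one of (iii-1), (iii-2), (iii-3) holds. Since $\ind A$ is odd, (iii-3) forces $\deg A_1$ to be a power of $2$ dividing $\ind A$, hence $\deg A_1=1$, falling into (iii-2). In case (iii-1), $\aGW{A_1,\sigma_1,\veps_1}=0$ by Remark~\ref{RM:split-primes-do-not-contribute}, so its summand $\aGW{A,\sigma,\veps}$ is also zero. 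In case (iii-2) of orthogonal or symplectic type we have $(A_1,\sigma_1)=(R_1,\id)$; the complex $\aGW{R_1,\id,\veps_1}$ is then exact by the hypothesis when $\veps_1=1$ (since $R_1$ is finite \'etale over $R$), and it is the zero complex when $\veps_1=-1$ because $\tilde{W}_{-1}$ of any field with trivial involution vanishes.

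The only remaining case is (iii-2) of unitary type: $(A_1,\sigma_1)=(S_1,\theta)$ for a quadratic \'etale $R_1$-algebra $S_1$ with its standard involution. To handle it I would apply the five-term exact sequence of cochain complexes
\[
0\to \aGW{S_1,\theta,1}\to \aGW{R_1,\id,1}\to \aGW{S_1,\id,1}\to \aGW{R_1,\id,1}\to \aGW{S_1,\theta,-1}\to 0
\]
from Theorem~\ref{TH:five-term-is-compatible}, noting that every level of the sequence is exact because $R_1$ is semilocal. The two middle complexes are exact by the hypothesis: $R_1$ is finite \'etale over $R$, and so is $S_1$ (as a composition of finite \'etale extensions), while the identification of $\aGW{S_1/R_1,\id,1}$ with $\aGW{S_1/S_1,\id,1}$ is provided by Theorem~\ref{TH:base-does-not-matter}. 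Breaking the five-term sequence into three short exact sequences of cochain complexes and applying long exact cohomology three times, using the acyclicity of the three middle terms, produces a natural isomorphism $\HH^n(\aGW{S_1,\theta,1})\cong \HH^{n-3}(\aGW{S_1,\theta,-1})$ for every $n\in\Z$.

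The decisive observation is that, because $R_1$ is semilocal, $S_1$ admits an $R_1$-basis $\{1,\lambda\}$ with $\lambda^2\in\units{R_1}$, and such $\lambda$ automatically satisfies $\lambda^\theta=-\lambda$; since $S_1$ is commutative we have $\Int(\lambda)\circ\theta=\theta$, so Example~\ref{EX:conjugation-is-compatible-with-GW} yields a cochain-complex isomorphism $\lambda_*\colon\aGW{S_1,\theta,1}\xrightarrow{\sim}\aGW{S_1,\theta,-1}$. Combining this with the $3$-shift above gives the periodicity $\HH^n(\aGW{S_1,\theta,1})\cong \HH^{n-3}(\aGW{S_1,\theta,1})$ for every $n$. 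Since $\aGW{S_1,\theta,1}$ is bounded, supported in degrees $\{-1,0,\dots,\dim R_1\}$, iterating this periodicity shifts any given degree outside the support and forces every cohomology group to vanish, yielding the required exactness. The main conceptual step is the recognition that the three-step shift coming from the five-term sequence and the identification of $\aGW{S_1,\theta,1}$ with $\aGW{S_1,\theta,-1}$ via $\lambda$-conjugation—two facts of quite different flavour—can be combined into honest periodicity, after which boundedness closes the argument.
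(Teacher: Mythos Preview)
Your proof is correct (modulo the small omission that Theorem~\ref{TH:reduction-for-octagon} requires $R$ to be a domain, so you should first decompose $R$ as a product of regular semilocal domains and work over each factor, as the paper does). The reduction via Theorem~\ref{TH:reduction-for-octagon} and the elimination of case~(iii-3) by the oddness of $\ind A$ match the paper exactly.

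Where you diverge is in the handling of the unitary rank-one case $(S_1,\theta)$. The paper views the five-term sequence of Theorem~\ref{TH:five-term-is-compatible} as a double complex and applies the diagram-chasing Lemma~\ref{LM:grid-lemma} inductively: once $\HH^i$ of both $\aGW{S,\theta,\pm 1}$ vanish, the grid lemma forces $\HH^{i+1}$ to vanish. You instead split the five-term sequence into three short exact sequences, chain together the connecting maps to obtain $\HH^n(\aGW{S_1,\theta,1})\cong \HH^{n-3}(\aGW{S_1,\theta,-1})$, and close the loop with $\lambda$-conjugation to get a genuine $3$-periodicity, which boundedness then kills. Your route is more self-contained and avoids Lemma~\ref{LM:grid-lemma} entirely; the paper's route, on the other hand, isolates the diagram-chasing into a reusable lemma that is invoked again in the proof of Theorem~\ref{TH:cond-exactness-even-cohomologies}, where no such clean periodicity is available. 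Both arguments are equally valid here.
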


\begin{proof}
	By writing $R$ as a product of connected rings and working over each
	factor separately, we may
	assume that $R$ is a   domain.
	Write $S=\Cent(A)$.
	By Theorem~\ref{TH:reduction-for-octagon},
	we may assume that $S=R\times R$ or $\deg A=1$, i.e.\ $A=S$.
	In the former case $\aGW{A,\sigma,\veps}=0$ (Remark~\ref{RM:split-primes-do-not-contribute}) 
	and there is nothing to prove,
	so
	assume   $A=S$. If $S=R$, then we are done by assumption.
	It remains to consider the case where	$S$ is a quadratic \'etale $R$-algebra and $\sigma$ is
	its standard $R$-involution.
	By Theorem~\ref{TH:five-term-is-compatible}, we have an
	exact sequence of cochain
	complexes
	\[
	0 \to 
	\aGW{S,\sigma,1}\to
	\aGW{R,\id ,1}\to
	\aGW{S,\id ,1} \to
	\aGW{R,\id ,1} \to
	\aGW{S,\sigma,-1} 
	\to 0 
	\] 
	which we view as a double cochain complex.
	By assumption, $\aGW{R,\id ,1}$ and
	$\aGW{S,\id ,1}$ are exact.
	Furthermore, by \cite[Lemma~1.26]{First_2022_octagon},
	there exists $u\in \Sym_{-1}(S,\sigma)\cap\units{S}$,
	which induces an isomorphism  $u_*:\aGW{S,\sigma,1}\cong \aGW{S,\sigma,-1}$
	by Example~\ref{EX:conjugation-is-compatible-with-GW}.
	
	We   now use induction on $i\in\Z$
	to show that $\HH^i(\aGW{S,\sigma,1})\cong \HH^i(\aGW{S,\sigma,-1})=0$.
	Assuming  $\HH^i(\aGW{S,\sigma,1})\cong \HH^i(\aGW{S,\sigma,-1})=0$
	has been established, we get $\HH^{i+1}(\aGW{S,\sigma,1})=0$ by applying 
	Lemma~\ref{LM:grid-lemma}.
\end{proof}

\begin{thm}\label{TH:exactness-odd-index}
	If $R$ is regular semilocal   of dimension $\leq 4$ and $\ind A$ is odd,
	then $\aGW{A,\sigma,\veps}$ is exact.
\end{thm}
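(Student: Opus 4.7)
The plan is to combine Theorem~\ref{TH:cond-exactness-odd-ind} with Theorem~\ref{TH:Balmer-Preeti-Walter} (Balmer--Preeti--Walter). The former reduces the exactness of $\aGW{A,\sigma,\veps}$, when $\ind A$ is odd, to the exactness of $\aGW{R_1,\id_{R_1},1}$ for every finite étale $R$-algebra $R_1$; the latter provides exactness of $\aGW{R',\id_{R'},1}$ for every regular semilocal ring $R'$ of dimension $\leq 4$. So the only thing to check is that every finite étale $R$-algebra $R_1$ remains regular semilocal of dimension $\leq 4$, and then the two results fit together immediately.

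Concretely, first I would observe that $R_1$, being finite étale over $R$, is a finite $R$-module. Since finite extensions of semilocal rings are semilocal (the preimages of the finitely many maximal ideals of $R$ contain all maximal ideals of $R_1$, and over each $\frakm \in \Max R$ the quotient $R_1/\frakm R_1$ is a finite product of fields, hence has only finitely many maximal ideals), $R_1$ is semilocal. Second, étale morphisms preserve regularity, so $R_1$ is regular. Third, since $R \to R_1$ is finite, $\dim R_1 = \dim R \leq 4$ by the going-up/going-down properties of integral extensions.

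With this verified, Theorem~\ref{TH:Balmer-Preeti-Walter} applied to each such $R_1$ tells us that $\aGW{R_1,\id_{R_1},1}$ is exact, so the hypothesis of Theorem~\ref{TH:cond-exactness-odd-ind} is satisfied and the conclusion follows. There is no real obstacle here: the theorem is essentially the concatenation of two already-established results, and the only minor point is the standard verification that the class of regular semilocal rings of dimension $\leq 4$ is closed under finite étale extensions.
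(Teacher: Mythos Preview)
Your proposal is correct and matches the paper's own proof, which is the single line ``This follows from Theorems~\ref{TH:cond-exactness-odd-ind} and~\ref{TH:Balmer-Preeti-Walter}.'' You have simply spelled out the implicit verification that a finite \'etale $R$-algebra $R_1$ is again regular semilocal of dimension $\leq 4$, which is exactly what is needed to plug Theorem~\ref{TH:Balmer-Preeti-Walter} into the hypothesis of Theorem~\ref{TH:cond-exactness-odd-ind}.
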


\begin{proof}
	This follows from Theorems~\ref{TH:cond-exactness-odd-ind} and~\ref{TH:Balmer-Preeti-Walter}.
\end{proof}

\begin{thm}\label{TH:cond-exactness-even-cohomologies}
	Assume $R$ is a  regular 
	semilocal domain.
	If
	for every connected finite \'etale $R$-algebra $R_1$,
	every Azumaya $R_1$-algebra with involution
	$(B,\tau)$ 	
	with  $\deg B\mid  \deg A$,
	and every $i\in  \Z$, 
	we have
	$\HH^{2i}(\aGW{B,\tau,\pm 1})=0$ and $\HH^i(\aGW{R_1,\id,1})=0$,
	then $ \aGW{A,\sigma,\veps} $ is exact.
\end{thm}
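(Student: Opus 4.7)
The plan is to mimic the inductive strategy of Theorem~\ref{TH:cond-exactness-odd-ind}, with the octagon of Theorem~\ref{TH:octagon-is-compatible} replacing the $5$-term exact sequence of Theorem~\ref{TH:five-term-is-compatible}. Working componentwise, I may assume that $R$ is a domain. By Corollary~\ref{CR:Springer-odd-etale} and Theorem~\ref{TH:reduction-for-octagon}, after an odd-rank connected finite \'etale base change $R_1/R$ I may replace $(A,\sigma,\veps)$ by a triple $(A_1,\sigma_1,\veps_1)$ over $R_1$ satisfying one of the three conclusions (iii-1), (iii-2), (iii-3) of that theorem, with the theorem's hypotheses inherited from $R$ to $R_1$.

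Cases (iii-1) and (iii-2) are handled exactly as in the proof of Theorem~\ref{TH:cond-exactness-odd-ind}: in (iii-1), the Gersten--Witt complex is zero by Remark~\ref{RM:split-primes-do-not-contribute}; in (iii-2), $A_1$ coincides with its center, and one either invokes the hypothesis on $\aGW{R_1,\id,1}$ directly (noting $\aGW{R_1,\id,-1}$ is identically zero) or, in the subcase where $\Cent(A_1)$ is quadratic \'etale with standard involution, applies the $5$-term exact sequence of Theorem~\ref{TH:five-term-is-compatible} together with Lemma~\ref{LM:grid-lemma}, as in the proof cited.

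The essentially new case is (iii-3), which I handle by outer induction on $\deg A_1$ (a power of $2$ of size at least $2$). The octagon assembles four $A$-type complexes $\aGW{A_1,\sigma_1,\pm\veps_1}$ and four $B$-type complexes $\aGW{B,\tau_i,\pm\veps_1}$ with $\deg B=\deg A_1/2<\deg A_1$ and $\deg B\mid\deg A$. By the outer induction, all four $B$-type complexes are exact. Combined with the hypothesis of the theorem, this yields that every column of the octagon has vanishing cohomology in all even degrees, and the four $B$-columns in every degree. An inner induction on the cohomological degree $q$, based on applying Lemma~\ref{LM:grid-lemma} to the bounded exact sequence of complexes
\[
0\to Z_0\to C_0\to C_1\to\cdots\to C_7\to Z_0\to 0
\]
obtained by cutting the cyclic octagon at $Z_0=\ker(C_0\to C_1)=\im(C_7\to C_0)$, then extracts the desired vanishing of $\HH^q$ on all four $A$-type columns.

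The main obstacle is controlling the auxiliary complex $Z_0$ in the bounded sequence, whose cohomology is not given directly by the hypotheses. This is overcome by observing that, for every odd $n$, the short exact sequence $0\to Z_n\to C_n\to Z_{n+1}\to 0$ has $\HH^*(C_n)=0$, so its long exact sequence collapses into degree-shift isomorphisms $\HH^{q+1}(Z_n)\cong\HH^q(Z_{n+1})$; iterating through the four odd positions in the cycle reduces the cohomology of $Z_0$ to a shifted piece of the cohomology of one of the $A$-type terms, which the inner induction already controls. Equivalently, enlarging the window parameters $s,t$ in Lemma~\ref{LM:grid-lemma} pushes the column-exactness conditions involving $Z_0$ into GW degrees outside the finite range $[-1,\dim R]$ where the complexes have nonzero terms, so those conditions hold trivially; the finitely many remaining boundary values of $q$ are then checked by direct inspection of the augmentation map.
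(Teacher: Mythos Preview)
Your proof is correct and follows the same inductive scheme as the paper: induction on $\deg A$, reduction via Theorem~\ref{TH:reduction-for-octagon}, and application of Lemma~\ref{LM:grid-lemma} to the octagon of Theorem~\ref{TH:octagon-is-compatible}.

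The one genuine difference is in how you invoke Lemma~\ref{LM:grid-lemma} in case (iii-3). You cut the periodic octagon into a bounded 10-term exact sequence with an auxiliary kernel complex $Z_0$ at the ends, and then expend effort controlling $\HH^*(Z_0)$ via degree-shifts through the $B$-columns and an inner induction on $q$. This works, but the paper avoids all of it: Lemma~\ref{LM:grid-lemma} is applied directly to the \emph{unrolled} periodic octagon as a $\Z$-indexed double complex. Since each $\aGW{-}$ is supported in the finite vertical range $[-1,d]$, one can choose $s,t$ so large that condition~(1) of the lemma holds trivially. Condition~(3) then only involves the genuine $A$- and $B$-columns; when the target GW-degree is odd, the column positions $(j,1-j)$ with $j$ odd are $B$-type (exact by induction) and those with $j$ even are $A$-type at even GW-level $2i+2-j$ (vanishing by hypothesis). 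No inner induction on $q$ and no $Z_0$ is needed. You essentially note this yourself in your ``Equivalently\ldots'' sentence; that remark \emph{is} the paper's proof, and everything preceding it about $Z_0$ can be deleted. Your final clause about ``boundary values of $q$'' and ``direct inspection of the augmentation map'' is not needed either: row-exactness at level $-1$ is already given by Theorem~\ref{TH:octagon-is-compatible} when $R$ is semilocal.

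One minor point where your write-up is actually more careful than the paper: you spell out case (iii-2) (the $\deg A_1=1$ unitary subcase) via the 5-term sequence, whereas the paper's ``holds by assumption'' tacitly relies on the argument of Theorem~\ref{TH:cond-exactness-odd-ind} for that base case.
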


\begin{proof}
	We prove the theorem by induction on $\deg A$.
	The case $\deg A=1$ holds by assumption,
	so assume that $\deg A>1$ and the theorem holds for all
	Azumaya algebras with involution of degree smaller than $\deg A$.

	By Theorem~\ref{TH:reduction-for-octagon} and the induction hypothesis,
	we may assume that   $Z(A)=R\times R$, or $\ind A=\deg A$ is a power of
	$2$ and there exist $\lambda$ and $\mu$ as in Section~\ref{sec:octagon}.
	In the first case, we have $\aGW{A,\sigma,\veps}=0$ (Remark~\ref{RM:split-primes-do-not-contribute}),
	so we only need to treat the second case.
	
	Define $B,\tau_1,\tau_2$ as in Section~\ref{sec:octagon}.
	By Theorem~\ref{TH:octagon-is-compatible}, we have an exact $8$-periodic
	sequence of cochain complexes
	\[
	\cdots
	\to
	\aGW{B,\tau_2,-\veps}\xrightarrow{\rho'_*} 
	\aGW{A,\sigma,\veps} \xrightarrow{\pi_*}
	\aGW{B,\tau_1,\veps} \xrightarrow{\rho_*}
	\aGW{A,\sigma,-\veps} \xrightarrow{\pi'_*}
	\aGW{B,\tau_2,\veps} \xrightarrow{\rho'_*}
	\cdots
	\]
	which we view as a double cochain complex. 
	The columns
	$\aGW{B,\tau_1,\pm\veps}$ and $\aGW{B,\tau_2,\pm \veps}$
	are exact by the induction hypothesis 
	($\deg B=\frac{1}{2}\deg A$, Remark~\ref{RM:base-ring-for-oct}), 
	and by assumption, $\HH^{2i}(\aGW{A,\sigma,\pm \veps})=0$ for all $i\in \Z$.
	Now, by Lemma~\ref{LM:grid-lemma}, we also have $\HH^{2i+1}(\aGW{A,\sigma,\veps})=0$
	for all $i\in \Z$, so $\aGW{A,\sigma,\veps}$ is exact.
\end{proof}

\begin{thm}\label{TH:GW-exact-dim-2}
	If $R$ is regular semilocal   of dimension $\leq 2$,
	then $\aGW{A,\sigma,\veps}$ is exact.
\end{thm}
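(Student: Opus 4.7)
The plan is to invoke Theorem~\ref{TH:cond-exactness-even-cohomologies}, verifying its two hypotheses using earlier results. First, since a regular semilocal ring is a finite product of regular semilocal domains, I would work in each factor separately to reduce to the case where $R$ itself is a domain. To apply Theorem~\ref{TH:cond-exactness-even-cohomologies} it then suffices to verify, for every connected finite \'etale $R$-algebra $R_1$, every Azumaya $R_1$-algebra with involution $(B,\tau)$ with $\deg B \mid \deg A$, and every $i\in\Z$, that
\[
\HH^{2i}(\aGW{B,\tau,\pm 1})=0 \quad \text{and} \quad \HH^i(\aGW{R_1,\id,1})=0.
\]

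For the second vanishing, observe that any finite \'etale $R$-algebra $R_1$ is again regular semilocal of dimension at most $2$, so Theorem~\ref{TH:Balmer-Preeti-Walter} (which handles $\dim\leq 4$) yields the exactness of $\aGW{R_1,\id,1}$, giving $\HH^i(\aGW{R_1,\id,1})=0$ for every $i$.

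For the first vanishing, since $\dim R_1\leq 2$ the complex $\aGW{B,\tau,\pm 1}$ is concentrated in degrees $-1,0,1,2$, so $\HH^{2i}=0$ automatically for $2i\notin\{0,2\}$. The two remaining cases are precisely:
\begin{itemize}
\item $\HH^0(\aGW{B,\tau,\pm 1})=0$, which is Theorem~\ref{TH:purity-dim-two} (purity for Witt groups of Azumaya algebras with involution in dimension $\leq 2$, deduced from the Colliot-Th\'el\`ene--Sansuc purity theorem together with Lemma~\ref{LM:purity-and-dd-zero});
\item $\HH^2(\aGW{B,\tau,\pm 1})=0$, which is Theorem~\ref{TH:surj-at-last-term} (surjectivity of the last differential for semilocal $R$).
\end{itemize}

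Both inputs apply to $(B,\tau)$ since $R_1$ is regular semilocal of dimension $\leq 2$. Hence the hypotheses of Theorem~\ref{TH:cond-exactness-even-cohomologies} are satisfied, and the theorem follows. The substantive mathematical content sits in the two cited results (notably the purity theorem in Section~\ref{sec:surjectivity}); the rest of the argument is a direct application of the conditional exactness criterion, so no serious obstacle is expected at this stage.
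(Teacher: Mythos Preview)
Your proof is correct and follows essentially the same route as the paper: reduce to a domain, then apply Theorem~\ref{TH:cond-exactness-even-cohomologies} after verifying its hypotheses via Theorem~\ref{TH:Balmer-Preeti-Walter} (for $\aGW{R_1,\id,1}$) and Theorems~\ref{TH:purity-dim-two} and~\ref{TH:surj-at-last-term} (for $\HH^0$ and $\HH^2$ of $\aGW{B,\tau,\pm1}$). Your added remark that the complex is concentrated in degrees $-1$ through $2$, so only $\HH^0$ and $\HH^2$ among the even cohomologies need attention, makes explicit a point the paper leaves implicit.
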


\begin{proof}
	As in the proof of Theorem~\ref{TH:cond-exactness-odd-ind},
	we may assume $R$ is a domain.
	Let $R_1$ be a finite \'etale $R$-algebra
	and let $(B,\tau)$ be an Azumaya $R_1$-algebra with ivolution.
	Then $\aGW{R_1,\id,1}$ is exact by
	Theorem~\ref{TH:Balmer-Preeti-Walter}
	and $\HH^0(\aGW{B,\tau,\pm1})=\HH^2(\aGW{B,\tau,\pm1})=0$
	by Theorems~\ref{TH:surj-at-last-term} and~\ref{TH:purity-dim-two}.
	The corollary therefore follows from
	Theorem~\ref{TH:cond-exactness-even-cohomologies}.
\end{proof}

We use the previous theorems   
to establish new cases of the Gro\-then\-dieck--Serre conjecture 
(see the introduction)
and prove a purity result for Witt groups of hermitian forms.
Given an Azumaya $R$-algebra with involution $(A,\sigma)$,
recall that $\uU(A,\sigma)\to \Spec R$ denotes the group $R$-scheme
of $\sigma$-unitary elements in $A$ and   $\uU^0(A,\sigma)\to \Spec R$
is its neutral connected component (see~\cite[\S2.5]{First_2022_octagon}).

\begin{thm}\label{TH:GS-and-purity}
	Let $R$ be a regular semilocal domain with fraction field $F$,
	let   $(A,\sigma)$ be an Azumya $R$-algebra with involution,
	and assume  that   one of the following hold:
	\begin{enumerate}[label=(\arabic*)]
		\item $\dim R=2$;
		\item $\dim R\leq 4$ and $\ind A$ is odd. 
	\end{enumerate}
	Then:
	\begin{enumerate}[label=(\roman*)]
		\item  The restriction map $\HH^1_{\et}(R,\uU(A,\sigma))\to\HH^1_{\et}(F,\uU(A,\sigma))$ 
		has trivial kernel. Likewise for $\uU^0(A,\sigma)$.
		\item $\im \big(W_\veps(A,\sigma)\to W_\veps(A_F,\sigma_F)\big)=
		\bigcap_{\frakp\in R^{(1)}}
		\im  \big(W_\veps(A_\frakp,\sigma_\frakp)\to W_\veps(A_F,\sigma_F)\big)$.
	\end{enumerate}
\end{thm}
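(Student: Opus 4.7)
The plan is to derive both statements from the exactness of the Gersten--Witt complex $\aGW{A,\sigma,\veps}$, which holds under hypothesis~(1) by Theorem~\ref{TH:GW-exact-dim-2} and under hypothesis~(2) by Theorem~\ref{TH:exactness-odd-index}. I reduce to $R$ a connected domain with fraction field $F$ by working over each connected component of $\Spec R$ (the two statements of the theorem and the Gersten--Witt complex all decompose along such a product). Then $R^{(0)}=\{0\}$, so the augmented complex begins
\[
0 \to W_\veps(A,\sigma) \xrightarrow{\dd_{-1}} W_\veps(A_F,\sigma_F) \xrightarrow{\dd_0} \bigoplus_{\frakp \in R^{(1)}} \tilde{W}_\veps(A(\frakp)) \to \cdots,
\]
where $\dd_{-1}$ is base-change to $F$ and the $\frakp$-component of $\dd_0$ is $\partial_{0,\frakp}^{A/R}$.

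For (ii), exactness at $W_\veps(A_F,\sigma_F)$ identifies $\im(\dd_{-1})$ with $\bigcap_{\frakp \in R^{(1)}} \ker(\partial_{0,\frakp})$. Lemma~\ref{LM:purity-and-dd-zero} gives the inclusion $\ker(\partial_{0,\frakp}) \subseteq \im\bigl(W_\veps(A_\frakp,\sigma_\frakp) \to W_\veps(A_F,\sigma_F)\bigr)$. The reverse inclusion follows from the definitional identity $\partial_{0,\frakp}^{A/R} = \partial_{0,\frakp R_\frakp}^{A_\frakp/R_\frakp}$ (see~\ref{subsec:dd-description}) combined with $\dd_0 \circ \dd_{-1} = 0$ in $\aGW{A_\frakp/R_\frakp,\sigma_\frakp,\veps}$ (Theorem~\ref{TH:GW-is-a-complex}). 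Together these two inclusions prove (ii).

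For (i), the group scheme $\uU(A,\sigma)$ is the automorphism group of the canonical rank-one unimodular $1$-hermitian space $(A, h_A)$ over $(A,\sigma)$, where $h_A(x,y) = x^\sigma y$. By \'etale descent, $\HH^1_{\et}(R, \uU(A,\sigma))$ classifies unimodular $1$-hermitian spaces $(V,f)$ over $(A,\sigma)$ that become isomorphic to $(A, h_A)$ after some faithfully flat \'etale base change. A class in $\ker\bigl(\HH^1_{\et}(R, \uU(A,\sigma)) \to \HH^1_{\et}(F, \uU(A,\sigma))\bigr)$ corresponds to such $(V,f)$ with $(V_F, f_F) \cong (A_F, h_{A_F})$, so $[f]_F = [h_A]_F$ in $W_1(A_F,\sigma_F)$. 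Injectivity of $\dd_{-1}$ (exactness of $\aGW{A,\sigma,1}$ at position~$-1$) yields $[f] = [h_A]$ in $W_1(A,\sigma)$. Since both $(V,f)$ and $(A,h_A)$ have rank one over $A$, Witt cancellation for unimodular $1$-hermitian spaces over $(A,\sigma)$ (available over semilocal rings since $2 \in \units{R}$) gives $(V,f) \cong (A, h_A)$, proving triviality of the torsor. The extension to $\uU^0(A,\sigma)$ is nontrivial only in the orthogonal case, and it follows from the statement for $\uU(A,\sigma)$ by a standard chase on the long exact cohomology sequence associated to $1 \to \uU^0(A,\sigma) \to \uU(A,\sigma) \to \umu_2 \to 1$, using surjectivity of $\uU(A,\sigma)(R) \to \umu_2(R)$ (one can produce reflections of determinant $-1$ locally, hence globally, over a semilocal base) and injectivity of $\HH^1_{\et}(R, \umu_2) \hookrightarrow \HH^1_{\et}(F, \umu_2)$ for regular semilocal $R$.

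The main obstacle is the Witt cancellation step: while cancellation for unimodular hermitian forms is classical over fields and standard over local rings, the version required here --- for unimodular $1$-hermitian forms over an Azumaya algebra with involution over a regular semilocal domain --- must be invoked from the existing literature (Knus, Scharlau). Should the precise statement not be directly available in the generality needed, a fallback is to patch locally: one localizes at each maximal ideal of $R$, applies cancellation over each local Azumaya algebra with involution (where the uniqueness of hyperbolic summands of unimodular forms is well-established), and glues using the finiteness of $\Max R$. The remainder of the argument is bookkeeping involving previously established functoriality (Theorems~\ref{TH:base-change-is-compatible-with-GW} and~\ref{TH:GW-is-a-complex}).
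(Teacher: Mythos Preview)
Your argument for (ii) is correct and matches the paper's, which invokes $\HH^{0}(\aGW{A,\sigma,\veps})=0$ and Lemma~\ref{LM:purity-and-dd-zero}; you have simply made explicit the equality $\ker(\partial_{0,\frakp})=\im\bigl(W_\veps(A_\frakp,\sigma_\frakp)\to W_\veps(A_F,\sigma_F)\bigr)$ underlying that one-line proof.

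For (i) the paper cites \cite[Proposition~8.5]{First_2018_octagon_preprint}, which converts injectivity of $W_1(A,\sigma)\to W_1(A_F,\sigma_F)$ into the $\HH^1_{\et}$ statement for both $\uU(A,\sigma)$ and $\uU^0(A,\sigma)$. Your unpacking of the $\uU$ case is correct and is essentially the content of that proposition; Witt cancellation in this generality is indeed available (e.g.\ in \cite{First_2018_octagon_preprint}). Your proposed \emph{fallback}, however, fails: isomorphism of hermitian spaces over each $R_\frakm$ does not imply isomorphism over $R$---that is exactly the phenomenon the Grothendieck--Serre conjecture concerns---so patching cannot substitute for a genuine semilocal cancellation theorem.

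The $\uU^0$ sketch has a real gap. Your chase needs $\uU(A,\sigma)(R)\to\umu_2(R)$ to be surjective, but ``reflections'' do not exist when $A$ is non-split, and the surjectivity is in fact false already over fields: for $A$ a quaternion division algebra over $\bbR$ with orthogonal $\sigma$, the reduced norm is positive on $\units{A}$, so no unitary element hits $-1$. What the chase actually requires is the conditional statement that $-1\in\im\bigl(\uU(A_F,\sigma_F)\to\umu_2(F)\bigr)$ forces $-1\in\im\bigl(\uU(A,\sigma)(R)\to\umu_2(R)\bigr)$; but the preimage of $-1$ in $\uU(A,\sigma)$ is itself a $\uU^0(A,\sigma)$-torsor, so this route is circular. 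The cited proposition handles $\uU^0$ by a direct interpretation of its torsors rather than through this exact sequence.
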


\begin{proof}
	Part (i)   follows from  $\HH^{-1}(\aGW{A,\sigma,1})=0$
	and \cite[Proposition~8.7]{First_2022_octagon}. Part
	(ii) follows from $\HH^0(\aGW{A,\sigma,\veps})=0$
	and Lemma~\ref{LM:purity-and-dd-zero}.
\end{proof}

\bibliographystyle{plain}
\bibliography{gs_bib}

\end{document}